\newtheorem{thm}{Theorem}[section]
\newtheorem{prp}[thm]{Proposition}
\newtheorem{lmm}[thm]{Lemma}
\newtheorem{cor}[thm]{Corollary}
\newtheorem{dfn}[thm]{Definition}
\newtheorem{rmk}[thm]{Remark}
\renewcommand{\labelenumi}{(\theenumi)}
\def\RR{{\mathbb R}}
\def\CC{{\mathbb C}}
\def\NN{{\mathbb N}}
\def\ZZ{{\mathbb Z}}
\def\PP{{\mathbb P}}
\begin{document}
\title{Homotopy Shadowing}
\author{Yutaka Ishii and John Smillie}
\date{\it Version, July 7, 2010.}
\address{Department of Mathematics, Kyushu University, Motooka, Fukuoka 819--0395, Japan. Email: {\tt yutaka{\char'100}math.kyushu-u.ac.jp}}
\address{Department of Mathematics, Cornell University, Malott Hall, Ithaca, NY 14853-4201, USA. Email: {\tt smillie{\char'100}math.cornell.edu}}

\begin{abstract}
Michael Shub proved in 1969 that the topological conjugacy class of an expanding endomorphism on a compact manifold is determined by its homotopy type. In this article we generalize this result in two directions. In one direction we consider certain expanding maps on metric spaces. In a second direction we consider maps which are hyperbolic with respect to product cone fields on a product manifold. A key step in the proof is to establish a shadowing theorem for pseudo-orbits with some additional homotopy information.
\end{abstract}

\maketitle

\begin{center}
2000 MSC: Primary 37D20, Secondary 37F15, 55P10.
\end{center}

\tableofcontents

\begin{section}{Introduction}
\label{sec:intro}

This paper deals with the problem of showing that two different hyperbolic dynamical systems are topologically conjugate. Structural stability says that two hyperbolic maps which are sufficiently close are topologically conjugate. Our objective though is to obtain conjugacies between systems which are not assumed to be close to one another.

Our approach will be to build models for the dynamical systems and show that an appropriate notion of homotopy equivalence of models establishes topological conjugacy of the original systems. Since we want to deal with the restrictions of dynamical systems to sets which are not invariant it is necessary for us to work with partially defined dynamical systems. It will turn out to be natural to take this one step further and allow our systems to be multiply valued as well as partially defined (see Section \ref{sec:pdmds}).

We start by giving an example of a situation where our techniques apply. Let $f$ be defined on a manifold $M$ and let $\Lambda$ be an invariant set. Let $U$ be a neighborhood of $\Lambda$ and assume that $\Lambda$ is the maximal invariant set in $U$. We moreover assume that either (i) the partially defined map $f$ is {\it expanding} on $U$ or (ii) $U$ can be written as a product of two Riemannian manifolds $U_x\times U_y$ and $f$ is {\it hyperbolic with respect to product cone fields} on $U$ associated to the product structure. In this paper we show that the homotopy type of the restriction of $f$ to $U$ determines $f$ restricted to $\Lambda$ up to topological conjugacy.

\begin{thm} 
\label{thm:exp}
If $f$ is either expanding or hyperbolic with respect to product cone fields on a neighborhood $U$ of $\Lambda$ then the topological conjugacy class of $f : \Lambda \to \Lambda$ depends only on the homotopy type of the restriction of $f$ to $U$.
\end{thm}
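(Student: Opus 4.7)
The plan is to treat the homotopy equivalence hypothesis as a machine that turns genuine orbits of one system into decorated pseudo-orbits of the other, and then to invert that machine by a shadowing theorem whose input is precisely such decorated pseudo-orbits. This is the ``homotopy shadowing'' device the title of the paper refers to, and it is what the abstract flags as the central technical step.

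Concretely, let $(f_1,U_1,\Lambda_1)$ and $(f_2,U_2,\Lambda_2)$ both satisfy the hypotheses, and assume the restrictions of $f_1$ and $f_2$ to $U_1$ and $U_2$ have the same homotopy type. Fix a homotopy equivalence $h : U_1 \to U_2$ intertwining the two maps up to homotopy, together with a homotopy inverse $k : U_2 \to U_1$ and explicit homotopies realizing $h\circ f_1 \simeq f_2 \circ h$ and $k\circ f_2 \simeq f_1 \circ k$. For $p \in \Lambda_1$ the sequence $q_n := h(f_1^n(p))$ is an honest sequence in $U_2$, and each pair $(f_2(q_n),q_{n+1})$ is joined by a canonical path coming from the chosen homotopy. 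By compactness of $\Lambda_1$ these paths are uniformly short, so $(q_n)$ is a pseudo-orbit of $f_2$ decorated with a definite homotopy class of connecting paths. I would then invoke the homotopy shadowing theorem to produce a \emph{unique} genuine $f_2$-orbit shadowing $(q_n)$ in the prescribed homotopy class, and define $H(p)$ to be its initial point; the maximal invariance of $\Lambda_2$ in $U_2$ forces $H(p) \in \Lambda_2$. Running the mirror construction with $k$ produces $K : \Lambda_2 \to \Lambda_1$. Uniqueness in the shadowing theorem, applied to the decorated pseudo-orbits arising from $k \circ h \simeq \mathrm{id}_{U_1}$ and $h\circ k \simeq \mathrm{id}_{U_2}$, gives $K\circ H = \mathrm{id}_{\Lambda_1}$ and $H\circ K = \mathrm{id}_{\Lambda_2}$, while continuous dependence of the shadowing orbit on its data yields continuity of $H$ and $K$. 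Hence $H$ is a topological conjugacy.

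The main obstacle is the homotopy shadowing theorem itself. Standard shadowing produces a true orbit only for pseudo-orbits whose jumps are uniformly small in the ambient metric, whereas in principle $h$ can send nearby points to points that are far apart in $U_2$; it is the homotopy class of the connecting path, not the metric distance, that must drive the fixed-point argument. I expect the cleanest formulation to be on a suitable covering space (or, for the product-cone case, on a product of coverings), where ``homotopy class of connecting path'' becomes ``choice of lift'', after which the argument reduces in the expanding case to a contraction mapping along the lines of Shub's 1969 lift-and-contract proof, and in the hyperbolic case to a two-sided graph-transform contraction driven by the product cone fields. The subtleties introduced by working with partially defined, multiply valued systems (Section~\ref{sec:pdmds}) must be folded into both the statement and the proof of this shadowing theorem, and this is where the real work lies; once it is in hand, the deduction of Theorem~\ref{thm:exp} is as outlined above.
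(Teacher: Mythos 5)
Your proposal follows the paper's approach essentially verbatim: reinterpret $f|_U$ as a partially defined (multivalued) dynamical system with $X^0=U$, $X^1=U\cap f^{-1}(U)$, push orbits of one system through a homotopy semi-conjugacy to obtain homotopy pseudo-orbits (Lemma~\ref{lmm:orbhpo}), invoke the homotopy shadowing theorem (Theorems~\ref{thm:exp-shadowing}, \ref{thm:hyp-shadowing}) and its uniqueness (Corollary~\ref{cor:hom-exp}, Proposition~\ref{prp:hom-hyp}) to produce a well-defined map $h^{\infty}$ on orbit spaces, then use the functoriality of $h\mapsto h^{\infty}$ together with the compatibility of the homotopies with the identity semi-conjugacies to conclude that $h^{\infty}$ and $k^{\infty}$ are mutually inverse homeomorphisms. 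Your guesses about the proof of the shadowing theorem itself are also on target — a covering-space lift-and-contract argument in the expanding case and a product-cone/horizontal--vertical intersection argument in the hyperbolic case — so this is the same route, not an alternative one.
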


We will give precise definitions and related results in Sections~\ref{sec:expsyst} and \ref{sec:hypsyst}.

In the case of expanding maps of compact manifolds we can take $\Lambda=U=M$. In this case Shub's theorem~\cite{S} shows that the action induced by $f$ on the fundamental group $\pi_1(M)$ of $M$ determines the dynamics of $f$. Our result yields Shub's theorem as a special case. 

In the case that $f$ is expanding on $U$ and $U$ is  arcwise connected the theory of iterated monodromy groups~\cite{N} yields combinatorial models of $\Lambda$ which can be determined from the action of $f$ on the fundamental group of $U$. There is a connection between our first theorem and the results of~\cite{N} though our approach is more dynamical and less group theoretic. The relation between the results of  this paper and the theory of iterated monodromy groups will be investigated in~\cite{HPS}. 

In the hyperbolic case the result above was motivated by our interest in complex H\'enon diffeomorphisms though our result as stated makes no reference to any complex structure. 

There are other settings where homotopy information about neighborhoods is used to analyze $\Lambda$. One of these is the theory of the Conley index (see~\cite{FR} and the references contained there). This theory deals with determining invariants of $f$ restricted to $\Lambda$ from homotopy invariants of its neighborhood $U$. On the one hand this theory is more general in that it makes no assumptions about the hyperbolicity of $f$. On the other hand it does not give conditions under which the conjugacy type of $\Lambda$ is determined.

We will also prove another statement which describes the dynamics of a hyperbolic system in terms of its {\it associated expanding system}. In Section~\ref{sec:associated} we construct a partially defined expanding map $f_x : U_x\to U_x$ from a hyperbolic system $f : U\to U$ which we call the associated expanding system of $f$. The map that we construct depends on certain choices but the homotopy equivalence class of the map is well defined (see Proposition~\ref{prp:independence}). We construct a map $f_x : \Lambda_x\to \Lambda_x$ from the homotopy model associated to $f_x : U_x\to U_x$ and show that the topological conjugacy class of the resulting map is determined. 

\begin{thm} 
\label{thm:hyp}
If $f$ is hyperbolic with respect to product cone fields on a neighborhood $U$ of $\Lambda$ then the restriction of $f$ to $\Lambda$ is topologically conjugate to the inverse limit of the restriction of $f_x$ to $\Lambda_x$.
\end{thm}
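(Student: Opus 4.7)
The plan is to construct an explicit topological conjugacy $\Pi : \Lambda \to \varprojlim\bigl(f_x:\Lambda_x\to\Lambda_x\bigr)$ intertwining $f|_\Lambda$ with the shift map $\sigma$ on the inverse limit. Hyperbolicity with respect to product cone fields makes $f$ injective on $\Lambda$, so each $p\in\Lambda$ has a well-defined backward orbit $\{f^{-n}(p)\}_{n\ge 0}\subset\Lambda$. Projecting to the first factor gives a sequence $(\mathrm{pr}_x(f^{-n}(p)))_{n\ge 0}$ in $U_x$, which, via the homotopy equivalence constructed in Section~\ref{sec:associated} identifying $U_x$-dynamics with $f_x:\Lambda_x\to\Lambda_x$, determines an element of $\varprojlim(f_x|_{\Lambda_x})$. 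That this sequence satisfies the inverse-limit compatibility $f_x(q_{n+1})=q_n$ is exactly the statement that $f_x$ is the induced expanding dynamics on the $x$-factor, which holds by construction of the associated expanding system up to the chosen homotopy.

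Injectivity of $\Pi$ is a direct consequence of the contracting behavior in the $y$-direction: two points of $\Lambda$ whose backward orbits have the same $x$-projections lie on a common local stable leaf of the product cone field, and the intersection of such a leaf with $\Lambda$ reduces to a point. Continuity of $\Pi$ follows from the continuity of $f^{-1}$ on $\Lambda$ and from the continuity of the homotopy-model identification.

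Surjectivity is the heart of the proof and is precisely where the homotopy shadowing theorem highlighted in the abstract is used. Given an abstract sequence $(q_n)_{n\ge 0}$ in $\varprojlim(f_x|_{\Lambda_x})$, I would lift each $q_n$ to a point in $U_x$ and pair it with an arbitrary $y$-coordinate in $U_y$ to produce a pseudo-orbit of $f$ in $U$ whose transitions encode the homotopy data carried by the inverse-limit sequence. Homotopy shadowing then produces a unique true orbit of $f$ tracking this pseudo-orbit; since this orbit is bi-infinite and stays in $U$, the maximality of $\Lambda$ forces it to lie in $\Lambda$, and its backward $x$-projections recover $(q_n)_{n\ge 0}$. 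The uniqueness in the shadowing lemma simultaneously gives that the resulting map is independent of the arbitrary choice of $y$-coordinates and that the inverse of $\Pi$ is continuous.

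Finally, $\Pi\circ f=\sigma\circ\Pi$ is immediate, since applying $f$ shifts the backward orbit of $p$ by one step, which is exactly the effect of $\sigma$ on the inverse-limit sequence. The principal technical obstacle is the bookkeeping between the geometric backward $x$-projections of $\Lambda$-orbits and the abstract combinatorial points of the homotopy model $\Lambda_x$: one must check that the homotopy equivalence between $U_x$-dynamics and $f_x:\Lambda_x\to\Lambda_x$ is compatible with the $y$-contraction data in $U$, so that the pseudo-orbit constructed on the $\Lambda_x$-side truly has the homotopy type required by hyperbolic shadowing in $U$. Once this compatibility is in place, the rest of the argument is forced by hyperbolicity and the shadowing theorem.
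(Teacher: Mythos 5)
Your overall strategy---identify $\Lambda$ with bi-infinite $x$-itinerary data and use homotopy shadowing for existence and uniqueness---is in the spirit of the paper, but the definition of your map $\Pi$ has a genuine gap. You assert that projecting the backward $f$-orbit of $p\in\Lambda$ to the $x$-factor ``determines an element of $\varprojlim(f_x|_{\Lambda_x})$,'' the compatibility $f_x(q_{n+1})=q_n$ holding ``by construction.'' It does not hold on the nose. In the product coordinates on $X^1=M^1_x\times M^1_y$ induced by $\rho_f$, the map $f$ does project to $\sigma$, but the inclusion $\iota$ does \emph{not} commute with the projections: the vertical fibers of $X^1$ are only vertical-like in $X^0$, so $\pi^0_x\iota(x,y)$ genuinely depends on $y$. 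Consequently the $x$-projection of an actual $f$-orbit is merely a homotopy pseudo-orbit of the associated expanding system (a point of $U_x$ is not a point of the model $\Lambda_x$ either---$\Lambda_x$ is an orbit space), and turning this projected data into a genuine point of the inverse limit requires the \emph{expanding} homotopy shadowing theorem (Theorem~\ref{thm:exp-shadowing}) together with its uniqueness (Corollary~\ref{cor:hom-exp}). In other words, shadowing is needed already to define the forward map $\Pi$, not only for surjectivity; this is exactly what the paper packages as the homotopy semi-conjugacy $h=(\pi^0_x,\pi^1_x;\widetilde G,\widetilde H)$ of Theorem~\ref{thm:associated-equiv} and the induced map $h^{\infty}$ of Theorems~\ref{thm:hyp-funct} and~\ref{thm:exphyp-equiv}. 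Your closing paragraph names this ``bookkeeping'' obstacle but does not resolve it, and that is where the actual content of the theorem lies.

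The remaining claims inherit the same gap. Injectivity of $\Pi$, independence of the shadowed orbit from the arbitrary $y$-coordinates, continuity of the inverse, and the fact that the two constructions are mutually inverse are precisely what the paper obtains from uniqueness of shadowing in \emph{both} settings (Proposition~\ref{prp:hom-hyp} and Corollary~\ref{cor:hom-exp}) combined with the functorial lemmas of Section~\ref{sec:functorial} (Lemmas~\ref{lmm:twohsc-hpo} and~\ref{lmm:hom-preserved}, Corollaries~\ref{cor:coincide}, \ref{cor:identity} and~\ref{cor:composition}); none of these is ``immediate'' once $\Pi$ itself is only defined up to the homotopy data above. Note also that pairing lifts with arbitrary $y$-coordinates and getting a well-defined answer uses the standing hypotheses that $M^0_y$ is simply connected and, for the homotopies $U^n_t$ of Theorem~\ref{thm:associated-equiv}, contractible; your argument invokes these silently. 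The paper's route---show the hyperbolic system and its associated expanding system are homotopy equivalent as multivalued dynamical systems (Proposition~\ref{prp:independence}, Theorem~\ref{thm:associated-equiv}) and then apply the general correspondence $h\mapsto h^{\infty}$ (Theorem~\ref{thm:exphyp-equiv}, Corollary~\ref{cor:associated-conj})---is essentially your argument with the missing homotopy bookkeeping carried out symmetrically in both directions.
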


We will give precise definitions and the statement in Sections~\ref{sec:hypsyst} and~\ref{sec:associated}.

Our interest in these questions was partially motivated by complex dynamics in one and two variables. In one-dimensional complex dynamics there are natural metrics on neighborhoods $U$ of $\Lambda$ which can be used to show that $f$ is expanding. In two complex dimensions Hubbard and Oberste-Vorth~\cite{HO} gave conditions on cone fields in a neighborhood $U$ of $\Lambda$ which show that $f$ is hyperbolic. Additional conditions on product cone fields were given in~\cite{I1}. In both cases it seems natural to use these neighborhoods as tools to analyze the sets $\Lambda$ and construct conjugacies and semi-conjugacies between systems.

This situation arises for complex H\'enon maps $f : \CC^2 \to \CC^2$ given by
$$f=f_{c, b} : (x, y)\longmapsto (p_c(x)-by, x),$$
where $p_c(x)=x^2+c$. Hubbard and Oberste-Vorth~\cite{HO} proved that if $p_c$ is expanding and $|b|$ is sufficiently small then $f_{c, b}$ is hyperbolic on its Julia set $J_f$ and the H\'enon map $f$ on $J_f$ is topologically conjugate to the inverse limit of $p_c$ on its Julia set. However, the conjugacy is not easy to calculate explicitly except for the horseshoe Julia set. As a consequence of Theorem~\ref{thm:hyp} together with~\cite{I1} it follows
\begin{cor}
\label{cor:Henon}
For the complex H\'enon map $f=f_{c, b}$, we have the following.
\begin{enumerate}
 \renewcommand{\labelenumi}{(\roman{enumi})}
  \item If $|c|>2(1+|b|)^2$, then $f : J_f\to J_f$ is topologically conjugate to a horseshoe.
  \item If $c=0$ and $|b|<(\sqrt{2}-1)/2$, then $f : J_f\to J_f$ is topologically conjugate to the solenoid.
  \item If $c=-1$ and $|b|<0.02$, then $f : J_f\to J_f$ is topologically conjugate to the inverse limit of the basilica.
\end{enumerate}
Moreover, there are explicit maps on orbits which realize these conjugacies.
\end{cor}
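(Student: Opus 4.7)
The plan is to combine Theorem~\ref{thm:hyp} with the hyperbolicity results of~\cite{I1} and then identify the associated expanding system in each parameter range with the standard one-dimensional model whose inverse limit gives the claimed space. First, for each of the three parameter regions listed, I would cite~\cite{I1} to produce a neighborhood $U = U_x \times U_y \subset \CC^2$ of the Julia set $J_f$ on which $f_{c,b}$ is hyperbolic with respect to a product cone field, with $J_f$ the maximal invariant set in $U$. This verifies the hypothesis of Theorem~\ref{thm:hyp}, which then gives a topological conjugacy between $f : J_f \to J_f$ and the inverse limit of the associated expanding system $f_x : \Lambda_x \to \Lambda_x$.

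The remaining work is to identify $f_x : \Lambda_x \to \Lambda_x$ up to topological conjugacy in each of the three cases. By Theorem~\ref{thm:exp} applied to the expanding system $f_x : U_x \to U_x$, this conjugacy class is determined by the homotopy type of the restriction of $f_x$ to $U_x$. The natural candidate in each case is of course the polynomial $p_c$ acting on a neighborhood of its Julia set $J_{p_c}$, which under the hypothesis that $p_c$ is hyperbolic and $|b|$ is small can be seen to be homotopic to $f_x$ on $U_x$ (the projection $(x,y) \mapsto x$ followed by $p_c$ differs from $f_x$ by the $O(b)$ term $-by$, and for small $|b|$ this gives a straight-line homotopy through expanding maps on the slightly larger neighborhood provided by~\cite{I1}). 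Once the homotopy equivalence is established, Theorem~\ref{thm:exp} identifies $f_x : \Lambda_x \to \Lambda_x$ with $p_c : J_{p_c} \to J_{p_c}$ up to topological conjugacy.

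It then remains to recognize the inverse limits. In case (i), $|c|>2$ places $p_c$ in the hyperbolic horseshoe locus, so $p_c$ on $J_{p_c}$ is the one-sided shift on two symbols and its inverse limit is the two-sided shift, i.e.\ a horseshoe. In case (ii), $p_0(x)=x^2$ restricted to the unit circle is the doubling map, whose inverse limit is by definition the dyadic solenoid. In case (iii), $p_{-1}$ acts on the basilica Julia set, and the inverse limit of this system is the stated object. In each case Theorem~\ref{thm:hyp} produces the conjugacy on $J_f$ explicitly in terms of orbits (the conjugating map sends an orbit of $f$ to the orbit of its $x$-projections under $f_x$, which in turn corresponds to an orbit under $p_c$), giving the final sentence of the corollary.

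The main obstacle is the identification step: checking that the associated expanding system $f_x$, whose construction in Section~\ref{sec:associated} involves choices, really is homotopic on $U_x$ to the polynomial $p_c$ acting on a neighborhood of its Julia set. This requires showing that the cone-field data and projection choices from~\cite{I1} yield a $U_x$ that is a genuine neighborhood of $J_{p_c}$ on which $p_c$ is itself expanding (so that the straight-line homotopy from $f_x$ to $p_c$ stays within the space of expanding maps), and that no exotic covering behavior is introduced by the partial definiteness; once this is done, the three conclusions are immediate consequences of Theorems~\ref{thm:exp} and~\ref{thm:hyp}.
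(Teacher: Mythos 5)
Your proposal is correct and follows the intended route: the paper itself offers no detailed proof of this corollary beyond the sentence ``as a consequence of Theorem~\ref{thm:hyp} together with~\cite{I1} it follows,'' and your outline supplies the natural elaboration of that citation. One simplification worth noting: for $f_{c,b}(x,y)=(p_c(x)-by,x)$ the associated expanding system is literally $\sigma_{y_0}(x)=\pi^0_x\circ f(x,y_0)=p_c(x)-by_0=p_{c'}(x)$ with $c'=c-by_0$, i.e.\ an honest quadratic polynomial with a slightly shifted parameter (a constant perturbation, not the $y$-dependent ``$-by$'' you wrote); so once $|b|$ is small enough that $c'$ lies in the same hyperbolic component as $c$, the identification of $f_x|_{\Lambda_x}$ with $p_c|_{J_{p_c}}$ follows from classical one-dimensional structural stability and does not really need a separate homotopy argument through Theorem~\ref{thm:exp}, though your route via homotopy equivalence of the two expanding systems is equally valid and closer in spirit to the paper's framework.
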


The estimate (i) has been essentially obtained in~\cite{O}, but there is a trivial arithmetic error in~\cite{O} which leads to a different condition for $c$. Modulo this error the proof given establishes the result (i) stated above.

We remark that not all hyperbolic H\'enon maps have the property of restricting to a map on the Julia set which is topologically conjugate to the inverse limit of any expanding one-dimensional polynomial map. In~\cite{I1} such examples are constructed for cubic H\'enon maps. In particular the product cone field hypothesis of Theorem~\ref{thm:hyp} does not hold for all hyperbolic H\'enon maps.

In the case of polynomial maps of $\CC$ natural models (in our sense) of Julia sets can be obtained from Hubbard trees~\cite{D, M}. In~\cite{I2} the results of this paper have been applied to the construction of Hubbard trees for a class of hyperbolic H\'enon maps including the non-perturbative one~\cite{I1} mentioned in the previous paragraph.

We will use the technique of {\it homotopy pseudo-orbits} to prove both theorems. An $\varepsilon$ pseudo-orbit is a sequence $(x_i)$ for which $d(f(x_i),x_{i+1})<\varepsilon$. A homotopy pseudo-orbit is a sequence $(x_i)$ for which we have chosen a path from $f(x_i)$ to $x_{i+1}$ (see Section \ref{sec:hpo}). In the proof of the structural stability theorem a large role is played by a shadowing theorem. We establish a shadowing theorem for homotopy pseudo-orbits (called a {\it homotopy shadowing theorem}) which says that any homotopy pseudo-orbit is homotopic to a unique orbit (see Sections \ref{sec:expsyst} and \ref{sec:hypsyst}).  In this way our argument is very much in the spirit of classical dynamical systems and we hope that our approach has the advantage of seeming  natural to dynamicists. Our results give explicit conjugacies and lead to numerical algorithms which can be implemented by computer. See \cite{Mu} for results in this direction.

In this paper we deal with both the expanding case and the hyperbolic case. For the reader who is interested in one of these cases but not the other we suggest the following. Those readers interested only in the case of expanding dynamics should read Sections 2, 3, 4, 6, 7, 10. Those readers interested only in the case of hyperbolic dynamics should read Sections 2, 3, 5, 6, 8, 10.

\vspace{0.3cm}

\noindent 
{\it Acknowledgment.} The authors are grateful to Masayuki Asaoka for pointing out an error in the proof of Proposition~\ref{prp:buildup} and supplying a correct proof.

\end{section}

\begin{section}{Partially defined and multivalued dynamical systems}
\label{sec:pdmds}

\subsection{Multivalued dynamical systems}
\label{sub:mds}

We are interested in finding a general setting in which shadowing ideas can be applied. We observe that the notion of shadowing as it is usually applied does not require that the dynamical system be everywhere defined. For example if we have a pseudo-orbit  in a neighborhood $U$ of a hyperbolic invariant set $\Lambda$ with local product structure then the shadowing principle allows us to shadow this pseudo-orbit with an actual orbit in $\Lambda$. The fact that the map is not everywhere defined in $U$ is not a hindrance. What is important is that the pseudo-orbit is defined for all time and that the orbit that it shadows is defined for all time. 

This discussion suggests that it would be useful to discuss the notion of shadowing in the context of partially defined dynamical systems. We will explain why a further extension to multiple valued dynamical systems is also useful. We will show that for a partially defined dynamical system $f:U\to U$ with appropriate hyperbolicity properties the dynamics on the invariant set $\Lambda$ depends only on the homotopy type of $U$ and the homotopy class of $f$. On the other hand replacing $U$ by a homotopy equivalent space can turn $f$ from a single valued map to a multivalued map. To give a familiar example say that $U$ is a disjoint union of of two disks $D_1$ and $D_2$. Assume that each disk is mapped across both $D_1$ and $D_2$ by an expanding partially defined map $f$ so that $\Lambda$ is a horseshoe. If we replace $U$ by the homotopy equivalent space consisting of two points $x_1$ and $x_2$ then each point should map to both points. Thus it will be useful for us to extend the definition of dynamical system so that it includes ``maps'' which may be undefined at certain points as well as maps which may take on multiple values at certain points. Keep in mind that we use such multivalued maps as tools for studying classical (single valued) dynamical systems. 

The following definitions give us a general set-up for defining ``multivalued dynamical systems''. In Sections~\ref{sec:expsyst} and~\ref{sec:hypsyst} we will consider the expansion and hyperbolicity hypotheses we need in order to make shadowing arguments work for multivalued dynamical systems.

\begin{dfn} 
\label{dfn:mds}
A pair of spaces $X^0$ and $X^1$ together with a pair of maps $\iota, \sigma : X^1\to X^0$ between them is called a {\it multivalued dynamical system}.
\end{dfn}

When we denote this formally we describe it as a quadruple $\mathcal{X}=(X^0, X^1; \iota, \sigma)$. We denote this quadruple by $\iota, \sigma : X^1\to X^0$. We make the convention that if $\sigma : X\to X$ is a classical dynamical system then we view it as a multivalued dynamical system $\iota, \sigma : X\to X$ where $\iota : X\to X$ is the identity.

 \begin{figure}
\setlength{\unitlength}{0.9mm}
\begin{picture}(50,130)(50,0)

\linethickness{1.2pt}
\put(0, 0){\line(1, 0){50}}
\put(50, 0){\line(0, 1){50}}
\put(0, 0){\line(0, 1){50}}
\put(0, 50){\line(1, 0){50}}

\put(100, 0){\line(1, 0){50}}
\put(150, 0){\line(0, 1){50}}
\put(100, 0){\line(0, 1){50}}
\put(100, 50){\line(1, 0){50}}

\qbezier(90,120)(98,107)(98,70)
\qbezier(78,120)(86,107)(86,70)
\qbezier(72,120)(64,107)(64,70)
\qbezier(60,120)(52,107)(52,70)
\put(60,120){\line(1, 0){12}}
\put(78,120){\line(1, 0){12}}
\put(52,70){\line(1, 0){12}}
\put(86,70){\line(1, 0){12}}

\small
\put(17, 55){$X^0=B$}
\put(59, 125){$X^1=B\cap f^{-1}(B)$}
\put(117, 55){$X^0=B$}
\put(20, 15){$\iota(X^1)$}
\put(130, 24){$\sigma(X^1)$}
\put(50, 60){$\iota$}
\put(98, 60){$\sigma$}

\linethickness{0.3pt}
\put(50, 70){\line(1, 0){50}}
\put(100, 70){\line(0, 1){50}}
\put(50, 70){\line(0, 1){50}}
\put(50, 120){\line(1, 0){50}}

\put(62, 67){\vector(-1, -1){15}}
\put(88, 67){\vector(1, -1){15}}
\put(20, 20){\line(-1, 1){10}}
\put(28, 20){\line(1, 1){10}}
\put(129, 23){\line(-1, -1){12}}
\put(129, 27){\line(-1, 1){12}}

\qbezier(40,50)(48,37)(48, 0)
\qbezier(28,50)(36,37)(36, 0)
\qbezier(22,50)(14,37)(14, 0)
\qbezier(10,50)( 2,37)( 2, 0)
\put(10,50){\line(1, 0){12}}
\put(28,50){\line(1, 0){12}}
\put(2, 0){\line(1, 0){12}}
\put(36, 0){\line(1, 0){12}}

\qbezier(100, 40)(113, 48)(150, 48)
\qbezier(100, 28)(113, 36)(150, 36)
\qbezier(100, 22)(113, 14)(150, 14)
\qbezier(100, 10)(113, 2)(150, 2)
\put(100, 10){\line(0, 1){12}}
\put(100, 28){\line(0, 1){12}}
\put(150, 2){\line(0, 1){12}}
\put(150, 36){\line(0, 1){12}}

\end{picture}
\vspace{1cm}
\begin{center}
Figure 1. A multivalued dynamical system $\iota, \sigma : X^1\to X^0$.
\end{center}
\end{figure}

An example of a multivalued dynamical system is given by a horseshoe map. Let $B\subset \RR^2$ be a square in the plane and let $f : B\to \RR^2$ be a standard horseshoe map. Let $X^0=B$ and $X^1=B\cap f^{-1}(B)$. Let $\sigma : X^1\to X^0$ be the restriction of $f$ to $B\cap f^{-1}(B)$ and let $\iota : X^1\to X^0$ be the inclusion of $B\cap f^{-1}(B)$ into $B$. Then, $\iota, \sigma : X^1\to X^0$ becomes a multivalued dynamical system (see Figure 1).

 \begin{figure}
\setlength{\unitlength}{1mm}
\begin{picture}(100, 100)(25, 0)

\put(0, 0){\line(1, 0){30}}
\put(30, 0){\line(0, 1){30}}
\put(0, 0){\line(0, 1){30}}
\put(0, 30){\line(1, 0){30}}

\put(60, 0){\line(1, 0){30}}
\put(90, 0){\line(0, 1){30}}
\put(60, 0){\line(0, 1){30}}
\put(60, 30){\line(1, 0){30}}

\put(120, 0){\line(1, 0){30}}
\put(150, 0){\line(0, 1){30}}
\put(120, 0){\line(0, 1){30}}
\put(120, 30){\line(1, 0){30}}

\put(30, 50){\line(1, 0){30}}
\put(60, 50){\line(0, 1){30}}
\put(30, 50){\line(0, 1){30}}
\put(30, 80){\line(1, 0){30}}

\put(90, 50){\line(1, 0){30}}
\put(120, 50){\line(0, 1){30}}
\put(90, 50){\line(0, 1){30}}
\put(90, 80){\line(1, 0){30}}

\put(0, 38){\vector(1, -2){13.2}}
\put(48.5, 61){\vector(1, -2){24.7}}
\put(47.5, 61){\vector(-2, -3){33}}
\put(108.5, 61){\vector(1, -2){24.7}}
\put(107.5, 61){\vector(-2, -3){33}}
\put(150, 35.2){\vector(-2, -3){15.7}}

\put(48, 62.3){\circle*{1.5}}
\put(108, 62.3){\circle*{1.5}}
\put(13.8, 10){\circle*{1.5}}
\put(73.8, 10){\circle*{1.5}}
\put(133.8, 10){\circle*{1.5}}

\small
\put(14, 33){$X^0$}
\put(74, 33){$X^0$}
\put(134, 33){$X^0$}
\put(44, 83){$X^1$}
\put(104, 83){$X^1$}
\put(45, 65){$x_{i-1}$}
\put(107, 65){$x_i$}
\put(31, 40){$\iota$}
\put(60, 40){$\sigma$}
\put(91, 40){$\iota$}
\put(120, 40){$\sigma$}
\put(0.5, 5){$\sigma(x_{i-2})=\iota(x_{i-1})$}
\put(62, 5){$\sigma(x_{i-1})=\iota(x_i)$}
\put(122.5, 5){$\sigma(x_i)=\iota(x_{i+1})$}

\end{picture}
\vspace{1cm}
\begin{center}
Figure 2a. An orbit $x$ : elements $x_i$ of $X^1$ are drawn as points.
\end{center}
\end{figure}

 \begin{figure}
\setlength{\unitlength}{1mm}
\begin{picture}(100, 50)(25, 0)

\qbezier(0, 26)(8, 19)(13.3, 11)
\qbezier(14.3, 11)(43.8, 60)(73.3, 11)
\qbezier(74.3, 11)(103.8, 60)(133.3, 11)
\qbezier(134.3, 11)(139.6, 19)(147.6, 26)
\put(13.8, 10){\circle*{1.5}}
\put(73.8, 10){\circle*{1.5}}
\put(133.8, 10){\circle*{1.5}}
\put(12.3, 12.5){\vector(2, -3){1}}
\put(72.3, 12.5){\vector(2, -3){1}}
\put(132.3, 12.5){\vector(2, -3){1}}

\small
\put(41, 37){$x_{i-1}$}
\put(103, 37){$x_i$}
\put(0.5, 5){$\sigma(x_{i-2})=\iota(x_{i-1})$}
\put(62, 5){$\sigma(x_{i-1})=\iota(x_i)$}
\put(122.5, 5){$\sigma(x_i)=\iota(x_{i+1})$}

\end{picture}
\vspace{1cm}
\begin{center}
Figure 2b. An orbit $x$ : elements $x_i$ of $X^1$ are drawn as arrows.
\end{center}
\end{figure}

The key property of multivalued dynamical systems is that they give rise to orbits.

\begin{dfn} 
\label{dfn:orbit}
We define an {\it orbit} for a multivalued dynamical system $\iota, \sigma : X^1\to X^0$ to be a sequence $(x_i)$ of points in $X^1$ such that $\sigma(x_i)=\iota(x_{i+1})$. 
\end{dfn}

An orbit gives us an infinite sequence of points in $X^1$ and an infinite sequence of points in $X^0$. In Figure 2a we adopt the convention of drawing a distinct copy of $X^1$ for each point $x_i$ in $X^1$ and a distinct copy of $X^0$ for each point $\sigma(x_{i-1})=\iota(x_i)$.

It can also be useful to think of the elements of $X^1$ as arrows between points in $X^0$. We interpret $x\in X^1$ as an arrow going from the point $\iota(x)$ to the point $\sigma(x)$. If we do this then an orbit is a sequence of arrows where the head of one ends at the tail of the next (see Figure 2b).

The orbit can be finite $(x_1, \ldots, x_n)$, forward infinite $(x_i)_{i\geq 0}$, backward infinite $(x_i)_{i\leq 0}$ or bi-infinite $(x_i)_{i\in\ZZ}$. We will use the notation $(x_i)$ in each of these cases hoping that our meaning can be determined from the context. 

Spaces of orbits will be useful objects to consider.

\begin{dfn} 
\label{dfn:orbsp}
Let $X^{+\infty}$ denote the space of forward orbits $\{(x_i)_{i\geq 0} : \sigma(x_i)=\iota(x_{i+1})\}$. Let $X^{\pm\infty}$ denote the space of bi-infinite orbits $\{(x_i)_{i\in \ZZ} : \sigma(x_i)=\iota(x_{i+1})\}$. Let $X^{\infty}$ denote either the space of forward orbits or the space of bi-infinite orbits.
\end{dfn}

We view these sets as topological spaces where the topology is the  topology that they inherit as subsets of product spaces.

We can think of our multivalued dynamical system as giving rise to an abstract (single valued) dynamical system by considering the shift map on the space of orbits. 

\begin{dfn} 
\label{dfn:shift}
If $\iota, \sigma : X^1\to X^0$ is a multivalued dynamical system then let $\hat \sigma : X^{+\infty}\to X^{+\infty}$ and $\hat \sigma : X^{\pm\infty}\to X^{\pm\infty}$ denote the {\it shift maps} defined so that $\hat \sigma((x_i))=(y_i)$ where $y_i=x_{i+1}$.
\end{dfn}

Thus the shift maps on $X^{+\infty}$ and $X^{\pm\infty}$ are dynamical systems in the ordinary sense of the word. The shift map on $X^{\pm\infty}$ is invertible.

In the case of a classical dynamical system $f : X\to X$ the space $X^{+\infty}$ is just $X$. If the map $f$ is invertible then the space $X^{\pm\infty}$ is also $X$. If $f$ is not invertible then $X^{\pm\infty}$ can be identified with the inverse limit $\varprojlim (X, f)$ which is also known as the natural extension of $f$.

\subsection{Partially defined maps}
\label{sub:pdm}

A {\it partially defined map} is a multivalued dynamical system for which $\iota$ is injective. The injectivity of $\iota$ gives a forward determinism in that for each $x\in X^0$ there is a unique forward orbit starting with $x$ though this orbit may have finite length. If we interpret $\sigma\iota^{-1}(x)$ as the set of possible values of our multivalued dynamical system then in the case of partially defined maps our ``multivalued dynamical system'' actually takes on 0 or 1 value at each point.

We can obtain examples of partially defined maps by restricting classical dynamical systems to subsets of the domain which are not invariant. As an example of such a system let $f : \CC\PP^1\to \CC\PP^1$ be a rational map. Let $U$ be a neighborhood of the Julia set $J_f$ of $f$ which is chosen so that $f^{-1}(U)\subset U$ and every point not in the Julia set eventually leaves $U$. We construct a multivalued dynamical system $\iota, \sigma: X^1\to X^0$. Let $X^0$ be the set $U$. Let $X^1$ be the set $f^{-1}(U)$ and let $\iota$ be the inclusion map from $f^{-1}(U)$ to $U$ and let $\sigma$ denote the restriction of $f$ to $X^1$. In this example a forward orbit for our partially defined dynamical system is an orbit for $f$ which remains in $U$ and we can identify $X^{+\infty}$ with the set $\bigcap_{n=1}^\infty f^{-n}(U)=J_f$. A bi-infinite orbit is a forward orbit together with a choice of a prehistory for the orbit so we can identify $X^{\pm\infty}$ with the inverse limit $\varprojlim(J_f,f)$.

Our notion of multivalued dynamical system allows us to restrict an invertible map to an arbitrary set. For example if $f$ is a H\'enon diffeomorphism of $\CC^2$ and $B$ is a set which contains all bounded orbits then let $X^0=B$ and let $X^1=B\cap f^{-1}(B)$. Let $\sigma:X^1\to X^0$ be the map $f|_{B\cap f^{-1}(B)} : B\cap f^{-1}(B)\to  B$ and let $\iota$ be the inclusion of $B\cap f^{-1}(B)$ into $B$. Then we have a multivalued dynamical system $\iota, \sigma : B\cap f^{-1}(B)\to B$ (see Figure 1 again for the horseshoe case). In this case both  $\iota$ and $\sigma$ are injective. A point in $X^0$ determines a unique orbit and we can identify $X^{\pm\infty}$ with $\bigcap_{n\in \ZZ}f^n(B)$.

\subsection{Subshifts of finite type}
\label{sub:sft}

Consider a multivalued dynamical system $\iota, \sigma: X^1\to X^0$ where the spaces $X^0$ and $X^1$ are finite sets. We can identify such a multivalued dynamical system with a directed graph where $X^0$ is the set of vertices, $X^1$ is the set of arrows, the map $\iota$ maps each arrow to its tail and the map $\sigma$ maps each arrow to its head. An orbit for such a multivalued dynamical system is a sequence of arrows  $x_1 x_2 \cdots x_n$ where the head of $x_i$ coincides with the tail of $x_{i+1}$ so that they form an oriented path in the graph. The spaces $X^{+\infty}$ and $X^{\pm\infty}$ together with the shift map $\hat\sigma$ are the corresponding one-sided and bi-infinite subshifts of finite type.

Subshifts of finite type give efficient ways to represent certain dynamical systems. Their efficiency is due to the fact that they are genuinely multivalued that is to say that $\iota$ is not injective. Other genuinely multivalued dynamical systems will arise when we take, for example, a neighborhood of an expanding Julia set and replace it by a homotopy equivalent one-complex. In this situation we are also producing efficient models for complicated dynamics.

\subsection{Hybrid examples}
\label{sub:hybrid}

We can construct models which mix the properties of the previous examples. Say that we have a directed graph where to each vertex $j$ we have assigned a topological space $Y_j$ and to each edge from $j$ to $j'$ we have assigned a space $Z_{j,j'}$ and a pair of maps $\iota_{j,j'}: Z_{j,j'} \to Y_j$ and $\sigma_{j,j'} : Z_{j,j'}\to Y_{j'}$. Then we can define a multivalued dynamical system $\iota, \sigma : X^1\to X^0$ by taking $X^1$ to be the disjoint union of the spaces $Z_{j,j'}$ and taking $X^0$ to be the disjoint union of the spaces $Y_j$ and defining $\iota(x)$ to be $\iota_{j,j'}(x)$ when $x\in Z_{j,j'}$ and defining $\sigma(x)$ to be $\sigma_{j,j'}(x)$ when $x\in Z_{j,j'}$.

Multivalued dynamical systems like these arise in the work of Ishii~\cite{I2} and of Bedford and Smillie~\cite{BS} on complex H\/enon maps where they are called systems of crossed mappings. The notion of crossed mapping first appeared in \cite{HO}. We can think of such systems of crossed mappings as arising when we want to consider a ``restriction'' of a map to a collection of subsets of the domain which are not necessarily disjoint. Say that $f:\CC^2\to\CC^2$ is a complex H\'enon map and $Y_1, \ldots, Y_j$ are subsets of $\CC^2$. Then we can construct a hybrid system as above by taking $Z_{j,j'}$ to be $Y_j\cap f^{-1}(Y_j')$ and letting $\iota_{j,j'}:Y_j\cap f^{-1}(Y_{j'})\to Y_j$ be the inclusion and letting $f_{j,j'}: Y_j\cap f^{-1}(Y_{j'})\to Y_{j'}$ be the restriction of the map $f$.

\subsection{Spaces of orbits of finite length}
\label{sub:finite}

We have defined spaces of infinite orbits previously. It is also useful to have at our disposal spaces of orbits of finite length.

\begin{dfn}
\label{dfn:finite}
Let $X^n$ be the space of sequences $(x_1,\ldots, x_n)\in (X^1)^n$ such that $\sigma(x_i)=\iota(x_{i+1})$ for $i=1, \ldots, n-1$.
\end{dfn}

As in the case of the spaces of infinite orbits, $X^{+\infty}$ and $X^{\pm\infty}$, there are shift maps associated to spaces of finite length orbits. Unlike the example of the shift map on $X^\infty$ or $X^{\pm\infty}$ this shift map does not give spaces of orbits of finite length the structure of a standard dynamical system but it does give them the structure of a multivalued dynamical system.

A multivalued dynamical system $\iota, \sigma : X^1\to X^0$ gives rise to a sequence of multivalued dynamical systems given by pairs of maps $\iota, \sigma : X^{n+1}\to X^n$ for $n\geq 0$ where $\iota((x_1, \ldots, x_{n-1}, x_n))=(x_1, \ldots, x_{n-1})$ and $\sigma((x_1, x_2, \ldots, x_n))=(x_2, \ldots, x_n)$. In the context of directed graphs these systems correspond to the  directed graphs for higher block presentations of the original system. Note that these systems $\iota, \sigma : X^{n+1}\to X^n$ produce the same spaces of infinite orbits as the original system $\iota, \sigma : X^1\to X^0$.

In the case of the restriction of a rational map to a neighborhood of the Julia set we can identify the spaces $X^n$ with a decreasing sequence of neighborhoods of the Julia set.

We also remark that the space $X^n$ can be described by means of a universal property. If there is a space $Y$ and a pair of maps $\phi_i : Y\to X^n$ ($i=1, 2$) with $\iota \phi_1=\sigma \phi_2$, then there exists a map $\psi : Y\to X^{n+1}$ so that $\phi_1=\sigma \psi$ and $\phi_2=\iota \psi$. This situation is often described by saying that $X^{n+1}$ is a ``pullback''.

\end{section}

\begin{section}{Homotopy semi-conjugacies and homotopy equivalence}
\label{sec:hsc}

We are interested in extending the notions of semi-conjugacy and conjugacy to multivalued dynamical systems. The notion of semi-conjugacy will form the basis for the notion of homotopy semi-conjugacy.

We begin by recalling the classical notion.

\begin{dfn} 
\label{dfn:semiconj1}
Let $f:X\to X$ and $g:Y\to Y$ be (classical) dynamical systems then a map $h : X\to Y$ is a {\it (classical) semi-conjugacy} if $hf=gh$.
\end{dfn}

\begin{rmk} 
\label{rmk:nonsurj}
We do not require semi-conjugacies to be surjective. 
\end{rmk}

We have a corresponding concept for multivalued dynamical systems. Let $\mathcal{X}=(X^0, X^1; \iota, f)$ and $\mathcal{Y}=(Y^0, Y^1; \iota, g)$ be two multivalued dynamical systems. 

\begin{dfn} 
\label{dfn:semiconj2}
We say that $\mathcal{X}$ is {\it semi-conjugate} to $\mathcal{Y}$ if there are maps $h^0 : X^0\to Y^0$ and $h^1 : X^1\to Y^1$ so that $g h^1=h^0 f$ and $\iota h^1=h^0\iota$ hold. The pair $h=(h^0, h^1)$ is called a semi-conjugacy from $\mathcal{X}$ to $\mathcal{Y}$.
\end{dfn}

The pair of the identity maps ${\rm id}_{X^0} : X^0\to X^0$ and ${\rm id}_{X^1} : X^1\to X^1$ is an example of a semi-conjugacy from a multivalued dynamical system $\mathcal{X}=(X^0, X^1; \iota, f)$ to itself. We call ${\rm id}_{\mathcal{X}}=({\rm id}_{X^0}, {\rm id}_{X^1})$ the {\it identity semi-conjugacy} of $\mathcal{X}$. 

\begin{prp} 
\label{prp:semiconj}
A semi-conjugacy from $\mathcal{X}$ to $\mathcal{Y}$ takes orbits of $\mathcal{X}$ to orbits of $\mathcal{Y}$ and induces a semi-conjugacy (in the classical sense) between $\hat{f} : X^\infty\to X^\infty$ and $\hat{g} : Y^\infty\to Y^\infty$.
\end{prp}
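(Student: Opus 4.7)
The plan is to unwind the definitions. Let $h=(h^0,h^1)$ be a semi-conjugacy from $\mathcal{X}=(X^0,X^1;\iota,f)$ to $\mathcal{Y}=(Y^0,Y^1;\iota,g)$, so by Definition~\ref{dfn:semiconj2} we have the two identities $g h^1=h^0 f$ and $\iota h^1=h^0\iota$. Given an orbit $(x_i)$ of $\mathcal{X}$, which by Definition~\ref{dfn:orbit} is a sequence in $X^1$ satisfying $f(x_i)=\iota(x_{i+1})$, I propose to take the candidate image sequence to be $(h^1(x_i))$ in $Y^1$ and verify directly that it is an orbit of $\mathcal{Y}$.

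First I would check the orbit condition for $(h^1(x_i))$ by the chain of equalities
\[
g(h^1(x_i))=h^0(f(x_i))=h^0(\iota(x_{i+1}))=\iota(h^1(x_{i+1})),
\]
where the first step uses $gh^1=h^0 f$, the second uses the orbit property of $(x_i)$, and the third uses $\iota h^1=h^0\iota$. This works identically whether the orbit is finite, forward infinite, or bi-infinite, so the assignment $(x_i)\mapsto(h^1(x_i))$ gives a well-defined map $h^\infty:X^\infty\to Y^\infty$ in each setting.

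Second I would verify that $h^\infty$ is a classical semi-conjugacy intertwining $\hat f$ and $\hat g$. This is a tautology from the definitions of the shifts in Definition~\ref{dfn:shift}: applying $h^\infty$ then $\hat g$ yields the sequence whose $i$-th entry is $h^1(x_{i+1})$, and applying $\hat f$ then $h^\infty$ yields the same sequence. Continuity of $h^\infty$ in the product topology of Definition~\ref{dfn:orbsp} is immediate because $h^\infty$ is defined coordinate-wise by the (continuous) map $h^1$.

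There is no real obstacle here; the statement is essentially a diagram chase packaged in the language of multivalued dynamics. The only point worth a sentence of care is that the two compatibility relations $gh^1=h^0 f$ and $\iota h^1=h^0\iota$ are both needed — the first converts forward dynamics under $f$ into forward dynamics under $g$, while the second is what lets us re-express $h^0\iota(x_{i+1})$ as $\iota(h^1(x_{i+1}))$ so the result lies in $Y^1$ rather than merely $Y^0$.
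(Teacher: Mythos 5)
Your proof is correct and is exactly the routine diagram chase the paper has in mind: the paper states this proposition without proof, and your verification that $g(h^1(x_i))=h^0(f(x_i))=h^0(\iota(x_{i+1}))=\iota(h^1(x_{i+1}))$, together with the coordinate-wise definition of $h^\infty$ giving shift-equivariance and continuity in the product topology, is precisely the argument being left to the reader.
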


Let $\pi_1:X^n\to X^1$ be the projection which maps a point $(x_1, x_2, \ldots,  x_n)$ to $x_1$.

\begin{prp} 
\label{prp:higher}
The pair of maps $(\pi_1\circ\iota,\iota\circ\pi_1)$ induce a semi-conjugacy from $\iota,f:X^{n+1}\to X^n$ to $\iota,f: X^1\to X^0$.
\end{prp}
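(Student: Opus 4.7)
The plan is to identify the two components of the pair $(\pi_1\circ\iota,\iota\circ\pi_1)$ with the maps $h^1$ and $h^0$ required by Definition~\ref{dfn:semiconj2}, and then verify the two commuting-square identities by direct computation on coordinates. Before doing anything else, I would disambiguate the overloaded symbols: the symbol $\iota$ refers to different maps in the two compositions, and the same is true of $\pi_1$ implicitly through the choice of $n$. In $\pi_1\circ\iota$ the intended reading is $\iota:X^{n+1}\to X^n$ followed by $\pi_1:X^n\to X^1$, giving a map $X^{n+1}\to X^1$ that will play the role of $h^1$; in $\iota\circ\pi_1$ the intended reading is $\pi_1:X^n\to X^1$ followed by $\iota:X^1\to X^0$, giving a map $X^n\to X^0$ that will play the role of $h^0$. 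Explicitly, $h^1(x_1,\ldots,x_{n+1})=x_1$ and $h^0(x_1,\ldots,x_n)=\iota(x_1)$.

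Next I would verify the two conditions from Definition~\ref{dfn:semiconj2}. For the dynamical identity $f\circ h^1=h^0\circ f$: evaluated on $(x_1,\ldots,x_{n+1})\in X^{n+1}$, the left-hand side yields $f(x_1)=\sigma(x_1)\in X^0$, while the right-hand side first applies the shift $f=\sigma:X^{n+1}\to X^n$ sending $(x_1,\ldots,x_{n+1})\mapsto(x_2,\ldots,x_{n+1})$ and then $h^0$, producing $\iota(x_2)$. These agree precisely because membership in $X^{n+1}$ demands $\sigma(x_i)=\iota(x_{i+1})$ for every $i$, and in particular for $i=1$. For the structural identity $\iota\circ h^1=h^0\circ \iota$: evaluated on $(x_1,\ldots,x_{n+1})$, the left-hand side gives $\iota(x_1)$, and the right-hand side first drops the last coordinate via $\iota:X^{n+1}\to X^n$ to get $(x_1,\ldots,x_n)$ and then applies $h^0=\iota\circ\pi_1$, again giving $\iota(x_1)$. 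So both equations hold pointwise.

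The main obstacle, such as it is, lies entirely in parsing the notation: once the overloaded $\iota$'s are correctly matched to their respective domains, the substantive content of the proposition is the tautological observation that the orbit-compatibility condition built into the definition of $X^{n+1}$ is exactly what makes the dynamical square commute. No further input from the structure of $\iota$ or $\sigma$ is needed, which is reassuring since the statement asserts a completely general structural fact about the higher-block systems $\iota,f:X^{n+1}\to X^n$.
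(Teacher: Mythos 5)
Your proposal is correct and coincides with the argument the paper leaves implicit: Proposition~\ref{prp:higher} is stated without proof precisely because the verification is the direct coordinate check you give, with $h^1=\pi_1\circ\iota:X^{n+1}\to X^1$, $h^0=\iota\circ\pi_1:X^n\to X^0$, and the identity $\sigma(x_1)=\iota(x_2)$ built into the definition of $X^{n+1}$ making both squares of Definition~\ref{dfn:semiconj2} commute. Your disambiguation of the overloaded $\iota$'s and $\pi_1$ is exactly the intended reading.
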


This semi-conjugacy induces semi-conjugacy from  $\hat f:X^\infty\to X^\infty$ to  $\hat f:X^\infty\to X^\infty$ which is the identity map.

We are interested in showing that homotopy information allows us to build semi-conjugacies. The following definition captures the homotopy information that we need.

\begin{dfn} 
\label{dfn:hsc}
$\mathcal{X}$ is said to be {\it homotopy semi-conjugate} to $\mathcal{Y}$ if there exist $h^0 : X^0\to Y^0$ and $h^1 : X^1\to Y^1$ so that $h^0f$ is homotopic to $gh^1$ by $G=G_t$ ($G_0=h^0f$ and $G_1=gh^1$) and $h^0\iota$ is homotopic to $\iota h^1$ by $H=H_t$ ($H_0=h^0\iota$ and $H_1=\iota h^1$). We call the quadruple $h=(h^0, h^1; G, H)$ a {\it homotopy semi-conjugacy} from $\mathcal{X}$ to $\mathcal{Y}$.
\end{dfn}

The pair of the identity maps ${\rm id}_{X^0} : X^0\to X^0$ and ${\rm id}_{X^1} : X^1\to X^1$ and a pair of constant homotopies $\iota$ and $f$ becomes a homotopy semi-conjugacy ${\rm id}_{\mathcal{X}}=({\rm id}_{X^0}, {\rm id}_{X^1}; f, \iota)$ from $\mathcal{X}=(X^0, X^1; \iota, f)$ to itself.

\begin{dfn} 
\label{dfn:identity}
We call ${\rm id}_{\mathcal{X}}=({\rm id}_{X^0}, {\rm id}_{X^1}; f, \iota)$ the {\it identity semi-conjugacy} of $\mathcal{X}$.
\end{dfn}

Given a homotopy $I(x)=I_t(x)$ ($0\leq t\leq 1$), we will write $I(x)^{-1}=I_{1-t}(x)$. Let $\cdot$ denote the concatenation of two homotopies or of two paths.

Let $h=(h^0, h^1; G, H)$ and $k=(k^0, k^1; G', H')$ be two homotopy semi-conjugacies from $\mathcal{X}=(X^0, X^1; \iota, f)$ to $\mathcal{Y}=(Y^0, Y^1; \iota, g)$. The next definition will be useful in telling us when two homotopy semi-conjugacies produce the same conjugacy. 

\begin{dfn} 
\label{dfn:homotopic}
$h$ is said to be {\it homotopic} to $k$ if there exist $S=S_t : X^1\to Y^1$ with $S_0=h^1$ and $S_1=k^1$ and $T=T_s : X^0\to Y^0$ with $T_0=h^0$ and $T_1=k^0$ so that (i) $gS(x)\cdot G'(x)^{-1}$ is homotopic to $G(x)^{-1}\cdot Tf(x)$ and (ii) $H(x)\cdot \iota S(x)$ is homotopic to $T\iota (x)\cdot H'(x)$ for each $x\in X^1$. The pair $(T, S)$ is called a {\it homotopy} from $h$ to $k$.
\end{dfn}

Let $h=(h^0, h^1; G, H)$ be a homotopy semi-conjugacy from $\mathcal{X}$ to $\mathcal{Y}$ and let $k=(k^0, k^1; G', H')$ be one from $\mathcal{Y}$ to $\mathcal{Z}$. We define their composition $kh : \mathcal{X}\to \mathcal{Z}$ as 
$$kh\equiv (k^0h^0, k^1h^1; k^0G\cdot G'h^1, k^0H\cdot H'h^1).$$

\begin{dfn} 
\label{dfn:equivalent}
$\mathcal{X}$ and $\mathcal{Y}$ are said to be {\it homotopy equivalent} if there exist homotopy semi-conjugacies $h$ from $\mathcal{X}$ to $\mathcal{Y}$ and $k$ from $\mathcal{Y}$ to $\mathcal{X}$ so that  $kh$ is homotopic to the identity semi-conjugacy ${\rm id}_{\mathcal{X}}$ of $\mathcal{X}$ and $hk$ is homotopic to the identity semi-conjugacy ${\rm id}_{\mathcal{Y}}$ of $\mathcal{Y}$.
\end{dfn}

One can extend the above notions for semi-conjugacies to semi-conjugacies with ``lag''. The first two definitions extend Definitions~\ref{dfn:semiconj2} and~\ref{dfn:identity}.

\begin{dfn} 
\label{dfn:lag}
A {\it semi-conjugacy of lag $n$} from $\iota, f : X^1\to X^0$ to $\iota, g : Y^1\to Y^0$ is a semi-conjugacy from $\iota, f : X^{n+1}\to X^n$ to $\iota, g : Y^1\to Y^0$.
\end{dfn}

\begin{dfn}
The {\it identity semi-conjugacy of lag $n$} is the semi-conjugacy of lag $n$ from $\iota, f : X^1\to X^0$ to itself which is given in Proposition~\ref{prp:higher}. 
\end{dfn}

Given $m>0$ note that a semi-conjugacy from $\iota, f : X^{n+1}\to X^n$ to $\iota, g : Y^1\to Y^0$ induces a natural semi-conjugacy from $\iota, f : X^{m+n+1}\to X^{m+n}$ to $\iota, g : Y^{m+1}\to Y^m$. In particular we can compose semi-conjugacies of lag $n$ and $m$ to get a semi-conjugacy of lag $m+n$.

\begin{dfn} 
\label{dfn:shift-eq}
A {\it shift equivalence} between $\iota, f : X^1\to X^0$ and $\iota, g : Y^1\to Y^0$ is a pair of semi-conjugacies of lag $n$ and $m$ such that the compositions in either directions give the identity semi-conjugacies.
\end{dfn}

The following generalizes Definition~\ref{dfn:hsc}.

\begin{dfn} 
\label{dfn:hsc-lag}
A {\it homotopy semi-conjugacy of lag $n$} from $\iota, f : X^1\to X^0$ to $\iota, g : Y^1\to Y^0$ is a homotopy semi-conjugacy from $\iota, f : X^{n+1}\to X^n$ to $\iota, g : Y^1\to Y^0$.
\end{dfn}

Finally the following generalizes Definition~\ref{dfn:shift-eq}.

\begin{dfn} 
\label{dfn:equiv-lag}
A {\it homotopy shift equivalence} between $\iota, f : X^1\to X^0$ and $\iota, g : Y^1\to Y^0$ is a pair of homotopy semi-conjugacies of lag $n$ and $m$ such that the compositions in either direction are homotopic to the identity semi-conjugacies.
\end{dfn}

\end{section}

\begin{section}{Expanding systems: Definitions, examples and results}
\label{sec:expsyst}

In this section we will define expanding multivalued dynamical systems and state our results in the expanding case. The proofs of these results depend on the notion of homotopy pseudo-orbit which is introduced in Section~\ref{sec:hpo}, the existence and uniqueness of the solution to the shadowing problem in the expanding case established in Section~\ref{sec:exp-shadowing} and the functorial properties of homotopy semi-conjugacies which are established in Section~\ref{sec:functorial}.

Let $X^0$ and $X^1$ be two complete length spaces with metrics $d^0$ and $d^1$ respectively.

\begin{dfn} 
\label{dfn:expsyst}
A multivalued dynamical system $\iota, f : X^1\to X^0$ is said to be {\it expanding} if (i) there exist $\delta>0$ and $\lambda>1$ so that $d^0(f(x), f(y))\geq \lambda d^0(\iota(x), \iota(y))$ whenever $d^1(x, y)<\delta$, and (ii) $f : X^1\to X^0$ is a covering map.
\end{dfn}

We call an expanding multivalued dynamical system an {\em expanding system} for short.

An example of an expanding system is an expanding map on a smooth manifold. Let $M$ be a compact manifold and $f : M\to M$ be an expanding map. Let $X^0\equiv M$ with its given metric and $X^1\equiv M$ with the metric pulled back by $f$. Then $\iota, f : X^1\to X^0$ becomes an expanding system since $f$ is a local isometry and $\iota$ is a contraction. The covering property for $f$ holds automatically in this case.

A second example of an expanding system is a directed graph. Here we choose the metric for which the distance between distinct edges is one and we set $\delta=1/2$.

Another important example of an expanding system is a rational map of $\CC\PP^1$. Let $f : \CC\PP^1 \to \CC\PP^1$ which has the property that any critical point of $f$ is attracted by some attractive cycle. Hence there is a neighborhood $U$ of the attractive cycles of $f$ so that $U$ contains all critical values of $f$ and $f(U)$ is compactly contained in $U$. Then, by letting $X^0\equiv \CC\PP^1 \setminus U$ equipped with the Poincar\'e distance in $X^0$, letting $X^1\equiv f^{-1}(X^0)$ equipped with the Poincar\'e distance in $X^1$ and $\iota : X^1\to X^0$ be the inclusion, we see that $\iota, f : X^1\to X^0$ becomes an expanding system.

The first main result of this paper is precisely stated as

\begin{thm}
\label{thm:exp-equiv}
A homotopy equivalence between expanding systems $\iota, f : X^1\to X^0$ and $\iota, g : Y^1\to Y^0$ induces a topological conjugacy between $\hat f:X^{+\infty}\to X^{+\infty}$ and $\hat g:Y^{+\infty}\to Y^{+\infty}$.
\end{thm}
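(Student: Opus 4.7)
The plan is to bootstrap the statement from the homotopy shadowing theorem announced for Section~\ref{sec:exp-shadowing}, plus the functorial calculus of homotopy semi-conjugacies promised in Section~\ref{sec:functorial}. In outline: every homotopy semi-conjugacy $h$ from $\mathcal X$ to $\mathcal Y$ will be shown to induce a continuous map $\hat h : X^{+\infty}\to Y^{+\infty}$ intertwining the shifts; this assignment is functorial up to homotopy; hence a homotopy equivalence produces mutually inverse maps on orbit spaces.

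First, let me explain how a single homotopy semi-conjugacy $h=(h^0,h^1;G,H)$ produces a map on forward orbits. Given an orbit $x=(x_i)_{i\ge 0}\in X^{+\infty}$, consider the sequence $(h^1(x_i))$ in $Y^1$. This is generally not an orbit for $\iota,g$, but the relation $f(x_i)=\iota(x_{i+1})$ combined with the homotopies $G$ and $H$ furnishes a canonical path in $Y^0$ from $gh^1(x_i)$ to $\iota h^1(x_{i+1})$, namely the concatenation $G(x_i)^{-1}\cdot H(x_{i+1})$. The pair (sequence, paths) is exactly a homotopy pseudo-orbit in $\mathcal Y$ in the sense of Section~\ref{sec:hpo}. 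Since $\mathcal Y$ is expanding, the homotopy shadowing theorem of Section~\ref{sec:exp-shadowing} produces a unique orbit $(y_i)\in Y^{+\infty}$ that shadows this homotopy pseudo-orbit. Define $\hat h(x):=(y_i)$. Continuity of $\hat h$ follows from the uniqueness in shadowing applied to nearby pseudo-orbits, and the identity $\hat h\circ\hat f=\hat g\circ\hat h$ is automatic because shifting a homotopy pseudo-orbit shifts its unique shadow.

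Second, I would establish two functorial properties (these are what I expect Section~\ref{sec:functorial} to be about and constitute the technical heart). \emph{Composition:} for $h:\mathcal X\to\mathcal Y$ and $k:\mathcal Y\to\mathcal Z$, one has $\widehat{kh}=\hat k\circ\hat h$. This reduces to verifying that the canonical homotopy pseudo-orbit associated to $kh$ and the one obtained by applying shadowing twice give homotopic pseudo-orbits in $\mathcal Z$; the composition formula $kh=(k^0h^0,k^1h^1;k^0G\cdot G'h^1,k^0H\cdot H'h^1)$ from the excerpt is tailored for exactly this. \emph{Homotopy invariance:} if $h$ and $h'$ are homotopic homotopy semi-conjugacies via $(T,S)$, then $\hat h=\hat{h'}$. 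Here the conditions (i) and (ii) in Definition~\ref{dfn:homotopic} say precisely that the two associated homotopy pseudo-orbits produced from a given $x\in X^{+\infty}$ are homotopic; by uniqueness in homotopy shadowing their shadows coincide. Finally, the identity semi-conjugacy $\mathrm{id}_\mathcal X$ clearly induces the identity on $X^{+\infty}$, since the genuine orbit $x$ is already a shadow of the pseudo-orbit it determines, and is the unique one by the uniqueness clause of shadowing.

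Given these three facts, the theorem falls out. A homotopy equivalence furnishes $h:\mathcal X\to\mathcal Y$ and $k:\mathcal Y\to\mathcal X$ with $kh\simeq\mathrm{id}_\mathcal X$ and $hk\simeq\mathrm{id}_\mathcal Y$. Applying the hat functor,
\[
\hat k\circ\hat h=\widehat{kh}=\widehat{\mathrm{id}_{\mathcal X}}=\mathrm{id}_{X^{+\infty}},\qquad
\hat h\circ\hat k=\widehat{hk}=\widehat{\mathrm{id}_{\mathcal Y}}=\mathrm{id}_{Y^{+\infty}},
\]
so $\hat h$ is a homeomorphism. It is a conjugacy by the intertwining relation established in the first step.

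The main obstacle I anticipate is the uniqueness clause in the homotopy shadowing theorem: one needs uniqueness not just of the shadowing orbit but of its shadowing homotopy up to an appropriate equivalence, strong enough that homotopic homotopy pseudo-orbits shadow the same orbit. Once that uniqueness is secured, the functoriality statements reduce to unwinding the definitions, and the theorem is formal. A minor additional point is verifying that the pseudo-orbit produced above actually satisfies the smallness hypothesis needed by homotopy shadowing; this should follow because $h^1$ is continuous on the compact pieces relevant to shadowing, and, if necessary, by first passing to a higher-block presentation $\iota,f:X^{n+1}\to X^n$ using Proposition~\ref{prp:higher}, which does not change $X^{+\infty}$.
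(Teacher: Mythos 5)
Your proposal matches the paper's proof in both structure and detail: Theorem~\ref{thm:exp-funct} (via Theorem~\ref{thm:exp-shadowing} and Corollary~\ref{cor:hom-exp}) gives the map $h^\infty$, while Section~\ref{sec:functorial} supplies exactly your three functorial facts (Corollaries~\ref{cor:coincide}, \ref{cor:identity} and \ref{cor:composition}), after which the conclusion is the same formal computation $h^\infty k^\infty = (hk)^\infty = (\mathrm{id}_{\mathcal X})^\infty = \mathrm{id}_{X^{+\infty}}$ and its mirror. Your anticipated ``main obstacle''---that the uniqueness clause of shadowing must be strong enough to identify shadows of homotopic pseudo-orbits---is precisely what Corollary~\ref{cor:hom-exp} delivers, so the argument is essentially the paper's.
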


Though we will not use this notion here we observe that in the hypothesis of this theorem we can replace homotopy equivalence by homotopy shift equivalence (see Definition~\ref{dfn:equiv-lag}).

To prove Theorem~\ref{thm:exp-equiv} we need the following

\begin{thm} 
\label{thm:exp-funct}
A homotopy semi-conjugacy $h$ from a multivalued dynamical system $\iota, f : X^1\to X^0$ to an expanding system $\iota, g : Y^1\to Y^0$ induces a unique semi-conjugacy $h^{\infty}$ from $\hat f:X^{+\infty}\to X^{+\infty}$ to $\hat g:Y^{+\infty}\to Y^{+\infty}$. 
\end{thm}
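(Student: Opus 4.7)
The plan is to use the homotopy data in $h=(h^0,h^1;G,H)$ to convert each orbit of $\hat f$ into a \emph{homotopy pseudo-orbit} of $g$, then to invoke the homotopy shadowing theorem for expanding systems (Section~\ref{sec:exp-shadowing}) to shadow it by a unique genuine orbit of $\hat g$.

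Concretely, given $(x_i)_{i\geq 0}\in X^{+\infty}$, I would set $y_i:=h^1(x_i)\in Y^1$. Because $f(x_i)=\iota(x_{i+1})$, the concatenation
$$
\gamma_i\;:=\;G(x_i)^{-1}\cdot H(x_{i+1})
$$
is a well-defined path in $Y^0$ running from $gh^1(x_i)=G_1(x_i)$ through the common value $h^0f(x_i)=h^0\iota(x_{i+1})$ to $\iota h^1(x_{i+1})=H_1(x_{i+1})$. The pair $\bigl((y_i),(\gamma_i)\bigr)$ is thus a homotopy pseudo-orbit for the expanding system $\iota,g:Y^1\to Y^0$. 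Applying homotopy shadowing yields a unique honest orbit $(y_i^{*})\in Y^{+\infty}$ that is homotopic to this pseudo-orbit, and I define $h^{\infty}((x_i)):=(y_i^{*})$. Continuity of $h^{\infty}$ should reduce to continuous dependence of the shadow on the pseudo-orbit, which I expect to be established together with the shadowing theorem itself.

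To verify the semi-conjugacy relation $\hat g\circ h^{\infty}=h^{\infty}\circ\hat f$, I observe that shifting $(x_i)$ by one shifts the entire pseudo-orbit $\bigl((y_i),(\gamma_i)\bigr)$ by one. Hence the shadow of the shifted pseudo-orbit equals the shift of the original shadow, by the uniqueness clause in shadowing. Uniqueness of $h^{\infty}$ as a semi-conjugacy induced by $h$ is then automatic: any candidate must assign to $(x_i)$ a genuine orbit of $\hat g$ that is homotopic (via $G$ and $H$) to $(h^1(x_i))$, and that orbit is uniquely determined.

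The main obstacle I anticipate is purely technical: ensuring that the pseudo-orbit $\bigl((h^1(x_i)),(\gamma_i))$ satisfies whatever admissibility (e.g.\ uniform shortness of the bridging paths relative to $\lambda$ and $\delta$) Section~\ref{sec:exp-shadowing} imposes. If $G$ and $H$ are not short enough to begin with, I would pass to a sufficiently high block presentation $\iota,f:X^{n+1}\to X^n$ and work with the induced homotopy semi-conjugacy of lag $n$, whose associated paths can be made arbitrarily short by using the expansion of $g$ on iterates; the functoriality developed in Section~\ref{sec:functorial} then lets me identify the resulting shadow with the desired $h^{\infty}$ via Proposition~\ref{prp:higher}. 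The rest of the argument is bookkeeping with the definitions of Section~\ref{sec:hsc}.
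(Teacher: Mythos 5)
Your construction is the paper's own: send an orbit $x=(x_i)$ to the homotopy pseudo-orbit $\bigl((h^1(x_i)), (G(x_i)^{-1}\cdot H(x_{i+1}))\bigr)$ (this is exactly Lemma~\ref{lmm:orbhpo}), shadow it by the unique homotopic orbit using Theorem~\ref{thm:exp-shadowing} together with Corollary~\ref{cor:hom-exp}, and get $h^{\infty}\hat f=\hat g h^{\infty}$ because the pseudo-orbit construction commutes with the shift and the shadow is unique. One side remark: the admissibility obstacle you anticipate does not exist. Definition~\ref{dfn:hpo} requires only that the connecting paths have uniformly bounded length --- no smallness relative to $\delta$ or $\lambda$ --- and Theorem~\ref{thm:exp-shadowing} is proved for every such pseudo-orbit, so the detour through higher block presentations and semi-conjugacies of lag $n$ is unnecessary. (What one does need, and what Lemma~\ref{lmm:orbhpo} tacitly assumes, is that the tracks $t\mapsto G_t(x_i)$ and $t\mapsto H_t(x_{i+1})$ have uniformly bounded length.)

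The genuine gap is continuity of $h^{\infty}$. The semi-conjugacy produced here must be continuous (it is what feeds into the topological conjugacy of Theorem~\ref{thm:exp-equiv}), and this is the half of the statement on which the paper spends most of its proof. It is not a formal consequence of the shadowing theorem as stated: no continuous-dependence clause is part of Theorem~\ref{thm:exp-shadowing}, so writing that you ``expect'' it to be established there does not close the argument. One has to re-enter the shadowing construction. The paper does this as follows: replace the metric on the expanding side $Y^1$ by the pullback of $d^0$ under $g$ (harmless, since $g$ is a covering and hence a local homeomorphism); recall from the proof of Theorem~\ref{thm:exp-shadowing} that the shadow satisfies $y_0=\lim_{n\to\infty}\beta^n_0(0)$, where $\beta^n$ are the successive lifts of the connecting paths; fix $N$ with $d^1(y_0,\beta^N_0(0))<\varepsilon$ and likewise for a second orbit $\tilde x$; use continuity of $h^1$ to make $d^1(h^1(x_i),h^1(\tilde x_i))<\varepsilon$ for $0\leq i\leq N$ whenever the first $N+1$ entries of $x$ and $\tilde x$ are $\delta$-close; and then use the expansion to see that pulling back $N$ times along $g$ contracts this discrepancy to $\varepsilon/\lambda^{N}$, so that $d^1(y_0,\tilde y_0)<3\varepsilon$. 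Some quantitative argument of this kind must be supplied; without it your $h^{\infty}$ is only a shift-equivariant map of sets, not yet the (continuous) semi-conjugacy asserted by the theorem.
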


Thus, we have a natural correspondence $h\mapsto h^{\infty}$. The proof of Theorem~\ref{thm:exp-funct} is given in Subsection~\ref{sub:exp-unique} and the proof of Theorem~\ref{thm:exp-equiv} is given in Section~\ref{sec:functorial}.

We have in mind the following interpretation for this theorem. Let us assume that our expanding systems $\mathcal{X}=(X^0, X^1; \iota, f)$ and $\mathcal{Y}=(Y^0, Y^1; \iota, g)$ have the additional property that $X^0$, $X^1$, $Y^0$ and $Y^1$ have the homotopy type of finite CW complexes. In this case the homotopy semi-conjugacies are determined (up to the relationship of being homotopic) by a finite amount of information. 

Following the paradigm of subshifts of finite type we can use homotopy semi-conjugacies of lag $n$ to define a notion of elementary shift equivalence and shift equivalence between expanding systems. If we apply the previous result to semi-conjugacies of lag $n$ we see that an elementary shift equivalence between expanding systems yields a conjugacy between the corresponding shift spaces.
 
 The semi-conjugacy produced in Theorem \ref{thm:exp-funct} can depend on the particular homotopy we choose in constructing the homotopy semi-conjugacy. This phenomenon occurs in the example of conjugacies between expanding circle maps of degree three and themselves. The expanding circle map of degree three has two fixed points and there are two conjugacies from this map to itself, the identity and one which switches the fixed points. The particular conjugacy which arises depends on which homotopy we use. In many other cases the semi-conjugacy does not in fact depend on the homotopies that we use. 

In many situations such as the case of rational maps the spaces $X^n$ are $K(\pi, 1)$'s so that the homotopy types of maps between them are determined by the maps on fundamental groups. The work of Nekrashevych~\cite{N} builds an elegant theory of expanding maps based on information about the fundamental groups of neighborhoods of Julia sets.

Since the dynamical systems are determined by the homotopy types of the maps we can replace the spaces $X^n$ by simpler spaces which capture their homotopy type. In the case of rational maps we can replace the spaces $X^n$ by one complexes. This gives another approach to the theory of Hubbard trees (in the uniformly expanding case) and can be used to show that the Hubbard tree determines the Julia set.

\end{section}

\begin{section}{Hyperbolic systems: Definitions, examples and results}
\label{sec:hypsyst}

In this section we will define hyperbolic product multivalued dynamical systems and state our results in this case. The proofs of these results depend on the notion of homotopy pseudo-orbit which is introduced in Section ~\ref{sec:hpo}, the existence and uniqueness of the solution to the shadowing problem in the hyperbolic case which are established  in Section~\ref{sec:hyp-shadowing} and the functorial properties of homotopy semi-conjugacies which are established in Section~\ref{sec:functorial}.

\subsection{Definition of hyperbolic systems}
\label{sub:hypsyst}

Let $M^0_x$ and $M^0_y$ be compact connected and orientable smooth manifolds of dimensions $m_x$ and $m_y$ respectively. 
\begin{center}
{\it From here on, we will always assume that $M^0_y$ is simply connected.} 
\end{center}
Write $X^0\equiv M^0_x\times M^0_y$ and let $\pi^0_x : X^0\to M^0_x$ and $\pi^0_y : X^0\to M^0_y$ be projections. Take an open subset $X^1\subset X^0$. Let $f : X^1\to X^0$ be a smooth diffeomorphism onto its image and $\iota : X^1\to X^0$ be the inclusion.

\begin{dfn} 
\label{dfn:crossed}
A multivalued dynamical system $\iota, f : X^1\to X^0$ is called a {\it crossed mapping} if
$$\rho_f\equiv (\pi^0_x \circ f, \pi^0_y \circ \iota) : X^1 \longrightarrow X^0$$
is proper. The {\it degree} of the crossed mapping $\iota, f : X^1\to X^0$ is defined as the degree of the proper map $\rho_f$.
\end{dfn}

It is not difficult to see that $\iota, f : X^1\to X^0$ is a crossed mapping of degree $d$ iff the map $\pi^0_x\circ f : \iota^{-1}(M^0_x(y_0))\to M^0_x$ is proper of degree $d$ for all $y_0\in M^0_y$, where $M^0_x(y_0)\equiv M_x^0\times \{y_0\}\subset X^0$.

Let $|\cdot |_{M^0_x}$ and $|\cdot |_{M^0_y}$ be infinitesimal metrics in the tangent bundles $TM^0_x$ and $TM^0_y$ respectively. For $p\in X^0$ we put
$$C^0_h(p)\equiv \{v=(v_x, v_y)\in T_pX^0 : |v_x|_{M^0_x}>|v_y|_{M^0_y} \}$$
and define $\|v\|^0_h\equiv |v_x|_{M^0_x}$ for $v=(v_x, v_y)\in C^0_h(p)$. Similarly we put 
$$C^0_v(p)\equiv \{v=(v_x, v_y)\in T_pX^0 : |v_x|_{M^0_x}<|v_y|_{M^0_y} \}$$
and define $\|v\|^0_v\equiv |v_y|_{M^0_y}$ for $v=(v_x, v_y)\in C^0_v(p)$.

\begin{dfn} 
\label{dfn:cone}
We call $(\{C^0_h(p)\}_{p\in X^0}, \| \cdot \|^0_h)$ the {\it horizontal cone field} in $X^0$. We also call $(\{C^0_v(p)\}_{p\in X^0}, \| \cdot \|^0_v)$ the {\it vertical cone field} in $X^0$. 
\end{dfn}

A crossed mapping $\iota, f : X^1\to X^0$ is said to {\it expand the horizontal cone field} if there exists $\lambda>1$ so that for any $p\in X^1$, we have 
$$D\iota^{-1}(C^0_h(\iota(p)))\subset Df^{-1}(C^0_h(f(p))) \quad \mathrm{and} \quad \lambda\|D\iota (v)\|^0_h\leq \|Df(v)\|^0_h$$ 
for any $v\in T_pX^1$ with $D\iota(v)\in C^0_h(\iota(p))$. Similarly, a crossed mapping $\iota, f : X^1\to X^0$ is said to {\it contract the vertical cone field} if there exists $\lambda>1$ so that for any $p\in X^1$, we have 
$$Df^{-1}(C^0_v(f(p)))\subset D\iota^{-1}(C^0_v(\iota(p))) \quad \mathrm{and} \quad \lambda\|Df(v)\|^0_v\leq \|D\iota (v)\|^0_v$$ 
for any $v\in T_pX^1$ with $Df(v)\in C^0_v(f(p))$.

\begin{dfn} 
\label{dfn:hypsyst}
A crossed mapping $\iota, f : X^1\to X^0$ of degree $d$ is called a {\it hyperbolic system} of degree $d$ if it expands the horizontal cone field and contracts the vertical cone field.
\end{dfn}

Let $l^0_x$ and $l^0_y$ be the arc lengths induced from the infinitesimal metrics in $M^0_x$ and $M^0_y$ respectively and put $l^0(\gamma)\equiv l^0_x(\pi^0_x(\gamma))+l^0_y(\pi^0_y(\gamma))$ for a path $\gamma$ in $X^0$. Let $d^0_x$ and $d^0_y$ be the induced distances in $M^0_x$ and $M^0_y$ respectively and put $d^0(p, q)\equiv d^0_x(\pi^0_x(p), \pi^0_x(q))+d^0_y(\pi^0_y(p), \pi^0_y(q))$. 

\begin{rmk} 
\label{rmk:bounded}
For $A\subset M^0_y$, let ${\rm diam}_y\, A$ be the diameter of $A$ with respect to the distance $d^0_y$ induced from $|\cdot|^0_y$. Since $M^0_y$ is assumed to be compact, we have 
$$(\ast) \quad C\equiv {\rm diam}_y (\pi^0_y(f(X^1)))<+\infty.$$
In the following discussion, the compactness of $M^0_y$ is not essential but the condition ($\ast$) is. See also Subsection~\ref{sub:example-hyp} for the case of complex H\'enon maps where $M^0_y$ is not compact but the condition ($\ast$) is satisfied.
\end{rmk}

From here on, we use the notation $M^0_y(x)\equiv \{x\} \times M^0_y$ and $M^0_x(y)\equiv M^0_x\times \{y\}$. 

\begin{lmm} 
\label{lmm:covering}
If $\iota, f : X^1\to X^0$ is a hyperbolic system of degree $d$, then $\rho=\rho_f : X^1\to X^0$ is a covering map of degree $d$.
\end{lmm}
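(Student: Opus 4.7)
The plan is to show $\rho_f$ is a proper local diffeomorphism with connected target, from which the covering property together with the correct sheet count follows from standard covering-space theory. Properness is already part of the definition of a crossed mapping, so the core work is to verify that $\rho_f$ is a local diffeomorphism, and this is exactly where the cone field hypotheses are used.

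First I would verify that $D\rho_f$ is injective (hence invertible, since $\dim X^1 = \dim X^0$) at every point $p \in X^1$. Unpacking definitions, $D\rho_f(v) = (Df(v)_x,\, D\iota(v)_y)$ where the subscripts denote the projections to $TM^0_x$ and $TM^0_y$. Suppose $v \in T_pX^1$ is nonzero but $D\rho_f(v)=0$, so that $Df(v)_x = 0$ and $D\iota(v)_y = 0$. Since $f$ is a diffeomorphism, $Df(v) \neq 0$, and because its horizontal component vanishes, $|Df(v)_y| > 0 = |Df(v)_x|$, so $Df(v) \in C^0_v(f(p))$. The vertical cone contraction hypothesis gives
\[
v \in Df^{-1}\bigl(C^0_v(f(p))\bigr) \subset D\iota^{-1}\bigl(C^0_v(\iota(p))\bigr),
\]
so $D\iota(v) \in C^0_v(\iota(p))$, i.e. $|D\iota(v)_y| > |D\iota(v)_x| \geq 0$. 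This contradicts $D\iota(v)_y = 0$. (The symmetric argument using horizontal expansion also works.) So $\rho_f$ is a local diffeomorphism.

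Next I would promote this to a covering. The image $\rho_f(X^1)$ is open in $X^0$ because $\rho_f$ is a local diffeomorphism, and it is closed because $\rho_f$ is proper and $X^0$ is locally compact Hausdorff (closed image of a proper map to a locally compact Hausdorff space). Since $X^0 = M^0_x \times M^0_y$ is connected and $\rho_f(X^1)$ is a nonempty clopen subset (taking $d \geq 1$; the $d=0$ case is vacuous), $\rho_f$ is surjective. A surjective proper local diffeomorphism between connected manifolds of equal dimension is automatically a finite covering map: each $q \in X^0$ has a preimage $\rho_f^{-1}(q)$ that is discrete (by local injectivity) and compact (by properness), hence finite; evenly covered neighborhoods are produced by intersecting the local inverse neighborhoods around each preimage point.

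Finally, the number of sheets of this covering equals $\deg(\rho_f) = d$: on the connected manifold $X^0$, the cardinality of $\rho_f^{-1}(q)$ is locally constant and agrees with the degree at any regular value, which is every point since $\rho_f$ is a local diffeomorphism (with all preimages contributing with the same orientation sign coming from the induced orientations on $X^0$ and $X^1\subset X^0$). I expect the only delicate step to be the local-diffeomorphism verification, specifically the check that vanishing of $D\rho_f(v)$ forces $Df(v)$ strictly into the open vertical cone rather than onto its boundary; the observation that $f$ being a diffeomorphism excludes $Df(v)=0$ removes that boundary issue cleanly.
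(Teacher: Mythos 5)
Your argument is correct and rests on the same key computation as the paper, but the packaging is genuinely different. Your verification that $D\rho_f$ is injective is, in substance, the paper's transversality claim: the paper identifies $\rho_f^{-1}(x_0,y_0)$ with $f^{-1}(M^0_y(x_0))\cap\iota^{-1}(M^0_x(y_0))$ and argues these meet transversally because their tangent spaces sit in the pulled-back vertical cone and the horizontal cone respectively, and transversality of that intersection at $p$ is exactly invertibility of $D\rho_f(p)$. From there the paper obtains local triviality by persistence of transverse intersections as $(x_0,y_0)$ moves over a simply connected neighborhood, whereas you quote the standard fact that a proper local diffeomorphism with nonempty clopen (hence full) image over a connected base is a finite covering, with the sheet count supplied by degree theory; properness is indeed part of the crossed-mapping definition, so that input is legitimate. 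Your route is cleaner on the point-set side and makes the role of properness explicit; the paper's route keeps the fiber visible as a horizontal-like/vertical-like intersection, which is the picture it reuses in Lemmas~\ref{lmm:submfd}--\ref{cor:intersection} and in the shadowing construction. (Minor point: $X^1$ need not be connected, but the covering fact you invoke only uses connectedness of $X^0$, and your sketch does not use connectedness of the source.)

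The one step you assert rather than prove is the parenthetical claim that all preimage points carry the same orientation sign, so that the number of sheets equals the signed degree $d$. With the orientation on $X^1$ induced from $X^0$, the local sign of $\rho_f$ at $p$ is the sign of the determinant of the horizontal block of $Df$, and induced orientations alone do not make this positive; what actually makes the count come out right is the cone-field positivity recorded in Lemma~\ref{lmm:intersection}: a transverse intersection of a horizontal-like and a vertical-like tangent space, each oriented through its projection, has positive sign, because the relevant block determinant is $\det(I-AB)$ with $A$, $B$ the cone-dominated graph maps. The paper's own proof glosses this point just as quickly (``at $d$ points by the definition of a hyperbolic system''), and it is automatic in the holomorphic examples, so you are not below the paper's standard; just be aware that this positivity, not the induced orientations, is the honest justification for the sheet count.
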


\begin{proof}
For $(x', y')\in X^0$, take its simply connected neighborhood $U\subset X^0$. For $(x_0, y_0)\in U$ the inverse image $\rho^{-1}_f(x, y)$ is the intersection of $f^{-1}(M^0_y(x_0))$ and $\iota^{-1}(M^0_x(y_0))$. Since $M^0_y(x_0)$ is contained in the vertical cone field and $M^0_x(y_0)$ is contained in the horizontal cone field at each point, $f^{-1}(M^0_y(x_0))$ intersects transversally with $\iota^{-1}(M^0_x(y_0))$ at $d$ points by the definition of a hyperbolic system. Since a transversal intersection persists by a small perturbation, these distinct $d$ points persist when $(x_0, y_0)$ moves over $U$. This shows that $\rho_f : X^1\to X^0$ is a covering map of degree $d$. 
\end{proof}

Thanks to this lemma and the assumption that $M^0_y$ is simply conneccted, when $\iota, f : X^1\to X^0$ is a hyperbolic system we see that $\rho_f$ naturally induces a product structure in $X^1$. To see this, we first note that $\rho^{-1}_f(M^0_y(x_0))$ consists of $d$ mutually disjoint submanifolds of $X^1$ which are all diffeomorphic to the simply connected submanifold $M^0_y(x_0)$ of $X^0$ for any $x_0\in M^0_x$. It then follows that $X^1$ is diffeomorphic to a space of the form $M^1_x\times M^1_y$, where $M^1_x\equiv X^1\cap \{y=y_0\}$ for any $y_0\in M^0_y$ and $M^1_y\equiv M^0_y$. Thus, one can define projections $\pi_x^1 : X^1\to M^1_x$ and $\pi_y^1 : X^1\to M^1_y$. In these product coordinates for $X^1$ the map $\rho_f : X^1\to X^0$ becomes $\rho_f(x, y)=(\pi^0_x\circ f(x, y_0), y)$. We let 
$$g\equiv \pi_x^0\circ f(\ \cdot \ , y_0) : M^1_x\longrightarrow M^0_x$$
and $\iota' : M^1_y\to M^0_y$ be $\iota'(y)\equiv y$. Then, since $f$ maps any vertical straight disk $M^1_y(x)$ in $M^1$ into certain vertical straight disk $M^0_y(x')$ in $M^0$ in the product coordinates, it follows that $\pi^0_x\circ f(x, y)$ does not depend on $y$. In particular, this is equal to $g(x)=\pi^0_x\circ f(x, y_0)$. Thus, we can write down as $f(x, y)=(g(x), h(x, y))$ for some $h$ in the product coordinates.

Infinitesimal metrics in $M_x^1$ and $M_y^1$ can be defined as 
$$|v_x|_{M^1_x}\equiv |Dg(v_x)|_{M^0_x}$$ 
for $v_x\in T_pM^1_x$ and
$$|v_y|_{M^1_y}\equiv |D\iota'(v_y)|_{M^0_y}$$
for $v_y\in T_pM^1_y$. Then, one can define the horizontal cone field $(\{C^1_h(p)\}_{p\in X^1}, \|\cdot\|^1_h)$ and the vertical cone field $(\{C^1_v(p)\}_{p\in X^1}, \|\cdot\|^1_v)$ in $X^1$ as before. This allows us to define the notions of length of paths and the distance in $X^1$ etc.

\begin{lmm} 
\label{lmm:expansion}
Let $\iota, f : X^1\to X^0$ be a hyperbolic system. Then, we have
$$\|v\|^1_h=\|Df(v)\|^0_h \quad \mathrm{\it and} \quad \|v\|^1_h\geq \lambda \|D\iota(v)\|^0_h.$$
Similarly we have
$$\|v\|^1_v\geq \lambda \|Df(v)\|^0_v \quad \mathrm{\it and} \quad \|v\|^1_v=\|D\iota(v)\|^0_v.$$
\end{lmm}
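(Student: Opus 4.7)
The plan is to unwind the constructions just introduced: work in the product coordinates $M^1_x \times M^1_y$ on $X^1$, in which $f(x,y) = (g(x), h(x,y))$ and $\iota(x,y) = (\iota_x(x,y), y)$ (the second coordinate of $\iota$ is $y$ because $M^1_y = M^0_y$ and $\iota'$ is the identity). Consequently, for $v = (v_x, v_y) \in T_p X^1$, the $x$-component of $Df(v)$ in $T_{f(p)}X^0$ is exactly $Dg(v_x)$, and the $y$-component of $D\iota(v)$ in $T_{\iota(p)}X^0$ is exactly $D\iota'(v_y) = v_y$. All four claims will reduce to these computations together with the definitions of the pulled-back norms on $M^1_x, M^1_y$ and the hyperbolicity hypotheses.

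First I would establish the two equalities, which are essentially definitional. For the horizontal equality, $\|v\|^1_h = |v_x|_{M^1_x}$ by definition, and the infinitesimal metric on $M^1_x$ was introduced precisely as the pullback $|v_x|_{M^1_x} = |Dg(v_x)|_{M^0_x}$. Since $(Df(v))_x = Dg(v_x)$ and $\|Df(v)\|^0_h = |(Df(v))_x|_{M^0_x}$, the identity $\|v\|^1_h = \|Df(v)\|^0_h$ follows. The vertical equality $\|v\|^1_v = \|D\iota(v)\|^0_v$ is proved in exactly the same way, using that $\iota'$ is the identity so $|v_y|_{M^1_y} = |v_y|_{M^0_y} = |(D\iota(v))_y|_{M^0_y}$.

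The two inequalities then come from the defining hypotheses of a hyperbolic system, combined with the equalities just established. The assumption that $f$ expands the horizontal cone field provides $\lambda \|D\iota(v)\|^0_h \leq \|Df(v)\|^0_h$ for any $v \in T_p X^1$ with $D\iota(v) \in C^0_h(\iota(p))$; replacing $\|Df(v)\|^0_h$ by $\|v\|^1_h$ via the first equality yields $\|v\|^1_h \geq \lambda \|D\iota(v)\|^0_h$. Symmetrically, the assumption that $f$ contracts the vertical cone field gives $\lambda \|Df(v)\|^0_v \leq \|D\iota(v)\|^0_v$ for $v$ with $Df(v) \in C^0_v(f(p))$; substituting $\|D\iota(v)\|^0_v = \|v\|^1_v$ from the fourth equality produces $\|v\|^1_v \geq \lambda \|Df(v)\|^0_v$.

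I do not expect any serious obstacle: the lemma is a formal translation of the global cone-field hypotheses on the pair $(\iota, f)$ into infinitesimal statements phrased in the intrinsic product coordinates on $X^1$. The one point requiring a little care is to ensure the product form $f(x,y) = (g(x), h(x,y))$ holds on all of $X^1$ rather than only on the distinguished slice $y = y_0$; this was established in the paragraph preceding the lemma by the observation that $f$ sends each vertical disk $M^1_y(x) \subset X^1$ into some vertical disk $M^0_y(x')\subset X^0$, so that $\pi^0_x \circ f$ is independent of $y$. Reusing this fact is what lets the argument above proceed purely by substitution.
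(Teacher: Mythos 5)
Your proposal is correct and follows essentially the same route as the paper's proof: the two equalities come from the pulled-back metrics on $M^1_x$, $M^1_y$ together with the triangular form of $Df$ (and $D\iota' = \mathrm{id}$) in the product coordinates, and the two inequalities follow by substituting these equalities into the cone-field expansion/contraction hypotheses.
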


\begin{proof}
By the definition of the norms, we first have $\|v\|^1_h=|v_x|_{M_x^1}$. In the product coordinates in $X^1=M^1_x\times M^1_y$ the derivative $Df$ has of the form:
$$Df=
\begin{pmatrix}
Dg & 0 \\
\ast & \ast
\end{pmatrix},$$
thus $\|Df(v)\|^0_h=|Dg(v_x)|_{M^0_x}$. By the definition of $|\cdot |_{M^1_x}$ we obtain the first equality $\|v\|^1_h=\|Df(v)\|^0_h$. 

The expansion of the horizontal cone field gives $\lambda \|D\iota(v)\|^0_h\leq \|Df(v)\|^0_h$. This combined with the first equality gives $\|v\|^1_h\geq \lambda \|D\iota(v)\|^0_h$.

The proof for the vertical direction is similar, hence omitted.
\end{proof}

The following notions play the role of ``approximate'' stable and unstable manifolds in the shadowing process.

\begin{dfn} 
\label{dfn:submfd}
For $n=0, 1$, an $m_x$-dimensional (not necessarily conneccted) submanifold $D$ in $X^n$ is said to be {\it horizontal-like of degree $d$} if $\pi^n_x : D\to M^n_x$ is a proper map of degree $d$ and for each point $p\in D$, the tangent space $T_pD$ is contained in $C^n_h(p)$. Similarly the notion of a {\it vertical-like submanifold} is defined.
\end{dfn}

\begin{lmm} 
\label{lmm:submfd}
Let $\iota, f : X^1\to X^0$ be a hyperbolic system of degree $d$. Then, for a horizontal-like submanifold $H$ of degree $k$ in $X^1$, $f(H)$ becomes a horizontal-like submanifold of degree $dk$ in $X^0$. For a vertical-like submanifold $V$ of degree $k$ in $X^1$, $\iota(V)$ becomes a vertical-like submanifold of degree $k$ in $X^0$.
\end{lmm}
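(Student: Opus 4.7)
The lemma has two parts with parallel structure. For each, I would verify in turn that (a) the image is a smooth submanifold of the correct dimension, (b) the relevant coordinate projection is proper of the claimed degree, and (c) the tangent space lies pointwise in the appropriate cone field. Step (a) is immediate: $f$ is a diffeomorphism onto its image and $\iota$ is the inclusion of the open subset $X^1\subset X^0$, so $f(H)$ is a smooth $m_x$-dimensional submanifold and $\iota(V)$ a smooth submanifold of the same dimension as $V$.

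For step (b) I pass to the product coordinates on $X^1\cong M^1_x\times M^1_y$ introduced just before Lemma~\ref{lmm:expansion}, in which $f(x_1,y_1)=(g(x_1),h(x_1,y_1))$ and $\rho_f(x_1,y_1)=(g(x_1),y_1)$. By Lemma~\ref{lmm:covering}, $\rho_f$ is a covering of degree $d$, and the product form forces $g:M^1_x\to M^0_x$ to be a proper map of degree $d$. For part~1 the commutation $\pi^0_x\circ f=g\circ\pi^1_x$ combined with the diffeomorphism $f|_H:H\to f(H)$ yields
\[
\pi^0_x\bigl|_{f(H)}=g\circ\pi^1_x\bigl|_H\circ (f|_H)^{-1},
\]
a composition of proper maps of degrees $d$, $k$, and $1$, hence proper of degree $dk$. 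For part~2 the commutation $\pi^0_y\circ\iota=\pi^1_y$ (both record the $y$-coordinate in $X^0$) together with the diffeomorphism $\iota|_V:V\to\iota(V)$ gives $\pi^0_y|_{\iota(V)}$ proper of degree $k$.

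Step (c) is the main technical point: one must verify $Df(C^1_h(p))\subset C^0_h(f(p))$ for part~1 and $D\iota(C^1_v(p))\subset C^0_v(\iota(p))$ for part~2, pointwise. The strategy is to decompose $v=(v_x,v_y)\in C^1_h$ (respectively $C^1_v$) into its purely horizontal and purely vertical parts in the $X^1$ product coordinates and to feed the cone expansion/contraction into the block-triangular form $Df=\bigl(\begin{smallmatrix}Dg&0\\ \ast&\ast\end{smallmatrix}\bigr)$ from the proof of Lemma~\ref{lmm:expansion}. Applied to $(v_x,0)$, whose $D\iota$-image is purely horizontal in $X^0$ and hence automatically in $C^0_h$, the horizontal expansion forces $Df(v_x,0)\in C^0_h$. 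Applied to $(0,v_y)$, whose $Df$-image is purely vertical in $X^0$ and hence in $C^0_v$, the vertical contraction yields the quantitative bound $\|Df(0,v_y)\|^0_v\le\lambda^{-1}\|v_y\|^1_v$. Combining these with the defining cone inequality for $v$ and the norm identities of Lemma~\ref{lmm:expansion} closes the cone condition for $Df(v)$. Part~2 is entirely dual: one applies the contraction to the purely vertical part and the expansion to the purely horizontal part of $v$.

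The main obstacle is the final cone inequality in step~(c). A naive triangle inequality on the off-diagonal block of $Df(v)$ picks up a factor $1+\lambda^{-1}$ instead of the required~$1$, so the argument must combine the strict (but not uniformly quantitative) cone bound coming from the expansion applied to $(v_x,0)$ with the quantitative $\lambda^{-1}$ gain from the vertical contraction, using the block-triangular structure. A convenient reformulation is to parametrize $T_pH$ as the graph of a linear map $L:TM^1_x\to TM^1_y$ of operator norm strictly less than $1$ (expressing the cone condition on $T_pH$) and to show directly that $Df(T_pH)$ is the graph of a composite linear map over $TM^0_x$ whose operator norm in the $M^0$-metrics is again strictly less than $1$.
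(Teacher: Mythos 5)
Your steps (a) and (b) are fine, and they amount to the same computation as the paper's proof: the paper simply observes that $\rho_f^{-1}(\{x=x_0\})=f^{-1}(\{x=x_0\})$, so $f$ carries exactly $d$ fibers of $\pi^1_x$ onto each fiber of $\pi^0_x$, each meeting $H$ in $k$ points, and calls the vertical case trivial because $\iota$ is the inclusion; your composition-of-proper-maps phrasing ($\pi^0_x|_{f(H)}=g\circ\pi^1_x|_H\circ(f|_H)^{-1}$, degrees $d\cdot k\cdot 1$) is the same count. The paper's written proof in fact says nothing about the tangent-cone condition, so you are right to flag it as the remaining point, and your two preliminary observations there are correct: $D\iota(v_x,0)$ is straight horizontal in $X^0$ (the horizontal leaves of the induced product structure on $X^1$ are straight), and $Df(0,v_y)$ is straight vertical (the vertical leaves are components of $f^{-1}$ of straight verticals), giving $\|Df(0,v_y)\|^0_v\leq\lambda^{-1}\|v_y\|^1_v$.

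The gap is in the combination step, and the fix you propose does not close it. Any argument that bounds the vertical component of $Df(v)$ by adding a bound for the $(v_x,0)$-contribution to a bound for the $(0,v_y)$-contribution is doomed: the horizontal expansion only gives the non-quantitative bound $\|Df(v_x,0)\|^0_v<\|Df(v_x,0)\|^0_h$, so both your triangle inequality and your graph reformulation produce a slope bound of the form $1+\lambda^{-1}\|L\|>1$, never $<1$. The correct argument is not additive but contrapositive, applied to the whole vector $v$: suppose $v\in C^1_h(p)$, $v\neq 0$, but $Df(v)$ lies in the (closed) vertical cone of $X^0$. Then the contraction of the vertical cone field together with Lemma~\ref{lmm:expansion} gives
\[
\lambda\,\|Df(v)\|^0_v\;\leq\;\|D\iota(v)\|^0_v\;=\;\|v\|^1_v\;\leq\;\|v\|^1_h\;=\;\|Df(v)\|^0_h\;\leq\;\|Df(v)\|^0_v,
\]
which forces $\|Df(v)\|^0_v=\|Df(v)\|^0_h=0$ and hence (since $Dg$ is invertible, $\rho_f$ being a covering) $v=0$, a contradiction; so $Df(v)\in C^0_h(f(p))$. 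The dual contradiction, using the horizontal expansion, shows $D\iota(C^1_v(p))\subset C^0_v(\iota(p))$. This is exactly the mechanism of items (a) and (b) of the Claim inside the proof of Proposition~\ref{prp:buildup}, and it needs nothing beyond the definition of a hyperbolic system and Lemma~\ref{lmm:expansion}; with this replacement for your step (c), the rest of your proposal goes through.
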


\begin{proof}
Since $H$ is a horizontal-like submanifold of degree $k$ in $X^1$, each fiber of $\pi_x^1 : X^1\to M^1_x$ intersects with $H$ at $k$ points. By the formula of $\rho$ we see that $\rho^{-1}(\{x=x_0\})=f^{-1}(\{x=x_0\})$. Thus, $f$ maps exactly $d$ fibers of $\pi_x^1 : X^1\to M^1_x$ into a fiber of $\pi_x^0 : X^0\to M^0_x$. It then follows that $f(H)$ intersects with a fiber of $\pi_x^0 : X^0\to M^0_x$ at $dk$ points. The discussion for a vertical-like submanifold is trivial since $\iota$ is the inclusion.
\end{proof}

\begin{lmm}
\label{lmm:intersection}
Let $H$ be a horizontal-like submanifold of degree $k_h$ and $V$ be a vertical-like submanifold of degree $k_v$ in $X^0$. Then their intersection $H\cap V$ consists of $k_hk_v$ points. Moreover, they are all transverse intersections.
\end{lmm}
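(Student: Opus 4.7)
The statement contains two claims: the intersection $H\cap V$ has exactly $k_hk_v$ points and each is transverse. Transversality is immediate: at $p\in H\cap V$ we have $T_pH\subset C^0_h(p)$ and $T_pV\subset C^0_v(p)$, but the two cones are defined by opposite strict inequalities on $|v_x|_{M^0_x}$ versus $|v_y|_{M^0_y}$ and therefore meet only at zero; since $\dim H+\dim V=m_x+m_y=\dim X^0$, the intersection is transverse at each point.

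For the count, the plan is to exploit simple connectedness of $M^0_y$ to put $V$ in an explicit form. The restriction $\pi^0_y|_V\colon V\to M^0_y$ is proper of degree $k_v$ by hypothesis, and the cone condition $TV\subset C^0_v$ forces it to be a local diffeomorphism (no nonzero tangent vector of $V$ lies in the purely horizontal kernel of $d\pi^0_y$). A proper local diffeomorphism onto a connected base is a covering map, and such a covering must be trivial over the simply connected $M^0_y$; hence $V=V_1\sqcup\cdots\sqcup V_{k_v}$ with each $V_i$ the graph of a smooth $\phi_i\colon M^0_y\to M^0_x$ satisfying $|D\phi_i(v)|_{M^0_x}<|v|_{M^0_y}$ for $v\neq 0$. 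The same reasoning makes $\pi^0_x|_H\colon H\to M^0_x$ a covering of degree $k_h$; although $M^0_x$ need not be simply connected, the pullback of this covering along $\phi_i$ sits over the simply connected $M^0_y$ and does trivialize. This yields $k_h$ smooth sections $h_j(y)=(\phi_i(y),\eta_j(y))$ for $j=1,\ldots,k_h$, and the bijection $H\cap V_i\leftrightarrow\bigsqcup_{j=1}^{k_h}\{y\in M^0_y:\eta_j(y)=y\}$ reduces the count to counting fixed points of each $\eta_j\colon M^0_y\to M^0_y$.

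The heart of the argument is showing that each $\eta_j$ has a unique fixed point. The image $h_j(M^0_y)$ lies in $H$, so its tangent at $y$, namely $(D\phi_i(v),D\eta_j(v))$, must belong to $C^0_h$; this gives $|D\eta_j(v)|_{M^0_y}<|D\phi_i(v)|_{M^0_x}$, and chaining with the vertical estimate for $\phi_i$ yields $|D\eta_j(v)|_{M^0_y}<|v|_{M^0_y}$ strictly for all nonzero $v$. Compactness of the unit tangent bundle of $M^0_y$ upgrades this to a uniform bound $|D\eta_j(v)|_{M^0_y}\le c\,|v|_{M^0_y}$ for some $c<1$, which integrates along geodesics to the genuine metric contraction $d^0_y(\eta_j(y),\eta_j(y'))\le c\,d^0_y(y,y')$. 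Since $M^0_y$ is compact and hence complete, the Banach fixed point theorem applied to $\eta_j$ delivers a unique fixed point. Summing over $i=1,\ldots,k_v$ and $j=1,\ldots,k_h$ then gives $|H\cap V|=k_hk_v$; disjointness of the $V_i$ and of the sections $h_j$ ensures no double counting, and compactness of $H$ and $V$ makes the finiteness of $H\cap V$ unambiguous.

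The step I expect to be the most delicate is the upgrade from the pointwise strict cone inequality $|D\eta_j(v)|<|v|$ to a \emph{uniform} contraction constant $c<1$. It is what converts the geometric hyperbolicity of the product cone fields into the hard combinatorial count; without it the contraction mapping theorem is unavailable and one would be forced into a more indirect Lefschetz-style argument that would not obviously yield both existence and uniqueness of fixed points simultaneously.
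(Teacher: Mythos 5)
Your proof is correct, but it takes a genuinely different route from the paper. The paper disposes of the lemma in two lines: transversality plus the cone conditions force every local intersection index to be $+1$, and then a standard homological intersection-number computation (in effect, $H$ is homologous to $k_h$ horizontal fibres $M^0_x\times\{pt\}$ and $V$ to $k_v$ vertical fibres, so $[H]\cdot[V]=k_hk_v$, and positivity turns the algebraic count into the set-theoretic one). You instead avoid homology entirely: you use that $\pi^0_y|_V$ and $\pi^0_x|_H$ are proper local diffeomorphisms, hence coverings, exploit the standing hypothesis that $M^0_y$ is simply connected to split $V$ into graphs $\phi_i$ and to trivialize the pullback of $\pi^0_x|_H$ along each $\phi_i$, and then count intersection points as fixed points of the maps $\eta_j$, which the cone estimates make uniform contractions (via compactness of the unit sphere bundle of $M^0_y$), so Banach gives exactly one fixed point per pair $(i,j)$. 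This buys an explicit, constructive identification of the $k_hk_v$ points, with existence and uniqueness coming from the same contraction estimate, and it is close in spirit to the graph-transform/shadowing arguments used later in the paper; the cost is that you need simple connectedness and compactness of $M^0_y$, which the paper's intersection-theoretic argument does not require for this particular lemma. One small imprecision to note: when $D\phi_i(v)=0$ the vector $(D\phi_i(v),D\eta_j(v))$ is not in the open cone $C^0_h$, so your chained strict inequality does not literally apply; but in that case $D\eta_j(v)=0$ as well, because every nonzero vector tangent to $H$ has nonzero $x$-component, so $Dh_j(v)$ must vanish, and the conclusion $|D\eta_j(v)|_{M^0_y}<|v|_{M^0_y}$ for $v\neq 0$ survives, as does the uniform bound.
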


\begin{proof}
Since $H$ is horizontal-like and $V$ is vertical-like, the intersection number is always positive for each point in $H\cap V$. Thus, the conclusion  follows from a standard homological argument.
\end{proof}

Combining the previous two lemmas, we have

\begin{cor} 
\label{cor:intersection}
Let $\iota, f : X^1\to X^0$ be a hyperbolic system of degree $d$. Then, for a horizontal-like submanifold $H$ of degree one and a vertical-like submanifold $V$ of degree one in $X^1$, $f(H)\cap \iota(V)$ consists of $d$ points. Moreover, they are all transverse intersections.
\end{cor}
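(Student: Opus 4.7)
The plan is to deduce this corollary as an immediate combination of Lemma~\ref{lmm:submfd} and Lemma~\ref{lmm:intersection}. The idea is to first push $H$ and $V$ forward into $X^0$, where the more general intersection count of Lemma~\ref{lmm:intersection} applies directly, and then simply multiply the resulting degrees.

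First I would apply Lemma~\ref{lmm:submfd} to the horizontal-like submanifold $H$ of degree one in $X^1$. Since $\iota, f : X^1\to X^0$ is a hyperbolic system of degree $d$, Lemma~\ref{lmm:submfd} tells us that $f(H)$ is a horizontal-like submanifold of degree $d\cdot 1 = d$ in $X^0$. Next I would apply the same lemma to the vertical-like submanifold $V$ of degree one in $X^1$; the inclusion $\iota$ preserves the degree, so $\iota(V)$ is a vertical-like submanifold of degree $1$ in $X^0$.

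Finally I would invoke Lemma~\ref{lmm:intersection} with $k_h = d$ and $k_v = 1$. It asserts that the intersection $f(H)\cap \iota(V)$ of a horizontal-like submanifold of degree $d$ with a vertical-like submanifold of degree $1$ in $X^0$ consists of exactly $k_h k_v = d\cdot 1 = d$ points, and that all these intersections are transverse. This gives precisely the conclusion of the corollary, so no additional argument is needed. There is no real obstacle here: the corollary is a purely formal consequence of the two preceding lemmas, with all of the analytic content (cone-field invariance, properness, and transversality from the opposing cones) already absorbed into their statements.
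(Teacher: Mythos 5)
Your proposal is correct and is exactly the paper's argument: the corollary is stated as "combining the previous two lemmas," namely applying Lemma~\ref{lmm:submfd} to get that $f(H)$ is horizontal-like of degree $d$ and $\iota(V)$ is vertical-like of degree one in $X^0$, then invoking Lemma~\ref{lmm:intersection} with $k_h=d$, $k_v=1$. Nothing further is needed.
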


Next we lift up the structure of hyperbolic system of $\iota, f : X^1\to X^0$ to higher levels $\iota, f : X^{n+1}\to X^n$.

\begin{prp} 
\label{prp:buildup}
Let $\iota, f : X^1\to X^0$ be a hyperbolic system. Then, $\iota, f : X^2\to X^1$ becomes a hyperbolic system. 
\end{prp}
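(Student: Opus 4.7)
My first step is to identify $X^2$ with the open subset $X^1 \cap f^{-1}(X^1) \subset X^1$ via the first projection $(x_1, x_2)\mapsto x_1$, which is legitimate because $\iota : X^1\to X^0$ is an open inclusion. Under this identification the ``new'' $\iota$ becomes the set-theoretic inclusion, the ``new'' $f$ becomes the restriction $f|_{X^1\cap f^{-1}(X^1)}$, and the product structure $X^1=M^1_x\times M^1_y$ (with $M^1_y=M^0_y$ simply connected) together with the cone fields $(C^1_h, \|\cdot\|^1_h)$ and $(C^1_v, \|\cdot\|^1_v)$ already constructed on $X^1$ supplies the new base space and cone fields. The crossed mapping condition reduces to showing that $\rho^{(2)}(p)=(\pi^1_x f(p), \pi^1_y(p))$ is proper of degree $d$ from $X^2$ to $X^1$: its fiber over $(x_0, y_0)\in X^1$ is the set of $d$ transverse intersection points of $f(M^1_x(y_0))$ with $\iota(M^1_y(x_0))$ furnished by Corollary~\ref{cor:intersection}, and properness follows by a sequence argument, since $\rho^{(2)}(p_n)\to (x_*, y_*)$ implies $\rho_f(p_n)\to (g(x_*), y_*)\in X^0$, so one extracts a convergent subsequence $p_n\to p_*\in X^1$ from the properness of $\rho_f$ and then verifies $f(p_*)\in X^1$ using the product structure of $X^1$ together with the finite-$y$-diameter condition $(\ast)$ of Remark~\ref{rmk:bounded}.

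The heart of the argument is the horizontal cone expansion. Given $v\in C^1_h(p)$ for $p\in X^2$, I claim first that $Df(v)\in C^0_h(f(p))$. Suppose instead $Df(v)\in C^0_v(f(p))$; the original vertical contraction then yields $\lambda\,|(Df\,v)^{0,y}|_{M^0_y}\leq |v^{1,y}|_{M^1_y}$, which combined with the identity $|(Df\,v)^{0,x}|_{M^0_x}=|v^{1,x}|_{M^1_x}$ from Lemma~\ref{lmm:expansion} and the hypothesis $|(Df\,v)^{0,x}|_{M^0_x}<|(Df\,v)^{0,y}|_{M^0_y}$ forces $|v^{1,x}|_{M^1_x}<\lambda^{-1}|v^{1,y}|_{M^1_y}$, contradicting $v\in C^1_h(p)$ (the boundary case $|(Df\,v)^{0,x}|=|(Df\,v)^{0,y}|$ is handled by continuity). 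Once $Df(v)\in C^0_h(f(p))$, a second application of Lemma~\ref{lmm:expansion}, now at $f(p)$ and to the vector $Df(v)$ itself (noting that $D\iota\, Df(v)=Df(v)$ since $\iota$ is an inclusion), gives $\|Df(v)\|^1_h\geq \lambda\,\|Df(v)\|^0_h=\lambda\,\|v\|^1_h$, which is the desired expansion. The invariance $Df(v)\in C^1_h(f(p))$ then follows since $\|Df(v)\|^1_v=|(Df\,v)^{0,y}|_{M^0_y}<|v^{1,x}|_{M^1_x}\leq \lambda^{-1}\|Df(v)\|^1_h<\|Df(v)\|^1_h$. The vertical cone contraction is proved by the dual argument interchanging the roles of $C^0_h$ and $C^0_v$.

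The main obstacle is precisely the initial implication $v\in C^1_h(p)\Rightarrow Df(v)\in C^0_h(f(p))$. The subtlety is that the ``horizontal cones'' $C^0_h$ and $C^1_h$, although both living in the same tangent space $T_p X^1=T_p X^0$, are defined using different product structures on $X^0$ and on $X^1$ respectively and are not comparable as cones, so one cannot invoke the original horizontal invariance directly. Ruling out $Df(v)\in C^0_v$ via the opposite (vertical) contraction, rather than attempting to verify $Df(v)\in C^0_h$ head-on, is what closes the implication; I suspect this swap of direction is essentially the fix credited to Asaoka in the Acknowledgment.
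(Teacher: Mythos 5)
Your handling of the cone fields is correct and is essentially the paper's own argument: under the identification of $X^2$ with $X^1\cap f^{-1}(X^1)$, your three steps (rule out $Df(v)\in C^0_v(f(p))$ by the vertical contraction, then apply Lemma~\ref{lmm:expansion} once at $p$ and once at $f(p)$ to get the expansion estimate and the invariance, in fact landing in the cone of slope $\lambda^{-1}$) coincide with the implications (a), (b) and the Claim in the published proof, and your guess about where the subtlety lies is consistent with that structure. The fiber count for $\rho^{(2)}$ via the $d$ transverse points of $f(M^1_x(y_0))\cap\iota(M^1_y(x_0))$ is also the paper's computation (Corollary~\ref{cor:intersection}).

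The genuine flaw is in your properness step. The implication ``$\rho^{(2)}(p_n)\to(x_*,y_*)$ implies $\rho_f(p_n)\to(g(x_*),y_*)$'' is false: in the product coordinates of $X^1$ one has $\rho_f(p_n)=(\pi^0_x f(p_n),\pi^0_y\iota(p_n))=(g(\pi^1_x(p_n)),\pi^1_y(p_n))$, so the first coordinate is controlled by the $M^1_x$-coordinate of $p_n$, whereas $x_*$ is the limit of the $M^1_x$-coordinate of $f(p_n)$; these are unrelated, and $g(x_*)$ would instead estimate $\pi^0_x f(f(p_n))$. Only the second coordinate converges for free. The argument can be repaired: from $\pi^1_x(f(p_n))\to x_*$ and compactness of $M^1_y=M^0_y$ extract $f(p_n)\to q_*\in M^1_x\times M^1_y=X^1$ along a subsequence; then $\rho_f(p_n)=(\pi^0_x(f(p_n)),\pi^1_y(p_n))\to(\pi^0_x(q_*),y_*)\in X^0$, so properness of $\rho_f:X^1\to X^0$ yields $p_n\to p_*\in X^1$, and continuity gives $f(p_*)=q_*\in X^1$, i.e.\ $p_*\in X^2$ with $\rho^{(2)}(p_*)=(x_*,y_*)$ --- but note this uses compactness of $M^0_y$, not merely the diameter condition $(\ast)$ of Remark~\ref{rmk:bounded} as you suggest. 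Alternatively, and this is the paper's route, you can skip the sequence argument entirely: since every fiber of $\rho^{(2)}$ has exactly $d$ points and the intersections are transverse, $\rho^{(2)}:X^2\to X^1$ is a local diffeomorphism with constant finite fiber cardinality, hence a covering of degree $d$ (as in Lemma~\ref{lmm:covering}), and a finite-sheeted covering is proper, which is all Definition~\ref{dfn:crossed} requires.
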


In the following proof, the commutativity $\iota f=f\iota$ is essentially used.

\begin{proof}
Take $(x_0, y_0)\in X^1$. Since $\iota$ and $f$ are injective,
\begin{align*}
{\rm card}(\rho^{-1}_f(x_0, y_0))
= & {\rm card}(f^{-1}(M^1_y(x_0))\cap \iota^{-1}(M^1_x(y_0))) \\
= & {\rm card}(\iota(M^1_y(x_0))\cap f(M^1_x(y_0))) \\
= & d 
\end{align*}
by Corollary~\ref{cor:intersection}. Since the intersections $\iota(M^1_y(x_0))\cap f(M^1_x(y_0))$ are transverse, this number is stable under a small perturbation of $(x_0, y_0)\in X^1$. This shows that $\rho_f : X^2\to X^1$ is a covering. 

We put
$$C^m_h(p, \alpha)\equiv \{ v\in T_pX^m : \|v\|^m_v\leq \alpha \|v\|^m_h \}$$ 
and
$$C^m_v(p, \alpha)\equiv \{ v\in T_pX^m : \|v\|^m_h\leq \alpha \|v\|^m_v \}$$ 
for $p\in X^m$ ($m=0, 1$) and $\alpha>0$. In particular, we have $C^m_h(p)=C^m_h(p, 1)$ and $C^m_v(p)=C^m_v(p, 1)$. Note also that $C^1_h(p, 1)\cap C^1_v(p, \alpha)=\emptyset$ and $C^1_h(p, \alpha)\cap C^1_v(p, 1)=\emptyset$ hold for $\alpha<1$.

To finish the proof of Proposition~\ref{prp:buildup}, it is enough to show

\vspace{0.3cm}

\noindent
{\bf Claim.}
{\it Assume that $\iota, f : X^1\to X^0$ is a hyperbolic system. Take $(p_1, p_2)\in X^2$ and $(v_1, v_2)\in T_pX^2$. 
\begin{enumerate}
\renewcommand{\labelenumi}{(\roman{enumi})}
\item If $v_1\in C^1_h(p_1, 1)$, then $v_2\in C^1_h(p_2, \lambda^{-1})$ and $\|v_2\|^1_h\geq \lambda \|v_1\|^1_h$ hold.
\item If $v_2\in C^1_v(p_2, 1)$, then $v_1\in C^1_v(p_1, \lambda^{-1})$ and $\|v_1\|^1_v\geq \lambda \|v_2\|^1_v$ hold.
\end{enumerate}
}

\vspace{0.3cm}

Note that $f(p_1)=\iota(p_2)$ and $Df(v_1)=D\iota(v_2)$ hold. 

\vspace{0.3cm}

\noindent
{\it Proof of Claim.} We first show
\begin{enumerate}
\renewcommand{\labelenumi}{(\alph{enumi})}
\item $D\iota(v)\in C^0_h(\iota(p), 1)$ implies $v\in C^1_h(p, \lambda^{-1})$, and 
\item $Df(v)\in C^0_v(f(p), 1)$ implies $v\in C^1_v(p, \lambda^{-1})$
\end{enumerate}
for $p\in X^1$ and $v\in T_p X^1$. Indeed, suppose that $D\iota(v)\in C^0_h(\iota(p), 1)$ holds. We then have the following estimate:
$$\|D\iota(v)\|^0_v\leq \|D\iota(v)\|^0_h\leq \lambda^{-1} \|Df(v)\|^0_h$$
by the definition of the expansion/contraction of the cone fields and the definition of $C^0_h(\iota(p), 1)$. This together with Lemma~\ref{lmm:expansion} implies $\|v\|^1_v\leq \lambda^{-1}\|v\|^1_h$, hence $v\in C^1_h(p, \lambda^{-1})$. The proof of (b) is similar.

Now let us prove Claim (i). Assume $v_1\in C^1_h(p_1, 1)$. Since $v_1\notin C^1_v(p_1, \lambda^{-1})$, we have $D\iota(v_2)=Df(v_1)\notin C^0_v(f(p_1), 1)$ by (b) above. In particular, $D\iota(v_2)\in C^0_h(f(p_1), 1)=C^0_h(\iota(p_2), 1)$. Then, (a) above implies $v_2\in C^1_h(p_2, \lambda^{-1}$). Since $D\iota(v_2)\in C^0_h(\iota(p_2), 1)$, we have
$$\|v_2\|^1_h=\|Df(v_2)\|^0_h\geq\lambda \|D\iota(v_2)\|^0_h=\lambda\|Df(v_1)\|^0_h=\lambda\|v_1\|^1_h$$
by the estimate above and Lemma~\ref{lmm:expansion}. The proof of (ii) is similar. Thus, we have shown Claim and hence Proposition~\ref{prp:buildup}.
\end{proof}

By applying this proposition repeatedly, one can induce a product structure $X^n=M^n_x\times M^n_y$, define the projections $\pi_x^n : X^n\to M^n_x$ and $\pi_y^n : X^n\to M^n_y$, the infinitesimal metrics $|\cdot |^n_x$ in $M_x^n$ and $|\cdot |^n_y$ in $M_y^n$, the horizontal cone field $(\{C^n_h(p)\}_{p\in X^n}, \|\cdot\|^n_h)$ and the vertical cone field $(\{C^n_v(p)\}_{p\in X^n}, \|\cdot\|^n_v)$ in $X^n$, the notions of horizontal-like submanifolds and vertical-like submanifolds in $X^n$, lengths of paths $l^n_x$ and $l^n_y$, and distances $d^n_x$ and $d^n_y$ in $X^n$ etc for all $n\geq 0$. Thus, in summary,

\begin{prp} 
\label{prp:induction}
Let $\iota, f : X^1\to X^0$ be a hyperbolic system. Then, the conclusions in  Lemmas~\ref{lmm:covering} to \ref{lmm:intersection}, Corollary~\ref{cor:intersection} and Proposition~\ref{prp:buildup} above hold for the setting of the multivalued dynamical system $\iota, f : X^{n+1}\to X^n$ for all $n\geq 0$.
\end{prp}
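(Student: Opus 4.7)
The plan is to prove Proposition~\ref{prp:induction} by induction on $n$. The base case $n=0$ is already established: it consists precisely of Lemmas~\ref{lmm:covering} through \ref{lmm:intersection}, Corollary~\ref{cor:intersection}, and Proposition~\ref{prp:buildup} themselves. So what needs to be verified is the inductive step, namely that if $\iota, f : X^{n+1}\to X^n$ carries all the structure of a hyperbolic system (product decomposition $X^n=M^n_x\times M^n_y$ with $M^n_y$ simply connected, infinitesimal metrics $|\cdot|^n_x, |\cdot|^n_y$, horizontal and vertical cone fields, notions of horizontal-like and vertical-like submanifolds, length and distance, etc.), then the same conclusions hold for $\iota, f : X^{n+2}\to X^{n+1}$.

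The key point is that Proposition~\ref{prp:buildup} and its supporting lemmas were formulated in terms of the structural data of a hyperbolic system on $X^1\to X^0$, but their arguments are entirely formal in that data: they use only the product decomposition, the cone field expansion/contraction estimates, Lemma~\ref{lmm:expansion}, and the commutativity $\iota f=f\iota$. Applying Proposition~\ref{prp:buildup} verbatim at level $n$ rather than at level $0$ gives that $\iota, f : X^{n+2}\to X^{n+1}$ is again a hyperbolic system. First I would observe that $\rho_f : X^{n+2}\to X^{n+1}$ is a covering of the same degree $d$ by the argument used in Lemma~\ref{lmm:covering} (the transversality of the intersection of horizontal-like with vertical-like submanifolds is preserved). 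Second, I would use this covering together with the fact that the vertical fibers $M^{n+1}_y(x_0)$ remain simply connected to produce a product decomposition $X^{n+2}=M^{n+2}_x\times M^{n+2}_y$ in which $M^{n+2}_y\equiv M^{n+1}_y$ (and hence in turn equals $M^0_y$).

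This last identification is the heart of why the induction can close: at every level, the ``vertical'' factor is literally the same manifold $M^0_y$, so the standing hypothesis that this factor be simply connected (and compact) is preserved for free through the induction. The ``horizontal'' factor $M^{n+2}_x$ inherits the structure of a compact, connected, orientable smooth manifold from the fact that it is a $d$-fold cover of $M^{n+1}_x$ as constructed via $\rho_f$. Once the hyperbolic-system data are transported to the new level, the appropriate analogs of Lemmas~\ref{lmm:expansion}, \ref{lmm:submfd}, \ref{lmm:intersection} and Corollary~\ref{cor:intersection} follow by repeating word-for-word the proofs given at level $0$, with all superscripts shifted up by $n+1$.

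The main obstacle, modest though it is, is the bookkeeping needed to verify that all the pieces of structure (the metrics $|\cdot|^{n+1}_x$, $|\cdot|^{n+1}_y$, the cone fields $C^{n+1}_h$, $C^{n+1}_v$, the norms $\|\cdot\|^{n+1}_h$, $\|\cdot\|^{n+1}_v$, etc.) are defined consistently with the definitions used in the statements of Lemmas~\ref{lmm:covering}--\ref{lmm:intersection}. In particular one must check that the inductively produced $|\cdot|^{n+1}_x$ is indeed the pullback-by-$g$ of $|\cdot|^n_x$ just as in the base case, so that the estimate $\|v\|^{n+1}_h=\|Df(v)\|^n_h$ of Lemma~\ref{lmm:expansion} continues to hold at the new level. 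No new dynamical content is required beyond what was established to prove Proposition~\ref{prp:buildup}; the statement is essentially a formal consequence of the fact that the hyperbolic-system structure is self-reproducing under the pullback to higher blocks.
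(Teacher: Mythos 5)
Your proposal is correct and takes essentially the same route as the paper: the paper's own ``proof'' is simply the remark that one applies Proposition~\ref{prp:buildup} repeatedly to transport the product structure, metrics, cone fields, and the attendant submanifold/transversality lemmas up one level at a time. Your induction on $n$ makes that iteration explicit, and your observation that the vertical factor is literally $M^0_y$ at every level (so simple connectivity and condition~$(\ast)$ are preserved for free) is exactly the key point that lets the iteration close.
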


In particular, the following claim will be quite useful in the sequel.

\begin{cor} 
\label{cor:expansion}
Let $\iota, f : X^1\to X^0$ be a hyperbolic system. Then, for $n\geq 0$ a path $u$ in a horizontal-like submanifold in $X^{n+1}$ satisfies $\lambda \cdot l^n_x(\iota u)\leq l^n_x(fu)$. Similarly a path $s$ in a vertical-like submanifold in $X^{n+1}$ satisfies $\lambda \cdot l^n_y(f s)\leq l^n_y(\iota s)$.
\end{cor}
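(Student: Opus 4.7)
The plan is to derive both inequalities as the integrated form of pointwise estimates coming from Lemma~\ref{lmm:expansion}, after using Proposition~\ref{prp:induction} to lift the hyperbolic-system structure to every level $\iota,f:X^{n+1}\to X^n$; this makes Lemmas~\ref{lmm:expansion} and~\ref{lmm:submfd} available verbatim at each $n\ge 0$, so the proof reduces to the case $n=0$ applied at the correct level.

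For the horizontal statement, fix a horizontal-like submanifold $H\subset X^{n+1}$ containing $u$. By Lemma~\ref{lmm:submfd} applied at level $n+1\to n$ the image $f(H)$ is horizontal-like in $X^n$, hence $Df(u'(t))\in C^n_h(fu(t))$ for every $t$, and unwinding the definitions one has
\[
l^n_x(fu)=\int \|Df(u'(t))\|^n_h\,dt,\qquad l^n_x(\iota u)=\int \bigl|D\iota(u'(t))_x\bigr|_{M^n_x}\,dt.
\]
The two parts of Lemma~\ref{lmm:expansion} at this level read $\|u'(t)\|^{n+1}_h=\|Df(u'(t))\|^n_h$ and $\|u'(t)\|^{n+1}_h\ge\lambda\,\|D\iota(u'(t))\|^n_h$; combined pointwise they give $\|Df(u'(t))\|^n_h\ge\lambda\,\|D\iota(u'(t))\|^n_h$, and integrating over $t$ yields $l^n_x(fu)\ge\lambda\,l^n_x(\iota u)$. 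The vertical statement is entirely dual: for $s$ in a vertical-like $V\subset X^{n+1}$, Lemma~\ref{lmm:submfd} now tells us that $\iota(V)$ is vertical-like in $X^n$, so $D\iota(s'(t))\in C^n_v(\iota s(t))$, and the vertical half of Lemma~\ref{lmm:expansion} (the identity $\|s'(t)\|^{n+1}_v=\|D\iota(s'(t))\|^n_v$ together with $\|s'(t)\|^{n+1}_v\ge\lambda\|Df(s'(t))\|^n_v$) integrates to $\lambda\cdot l^n_y(fs)\le l^n_y(\iota s)$.

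The main technical point to check is that the second (expansion/contraction) half of Lemma~\ref{lmm:expansion} legitimately applies along the path. The hypothesis in Definition~\ref{dfn:hypsyst} is phrased for $v$ with $D\iota(v)\in C^n_h$ (respectively $Df(v)\in C^n_v$), which by Claims~(a)--(b) in the proof of Proposition~\ref{prp:buildup} is genuinely stronger than $v\in C^{n+1}_h$ (respectively $v\in C^{n+1}_v$), and the data along $u$ (resp.\ $s$) provide only the latter. The hard part of the argument is to close this gap, and I would do so by working in the $X^{n+1}$-product coordinates used in the proof of Lemma~\ref{lmm:expansion}, where $Df$ is block-triangular with $x$-diagonal block $Dg_n$ and $\pi^n_y\circ\iota$ is the identity in the $y$-variable: this reduces the pointwise bound to the expansion property for $g_n$ applied to purely horizontal vectors, for which the cone hypothesis $D\iota(v)\in C^n_h$ is automatic.
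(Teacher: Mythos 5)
Your first two paragraphs are precisely the paper's own proof: Corollary~\ref{cor:expansion} is proved there in one line by applying Lemma~\ref{lmm:expansion} at level $n$ (legitimate by Proposition~\ref{prp:induction}) to the tangent vectors of the path and integrating the $x$- (resp.\ $y$-) components, exactly as you do. Since Lemma~\ref{lmm:expansion} is stated for all $v\in T_pX^{n+1}$ with no cone hypothesis, citing it as stated already finishes the proof; up to that point your proposal matches the paper.

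The problem is the patch in your last paragraph. The observation that the axioms of Definition~\ref{dfn:hypsyst} are only postulated for $v$ with $D\iota(v)\in C^n_h$ (resp.\ $Df(v)\in C^n_v$), and that this is a priori stronger than $v\in C^{n+1}_h$ (resp.\ $v\in C^{n+1}_v$), is a fair remark about the proof of Lemma~\ref{lmm:expansion}, but your proposed reduction does not close that gap. In the level-$(n+1)$ product coordinates $\pi^n_y\circ\iota$ is indeed the identity in $y$, yet $\pi^n_x\circ\iota$ genuinely depends on $y$: the $\iota$-image of a level-$(n+1)$ vertical leaf $\{x=\mathrm{const}\}$ is a vertical-like submanifold of $X^n$ which is in general curved (for a H\'enon map it is a curve $p(x)=c+by$, not a straight line). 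So for $v=(v_x,v_y)$ one has $\|D\iota(v)\|^n_h=|Av_x+Bv_y|_{M^n_x}$ with $B=\partial_y(\pi^n_x\circ\iota)\neq 0$ in general, and restricting attention to ``purely horizontal vectors'' simply discards the term $Bv_y$, which is exactly the delicate one. The axioms control that term only through the vertical cone invariance, giving $|Bv_y|_{M^n_x}\le |v_y|_{M^n_y}=\|v\|^{n+1}_v$; for $v$ near the boundary of $C^{n+1}_h$ this is comparable to $\|v\|^{n+1}_h=\|Df(v)\|^n_h$, so this route yields at best $\|D\iota(v)\|^n_h\le(1+\lambda^{-1})\|Df(v)\|^n_h$, not the factor $\lambda^{-1}$ required by the corollary; the vertical half of your argument has the same defect. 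In short, if you refuse to take Lemma~\ref{lmm:expansion} at face value, your coordinate reduction does not supply the missing cone membership, and the pointwise $\lambda$-estimate for arbitrary vectors of $C^{n+1}_h$ is not a formal consequence of the reduction you describe. (It is worth noting that where the corollary is actually invoked, e.g.\ in Corollary~\ref{cor:hyp-estimate} and Proposition~\ref{prp:hom-hyp}, the horizontal-like submanifolds are components of $\iota^{-1}$ of horizontal-like submanifolds one level down and the vertical-like ones are components of $f^{-1}$ of vertical-like ones, so along the relevant paths $D\iota(v)$, resp.\ $Df(v)$, does lie in the lower-level cone and the axioms apply verbatim with constant $\lambda$.)
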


\begin{proof}
This immediately follows from Lemma~\ref{lmm:expansion}.
\end{proof}

\subsection{Examples of hyperbolic systems}
\label{sub:example-hyp}

One of the most important examples of a hyperbolic system is a certain polynomial diffeomorphism of $\CC^2$ such as a complex H\'enon map~\cite{I1}. 

Let $M^0_x$ and $M^0_y$ be connected, bounded, open subsets of $\CC$. As before, we moreover assume that $M^0_y$ is simply connected. Let $f : \CC^2\to\CC^2$ be a polynomial diffeomorphism of $\CC^2$. We put $X^0\equiv M^0_x\times M^0_y$ and $X^1\equiv X^0\cap f^{-1}(X^0)$, and we let $\iota : X^1\to X^0$ be the inclusion. This defines a multivalued dynamical system $\iota, f : X^1\to X^0$.

Let $|\cdot |_{M^0_x}$ and $|\cdot |_{M^0_y}$ be Poincar\'e metrics in $M^0_x$ and $M^0_y$ respectively. Define a cone field in terms of the ``slope'' with respect to the Poincar\'e metrics in $M^0_x$ and $M^0_y$ as
$$C^h_p\equiv \bigl\{v=(v_x, v_y) \in T_p X^0 : |v_x|_{M^0_x}>|v_y|_{M^0_y} \bigr\}.$$
A metric in this cone is given by $\|v\|_h\equiv |D\pi^0_x (v)|_{M^0_x}$. Similarly we put
$$C^v_p\equiv \bigl\{v=(v_x, v_y) \in T_p X^0 : |v_x|_{M^0_x}<|v_y|_{M^0_y} \bigr\}.$$
A metric in this cone is given by $\|v\|_v\equiv |D\pi^0_y (v)|_{M^0_y}$.

\begin{dfn}
We call $(\{C^h_p\}_{p\in X^0}, \| \cdot \|_h)$ the {\it horizontal Poincar\'e cone field}. We call $(\{C^v_p\}_{p\in X^0}, \| \cdot \|_v)$ the {\it vertical Poincar\'e cone field}. 
\end{dfn}

\begin{rmk} 
\label{rmk:ok-Henon}
In this setting $M^0_x$ and $M^0_y$ are not compact. However, the condition ($\ast$) holds when $\iota, f : X^1\to X^0$ is a crossed mapping.
\end{rmk}

Let $\mathcal{F}_h=\{M^0_x(y)\}_{y\in M^0_y}$ be the horizontal foliation of $X^0$ with leaves $M^0_x(y)$ ($y\in M^0_y$), and let $\mathcal{F}_v=\{M^0_y(x)\}_{x\in M^0_x}$ be the vertical foliation of $X^0$ with leaves $M^0_y(x)$ ($x\in M^0_x$).

\begin{dfn} 
\label{dfn:ntc}
We say that $\iota, f : X^1\to X^0$ satisfies the {\it no-tangency condition (NTC)} if ${\iota}^{-1}(\mathcal{F}_h)$ and $f^{-1}(\mathcal{F}_v)$ have no tangencies in $X^1$. 
\end{dfn}

The following statement has been proved in~\cite{I1}.

\begin{thm} 
\label{thm:criterion}
Let $f$ be a polynomial diffeomorphism of $\CC^2$ and assume that $\iota, f : X^1\to X^0$ is a crossed mapping of degree $d\geq 2$. Then, $\iota, f : X^1\to X^0$ is a hyperbolic system if and only if it satisfies the (NTC).
\end{thm}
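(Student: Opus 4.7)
The plan is to prove both directions separately. The forward implication follows immediately from the disjointness of the horizontal and vertical cones, while the reverse implication will hinge on the Schwarz--Pick lemma applied leafwise together with a compactness-based uniformity argument. For ($\Rightarrow$) I argue by contradiction: assume the system is hyperbolic and that a tangency occurs at some $p\in X^1$, so there is a nonzero $v\in T_pX^1$ with $D\iota(v)$ tangent to $\mathcal{F}_h$ and $Df(v)$ tangent to $\mathcal{F}_v$. Then $D\iota(v)=(a,0)$ and $Df(v)=(0,b)$ with $a,b\neq 0$, so $D\iota(v)\in C^0_h(\iota(p))$ strictly; the horizontal cone expansion of Definition~\ref{dfn:hypsyst} then forces $Df(v)\in C^0_h(f(p))$. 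But $Df(v)=(0,b)$ lies in $C^0_v(f(p))$, and these two cones are disjoint, giving the contradiction.

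For ($\Leftarrow$), first observe that the critical points of $\rho_f=(\pi^0_x\circ f,\pi^0_y\circ\iota)$ at a point $p$ are exactly the vectors $v\in T_pX^1$ with $D\iota(v)$ tangent to $\mathcal{F}_h$ and $Df(v)$ tangent to $\mathcal{F}_v$. Hence (NTC) is equivalent to $\rho_f$ being a local diffeomorphism, and combined with the crossed-mapping (properness) hypothesis this makes $\rho_f$ a holomorphic covering of degree $d$. For each $y_0\in M^0_y$ the restriction
$$P_{y_0}\equiv\pi^0_x\circ f:\iota^{-1}(M^0_x(y_0))\longrightarrow M^0_x$$
is then a proper holomorphic covering of degree $d$ between bounded (hence hyperbolic) open subsets of $\CC$, and symmetrically for $Q_{x_0}\equiv\pi^0_y\circ\iota:f^{-1}(M^0_y(x_0))\to M^0_y$. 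Being a covering of hyperbolic Riemann surfaces, $P_{y_0}$ is a local isometry of intrinsic Poincar\'e metrics; combining this with the monotonicity $\rho_{\iota^{-1}(M^0_x(y_0))}\geq \rho_{M^0_x}$, which is strict because $\iota^{-1}(M^0_x(y_0))\subsetneq M^0_x$, one obtains $|DP_{y_0}(v)|_{M^0_x}\geq \lambda|v|_{M^0_x}$ for some $\lambda>1$ and every $v$ tangent to a horizontal-like slice. This is the leafwise horizontal expansion of $f$; the contraction of $\iota$ along vertical-like slices follows symmetrically from $Q_{x_0}$. To pass from slice-tangent vectors to arbitrary vectors in $C^0_h$ (respectively $C^0_v$), I will use (NTC) to introduce local product coordinates adapted to the transverse foliations $\iota^{-1}(\mathcal{F}_h)$ and $f^{-1}(\mathcal{F}_v)$, in which the leafwise estimates translate into full cone-field estimates up to bounded distortion.

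The hard part will be the uniformity of $\lambda$ in $y_0$ and $p$: the pointwise strict monotonicity of Poincar\'e metrics must be upgraded to a bound holding throughout $X^1$, and leafwise expansion must be promoted to cone-field expansion. Both steps rest on a compactness input---namely, that each horizontal slice of $X^1$ is uniformly thin inside $M^0_x$ and that (NTC) provides a uniform angular gap between $\iota^{-1}(\mathcal{F}_h)$ and $f^{-1}(\mathcal{F}_v)$ throughout $X^1$. These uniform estimates constitute the technical core of the proof given in \cite{I1}.
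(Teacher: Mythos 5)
The paper does not actually prove Theorem~\ref{thm:criterion}; it quotes it from~\cite{I1}. So the only question is whether your argument stands on its own, and it does not, although it is partially sound: your forward direction (hyperbolicity implies (NTC)) is correct and complete — a tangency vector $v$ gives $D\iota(v)=(a,0)\in C^0_h(\iota(p))$ and $Df(v)=(0,b)\in C^0_v(f(p))$, contradicting the cone-inclusion in Definition~\ref{dfn:hypsyst} — and your identification of (NTC) with the nonvanishing of the Jacobian of $\rho_f$, hence with $\rho_f$ being a degree-$d$ covering, is also right.

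The reverse direction, however, has genuine gaps exactly where the theorem's content lies. First, the strict inequality you invoke (``strict because $\iota^{-1}(M^0_x(y_0))\subsetneq M^0_x$'') is unjustified: nothing in your argument shows the horizontal slices of $X^1$ are proper subsets of their leaves, let alone uniformly so, and pointwise strict Schwarz--Pick monotonicity does not yield a single $\lambda>1$ valid over the noncompact $X^1$ (the slices can degenerate as $y_0$ approaches $\partial M^0_y$). You acknowledge this and defer ``the technical core'' to~\cite{I1}, but that makes the proposal a reduction to the cited theorem rather than a proof of it. Second, and symptomatically, you never use the hypothesis $d\geq 2$: your Schwarz--Pick argument would apply verbatim to a degree-one crossed mapping with no tangencies, yet such a system (e.g.\ a polynomial automorphism preserving a product box around an elliptic fixed point) need not be hyperbolic, so no argument that ignores $d\geq 2$ can close; the degree is precisely what rules out slices filling whole leaves and what produces the uniform gap. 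Third, the passage from vectors tangent to $\iota^{-1}(\mathcal{F}_h)$ to arbitrary vectors of the Poincar\'e cone is only gestured at: ``product coordinates adapted to the foliations with bounded distortion'' is not a working mechanism here, since those coordinates carry no uniform holomorphic bounds; the standard device is to realize a given cone vector $(v_x,v_y)$ with $|v_x|_{M^0_x}>|v_y|_{M^0_y}$ as the tangent of a degree-one horizontal-like graph of a holomorphic map $M^0_x\to M^0_y$ with prescribed $1$-jet and to run the covering/Schwarz--Pick comparison on that graph and its image. Finally, you prove (at best) norm inequalities but never address the cone-invariance inclusions $D\iota^{-1}(C^0_h(\iota(p)))\subset Df^{-1}(C^0_h(f(p)))$ and $Df^{-1}(C^0_v(f(p)))\subset D\iota^{-1}(C^0_v(\iota(p)))$, which are part of Definition~\ref{dfn:hypsyst} and must also be established.
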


There is in fact a checkable criterion for a multivalued dynamical system to be hyperbolic. To do this, given two open subsets $V$ and $W$ of $\CC$ let us write $\partial_v(V\times W)=\partial V\times W$ and $\partial_h(V\times W)=V\times \partial W$. Let ${\rm dist} (A, B)$ be the Euclidean distance between two sets $A$ and $B$ in $\CC$. Let $f : \CC^2\to \CC^2$ be a polynomial diffeomorphism of $\CC^2$. 

\begin{dfn} 
\label{dfn:bcc}
$f$ is said to satisfy the {\it boundary compatibility condition (BCC)} if 
\begin{enumerate}
 \renewcommand{\labelenumi}{(\roman{enumi})}
  \item ${\rm dist} (\pi^0_x\circ f(\partial_vX^0), M^0_x)>0$ and
  \item ${\rm dist} (\pi^0_y\circ f^{-1} (\partial_hX^0), M^0_y)>0$
\end{enumerate}
hold. 
\end{dfn}

Let us define
\begin{equation*}
\mathcal{C}_f\equiv \bigcup_{y\in M^0_y}\left\{{\rm critical \ points \ of \ } \pi^0_x\circ f : M^0_x \times\{y\} \to \CC \right\}
\end{equation*}
and call it the {\it dynamical critical set} of $f$.

\begin{dfn} 
\label{dfn:occ}
We say that $f$ satisfies the {\it off-criticality condition (OCC)} if
$${\rm dist} (\pi^0_x\circ f(\mathcal{C}_f), M^0_x)>0$$
holds.
\end{dfn}
 
It is easy to see that the (BCC) implies that $\iota, f : X^1\to X^0$ is a crossed mapping and the (OCC) implies the (NTC). Thus, 

\begin{cor} 
\label{cor:criterion}
If a polynomial diffeomorphism $f : \CC^2\to \CC^2$ satisfies the (BCC) and the (OCC), then $\iota, f : X^1\to X^0$ is a hyperbolic system.
\end{cor}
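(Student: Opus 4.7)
The plan is to verify the two hypotheses of Theorem~\ref{thm:criterion}: that $\iota,f : X^1\to X^0$ is a crossed mapping, and that it satisfies the (NTC). Hyperbolicity is then immediate.

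First I would show that (BCC) forces $\rho_f=(\pi^0_x\circ f,\pi^0_y\circ \iota)$ to be proper. Since $M^0_x$ and $M^0_y$ are bounded it is enough to argue that if a sequence $p_n\in X^1$ accumulates at some $p\in \partial X^1$, then $\rho_f(p_n)$ leaves every compact subset of $X^0$. Because $X^1=X^0\cap f^{-1}(X^0)$, any such $p$ satisfies $\iota(p)\in \partial X^0$ or $f(p)\in \partial X^0$, and using $\partial X^0=\partial_v X^0\cup \partial_h X^0$ this splits into four cases. In the two cases $\iota(p)\in \partial_h X^0$ or $f(p)\in \partial_v X^0$ the escape is automatic, since then $\rho_f(p_n)$ acquires a coordinate tending to $\partial M^0_y$ or $\partial M^0_x$ respectively. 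The remaining two cases are precisely where (BCC) is used: (i) handles $\iota(p)\in \partial_v X^0$ by bounding $\pi^0_x\circ f(p)$ away from $M^0_x$, while (ii) handles $f(p)\in \partial_h X^0$ by bounding $\pi^0_y(p)=\pi^0_y\circ f^{-1}(f(p))$ away from $M^0_y$. In all four cases $\rho_f(p_n)$ escapes $X^0$, so $\rho_f$ is proper and $\iota,f : X^1\to X^0$ is a crossed mapping.

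Next I would show (OCC) implies (NTC). Because $\iota$ is the inclusion, the tangent line of $\iota^{-1}(\mathcal{F}_h)$ at each $p\in X^1$ is the horizontal direction spanned by $e_x$. On the other hand, the tangent line of $f^{-1}(\mathcal{F}_v)$ at $p$ is $\ker D(\pi^0_x\circ f)_p$. A tangency of the two foliations at $p$ is thus equivalent to $e_x\in \ker D(\pi^0_x\circ f)_p$, i.e.\ to the vanishing of $\partial_x(\pi^0_x\circ f)$ at $p$, which by definition means $p\in \mathcal{C}_f$. But (OCC) asserts that $\pi^0_x\circ f(\mathcal{C}_f)$ is at positive distance from $M^0_x$, so $f(p)\notin X^0$ for every $p\in \mathcal{C}_f$, and hence $p\notin f^{-1}(X^0)\supset X^1$. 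Therefore $\mathcal{C}_f\cap X^1=\emptyset$, which is exactly (NTC).

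Having verified both hypotheses of Theorem~\ref{thm:criterion}, the conclusion that $\iota,f : X^1\to X^0$ is a hyperbolic system follows. There is no deep obstacle here; the main work is in the properness step, where one must check that the two halves of (BCC) each eliminate one of the two non-automatic boundary escape directions. The tangency reduction is essentially a definition-chase, and (OCC) is tailored precisely to kill the dynamical critical set before it re-enters $X^0$.
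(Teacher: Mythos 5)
Your proposal is correct and takes essentially the same route as the paper, which simply asserts that (BCC) gives the properness needed for the crossed-mapping property and that (OCC) implies the (NTC), and then invokes Theorem~\ref{thm:criterion}; your boundary case analysis and the identification of tangencies of $\iota^{-1}(\mathcal{F}_h)$ and $f^{-1}(\mathcal{F}_v)$ with points of $\mathcal{C}_f$ are exactly the details the paper leaves as ``easy to see.'' (Like the paper, you do not comment on the degree hypothesis $d\geq 2$ of Theorem~\ref{thm:criterion}, but that is inherited from the statement itself.)
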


\subsection{Statement of main result}
\label{sub:result-hyp}

The second main result of this paper is precisely stated as

\begin{thm}
\label{thm:hyp-equiv}
A homotopy equivalence between hyperbolic systems $\iota, f : X^1\to X^0$ and $\iota, g : Y^1\to Y^0$ induces a topological conjugacy between $\hat f:X^{\pm\infty}\to X^{\pm\infty}$ and $\hat g:Y^{\pm\infty}\to Y^{\pm\infty}$.
\end{thm}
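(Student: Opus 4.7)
The plan is to mirror the strategy used for the expanding case (Theorem~\ref{thm:exp-equiv}) but adapted to hyperbolic systems. First I would define a bi-infinite homotopy pseudo-orbit in $\iota, g : Y^1 \to Y^0$ as a sequence $(y_i)_{i \in \ZZ}$ in $Y^1$ together with paths $\gamma_i$ in $Y^0$ from $g(y_i)$ to $\iota(y_{i+1})$. Given a homotopy semi-conjugacy $h = (h^0, h^1; G, H)$ from any multivalued dynamical system $\iota, f : X^1 \to X^0$ to the hyperbolic system $\iota, g : Y^1 \to Y^0$, an orbit $(x_i)$ of $f$ produces a homotopy pseudo-orbit of $g$ by setting $\gamma_i = G(x_i)^{-1} \cdot H(x_{i+1})$ (a path routed through the intermediate point $h^0 f(x_i) = h^0 \iota(x_{i+1})$).

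The central step is a homotopy shadowing theorem: every bi-infinite homotopy pseudo-orbit $(y_i, \gamma_i)$ in a hyperbolic system is shadowed by a unique bi-infinite orbit $(z_i)_{i \in \ZZ}$, with appropriate homotopy between the connecting paths. The construction uses the product structure provided after Lemma~\ref{lmm:covering} together with Corollary~\ref{cor:intersection}: for each $i$, one inductively builds horizontal-like submanifolds of degree one by taking forward graph transforms of a vertical slice through $y_{i-n}$ and following the homotopy data along $\gamma_{i-n}, \ldots, \gamma_{i-1}$, and vertical-like submanifolds of degree one by taking backward graph transforms of a horizontal slice through $y_{i+n}$ and following $\gamma_i, \ldots, \gamma_{i+n-1}$. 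The horizontal submanifolds converge (exponentially in the horizontal metric) by Corollary~\ref{cor:expansion}, the vertical submanifolds converge similarly in the vertical direction, and by Corollary~\ref{cor:intersection} their limiting intersection contains a single transverse point $z_i$. The path data in the pseudo-orbit selects the correct branch among the $d$ choices at each step, which is exactly what yields uniqueness.

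With this in hand I would derive the hyperbolic analog of Theorem~\ref{thm:exp-funct}: a homotopy semi-conjugacy $h$ from $\iota, f : X^1 \to X^0$ to a hyperbolic system $\iota, g : Y^1 \to Y^0$ induces a unique continuous semi-conjugacy $h^\infty : X^{\pm\infty} \to Y^{\pm\infty}$ intertwining $\hat f$ and $\hat g$. Namely, $h^\infty((x_i))$ is defined to be the unique orbit shadowing the homotopy pseudo-orbit $(h^1(x_i), \gamma_i)$ constructed from $h$ as above. The uniqueness in the shadowing statement plus continuous dependence on the pseudo-orbit supplies continuity and the intertwining relation, while the identification of $X^{\pm\infty}$ with orbits in $X^{n+1}$ for any $n$ (via Proposition~\ref{prp:induction}) allows the same construction to handle homotopy semi-conjugacies of arbitrary lag.

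Finally I would check the functorial properties in Section~\ref{sec:functorial}: the identity homotopy semi-conjugacy of $\mathcal{X}$ induces the identity on $X^{\pm\infty}$; composition $(kh)^\infty = k^\infty h^\infty$ follows from the fact that the composed pseudo-orbit is still shadowed by the same orbit (by uniqueness); and homotopic homotopy semi-conjugacies induce the same map, because the two resulting pseudo-orbits are connected by a homotopy of homotopy pseudo-orbits and the unique shadow depends only on the homotopy class. A homotopy equivalence furnishes $h : \mathcal{X} \to \mathcal{Y}$ and $k : \mathcal{Y} \to \mathcal{X}$ with $kh \simeq \mathrm{id}_\mathcal{X}$ and $hk \simeq \mathrm{id}_\mathcal{Y}$, so $k^\infty h^\infty = \mathrm{id}_{X^{\pm\infty}}$ and $h^\infty k^\infty = \mathrm{id}_{Y^{\pm\infty}}$, hence $h^\infty$ is the desired topological conjugacy. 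The hard part will be the bi-infinite homotopy shadowing itself: unlike the expanding case, one must simultaneously control the expanding (forward) and contracting (backward) directions, and show that the product of shrinking horizontal and vertical neighborhoods really pins down a single orbit with the prescribed homotopy class of connecting paths in $Y^0$.
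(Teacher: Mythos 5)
Your proposal follows the same high-level route as the paper: prove a bi-infinite homotopy shadowing theorem for hyperbolic systems, derive an induced semi-conjugacy $h^{\infty}$ on orbit spaces, and then combine the functorial properties (composition, identity, homotopy-invariance) with the homotopy equivalence data to obtain the conjugacy. Your description of the shadowing construction (iterated horizontal-like and vertical-like submanifolds with branch selection via the homotopy data, by analogues of Lemma~\ref{lmm:unique-intersect} and Corollary~\ref{cor:intersection}) is essentially the paper's Step I/II/III construction.

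There is, however, a gap in how you handle uniqueness. You assert that "the path data in the pseudo-orbit selects the correct branch among the $d$ choices at each step, which is exactly what yields uniqueness," and later invoke "the unique shadow depends only on the homotopy class." The branch-selection observation only shows that your construction produces a well-defined orbit from a \emph{given} homotopy pseudo-orbit; it does not prove the stronger fact you actually need, namely that two orbits of a hyperbolic system which are homotopic as homotopy pseudo-orbits must coincide (equivalently, that homotopic pseudo-orbits have equal shadows). This statement is essential for the functoriality step: both the claim that homotopic homotopy semi-conjugacies induce the same map (Corollary~\ref{cor:coincide}) and the composition identity $(kh)^{\infty}=k^{\infty}h^{\infty}$ (Corollary~\ref{cor:composition}) hinge on it, and without it your final cancellation $h^{\infty}k^{\infty}=\mathrm{id}$ does not go through. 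The paper proves it separately as Proposition~\ref{prp:hom-hyp}: one projects to the horizontal coordinate, where the induced system is expanding, applies the expanding-case uniqueness to conclude the connecting homotopies $\beta_i$ lie in vertical fibers, and then runs a contraction estimate on the quantities $L_y(\iota(\beta_i))$ using Corollary~\ref{cor:expansion} to force them to vanish. Some argument of this kind must be supplied before the rest of your outline becomes a proof.
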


Though we will not use this notion here we observe that in the hypothesis of this theorem we can replace homotopy equivalence by homotopy shift equivalence (see Definition~\ref{dfn:equiv-lag}).

To prove Theorem~\ref{thm:hyp-equiv} we need the following 

\begin{thm} 
\label{thm:hyp-funct}
A homotopy semi-conjugacy $h$ from a multivalued dynamical system $\iota, f : X^1\to X^0$ to a hyperbolic system $\iota, g : Y^1\to Y^0$ induces a unique semi-conjugacy $h^{\infty}$ from $\hat f:X^{\pm\infty}\to X^{\pm\infty}$ to $\hat g:Y^{\pm\infty}\to Y^{\pm\infty}$. 
\end{thm}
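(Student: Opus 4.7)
The plan is to parallel the argument for the expanding counterpart Theorem~\ref{thm:exp-funct}, using the homotopy shadowing theorem for hyperbolic systems (to be established in Section~\ref{sec:hyp-shadowing}) in place of its expanding version. Given the homotopy semi-conjugacy $h=(h^0,h^1;G,H)$ and a bi-infinite orbit $(x_i)_{i\in\ZZ}\in X^{\pm\infty}$, I first transport it to a ``near-orbit'' in $Y^1$ by setting $\xi_i\equiv h^1(x_i)$. This sequence is not generally an honest orbit for $\iota,g:Y^1\to Y^0$: from $G:h^0f\simeq gh^1$ and $H:h^0\iota\simeq \iota h^1$ together with the orbit relation $f(x_i)=\iota(x_{i+1})$, we obtain, for each $i$, a canonical path
\[
\gamma_i \equiv G(x_i)^{-1}\cdot H(x_{i+1})
\]
in $Y^0$ from $g(\xi_i)$ to $\iota(\xi_{i+1})$. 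The data $((\xi_i)_{i\in\ZZ};(\gamma_i)_{i\in\ZZ})$ is precisely a bi-infinite homotopy pseudo-orbit in the sense of Section~\ref{sec:hpo}.

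Next I invoke the homotopy shadowing theorem for the hyperbolic system $\iota,g:Y^1\to Y^0$ to produce a unique honest orbit $(y_i)_{i\in\ZZ}\in Y^{\pm\infty}$ which is homotopic to the above pseudo-orbit in the appropriate sense. Define $h^{\infty}((x_i))\equiv (y_i)$. Continuity of $h^{\infty}$ comes from continuity of the assignment $(x_i)\mapsto (\xi_i,\gamma_i)$ combined with the continuous dependence of the shadowing orbit on its pseudo-orbit, which is a standard byproduct of the hyperbolic estimates, in particular Corollary~\ref{cor:expansion}. To verify the semi-conjugacy relation $h^{\infty}\hat f=\hat g\, h^{\infty}$, note that the homotopy pseudo-orbit associated to $\hat f(x)=(x_{i+1})$ is exactly the shift of the one associated to $(x_i)$; by uniqueness of shadowing its shadowing orbit must be $\hat g(y)$.

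Uniqueness of the induced semi-conjugacy is again built into the shadowing theorem: any semi-conjugacy compatible with the homotopies $G$ and $H$ assigns to $(x_i)$ a genuine orbit homotopic to the same pseudo-orbit $((\xi_i);(\gamma_i))$, and there is only one such. The main obstacle I anticipate lies not in the formal construction above but in the homotopy shadowing theorem itself in the hyperbolic setting. Unlike the expanding case, one must simultaneously exploit horizontal expansion and vertical contraction, propagate the product cone structure across all levels $X^n$ via Proposition~\ref{prp:buildup} and Proposition~\ref{prp:induction}, and realize the shadowing orbit as a nested intersection of horizontal-like forward images and vertical-like backward preimages of small disks, using Corollary~\ref{cor:intersection} to ensure that at every finite stage the intersection is a single transverse point. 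Assembling these finite stages into a bi-infinite shadowing orbit, with the required continuous and unique dependence on the input homotopy pseudo-orbit, is where the real analytical work resides; once that backbone is in place the derivation of $h^{\infty}$ is a formality.
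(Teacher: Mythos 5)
Your proposal is correct and follows essentially the same route as the paper's proof: map a bi-infinite orbit to a homotopy pseudo-orbit via the semi-conjugacy data (this is exactly Lemma~\ref{lmm:orbhpo}, your $\gamma_i$ being the paper's $\alpha_{i+1}$), shadow it using Theorem~\ref{thm:hyp-shadowing} together with Proposition~\ref{prp:hom-hyp} for uniqueness and well-definedness, obtain the semi-conjugacy relation from shift-equivariance of the construction, and extract continuity from the contraction estimates underlying the shadowing argument. The only material simplification in your write-up is that the continuity step is asserted as a ``standard byproduct''; the paper makes this explicit by unpacking the iterative construction of the shadowing orbit (the limit $w_0=\lim_{n\to\infty}(\iota f)^n z_0^{2n+1}$) and then using Lemma~\ref{lmm:expansion} and Proposition~\ref{prp:induction} to get the $\varepsilon/\lambda^N$-type bound on the first coordinate, but this is a matter of level of detail rather than of approach.
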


Thus, we have a natural correspondence $h\mapsto h^{\infty}$. The proof of Theorem~\ref{thm:hyp-funct} is given in Subsection~\ref{sub:hyp-unique} and the proof of Theorem~\ref{thm:hyp-equiv} is given in Section~\ref{sec:functorial}.

\end{section}

\begin{section}{Homotopy pseudo-orbits and homotopies between them}
\label{sec:hpo}

A pseudo-orbit of a dynamical system $f : X\to X$ is a sequence of points $(x_i)$ in $X$ so that $f(x_i)$ and $x_{i+1}$ are ``close''. The shadowing lemma says that for hyperbolic maps pseudo-orbits are close to actual orbits. Our hypotheses do not give closeness information instead they give homotopy information. We introduce homotopy pseudo-orbits to capture this information.

\begin{figure}
\setlength{\unitlength}{1mm}
\begin{picture}(100, 100)(25, 0)

\put(0, 0){\line(1, 0){30}}
\put(30, 0){\line(0, 1){30}}
\put(0, 0){\line(0, 1){30}}
\put(0, 30){\line(1, 0){30}}

\put(60, 0){\line(1, 0){30}}
\put(90, 0){\line(0, 1){30}}
\put(60, 0){\line(0, 1){30}}
\put(60, 30){\line(1, 0){30}}

\put(120, 0){\line(1, 0){30}}
\put(150, 0){\line(0, 1){30}}
\put(120, 0){\line(0, 1){30}}
\put(120, 30){\line(1, 0){30}}

\put(30, 50){\line(1, 0){30}}
\put(60, 50){\line(0, 1){30}}
\put(30, 50){\line(0, 1){30}}
\put(30, 80){\line(1, 0){30}}

\put(90, 50){\line(1, 0){30}}
\put(120, 50){\line(0, 1){30}}
\put(90, 50){\line(0, 1){30}}
\put(90, 80){\line(1, 0){30}}

\qbezier(0, 37)(2, 33)(7, 23)
\put(5.5, 26){\vector(1, -2){1}}
\qbezier(47, 63)(35, 35)(23, 7)
\put(24.5, 10){\vector(-1, -2){1}}
\qbezier(47, 63)(57, 43)(67, 23)
\put(65.5, 26){\vector(1, -2){1}}
\qbezier(107, 63)(95, 35)(83, 7)
\put(84.5, 10){\vector(-1, -2){1}}
\qbezier(107, 63)(117, 43)(127, 23)
\put(125.5, 26){\vector(1, -2){1}}
\qbezier(152, 28)(149, 21)(143, 7)
\put(144.5, 10){\vector(-1, -2){1}}

\qbezier(8, 22)(10, 22)(10, 20)
\qbezier(10, 20)(10, 18)(12, 18)
\qbezier(12, 18)(14, 18)(14, 16)
\qbezier(14, 16)(14, 14)(16, 14)
\qbezier(16, 14)(18, 14)(18, 12)
\qbezier(18, 12)(18, 10)(20, 10)
\qbezier(20, 10)(22, 10)(22, 8)
\qbezier(68, 22)(70, 22)(70, 20)
\qbezier(70, 20)(70, 18)(72, 18)
\qbezier(72, 18)(74, 18)(74, 16)
\qbezier(74, 16)(74, 14)(76, 14)
\qbezier(76, 14)(78, 14)(78, 12)
\qbezier(78, 12)(78, 10)(80, 10)
\qbezier(80, 10)(82, 10)(82, 8)
\qbezier(128, 22)(130, 22)(130, 20)
\qbezier(130, 20)(130, 18)(132, 18)
\qbezier(132, 18)(134, 18)(134, 16)
\qbezier(134, 16)(134, 14)(136, 14)
\qbezier(136, 14)(138, 14)(138, 12)
\qbezier(138, 12)(138, 10)(140, 10)
\qbezier(140, 10)(142, 10)(142, 8)

\put(14.2, 15){\vector(1, -1){1}}
\put(74.2, 15){\vector(1, -1){1}}
\put(134.2, 15){\vector(1, -1){1}}

\put(7.2, 22.8){\circle*{1.5}}
\put(22.5, 7){\circle*{1.5}}
\put(67.2, 22.8){\circle*{1.5}}
\put(82.5, 7){\circle*{1.5}}
\put(127.2, 22.8){\circle*{1.5}}
\put(142.5, 7){\circle*{1.5}}
\put(47, 63){\circle*{1.5}}
\put(107, 63){\circle*{1.5}}

\small
\put(45, 66){$x_{i-1}$}
\put(106, 66){$x_i$}
\put(34, 40){$\iota$}
\put(60, 40){$\sigma$}
\put(94, 40){$\iota$}
\put(120, 40){$\sigma$}
\put(16, 16){$\alpha_{i-1}$}
\put(76, 16){$\alpha_{i}$}
\put(136, 16){$\alpha_{i+1}$}
\put(14, 33){$X^0$}
\put(74, 33){$X^0$}
\put(134, 33){$X^0$}
\put(44, 83){$X^1$}
\put(104, 83){$X^1$}
\put(18, 2){$\iota(x_{i-1})=\alpha_{i-1}(1)$}
\put(43, 17.5){$\sigma(x_{i-1})=\alpha_i(0)$}
\put(82, 2){$\iota(x_i)=\alpha_i(1)$}
\put(103.8, 17.5){$\sigma(x_i)=\alpha_{i+1}(0)$}

\end{picture}
\vspace{1cm}
\begin{center}
Figure 3a. A homotopy pseudo--orbit $(x, \alpha)$ : elements $x_i$ of $X^1$ are drawn as points.
\end{center}
\end{figure} 

 \begin{figure}
\setlength{\unitlength}{1mm}
\begin{picture}(100, 50)(25, 0)

\qbezier(8, 22)(10, 22)(10, 20)
\qbezier(10, 20)(10, 18)(12, 18)
\qbezier(12, 18)(14, 18)(14, 16)
\qbezier(14, 16)(14, 14)(16, 14)
\qbezier(16, 14)(18, 14)(18, 12)
\qbezier(18, 12)(18, 10)(20, 10)
\qbezier(20, 10)(22, 10)(22, 8)
\qbezier(68, 22)(70, 22)(70, 20)
\qbezier(70, 20)(70, 18)(72, 18)
\qbezier(72, 18)(74, 18)(74, 16)
\qbezier(74, 16)(74, 14)(76, 14)
\qbezier(76, 14)(78, 14)(78, 12)
\qbezier(78, 12)(78, 10)(80, 10)
\qbezier(80, 10)(82, 10)(82, 8)
\qbezier(128, 22)(130, 22)(130, 20)
\qbezier(130, 20)(130, 18)(132, 18)
\qbezier(132, 18)(134, 18)(134, 16)
\qbezier(134, 16)(134, 14)(136, 14)
\qbezier(136, 14)(138, 14)(138, 12)
\qbezier(138, 12)(138, 10)(140, 10)
\qbezier(140, 10)(142, 10)(142, 8)
\put(14.2, 15){\vector(1, -1){1}}
\put(74.2, 15){\vector(1, -1){1}}
\put(134.2, 15){\vector(1, -1){1}}

\qbezier(0, 28)(3, 26.5)(6, 24)
\qbezier(24, 8)(48, 43)(67, 23)
\qbezier(84, 8)(108, 43)(127, 23)
\qbezier(144, 8)(147, 12)(150, 15)
\put(5, 25){\vector(3, -2){1}}
\put(65.5, 24.5){\vector(3, -2){1}}
\put(125.5, 24.5){\vector(3, -2){1}}

\put(7, 23){\circle*{1.5}}
\put(23, 7){\circle*{1.5}}
\put(67, 23){\circle*{1.5}}
\put(83, 7){\circle*{1.5}}
\put(127, 23){\circle*{1.5}}
\put(143, 7){\circle*{1.5}}

\small
\put(48, 32){$x_{i-1}$}
\put(111, 32){$x_i$}
\put(16, 16){$\alpha_{i-1}$}
\put(76, 16){$\alpha_{i}$}
\put(136, 16){$\alpha_{i+1}$}
\put(16, 2){$\iota(x_{i-1})=\alpha_{i-1}(1)$}
\put(42, 18){$\sigma(x_{i-1})=\alpha_i(0)$}
\put(78, 2){$\iota(x_i)=\alpha_i(1)$}
\put(102, 18){$\sigma(x_i)=\alpha_{i+1}(0)$}

\end{picture}
\vspace{1cm}
\begin{center}
Figure 3b. A homotopy pseudo--orbit $(x, \alpha)$ : elements $x_i$ of $X^1$ are drawn as arrows.
\end{center}
\end{figure}

Assume that $X^0$ and $X^1$ are equipped with metrics and let $l(\alpha_i)$ denote the length of the path $\alpha_i$ in $X^0$. Let $\iota, \sigma :X^1\to X^0$ be a multivalued dynamical system.

\begin{dfn} 
\label{dfn:hpo}
A {\it homotopy pseudo-orbit} $(x, \alpha)$ is a sequence $x=(x_i)$ of points $x_i\in X^1$ together with a sequence $\alpha=(\alpha_i)$ of paths $\alpha_i : [0, 1]\to X^0$ so that $\alpha_i(0)=\sigma(x_{i-1})$, $\alpha_i(1)=\iota(x_i)$ and $l(\alpha_i)\leq C$ for some $C\geq 0$ independent of $i$. 
\end{dfn}

See Figures 3a and 3b. Here, we allow the index $i$ to take values in either $\NN\cup\{0\}$ (or $\NN$) or $\ZZ$ and we obtain one-sided or bi-infinite homotopy pseudo-orbits respectively. Note that when we speak of a one-sided homotopy pseudo-orbit $(x, \alpha)=((x_i), (\alpha_i))$, the index for the points $x_i$ starts at $i=0$ and the index for the paths $\alpha_i$ starts at $i=1$.

When $\alpha$ consists of constant homotopies, then the homotopy pseudo-orbit $(x, \alpha)$ becomes an orbit (see Figures 2a and 2b again). In this case the sequence of homotopies $\alpha$ may be omitted from the notation $(x, \alpha)$ and we may simply write $x$ if there is no risk of confusion.

\begin{figure}
\setlength{\unitlength}{1mm}
\begin{picture}(100, 100)(25, 0)

\qbezier(6, 71)(30, 95)(54, 71)
\qbezier(56, 71)(80, 95)(104, 71)
\qbezier(135, 83)(118, 83)(106, 71)
\put(55, 70){\circle*{1.5}}
\put(105, 70){\circle*{1.5}}
\put(53, 72){\vector(1, -1){1}}
\put(103, 72){\vector(1, -1){1}}
\put(134, 83){\vector(1, 0){1}}

\qbezier(44, 22)(46, 22)(46, 20)
\qbezier(46, 20)(46, 18)(48, 18)
\qbezier(48, 18)(50, 18)(50, 16)
\qbezier(50, 16)(50, 14)(52, 14)
\qbezier(52, 14)(54, 14)(54, 12)
\qbezier(54, 12)(54, 10)(56, 10)
\qbezier(56, 10)(58, 10)(58, 8)
\qbezier(104, 22)(106, 22)(106, 20)
\qbezier(106, 20)(106, 18)(108, 18)
\qbezier(108, 18)(110, 18)(110, 16)
\qbezier(110, 16)(110, 14)(112, 14)
\qbezier(112, 14)(114, 14)(114, 12)
\qbezier(114, 12)(114, 10)(116, 10)
\qbezier(116, 10)(118, 10)(118, 8)
\put(46.2, 19){\vector(1, -1){1}}
\put(54.2, 11){\vector(1, -1){1}}
\put(106.2, 19){\vector(1, -1){1}}
\put(114.2, 11){\vector(1, -1){1}}
\put(50.3, 15){\circle*{1.5}}
\put(110.3, 15){\circle*{1.5}}

\qbezier(0, 8)(19, 23)(43, 23)
\qbezier(60, 8)(79, 23)(103, 23)
\qbezier(120, 8)(131, 17)(146, 20)
\put(41.3, 23){\vector(1, 0){1}}
\put(101.3, 23){\vector(1, 0){1}}
\put(145, 19.8){\vector(3, 1){1}}

\put(43, 23){\circle*{1.5}}
\put(59, 7){\circle*{1.5}}
\put(103, 23){\circle*{1.5}}
\put(119, 7){\circle*{1.5}}

\small
\put(26, 87){$x_{i-1}$}
\put(78, 87){$x_i$}
\put(128, 87){$x_{i+1}$}
\put(42, 65){$\sigma(x_{i-1})=\iota(x_i)$}
\put(94, 65){$\sigma(x_i)=\iota(x_{i+1})$}
\put(16, 13){$h^1(x_{i-1})$}
\put(76, 13){$h^1(x_i)$}
\put(134, 12){$h^1(x_{i+1})$}
\put(48, 20){$G(x_{i-1})^{-1}$}
\put(44.5, 7){$H(x_i)$}
\put(108, 20){$G(x_i)^{-1}$}
\put(100, 7){$H(x_{i+1})$}

\put(75, 55){\vector(0, -10){25}}
\put(77, 42){$h=(h^0, h^1)$}

\end{picture}
\vspace{1cm}
\begin{center}
Figure 4a. An orbit $x$ and a homotopy pseudo-orbit $h(x)$.
\end{center}
\end{figure}

 \begin{figure}
\setlength{\unitlength}{1mm}
\begin{picture}(100, 110)(25, 0)

\qbezier(38, 92)(40, 92)(40, 90)
\qbezier(40, 90)(40, 88)(42, 88)
\qbezier(42, 88)(44, 88)(44, 86)
\qbezier(44, 86)(44, 84)(46, 84)
\qbezier(46, 84)(48, 84)(48, 82)
\qbezier(48, 82)(48, 80)(50, 80)
\qbezier(50, 80)(52, 80)(52, 78)
\qbezier(98, 92)(100, 92)(100, 90)
\qbezier(100, 90)(100, 88)(102, 88)
\qbezier(102, 88)(104, 88)(104, 86)
\qbezier(104, 86)(104, 84)(106, 84)
\qbezier(106, 84)(108, 84)(108, 82)
\qbezier(108, 82)(108, 80)(110, 80)
\qbezier(110, 80)(112, 80)(112, 78)
\put(44.2, 85){\vector(1, -1){1}}
\put(104.2, 85){\vector(1, -1){1}}

\qbezier(10, 86)(20, 93)(37, 93)
\qbezier(54, 78)(73, 93)(97, 93)
\qbezier(114, 78)(125, 87)(140, 90)
\put(35.3, 93){\vector(1, 0){1}}
\put(95.3, 93){\vector(1, 0){1}}
\put(139, 89.8){\vector(3, 1){1}}

\put(37, 93){\circle*{1.5}}
\put(53, 77){\circle*{1.5}}
\put(97, 93){\circle*{1.5}}
\put(113, 77){\circle*{1.5}}

\qbezier(32, 33)(32, 31)(34, 31)
\qbezier(34, 31)(36, 31)(36, 29)
\qbezier(36, 29)(36, 27)(38, 27)
\qbezier(38, 27)(40, 27)(40, 25)
\qbezier(40, 25)(40, 23)(42, 23)
\qbezier(42, 23)(44, 23)(44, 21)
\qbezier(44, 21)(44, 19)(46, 19)
\qbezier(46, 19)(48, 19)(48, 17)
\qbezier(48, 17)(48, 15)(50, 15)
\qbezier(50, 15)(52, 15)(52, 13)
\qbezier(52, 13)(52, 11)(54, 11)
\qbezier(54, 11)(56, 11)(56, 9)
\qbezier(56, 9)(56, 7)(58, 7)

\qbezier(92, 33)(92, 31)(94, 31)
\qbezier(94, 31)(96, 31)(96, 29)
\qbezier(96, 29)(96, 27)(98, 27)
\qbezier(98, 27)(100, 27)(100, 25)
\qbezier(100, 25)(100, 23)(102, 23)
\qbezier(102, 23)(104, 23)(104, 21)
\qbezier(104, 21)(104, 19)(106, 19)
\qbezier(106, 19)(108, 19)(108, 17)
\qbezier(108, 17)(108, 15)(110, 15)
\qbezier(110, 15)(112, 15)(112, 13)
\qbezier(112, 13)(112, 11)(114, 11)
\qbezier(114, 11)(116, 11)(116, 9)
\qbezier(116, 9)(116, 7)(118, 7)
\put(44.2, 20){\vector(1, -1){1}}
\put(104.2, 20){\vector(1, -1){1}}
\put(36, 29){\vector(0, -1){1}}
\put(54, 11){\vector(1, 0){1}}
\put(96, 29){\vector(0, -1){1}}
\put(114, 11){\vector(1, 0){1}}

\qbezier(10, 19)(20, 30)(32, 33)
\qbezier(58, 7)(73, 28)(92, 33)
\qbezier(118, 7)(125, 18)(140, 26)
\put(30, 32.5){\vector(3, 1){1}}
\put(90, 32.5){\vector(3, 1){1}}
\put(139, 25.5){\vector(2, 1){1}}

\put(32, 33){\circle*{1.5}}
\put(40, 25){\circle*{1.5}}
\put(50, 15){\circle*{1.5}}
\put(58, 7){\circle*{1.5}}
\put(92, 33){\circle*{1.5}}
\put(118, 7){\circle*{1.5}}
\put(100, 25){\circle*{1.5}}
\put(110, 15){\circle*{1.5}}

\small
\put(46, 86){$\alpha_{i}$}
\put(106, 86){$\alpha_{i+1}$}
\put(18, 86){$x_{i-1}$}
\put(73, 85){$x_{i}$}
\put(128, 83){$x_{i+1}$}
\put(24, 97){$\sigma(x_{i-1})=\alpha_i(0)$}
\put(43, 71){$\iota(x_i)=\alpha_i(1)$}
\put(86, 97){$\sigma(x_i)=\alpha_{i+1}(0)$}
\put(99, 71){$\iota(x_{i+1})=\alpha_{i+1}(1)$}
\put(16, 20){$h^1(x_{i-1})$}
\put(74, 20){$h^1(x_i)$}
\put(132, 17){$h^1(x_{i+1})$}
\put(38, 29){$G(x_{i-1})^{-1}$}
\put(43, 7){$H(x_i)$}
\put(98, 29){$G(x_i)^{-1}$}
\put(100, 7){$H(x_{i+1})$}
\put(46, 21){$h^0(\alpha_i)$}
\put(106, 21){$h^0(\alpha_i)$}

\put(75, 65){\vector(0, -10){25}}
\put(77, 52){$h=(h^0, h^1)$}

\end{picture}
\vspace{1cm}
\begin{center}
Figure 4b. A homotopy pseudo-orbit $(x, \alpha)$ and a homotopy pseudo-orbit $h(x, \alpha)$.
\end{center}
\end{figure}

Next we see how a (homotopy pseudo-)orbit is mapped by a homotopy semi-conjugacy.

\begin{lmm} 
\label{lmm:orbhpo}
A homotopy semi-conjugacy $h=(h^0, h^1; G, H)$ from $\mathcal{X}$ to $\mathcal{Y}$ takes an orbit $x\in X^{\infty}$ of $\mathcal{X}$ to a homotopy pseudo-orbit $h(x)\equiv(h^1(x), G(x)^{-1}\cdot H(x))$ of $\mathcal{Y}$.
\end{lmm}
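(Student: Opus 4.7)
The plan is to set $y_i \equiv h^1(x_i)$ and $\beta_i \equiv G(x_{i-1})^{-1}\cdot H(x_i)$, and then verify the three conditions required by Definition~\ref{dfn:hpo}: that each $y_i$ lies in $Y^1$; that each $\beta_i$ is a well defined path in $Y^0$ with $\beta_i(0)=g(y_{i-1})$ and $\beta_i(1)=\iota(y_i)$; and that the lengths $l(\beta_i)$ are uniformly bounded in $i$.

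The first requirement is immediate from $h^1 : X^1 \to Y^1$. For the second I would simply unwind the endpoint data of the homotopies $G$ (from $h^0f$ to $gh^1$) and $H$ (from $h^0\iota$ to $\iota h^1$). The terminal point of $G(x_{i-1})^{-1}$ is $G(x_{i-1})(0)=h^0f(x_{i-1})$, while the initial point of $H(x_i)$ is $H(x_i)(0)=h^0\iota(x_i)$; these agree precisely because $x$ is an orbit of $\mathcal{X}$, which gives $f(x_{i-1})=\iota(x_i)$. Hence the concatenation $\beta_i$ is well defined. Its initial point is $G(x_{i-1})^{-1}(0)=G(x_{i-1})(1)=gh^1(x_{i-1})=g(y_{i-1})$, and its terminal point is $H(x_i)(1)=\iota h^1(x_i)=\iota(y_i)$, which are exactly the values required by Definition~\ref{dfn:hpo}.

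The main obstacle, and the only subtle point, is the uniform length bound. The homotopies $G, H : X^1\times [0,1] \to Y^0$ are continuous, so the slice paths $x\mapsto G(x)$ and $x\mapsto H(x)$ have continuously varying lengths on $X^1$; under the compactness hypothesis on $X^1$ that is in force in all cases of interest, we may take
\[
C \equiv \sup_{x\in X^1}\bigl(l(G(x)) + l(H(x))\bigr) < \infty,
\]
and then $l(\beta_i) \leq C$ uniformly in $i$, as required. If one wished to dispense with the compactness assumption on $X^1$, this step would force one to add as a hypothesis that the homotopies in a homotopy semi-conjugacy have tracks of uniformly bounded length.
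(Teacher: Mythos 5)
Your endpoint verification is exactly the paper's argument: the paper proves the more general Lemma~\ref{lmm:hpohpo} (a homotopy semi-conjugacy sends a homotopy pseudo-orbit $(x,\alpha)$ to $(h^1(x),\,G(x)^{-1}\cdot h^0(\alpha)\cdot H(x))$) by checking precisely the endpoint identities $G_0(x_{i-1})=h^0f(x_{i-1})$, $H_0(x_i)=h^0\iota(x_i)$, $G_1(x_{i-1})=gh^1(x_{i-1})$, $H_1(x_i)=\iota h^1(x_i)$, and Lemma~\ref{lmm:orbhpo} is the special case in which the connecting paths $\alpha_i$ are constant, so that $f(x_{i-1})=\iota(x_i)$ makes the concatenation $G(x_{i-1})^{-1}\cdot H(x_i)$ well defined. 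On this core content your proposal and the paper coincide.

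Where you go beyond the paper is the uniform bound $l(\beta_i)\le C$ demanded by Definition~\ref{dfn:hpo}; the paper's proof is silent on it, and you are right to flag it, but your justification does not quite work as written. Path length is only lower semicontinuous under uniform convergence, not continuous, and the track $t\mapsto G_t(x)$ of a merely continuous homotopy need not be rectifiable at all, so even if $X^1$ were compact the supremum $\sup_{x\in X^1}\bigl(l(G(x))+l(H(x))\bigr)$ could be infinite; moreover compactness of $X^1$ is not among the paper's hypotheses (the source system in Theorems~\ref{thm:exp-funct} and \ref{thm:hyp-funct} is an arbitrary multivalued dynamical system, and even expanding systems are only assumed to live on complete length spaces). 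The correct conclusion is your fallback sentence: one must assume (or verify in the application at hand) that the tracks of $G$ and $H$ are rectifiable with uniformly bounded length — this is implicit in the paper and is what furnishes the constant $C$ invoked at the start of Lemma~\ref{lmm:exp-estimate}. With that reading your proof is correct and follows the same route as the paper, with a more careful statement of the boundedness issue than the paper itself gives.
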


Here, $G(x)^{-1}\cdot H(x)\equiv (G(x_{i-1})^{-1}\cdot H(x_i))$ is a sequence of homotopies. See Figure 4a. More generally, by writing a sequence of homotopies as $G(x)^{-1}\cdot h^0(\alpha)\cdot H(x)\equiv (G(x_{i-1})^{-1}\cdot h^0(\alpha_i)\cdot H(x_i))$ for a homotopy pseudo-orbit $(x, \alpha)$ of $\mathcal{X}$ and a homotopy semi-conjugacy $h=(h^0, h^1; G, H)$ from $\mathcal{X}$ to $\mathcal{Y}$ we can show (see Figure 4b)

\begin{lmm} 
\label{lmm:hpohpo}
A homotopy semi-conjugacy $h=(h^0, h^1; G, H)$ from $\mathcal{X}=(X^0, X^1; \iota, f)$ to $\mathcal{Y}=(Y^0, Y^1; \iota, g)$ takes a homotopy pseudo-orbit $(x, \alpha)$ of $\mathcal{X}$ to another homotopy pseudo-orbit $h(x, \alpha)\equiv (h^1(x), G(x)^{-1}\cdot h^0(\alpha)\cdot H(x))$ of $\mathcal{Y}$.
\end{lmm}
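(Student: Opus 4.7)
The plan is to verify directly that the sequence $(h^1(x),\beta)$, with $\beta_i \equiv G(x_{i-1})^{-1}\cdot h^0(\alpha_i)\cdot H(x_i)$, satisfies the three defining conditions of Definition~\ref{dfn:hpo} for a homotopy pseudo-orbit of $\mathcal{Y}=(Y^0,Y^1;\iota,g)$: (a) each $\beta_i$ is a well-defined concatenation in $Y^0$; (b) $\beta_i(0)=g(h^1(x_{i-1}))$ and $\beta_i(1)=\iota(h^1(x_i))$; (c) $l(\beta_i)$ is bounded uniformly in $i$. The first two items are endpoint bookkeeping while (c) is the only analytic point.

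For (a) and (b) I would unwind the structural relations built into the homotopies $G$ and $H$. By definition of $G:h^0 f\simeq g h^1$, the path $G(x_{i-1})$ runs from $h^0 f(x_{i-1})$ to $g h^1(x_{i-1})$, so $G(x_{i-1})^{-1}$ runs from $g h^1(x_{i-1})$ back to $h^0 f(x_{i-1})$. Since $(x,\alpha)$ is a homotopy pseudo-orbit of $\mathcal{X}$, we have $\alpha_i(0)=f(x_{i-1})$ and $\alpha_i(1)=\iota(x_i)$, so $h^0\alpha_i$ goes from $h^0 f(x_{i-1})$ to $h^0\iota(x_i)$. Finally $H:h^0\iota\simeq\iota h^1$ makes $H(x_i)$ run from $h^0\iota(x_i)$ to $\iota h^1(x_i)$. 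Therefore the three segments of $\beta_i$ fit together head-to-tail, and $\beta_i(0)=g h^1(x_{i-1})$, $\beta_i(1)=\iota h^1(x_i)$, as required; this is precisely what Figure~4b depicts, and the specialization to constant $\alpha$ recovers Lemma~\ref{lmm:orbhpo} and Figure~4a.

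For (c) I would estimate
$$l(\beta_i)\leq l(G(x_{i-1}))+l(h^0\alpha_i)+l(H(x_i)).$$
The middle term is controlled by $l(h^0\alpha_i)\leq L\cdot l(\alpha_i)\leq LC$, where $L$ is a uniform continuity (or Lipschitz) constant for $h^0$ on the relevant bounded region and $C$ is the length bound from the hypothesis on $\alpha$. The outer terms are bounded by $\sup_z l(G(z))$ and $\sup_z l(H(z))$ taken over the same region. I expect this uniform length bound to be the only real obstacle: it requires that the homotopies $G$ and $H$ have paths of uniformly bounded length and that $h^0$ be uniformly continuous on the image of the orbit, which in the settings of this paper (compact CW complexes, or orbits confined to a compact subset of $X^1$) holds automatically. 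Granting this uniform control, one obtains a single constant $C'$ with $l(\beta_i)\leq C'$ for all $i$, which completes the verification.
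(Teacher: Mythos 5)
Your endpoint bookkeeping is exactly the paper's proof: the paper checks only that $G_0(x_{i-1})=h^0f(x_{i-1})=h^0\alpha_i(0)$ and $H_0(x_i)=h^0\iota(x_i)=h^0\alpha_i(1)$ (so the concatenation $G(x_{i-1})^{-1}\cdot h^0(\alpha_i)\cdot H(x_i)$ is a genuine path) and that its endpoints are $G_1(x_{i-1})=gh^1(x_{i-1})$ and $H_1(x_i)=\iota h^1(x_i)$, and then concludes that $h(x,\alpha)$ is a homotopy pseudo-orbit. Your additional verification of the uniform length bound in Definition~\ref{dfn:hpo} addresses a point the paper silently omits, and is welcome, but be aware that a uniform-continuity modulus for $h^0$ does \emph{not} control lengths (continuous images of rectifiable paths need not even be rectifiable): what is actually needed, and implicitly assumed in the paper's settings, is Lipschitz-type control of $h^0$ together with uniformly bounded lengths of the tracks $t\mapsto G_t(z)$ and $t\mapsto H_t(z)$ over the relevant (compact) region.
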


\begin{proof}
Since $G_0(x_{i-1})=h^0f(x_{i-1})=h^0\alpha_i(0)$ and $H_0(x_i)=h^0\iota(x_i)=h^0\alpha_i(1)$, the concatenation $G(x)^{-1}\cdot h^0(\alpha)\cdot H(x)$ becomes a path. Since $G_1(x_{i-1})=gh^1(x_{i-1})$ and $H_1(x_i)=\iota h^1(x_i)$ hold, $h(x, \alpha)$ becomes a homotopy pseudo-orbit. 
\end{proof}

 \begin{figure}
\setlength{\unitlength}{1mm}
\begin{picture}(100, 90)(25, 0)

\put(30, 0){\line(1, 0){30}}
\put(60, 0){\line(0, 1){30}}
\put(30, 0){\line(0, 1){30}}
\put(30, 30){\line(1, 0){30}}
\put(90, 0){\line(1, 0){30}}
\put(120, 0){\line(0, 1){30}}
\put(90, 0){\line(0, 1){30}}
\put(90, 30){\line(1, 0){30}}
\put(0, 50){\line(1, 0){30}}
\put(30, 50){\line(0, 1){30}}
\put(0, 50){\line(0, 1){30}}
\put(0, 80){\line(1, 0){30}}
\put(60, 50){\line(1, 0){30}}
\put(90, 50){\line(0, 1){30}}
\put(60, 50){\line(0, 1){30}}
\put(60, 80){\line(1, 0){30}}
\put(120, 50){\line(1, 0){30}}
\put(150, 50){\line(0, 1){30}}
\put(120, 50){\line(0, 1){30}}
\put(120, 80){\line(1, 0){30}}

\qbezier(15, 57)(14, 58)(15, 59)
\qbezier(15, 59)(16, 60)(15, 61)
\qbezier(15, 61)(14, 62)(15, 63)
\qbezier(15, 63)(16, 64)(15, 65)
\qbezier(15, 65)(14, 66)(15, 67)
\qbezier(15, 67)(16, 68)(15, 69)
\qbezier(15, 69)(14, 70)(15, 71)
\qbezier(15, 71)(15.5, 72)(15, 73)
\qbezier(37, 7)(36.5, 8)(37, 9)
\qbezier(37, 9)(38, 10)(37, 11)
\qbezier(37, 11)(36, 12)(37, 13)
\qbezier(37, 13)(38, 14)(37, 15)
\qbezier(37, 15)(36, 16)(37, 17)
\qbezier(37, 17)(38, 18)(37, 19)
\qbezier(37, 19)(36, 20)(37, 21)
\qbezier(37, 21)(38, 22)(37, 23)
\qbezier(53, 7)(52, 8)(53, 9)
\qbezier(53, 9)(54, 10)(53, 11)
\qbezier(53, 11)(52, 12)(53, 13)
\qbezier(53, 13)(54, 14)(53, 15)
\qbezier(53, 15)(52, 16)(53, 17)
\qbezier(53, 17)(54, 18)(53, 19)
\qbezier(53, 19)(52, 20)(53, 21)
\qbezier(53, 21)(54, 22)(53, 23)
\qbezier(75, 57)(74, 58)(75, 59)
\qbezier(75, 59)(76, 60)(75, 61)
\qbezier(75, 61)(74, 62)(75, 63)
\qbezier(75, 63)(76, 64)(75, 65)
\qbezier(75, 65)(74, 66)(75, 67)
\qbezier(75, 67)(76, 68)(75, 69)
\qbezier(75, 69)(74, 70)(75, 71)
\qbezier(75, 71)(75.5, 72)(75, 73)
\qbezier(97, 7)(96.5, 8)(97, 9)
\qbezier(97, 9)(98, 10)(97, 11)
\qbezier(97, 11)(96, 12)(97, 13)
\qbezier(97, 13)(98, 14)(97, 15)
\qbezier(97, 15)(96, 16)(97, 17)
\qbezier(97, 17)(98, 18)(97, 19)
\qbezier(97, 19)(96, 20)(97, 21)
\qbezier(97, 21)(98, 22)(97, 23)
\qbezier(113, 7)(112, 8)(113, 9)
\qbezier(113, 9)(114, 10)(113, 11)
\qbezier(113, 11)(112, 12)(113, 13)
\qbezier(113, 13)(114, 14)(113, 15)
\qbezier(113, 15)(112, 16)(113, 17)
\qbezier(113, 17)(114, 18)(113, 19)
\qbezier(113, 19)(112, 20)(113, 21)
\qbezier(113, 21)(114, 22)(113, 23)
\qbezier(135, 57)(134, 58)(135, 59)
\qbezier(135, 59)(136, 60)(135, 61)
\qbezier(135, 61)(134, 62)(135, 63)
\qbezier(135, 63)(136, 64)(135, 65)
\qbezier(135, 65)(134, 66)(135, 67)
\qbezier(135, 67)(136, 68)(135, 69)
\qbezier(135, 69)(134, 70)(135, 71)
\qbezier(135, 71)(135.5, 72)(135, 73)

\qbezier(37, 7)(38, 8)(39, 7)
\qbezier(39, 7)(40, 6)(41, 7)
\qbezier(41, 7)(42, 8)(43, 7)
\qbezier(43, 7)(44, 6)(45, 7)
\qbezier(45, 7)(46, 8)(47, 7)
\qbezier(47, 7)(48, 6)(49, 7)
\qbezier(49, 7)(50, 8)(51, 7)
\qbezier(51, 7)(52, 6)(53, 7)
\qbezier(37, 23)(38, 24)(39, 23)
\qbezier(39, 23)(40, 22)(41, 23)
\qbezier(41, 23)(42, 24)(43, 23)
\qbezier(43, 23)(44, 22)(45, 23)
\qbezier(45, 23)(46, 24)(47, 23)
\qbezier(47, 23)(48, 22)(49, 23)
\qbezier(49, 23)(50, 24)(51, 23)
\qbezier(51, 23)(52, 22)(53, 23)
\qbezier(97, 7)(98, 8)(99, 7)
\qbezier(99, 7)(100, 6)(101, 7)
\qbezier(101, 7)(102, 8)(103, 7)
\qbezier(103, 7)(104, 6)(105, 7)
\qbezier(105, 7)(106, 8)(107, 7)
\qbezier(107, 7)(108, 6)(109, 7)
\qbezier(109, 7)(110, 8)(111, 7)
\qbezier(111, 7)(112, 6)(113, 7)
\qbezier(97, 23)(98, 24)(99, 23)
\qbezier(99, 23)(100, 22)(101, 23)
\qbezier(101, 23)(102, 24)(103, 23)
\qbezier(103, 23)(104, 22)(105, 23)
\qbezier(105, 23)(106, 24)(107, 23)
\qbezier(107, 23)(108, 22)(109, 23)
\qbezier(109, 23)(110, 24)(111, 23)
\qbezier(111, 23)(112, 22)(113, 23)

\put(21.3, 65){\line(-2, -1){6}}
\put(81.3, 65){\line(-2, -1){6}}
\put(141.3, 65){\line(-2, -1){6}}
\put(26, 16){\line(3, -1){10.5}}
\put(65, 9){\line(-3, 1){11.5}}
\put(86, 16){\line(3, -1){10.5}}
\put(125, 9){\line(-3, 1){11.5}}

\put(36.5, 16){\vector(1, -2){1}}
\put(45, 7){\vector(2, 1){1}}
\put(52.5, 16){\vector(1, -2){1}}
\put(45, 23){\vector(2, 1){1}}
\put(96.5, 16){\vector(1, -2){1}}
\put(105, 7){\vector(2, 1){1}}
\put(112.5, 16){\vector(1, -2){1}}
\put(105, 23){\vector(2, 1){1}}
\put(14.5, 66){\vector(1, -2){1}}
\put(74.5, 66){\vector(1, -2){1}}
\put(134.5, 66){\vector(1, -2){1}}
\put(35.6, 26){\vector(1, -2){1}}
\put(35.6, 10){\vector(1, -2){1}}
\put(54.4, 26){\vector(-1, -2){1}}
\put(54.4, 10){\vector(-1, -2){1}}
\put(95.6, 26){\vector(1, -2){1}}
\put(95.6, 10){\vector(1, -2){1}}
\put(114.4, 26){\vector(-1, -2){1}}
\put(114.4, 10){\vector(-1, -2){1}}

\qbezier(37, 7)(26, 32)(15, 57)
\qbezier(37, 23)(26, 48)(15, 73)
\qbezier(53, 7)(64, 32)(75, 57)
\qbezier(53, 23)(64, 48)(75, 73)
\qbezier(97, 7)(86, 32)(75, 57)
\qbezier(97, 23)(86, 48)(75, 73)
\qbezier(113, 7)(124, 32)(135, 57)
\qbezier(113, 23)(124, 48)(135, 73)

\put(45, 15){\circle{6}}
\put(105, 15){\circle{6}}
\put(45, 17.9){\vector(-1, 0){1}}
\put(105, 17.9){\vector(-1, 0){1}}

\put(15, 57){\circle*{1.5}}
\put(15, 73){\circle*{1.5}}
\put(37, 7){\circle*{1.5}}
\put(37, 23){\circle*{1.5}}
\put(53, 7){\circle*{1.5}}
\put(53, 23){\circle*{1.5}}
\put(75, 57){\circle*{1.5}}
\put(75, 73){\circle*{1.5}}
\put(97, 7){\circle*{1.5}}
\put(97, 23){\circle*{1.5}}
\put(113, 7){\circle*{1.5}}
\put(113, 23){\circle*{1.5}}
\put(135, 57){\circle*{1.5}}
\put(135, 73){\circle*{1.5}}

\small
\put(43, 32){$X^0$}
\put(103, 32){$X^0$}
\put(13, 82){$X^1$}
\put(73, 82){$X^1$}
\put(133, 82){$X^1$}

\put(43, 25){$\alpha_i$}
\put(43, 3){$\alpha'_i$}
\put(101, 25){$\alpha_{i+1}$}
\put(101, 3){$\alpha'_{i+1}$}
\put(22, 65){$\beta_{i-1}$}
\put(82, 65){$\beta_i$}
\put(142, 65){$\beta_{i+1}$}
\put(3, 52){$\beta_{i-1}(1)=x'_{i-1}$}
\put(67, 52){$\beta_i(1)=x'_i$}
\put(124, 52){$\beta_{i+1}(1)=x'_{i+1}$}
\put(3, 75){$\beta_{i-1}(0)=x_{i-1}$}
\put(67, 75){$\beta_i(0)=x_i$}
\put(124, 75){$\beta_{i+1}(0)=x_{i+1}$}
\put(20, 37){$\sigma$}
\put(30, 41){$\sigma$}
\put(58, 41){$\iota$}
\put(68, 37){$\iota$}
\put(80, 37){$\sigma$}
\put(90, 41){$\sigma$}
\put(118, 41){$\iota$}
\put(128, 37){$\iota$}
\put(13, 16){$\sigma(\beta_{i-1})$}
\put(66, 7){$\iota(\beta_i)$}
\put(77, 16){$\sigma(\beta_i)$}
\put(126, 7){$\iota(\beta_{i+1})$}

\end{picture}
\vspace{1cm}
\begin{center}
Figure 5a. A homotopy $\beta$ : homotopies $\beta_i$ in $X^1$ are drawn as paths.
\end{center}
\end{figure}

 \begin{figure}
\setlength{\unitlength}{1.2mm}

\begin{picture}(100, 50)(25, 0)

\qbezier(20, 5)(21, 6.25)(20, 7.5)
\qbezier(20, 7.5)(19, 8.75)(20, 10)
\qbezier(20, 10)(21, 11.25)(20, 12.5)
\qbezier(20, 12.5)(19, 13.75)(20, 15)
\qbezier(20, 15)(21, 16.25)(20, 17.5)
\qbezier(20, 17.5)(19, 18.75)(20, 20)
\qbezier(20, 20)(21, 21.25)(20, 22.5)
\qbezier(20, 22.5)(19, 23.75)(20, 25)
\qbezier(20, 25)(21, 26.25)(20, 27.5)
\qbezier(20, 27.5)(19, 28.75)(20, 30)

\qbezier(40, 20)(41, 21.25)(40, 22.5)
\qbezier(40, 22.5)(39, 23.75)(40, 25)
\qbezier(40, 25)(41, 26.25)(40, 27.5)
\qbezier(40, 27.5)(39, 28.75)(40, 30)
\qbezier(40, 30)(41, 31.25)(40, 32.5)
\qbezier(40, 32.5)(39, 33.75)(40, 35)
\qbezier(40, 35)(41, 36.25)(40, 37.5)
\qbezier(40, 37.5)(39, 38.75)(40, 40)
\qbezier(40, 40)(41, 41.25)(40, 42.5)
\qbezier(40, 42.5)(39, 43.75)(40, 45)

\qbezier(70, 5)(71, 6.25)(70, 7.5)
\qbezier(70, 7.5)(69, 8.75)(70, 10)
\qbezier(70, 10)(71, 11.25)(70, 12.5)
\qbezier(70, 12.5)(69, 13.75)(70, 15)
\qbezier(70, 15)(71, 16.25)(70, 17.5)
\qbezier(70, 17.5)(69, 18.75)(70, 20)
\qbezier(70, 20)(71, 21.25)(70, 22.5)
\qbezier(70, 22.5)(69, 23.75)(70, 25)
\qbezier(70, 25)(71, 26.25)(70, 27.5)
\qbezier(70, 27.5)(69, 28.75)(70, 30)

\qbezier(90, 20)(91, 21.25)(90, 22.5)
\qbezier(90, 22.5)(89, 23.75)(90, 25)
\qbezier(90, 25)(91, 26.25)(90, 27.5)
\qbezier(90, 27.5)(89, 28.75)(90, 30)
\qbezier(90, 30)(91, 31.25)(90, 32.5)
\qbezier(90, 32.5)(89, 33.75)(90, 35)
\qbezier(90, 35)(91, 36.25)(90, 37.5)
\qbezier(90, 37.5)(89, 38.75)(90, 40)
\qbezier(90, 40)(91, 41.25)(90, 42.5)
\qbezier(90, 42.5)(89, 43.75)(90, 45)

\qbezier(140, 20)(141, 21.25)(140, 22.5)
\qbezier(140, 22.5)(139, 23.75)(140, 25)
\qbezier(140, 25)(141, 26.25)(140, 27.5)
\qbezier(140, 27.5)(139, 28.75)(140, 30)
\qbezier(140, 30)(141, 31.25)(140, 32.5)
\qbezier(140, 32.5)(139, 33.75)(140, 35)
\qbezier(140, 35)(141, 36.25)(140, 37.5)
\qbezier(140, 37.5)(139, 38.75)(140, 40)
\qbezier(140, 40)(141, 41.25)(140, 42.5)
\qbezier(140, 42.5)(139, 43.75)(140, 45)

\qbezier(120, 5)(121, 6.25)(120, 7.5)
\qbezier(120, 7.5)(119, 8.75)(120, 10)
\qbezier(120, 10)(121, 11.25)(120, 12.5)
\qbezier(120, 12.5)(119, 13.75)(120, 15)
\qbezier(120, 15)(121, 16.25)(120, 17.5)
\qbezier(120, 17.5)(119, 18.75)(120, 20)
\qbezier(120, 20)(121, 21.25)(120, 22.5)
\qbezier(120, 22.5)(119, 23.75)(120, 25)
\qbezier(120, 25)(121, 26.25)(120, 27.5)
\qbezier(120, 27.5)(119, 28.75)(120, 30)

\qbezier(40, 45)(41, 43)(43, 43.5)
\qbezier(43, 43.5)(45, 44)(46, 42)
\qbezier(46, 42)(47, 40)(49, 40.5)
\qbezier(49, 40.5)(51, 41)(52, 39)
\qbezier(52, 39)(53, 37)(55, 37.5)
\qbezier(55, 37.5)(57, 38)(58, 36)
\qbezier(58, 36)(59, 34)(61, 34.5)
\qbezier(61, 34.5)(63, 35)(64, 33)
\qbezier(64, 33)(65, 31)(67, 31.5)
\qbezier(67, 31.5)(69, 32)(70, 30)

\qbezier(40, 20)(41, 18)(43, 18.5)
\qbezier(43, 18.5)(45, 19)(46, 17)
\qbezier(46, 17)(47, 15)(49, 15.5)
\qbezier(49, 15.5)(51, 16)(52, 14)
\qbezier(52, 14)(53, 12)(55, 12.5)
\qbezier(55, 12.5)(57, 13)(58, 11)
\qbezier(58, 11)(59, 9)(61, 9.5)
\qbezier(61, 9.5)(63, 10)(64, 8)
\qbezier(64, 8)(65, 6)(67, 6.5)
\qbezier(67, 6.5)(69, 7)(70, 5)

\qbezier(90, 45)(91, 43)(93, 43.5)
\qbezier(93, 43.5)(95, 44)(96, 42)
\qbezier(96, 42)(97, 40)(99, 40.5)
\qbezier(99, 40.5)(101, 41)(102, 39)
\qbezier(102, 39)(103, 37)(105, 37.5)
\qbezier(105, 37.5)(107, 38)(108, 36)
\qbezier(108, 36)(109, 34)(111, 34.5)
\qbezier(111, 34.5)(113, 35)(114, 33)
\qbezier(114, 33)(115, 31)(117, 31.5)
\qbezier(117, 31.5)(119, 32)(120, 30)

\qbezier(90, 20)(91, 18)(93, 18.5)
\qbezier(93, 18.5)(95, 19)(96, 17)
\qbezier(96, 17)(97, 15)(99, 15.5)
\qbezier(99, 15.5)(101, 16)(102, 14)
\qbezier(102, 14)(103, 12)(105, 12.5)
\qbezier(105, 12.5)(107, 13)(108, 11)
\qbezier(108, 11)(109, 9)(111, 9.5)
\qbezier(111, 9.5)(113, 10)(114, 8)
\qbezier(114, 8)(115, 6)(117, 6.5)
\qbezier(117, 6.5)(119, 7)(120, 5)

\put(40.2, 20){\circle*{1.5}}
\put(40, 45){\circle*{1.5}}
\put(90.2, 20){\circle*{1.5}}
\put(90, 45){\circle*{1.5}}
\put(140.2, 20){\circle*{1.5}}
\put(140, 45){\circle*{1.5}}
\put(20.5, 5.5){\circle*{1.5}}
\put(20, 30.5){\circle*{1.5}}
\put(70.5, 5.5){\circle*{1.5}}
\put(70, 30.5){\circle*{1.5}}
\put(120.5, 5.5){\circle*{1.5}}
\put(120, 30.5){\circle*{1.5}}

\put(40.5, 31.5){\vector(0, -1){1}}
\put(69.5, 18.8){\vector(0, -1){1}}
\put(90.5, 31.5){\vector(0, -1){1}}
\put(119.5, 18.8){\vector(0, -1){1}}
\put(55, 37.5){\vector(3, 1){1}}
\put(55, 12.5){\vector(3, 1){1}}
\put(105, 37.5){\vector(3, 1){1}}
\put(105, 12.5){\vector(3, 1){1}}

\put(38.5, 19.5){\vector(2, 1){1}}
\put(38.8, 24.7){\vector(3, 1){1}}
\put(38.8, 29.7){\vector(3, 1){1}}
\put(38.8, 34.7){\vector(3, 1){1}}
\put(38.8, 39.7){\vector(3, 1){1}}
\put(38.5, 44.5){\vector(2, 1){1}}
\put(88.5, 19.5){\vector(2, 1){1}}
\put(88.8, 24.7){\vector(3, 1){1}}
\put(88.8, 29.7){\vector(3, 1){1}}
\put(88.8, 34.7){\vector(3, 1){1}}
\put(88.8, 39.7){\vector(3, 1){1}}
\put(88.5, 44.5){\vector(2, 1){1}}
\put(138.5, 19.5){\vector(3, 1){1}}
\put(138.8, 24.7){\vector(3, 1){1}}
\put(138.8, 29.7){\vector(3, 1){1}}
\put(138.8, 34.7){\vector(3, 1){1}}
\put(138.8, 39.7){\vector(3, 1){1}}
\put(138.5, 44.5){\vector(3, 1){1}}

\put(55, 25){\circle{7}}
\put(105, 25){\circle{7}}
\put(55, 28.5){\vector(-1, 0){1}}
\put(105, 28.5){\vector(-1, 0){1}}

\qbezier(20, 5)(28, 16)(40, 20)
\qbezier(20, 30)(28, 41)(40, 45)
\qbezier(70, 5)(78, 16)(90, 20)
\qbezier(70, 30)(78, 41)(90, 45)
\qbezier(120, 5)(128, 16)(140, 20)
\qbezier(120, 30)(128, 41)(140, 45)

\small
\put(27, 27){$\beta_{i-1}$}
\put(79, 27){$\beta_i$}
\put(127, 27){$\beta_{i+1}$}
\put(42, 32){$\sigma(\beta_{i-1})$}
\put(62, 17){$\iota(\beta_i)$}
\put(92, 32){$\sigma(\beta_i)$}
\put(109, 17){$\iota(\beta_{i+1})$}
\put(54, 40){$\alpha_i$}
\put(104, 40){$\alpha_{i+1}$}
\put(52, 9){$\alpha'_i$}
\put(102, 9){$\alpha'_{i+1}$}
\put(13, 44){$\beta_{i-1}(0)=x_{i-1}$}
\put(70, 44){$\beta_i(0)=x_i$}
\put(114, 44){$\beta_{i+1}(0)=x_{i+1}$}
\put(24, 7){$\beta_{i-1}(1)=x'_{i-1}$}
\put(74, 7){$\beta_i(1)=x'_i$}
\put(124, 7){$\beta_{i+1}(1)=x'_{i+1}$}

\linethickness{0.2pt}
\qbezier(20, 10)(28, 21)(40, 25)
\qbezier(20, 15)(28, 26)(40, 30)
\qbezier(20, 20)(28, 31)(40, 35)
\qbezier(20, 25)(28, 36)(40, 40)
\qbezier(70, 10)(78, 21)(90, 25)
\qbezier(70, 15)(78, 26)(90, 30)
\qbezier(70, 20)(78, 31)(90, 35)
\qbezier(70, 25)(78, 36)(90, 40)
\qbezier(120, 10)(128, 21)(140, 25)
\qbezier(120, 15)(128, 26)(140, 30)
\qbezier(120, 20)(128, 31)(140, 35)
\qbezier(120, 25)(128, 36)(140, 40)

\end{picture}
\vspace{1cm}
\begin{center}
Figure 5b. A homotopy $\beta$ : homotopies $\beta_i$ in $X^1$ are drawn as families of arrows.
\end{center}
\end{figure}

In Theorems~\ref{thm:exp-shadowing} and \ref{thm:hyp-shadowing} we will prove a closing lemma for homotopy pseudo-orbits. To state these we need

\begin{dfn} 
\label{hpo-homotopy}
Two homotopy pseudo-orbits $(x, \alpha)$ and $(x', \alpha')$ are said to be {\it homotopic} if there is a sequence $\beta=(\beta_i)$ of paths $\beta_i : [0, 1]\to X^1$ of bounded length with $\beta_i(0)=x_i$ and $\beta_i(1)=x'_i$ so that the path $\alpha_i \cdot \iota(\beta_i)$ is homotopic to the path $\sigma(\beta_{i-1})\cdot \alpha'_i$.
\end{dfn}

See Figures 5a and 5b, where a homotopy pseudo-orbit $(x, \alpha)$ is homotopic to another homotopy pseudo-orbit $(x', \alpha')$ by a homotopy $\beta=(\beta_i)$. Note that the two diagrams in each figure are commutative up to homotopies.

\end{section}

\begin{section}{Shadowing and its uniqueness for expanding systems}
\label{sec:exp-shadowing}

In this section we consider the case of one-sided orbits. When we use the term orbit in this section without further modification, we mean a one-sided orbit.

\subsection{Homotopy shadowing theorem}
\label{sub:exp-shadowing}

This subsection is devoted to the proof of the following theorem. A corresponding statement in the case of hyperbolic systems can be found in Theorem~\ref{thm:hyp-shadowing}.

\begin{thm} 
\label{thm:exp-shadowing}
Every homotopy pseudo-orbit $(x, \alpha)$ of an expanding system $\iota, \sigma : X^1\to X^0$ is homotopic to an orbit.
\end{thm}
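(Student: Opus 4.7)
My plan is to construct the shadowing orbit by iterated backward lifting, exploiting two features of the expanding system: the covering property of $\sigma$, which provides unique path lifts from $X^0$ to $X^1$, and the expansion $d^0(\sigma(x),\sigma(y))\geq \lambda d^0(\iota(x),\iota(y))$, which makes backward lifting contractive. The main obstacle will be to make this contraction precise, since the expansion hypothesis compares $d^0$-distances via $\iota$ and $\sigma$ whereas convergence must take place in the metric $d^1$ on $X^1$; this should be overcome by exploiting that $\sigma$ is a local homeomorphism between complete length spaces and by subdividing every path so that the local inequality of Definition~\ref{dfn:expsyst} applies.

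For each $N\geq 0$, I first build a finite sequence $\beta_0^{(N)},\ldots,\beta_N^{(N)}$ of paths in $X^1$ satisfying $\beta_i^{(N)}(0)=x_i$ as follows. Take $\beta_N^{(N)}$ to be the constant path at $x_N$, and for $i=N-1,N-2,\ldots,0$ let $\beta_i^{(N)}$ be the unique $\sigma$-lift of the concatenation $\alpha_{i+1}\cdot\iota(\beta_{i+1}^{(N)})$ starting at $x_i$. Such a lift exists and is unique because $\sigma$ is a covering map and the concatenation begins at $\sigma(x_i)$. By construction $\sigma(\beta_i^{(N)})=\alpha_{i+1}\cdot\iota(\beta_{i+1}^{(N)})$ as actual paths in $X^0$, so evaluating at the endpoint yields $\sigma(\beta_i^{(N)}(1))=\iota(\beta_{i+1}^{(N)}(1))$ for $0\leq i<N$, which is the orbit relation for the candidate finite orbit.

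To bound lengths, I subdivide each lift into subsegments of $d^1$-diameter less than $\delta$ so that the expansion inequality applies on each subsegment, yielding $l(\iota\gamma)\leq \lambda^{-1}l(\sigma\gamma)$ for any path $\gamma$ in $X^1$. Applying this to the identity $\sigma(\beta_i^{(N)})=\alpha_{i+1}\cdot\iota(\beta_{i+1}^{(N)})$ and using $l(\alpha_j)\leq C$, I obtain by backward induction the uniform estimate $l(\iota\beta_i^{(N)})\leq C/(\lambda-1)$, independent of $N$ and $i$.

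Finally, I show that for each fixed $i$ the sequence $(\beta_i^{(N)})_{N>i}$ is Cauchy in a suitable sense. The paths $\beta_i^{(N)}$ and $\beta_i^{(N+1)}$ both start at $x_i$ and are $\sigma$-lifts of $X^0$-paths that agree on the initial segment $\alpha_{i+1}$, so they coincide on the lift of that segment; the discrepancy afterwards comes entirely from $\iota(\beta_{i+1}^{(N)})$ versus $\iota(\beta_{i+1}^{(N+1)})$. Each backward lifting contracts this discrepancy by a factor of $\lambda^{-1}$, so propagating the initial discrepancy at step $N$ (where $\beta_N^{(N)}$ is constant but $\beta_N^{(N+1)}$ has $\iota$-length at most $C/\lambda$) back by $N-i$ steps yields a bound of order $C\lambda^{-(N-i+1)}$ at step $i$, which vanishes exponentially as $N\to\infty$. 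By completeness of $X^1$, the limit path $\beta_i$ exists; its endpoint $x'_i:=\beta_i(1)$ satisfies $\sigma(x'_i)=\iota(x'_{i+1})$ by continuity, and the exact equality $\sigma(\beta_{i-1})=\alpha_i\cdot\iota(\beta_i)$ persists at the limit, giving the required homotopy between $(x,\alpha)$ and the orbit $(x'_i)$ in the sense of Definition~\ref{hpo-homotopy} (taking each $\alpha'_i$ to be the constant path).
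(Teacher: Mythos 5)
Your proposal is correct and takes essentially the same approach as the paper: iterated $\sigma$-lifting via the covering property, with the expansion hypothesis forcing the lengths of the lifts (measured through $\iota$ in $X^0$) to decay geometrically. Your truncate-at-$N$-and-pass-to-the-limit packaging---where $\beta_i^{(N)}$ unwinds, after reparametrization, to the concatenation of the paper's lifts $\beta^0_i\cdot\beta^1_i\cdots\beta^{N-i-1}_i$---differs from the paper's direct construction of the infinite concatenation only in bookkeeping.
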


\begin{proof}
We write $(x, \alpha)=((x_i)_{i\geq 0}, (\alpha_i)_{i\geq 1})$ where the index for the points $x_i$ starts at $i=0$ and the index for the paths $\alpha_i$ starts at $i=1$. In what follows we will inductively define a sequence of homotopy pseudo-orbits $((x^n_i)_{i\geq 0}, (\alpha^n_i)_{i\geq 1})$. Set $x^0_i\equiv x_i$ and $\alpha^0_i\equiv \alpha_i$. Suppose that a homotopy pseudo-orbit $((x^n_i)_{i\geq 0}, (\alpha^n_i)_{i\geq 1})$ is defined. This means that $x^n_i\in X^1$ and $\alpha^n_i : [0, 1]\to X^0$ satisfy $\alpha^n_i(0)=\sigma(x^n_{i-1})$ and $\alpha^n_i(1)=\iota(x^n_i)$. Then, since $\sigma$ is a covering and $\alpha^n_i(0)=\sigma(x^n_{i-1})$, there exists a unique lift $\beta^n_{i-1} : [0, 1]\to X^1$ of $\alpha^n_i$ by $\sigma$ so that $\beta^n_{i-1}(0)=x^n_{i-1}$ by the path lifting property. Put $\alpha^{n+1}_i\equiv \iota(\beta^n_i)$ and $x^{n+1}_i\equiv \beta^n_i(1)$. Then, we have $\sigma(x^{n+1}_{i-1})=\sigma(\beta^n_{i-1}(1))=\alpha^n_i(1)=\iota(x^n_i)=\iota(\beta^n_i(0))=\alpha^{n+1}_i(0)$ and $\iota (x^{n+1}_i)=\iota(\beta^n_i(1))=\alpha^{n+1}_i(1)$. This means that $((x^{n+1}_i)_{i\geq 0}, (\alpha^{n+1}_i)_{i\geq 1})$ is a homotopy pseudo-orbit.

 \begin{figure}
\setlength{\unitlength}{0.8mm}

\begin{picture}(100, 125)(40, 0)

\put(0, 0){\line(1, 0){40}}
\put(0, 0){\line(0, 1){50}}
\put(40, 0){\line(0, 1){50}}
\put(0, 50){\line(1, 0){40}}
\put(30, 60){\line(1, 0){38}}
\put(30, 60){\line(0, 1){50}}
\put(68, 60){\line(0, 1){50}}
\put(30, 110){\line(1, 0){38}}
\put(60, 0){\line(1, 0){40}}
\put(60, 0){\line(0, 1){50}}
\put(100, 0){\line(0, 1){50}}
\put(60, 50){\line(1, 0){40}}
\put(90, 60){\line(1, 0){38}}
\put(90, 60){\line(0, 1){50}}
\put(128, 60){\line(0, 1){50}}
\put(90, 110){\line(1, 0){38}}
\put(120, 0){\line(1, 0){40}}
\put(120, 0){\line(0, 1){50}}
\put(160, 0){\line(0, 1){50}}
\put(120, 50){\line(1, 0){40}}
\put(150, 60){\line(1, 0){38}}
\put(150, 60){\line(0, 1){50}}
\put(188, 60){\line(0, 1){50}}
\put(150, 110){\line(1, 0){38}}

\put(21.5, 24){\vector(-1, -2){1}}
\put(21.5, 18){\vector(-1, -2){1}}
\put(21.5, 14){\vector(-1, -2){1}}
\put(21.5, 8){\vector(-1, -2){1}}
\put(21.5, 33){\vector(-1, -2){1}}
\put(78.5, 47.5){\vector(2, -3){1}}
\put(78.5, 33){\vector(1, -2){1}}
\put(78.5, 24){\vector(1, -2){1}}
\put(78.5, 18){\vector(1, -2){1}}
\put(78.5, 8){\vector(1, -2){1}}
\put(81.5, 24){\vector(-1, -2){1}}
\put(81.5, 18){\vector(-1, -2){1}}
\put(81.5, 14){\vector(-1, -2){1}}
\put(81.5, 8){\vector(-1, -2){1}}
\put(82, 34){\vector(-1, -2){1}}
\put(138.5, 47.5){\vector(2, -3){1}}
\put(138.5, 33){\vector(1, -2){1}}
\put(138.5, 24){\vector(1, -2){1}}
\put(138.5, 18){\vector(1, -2){1}}
\put(138.5, 8){\vector(1, -2){1}}
\put(141.5, 24){\vector(-1, -2){1}}
\put(141.5, 18){\vector(-1, -2){1}}
\put(141.5, 14){\vector(-1, -2){1}}
\put(141.5, 8){\vector(-1, -2){1}}
\put(142, 34){\vector(-1, -2){1}}
\put(189.5, 61){\vector(2, -3){1}}
\put(189.5, 49){\vector(1, -2){1}}
\put(189.5, 41){\vector(1, -2){1}}
\put(189.5, 36){\vector(1, -2){1}}
\put(189.5, 28){\vector(1, -2){1}}

\put(20, 5){\circle*{2}}
\put(20, 7){\circle*{0.5}}
\put(20, 8){\circle*{0.5}}
\put(20, 9){\circle*{0.5}}
\put(20, 11){\circle*{1.3}}
\put(20, 15){\circle*{1.3}}
\put(20, 21){\circle*{1.3}}
\put(20, 30){\circle*{2}}
\put(50, 67){\circle*{2}}
\put(50, 69){\circle*{0.5}}
\put(50, 70.5){\circle*{0.5}}
\put(50, 72){\circle*{0.5}}
\put(50, 74){\circle*{1.3}}
\put(50, 78){\circle*{1.3}}
\put(50, 84){\circle*{1.3}}
\put(50, 93){\circle*{2}}
\put(80, 5){\circle*{2}}
\put(80, 7){\circle*{0.5}}
\put(80, 8){\circle*{0.5}}
\put(80, 9){\circle*{0.5}}
\put(80, 11){\circle*{1.3}}
\put(80, 15){\circle*{1.3}}
\put(80, 21){\circle*{1.3}}
\put(80, 30){\circle*{2}}
\put(80, 45){\circle*{2}}
\put(110, 67){\circle*{2}}
\put(110, 69){\circle*{0.5}}
\put(110, 70.5){\circle*{0.5}}
\put(110, 72){\circle*{0.5}}
\put(110, 74){\circle*{1.3}}
\put(110, 78){\circle*{1.3}}
\put(110, 84){\circle*{1.3}}
\put(110, 93){\circle*{2}}
\put(140, 5){\circle*{2}}
\put(140, 7){\circle*{0.5}}
\put(140, 8){\circle*{0.5}}
\put(140, 9){\circle*{0.5}}
\put(140, 11){\circle*{1.3}}
\put(140, 15){\circle*{1.3}}
\put(140, 21){\circle*{1.3}}
\put(140, 30){\circle*{2}}
\put(140, 45){\circle*{2}}
\put(170, 67){\circle*{2}}
\put(170, 69){\circle*{0.5}}
\put(170, 70.5){\circle*{0.5}}
\put(170, 72){\circle*{0.5}}
\put(170, 74){\circle*{1.3}}
\put(170, 78){\circle*{1.3}}
\put(170, 84){\circle*{1.3}}
\put(170, 93){\circle*{2}}

\linethickness{0.3pt}

\qbezier(20, 21)(35, 52.5)(50, 84)
\qbezier(20, 15)(35, 46.5)(50, 78)
\qbezier(20, 11)(35, 42.5)(50, 74)
\qbezier(50, 74)(65, 45)(80, 15)
\qbezier(50, 78)(65, 49.5)(80, 21)
\qbezier(50, 84)(65, 57)(80, 30)
\qbezier(80, 21)(95, 52.5)(110, 84)
\qbezier(80, 15)(95, 46.5)(110, 78)
\qbezier(80, 11)(95, 42.5)(110, 74)
\qbezier(110, 74)(125, 45)(140, 15)
\qbezier(110, 78)(125, 49)(140, 21)
\qbezier(110, 84)(125, 57)(140, 30)
\qbezier(140, 21)(155, 52.5)(170, 84)
\qbezier(140, 15)(155, 46.5)(170, 78)
\qbezier(140, 11)(155, 42.5)(170, 74)
\qbezier(170, 74)(185, 45)(190, 35)
\qbezier(170, 78)(185, 49)(190, 40)
\qbezier(170, 84)(185, 57)(190, 48)

\qbezier(20, 31)(19, 30)(20, 29)
\qbezier(20, 29)(21, 28)(20, 27)
\qbezier(20, 27)(19, 26)(20, 25)
\qbezier(20, 25)(21, 24)(20, 23)
\qbezier(20, 23)(19, 22)(20, 21)
\qbezier(20, 21)(21, 20)(20, 19)
\qbezier(20, 19)(19, 18)(20, 17)
\qbezier(20, 17)(21, 16)(20, 15)
\qbezier(20, 15)(19, 14)(20, 13)
\qbezier(20, 13)(21, 12)(20, 11)
\qbezier(50, 92)(51, 91)(50, 90)
\qbezier(50, 90)(49, 89)(50, 88)
\qbezier(50, 88)(51, 87)(50, 86)
\qbezier(50, 86)(49, 85)(50, 84)
\qbezier(50, 84)(51, 83)(50, 82)
\qbezier(50, 82)(49, 81)(50, 80)
\qbezier(50, 80)(51, 79)(50, 78)
\qbezier(50, 78)(49, 77)(50, 76)
\qbezier(50, 76)(51, 75)(50, 74)
\qbezier(80, 31)(79, 30)(80, 29)
\qbezier(80, 29)(81, 28)(80, 27)
\qbezier(80, 27)(79, 26)(80, 25)
\qbezier(80, 25)(81, 24)(80, 23)
\qbezier(80, 23)(79, 22)(80, 21)
\qbezier(80, 21)(81, 20)(80, 19)
\qbezier(80, 19)(79, 18)(80, 17)
\qbezier(80, 17)(81, 16)(80, 15)
\qbezier(80, 15)(79, 14)(80, 13)
\qbezier(80, 13)(81, 12)(80, 11)
\qbezier(110, 92)(111, 91)(110, 90)
\qbezier(110, 90)(109, 89)(110, 88)
\qbezier(110, 88)(111, 87)(110, 86)
\qbezier(110, 86)(109, 85)(110, 84)
\qbezier(110, 84)(111, 83)(110, 82)
\qbezier(110, 82)(109, 81)(110, 80)
\qbezier(110, 80)(111, 79)(110, 78)
\qbezier(110, 78)(109, 77)(110, 76)
\qbezier(110, 76)(111, 75)(110, 74)
\qbezier(140, 31)(139, 30)(140, 29)
\qbezier(140, 29)(141, 28)(140, 27)
\qbezier(140, 27)(139, 26)(140, 25)
\qbezier(140, 25)(141, 24)(140, 23)
\qbezier(140, 23)(139, 22)(140, 21)
\qbezier(140, 21)(141, 20)(140, 19)
\qbezier(140, 19)(139, 18)(140, 17)
\qbezier(140, 17)(141, 16)(140, 15)
\qbezier(140, 15)(139, 14)(140, 13)
\qbezier(140, 13)(141, 12)(140, 11)
\qbezier(170, 92)(171, 91)(170, 90)
\qbezier(170, 90)(169, 89)(170, 88)
\qbezier(170, 88)(171, 87)(170, 86)
\qbezier(170, 86)(169, 85)(170, 84)
\qbezier(170, 84)(171, 83)(170, 82)
\qbezier(170, 82)(169, 81)(170, 80)
\qbezier(170, 80)(171, 79)(170, 78)
\qbezier(170, 78)(169, 77)(170, 76)
\qbezier(170, 76)(171, 75)(170, 74)

\qbezier(40, 88)(44, 88)(49.5, 88)
\qbezier(40, 80)(45, 80)(49.5, 80)
\qbezier(40, 75.5)(44.5, 75.5)(50, 75.5)
\qbezier(100, 88)(104, 88)(109.5, 88)
\qbezier(100, 80)(104, 80)(109.5, 80)
\qbezier(100, 75.5)(104.5, 75.5)(110, 75.5)
\qbezier(160, 88)(164, 88)(169.5, 88)
\qbezier(160, 80)(164, 80)(169.5, 80)
\qbezier(160, 75.5)(164.5, 75.5)(170, 75.5)
\qbezier(19.5, 25)(15, 25)(13, 25)
\qbezier(19.5, 19)(15, 19)(13, 19)
\qbezier(19.5, 13)(15, 13)(13, 13)
\qbezier(79.5, 25)(75, 25)(68, 25)
\qbezier(79.5, 19)(75, 19)(68, 19)
\qbezier(79.5, 13)(75, 13)(68, 13)
\qbezier(139.5, 25)(135, 25)(128, 25)
\qbezier(139.5, 19)(135, 19)(128, 19)
\qbezier(139.5, 13)(135, 13)(128, 13)

\small
\put(48, 113){$X^1$}
\put(108, 113){$X^1$}
\put(168, 113){$X^1$}
\put(18, 53){$X^0$}
\put(78, 53){$X^0$}
\put(138, 53){$X^0$}
\put(34, 87){$\beta^0_0$}
\put(34, 80){$\beta^1_0$}
\put(34, 74){$\beta^2_0$}
\put(94, 87){$\beta^0_1$}
\put(94, 80){$\beta^1_1$}
\put(94, 74){$\beta^2_1$}
\put(154, 87){$\beta^0_2$}
\put(154, 80){$\beta^1_2$}
\put(154, 74){$\beta^2_2$}
\put(7, 24){$\alpha^1_0$}
\put(7, 18){$\alpha^2_0$}
\put(7, 12){$\alpha^3_0$}
\put(74, 37){$\alpha^0_1$}
\put(62, 24){$\alpha^1_1$}
\put(62, 18){$\alpha^2_1$}
\put(62, 12){$\alpha^3_1$}
\put(134, 37){$\alpha^0_2$}
\put(122, 24){$\alpha^1_2$}
\put(122, 18){$\alpha^2_2$}
\put(122, 12){$\alpha^3_2$}
\put(52, 94){$x^0_0$}
\put(112, 94){$x^0_1$}
\put(172, 94){$x^0_2$}
\put(12.5, 3){$c^{\infty}_0$}
\put(72.5, 3){$c^{\infty}_1$}
\put(132.5, 3){$c^{\infty}_2$}
\put(42.5, 66){$x^{\infty}_0$}
\put(102.5, 66){$x^{\infty}_1$}
\put(162.5, 66){$x^{\infty}_2$}

\put(36.5, 53){$\iota$}
\put(96.5, 53){$\iota$}
\put(156.5, 53){$\iota$}
\put(63.5, 53){$\sigma$}
\put(123.5, 53){$\sigma$}
\put(183.5, 53){$\sigma$}

\linethickness{1pt}

\qbezier(20, 30)(35, 61.5)(50, 93)
\qbezier(20, 5)(35, 36)(50, 67)
\qbezier(50, 67)(65, 36)(80, 5)
\qbezier(50, 93)(65, 68)(80, 45)
\qbezier(80, 30)(95, 61.5)(110, 93)
\qbezier(80, 5)(95, 36)(110, 67)
\qbezier(110, 67)(125, 36)(140, 5)
\qbezier(110, 93)(125, 68)(140, 45)
\qbezier(140, 30)(155, 61.5)(170, 93)
\qbezier(140, 5)(155, 36)(170, 67)
\qbezier(170, 67)(185, 37)(190, 27)
\qbezier(170, 93)(185, 68)(190, 60)

\qbezier(80, 45)(81, 44)(80, 43)
\qbezier(80, 43)(79, 42)(80, 41)
\qbezier(80, 41)(81, 40)(80, 39)
\qbezier(80, 39)(79, 38)(80, 37)
\qbezier(80, 37)(81, 36)(80, 35)
\qbezier(80, 35)(79, 34)(80, 33)
\qbezier(80, 33)(81, 32)(80, 31)
\qbezier(140, 45)(141, 44)(140, 43)
\qbezier(140, 43)(139, 42)(140, 41)
\qbezier(140, 41)(141, 40)(140, 39)
\qbezier(140, 39)(139, 38)(140, 37)
\qbezier(140, 37)(141, 36)(140, 35)
\qbezier(140, 35)(139, 34)(140, 33)
\qbezier(140, 33)(141, 32)(140, 31)

\end{picture}
\vspace{1cm}
\begin{center}
Figure 6a. Pullbacks of homotopies : homotopies $\beta^n_i$ are drawn as paths.
\end{center}

\end{figure}

 \begin{figure}
\setlength{\unitlength}{1mm}

\begin{picture}(100, 60)(15, -5)

\put(10, 0){\circle*{2}}
\put(50, 0){\circle*{2}}
\put(90, 0){\circle*{2}}
\put(10, 30){\circle*{2}}
\put(50, 30){\circle*{2}}
\put(90, 30){\circle*{2}}
\put(50, 50){\circle*{2}}
\put(90, 50){\circle*{2}}
\put(10, 20){\circle*{1.2}}
\put(50, 20){\circle*{1.2}}
\put(90, 20){\circle*{1.2}}
\put(10, 14.5){\circle*{1.2}}
\put(50, 14.5){\circle*{1.2}}
\put(90, 14.5){\circle*{1.2}}
\put(10, 12){\circle*{1.2}}
\put(50, 12){\circle*{1.2}}
\put(90, 12){\circle*{1.2}}

\put(10, 4){\circle*{0.5}}
\put(10, 6){\circle*{0.5}}
\put(10, 8){\circle*{0.5}}
\put(50, 4){\circle*{0.5}}
\put(50, 6){\circle*{0.5}}
\put(50, 8){\circle*{0.5}}
\put(90, 4){\circle*{0.5}}
\put(90, 6){\circle*{0.5}}
\put(90, 8){\circle*{0.5}}

\put(48, 49.6){\vector(3, 1){1}}
\put(88, 49.6){\vector(3, 1){1}}
\put(130, 50){\vector(3, 1){1}}
\put(48, 30){\vector(1, 0){1}}
\put(88, 30){\vector(1, 0){1}}
\put(130, 30){\vector(1, 0){1}}
\put(48, 20){\vector(1, 0){1}}
\put(88, 20){\vector(1, 0){1}}
\put(130, 20){\vector(1, 0){1}}
\put(48, 14.5){\vector(1, 0){1}}
\put(88, 14.5){\vector(1, 0){1}}
\put(130, 14.5){\vector(1, 0){1}}
\put(48, 0){\vector(1, 0){1}}
\put(88, 0){\vector(1, 0){1}}
\put(130, 0){\vector(1, 0){1}}

\linethickness{0.3pt}
\qbezier(10, 20)(25, 28)(50, 30)
\qbezier(50, 20)(65, 28)(90, 30)
\qbezier(90, 20)(105, 28)(130, 30)
\qbezier(10, 14.5)(25, 19)(50, 20)
\qbezier(50, 14.5)(65, 19)(90, 20)
\qbezier(90, 14.5)(105, 19)(130, 20)
\qbezier(10, 12)(25, 14)(50, 14.5)
\qbezier(50, 12)(65, 14)(90, 14.5)
\qbezier(90, 12)(105, 14)(130, 14.5)

\qbezier(10, 30)(11, 29)(10, 28)
\qbezier(10, 28)(9, 27)(10, 26)
\qbezier(10, 26)(11, 25)(10, 24)
\qbezier(10, 24)(9, 23)(10, 22)
\qbezier(10, 22)(11, 21)(10, 20)
\qbezier(10, 20)(9, 19)(10, 18)
\qbezier(10, 18)(11, 17)(10, 16)
\qbezier(10, 16)(9, 15)(10, 14)
\qbezier(10, 14)(11, 13)(10, 12)
\qbezier(50, 30)(51, 29)(50, 28)
\qbezier(50, 28)(49, 27)(50, 26)
\qbezier(50, 26)(51, 25)(50, 24)
\qbezier(50, 24)(49, 23)(50, 22)
\qbezier(50, 22)(51, 21)(50, 20)
\qbezier(50, 20)(49, 19)(50, 18)
\qbezier(50, 18)(51, 17)(50, 16)
\qbezier(50, 16)(49, 15)(50, 14)
\qbezier(50, 14)(51, 13)(50, 12)
\qbezier(90, 30)(91, 29)(90, 28)
\qbezier(90, 28)(89, 27)(90, 26)
\qbezier(90, 26)(91, 25)(90, 24)
\qbezier(90, 24)(89, 23)(90, 22)
\qbezier(90, 22)(91, 21)(90, 20)
\qbezier(90, 20)(89, 19)(90, 18)
\qbezier(90, 18)(91, 17)(90, 16)
\qbezier(90, 16)(89, 15)(90, 14)
\qbezier(90, 14)(91, 13)(90, 12)

\linethickness{1pt}

\qbezier(10, 30)(25, 45)(50, 50)
\qbezier(50, 30)(65, 45)(90, 50)
\qbezier(90, 30)(105, 45)(130, 50)
\qbezier(10, 0)(20, 0)(130, 0)

\qbezier(50, 50)(51, 49)(50, 48)
\qbezier(50, 48)(49, 47)(50, 46)
\qbezier(50, 46)(51, 45)(50, 44)
\qbezier(50, 44)(49, 43)(50, 42)
\qbezier(50, 42)(51, 41)(50, 40)
\qbezier(50, 40)(49, 39)(50, 38)
\qbezier(50, 38)(51, 37)(50, 36)
\qbezier(50, 36)(49, 35)(50, 34)
\qbezier(50, 34)(51, 33)(50, 32)
\qbezier(50, 32)(49, 31)(50, 30)
\qbezier(90, 50)(91, 49)(90, 48)
\qbezier(90, 48)(89, 47)(90, 46)
\qbezier(90, 46)(91, 45)(90, 44)
\qbezier(90, 44)(89, 43)(90, 42)
\qbezier(90, 42)(91, 41)(90, 40)
\qbezier(90, 40)(89, 39)(90, 38)
\qbezier(90, 38)(91, 37)(90, 36)
\qbezier(90, 36)(89, 35)(90, 34)
\qbezier(90, 34)(91, 33)(90, 32)
\qbezier(90, 32)(89, 31)(90, 30)

\small
\put(28, 33){$\beta^0_0$}
\put(28, 21.5){$\beta^1_0$}
\put(28, 15){$\beta^2_0$}
\put(68, 33){$\beta^0_1$}
\put(68, 21.5){$\beta^1_1$}
\put(68, 15){$\beta^2_1$}
\put(108, 33){$\beta^0_2$}
\put(108, 21.5){$\beta^1_2$}
\put(108, 15){$\beta^2_2$}
\put(11, 25){$\alpha^1_0$}
\put(11, 17.5){$\alpha^2_0$}
\put(11, 12.5){$\alpha^3_0$}
\put(51, 40){$\alpha^0_1$}
\put(51, 25){$\alpha^1_1$}
\put(51, 17.5){$\alpha^2_1$}
\put(51, 12.5){$\alpha^3_1$}
\put(91, 40){$\alpha^0_2$}
\put(91, 25){$\alpha^1_2$}
\put(91, 17.5){$\alpha^2_2$}
\put(91, 12.5){$\alpha^3_2$}
\put(28, 47){$x^0_0$}
\put(68, 47){$x^0_1$}
\put(108, 47){$x^0_2$}
\put(28, 2){$x^{\infty}_0$}
\put(68, 2){$x^{\infty}_1$}
\put(108, 2){$x^{\infty}_2$}
\put(8, -5){$c^{\infty}_0$}
\put(48, -5){$c^{\infty}_1$}
\put(88, -5){$c^{\infty}_2$}

\end{picture}
\vspace{1cm}
\begin{center}
Figure 6b. Pullbacks of homotopies : homotopies $\beta^n_i$ are drawn as families of arrows.
\end{center}

\end{figure}

Let $l^n(\alpha)$ be the length of a path $\alpha$ in $X^n$. Here we need

\begin{lmm} 
\label{lmm:exp-estimate}
There exists a constant $C\geq 0$ so that 
$$l^0(\alpha_i^n)\leq \frac{C}{\lambda^n}$$
holds for all $n\geq 0$ and $i\geq 1$, where $\lambda>1$ is as in Definition~\ref{dfn:expsyst}.
\end{lmm}

\begin{proof}
By the definition of homotopy pseudo-orbit, there is a constant $C\geq 0$ so that $l^0(\alpha^0_i)\leq C$ for all $i\geq 1$. Recall that $\sigma(\beta^n_{i-1})=\alpha^n_i$ and $\alpha^{n+1}_i= \iota(\beta^n_i)$. Hence, the expanding property of $\iota, \sigma : X^1\to X^0$ implies the desired estimate.
\end{proof}

Since $\alpha^{n+1}_i(0)=\iota(\beta^n_i(0))=\iota(x^n_i)=\alpha^n_i(1)$, we can concatenate the paths $\alpha^n_i$ ($n=0, 1, 2, \ldots$) for each $i\geq 1$ as follows. Let $I_n=[1-\frac{1}{2^n}, 1-\frac{1}{2^{n+1}}]$ and define $\alpha^{\infty}_i|_{I_n} : I_n\to X^0$ as $\alpha^{\infty}_i(t)\equiv \alpha^n_i(2^{n+1}(t-1+\frac{1}{2^n}))$ for $t\in I_n$ and $n\geq 0$. See Figures 6a and 6b. This construction gives a continuous map $\alpha^{\infty}_i : [0, 1)\to X^0$. Lemma~\ref{lmm:exp-estimate} implies that the map naturally extends to $\alpha^{\infty}_i : [0, 1]\to X^0$ since $\alpha_i^{\infty}(1)\equiv \lim_{n\to \infty}\alpha^n_i(1)$ exists. A similar construction implies that one can concatenate the paths $\beta^n_i$ ($n=0, 1, 2, \ldots$) to get a continuous map $\beta^{\infty}_i : [0, 1]\to X^1$. Put $x^{\infty}_i\equiv \lim_{n\to \infty}x^n_i=\lim_{n\to \infty}\beta^n_i(0)$. By the definition of $\beta^n=(\beta_i^n)_{i\geq 0}$, one easily sees that $\beta_i^n(0)=x_i^n$ and $\beta_i^n(1)=x_i^{n+1}$ hold. From these we have $\iota(x^{\infty}_i)=\alpha^{\infty}_i(1)$ and $\sigma(x_{i-1}^{\infty})=\alpha^{\infty}_i(0)$. Since $l^0(\alpha^n_i)\to 0$ when $n\to \infty$ by Lemma~\ref{lmm:exp-estimate}, we conclude $\iota(x^{\infty}_i)=\alpha^{\infty}_i(1)=\alpha^{\infty}_i(0)=\sigma(x_{i-1}^{\infty})$. This means that $(x^{\infty}, c^{\infty})=((x^{\infty}_i)_{i\geq 0}, (c^{\infty}_i)_{i\geq 1})$ becomes an orbit, where $c^{\infty}_i(t)\equiv \alpha^{\infty}_i(0)=\alpha^{\infty}_i(1)$ is a constant homotopy.

Again by the definition of $\beta^n$, we have $\alpha_i^n\cdot \iota(\beta_i^n)=\sigma(\beta^n_{i-1})\cdot \alpha_i^{n+1}$. By using this several times, one gets
\begin{align*}
\alpha^0_i
= & \sigma(\beta^0_{i-1})\cdot \alpha^1_i\cdot \iota(\beta^0_i)^{-1}\\
= & \sigma(\beta^0_{i-1})\sigma(\beta^1_{i-1})\cdot \alpha^2_i\cdot \iota (\beta_i^1)^{-1}\iota(\beta^0_i)^{-1}\\
= & \cdots \\
= & \sigma(\beta^0_{i-1})\cdots \sigma(\beta^{n-1}_{i-1})\cdot \alpha^n_i\cdot \iota (\beta_i^{n-1})^{-1}\cdots \iota(\beta^0_i)^{-1}\\
= & \sigma(\beta^0_{i-1}\cdots \beta^{n-1}_{i-1})\cdot \alpha^n_i\cdot \iota (\beta^0_i\cdots \beta^{n-1}_i)^{-1}.
\end{align*}
Letting $n\to \infty$, we get $\alpha^0_i=\sigma(\beta_{i-1}^{\infty})\cdot c^{\infty}_i\cdot \iota(\beta_i^{\infty})^{-1}$. We also have $\beta^{\infty}_i(0)=x^0_i$ and $\beta^{\infty}_i(1)=x^{\infty}_i$. Thus, the homotopy pseudo-orbit $(x, \alpha)$ is homotopic to the orbit $(x^{\infty}, c^{\infty})$. This completes the proof of Theorem~\ref{thm:exp-shadowing}.
\end{proof}

\subsection{Uniqueness of shadowing} 
\label{sub:exp-unique}

In this subsection we prove two results. First we show that if two orbits of an expanding system are homotopic, then they are equal. In particular, it follows that the shadowing orbit found in Theorem~\ref{thm:exp-shadowing} is unique. Based on this fact, we finish the proof of Theorem~\ref{thm:exp-funct}.

\begin{prp} 
\label{prp:exp-unique}
Let $(x, \alpha)$ and $(x', \alpha')$ be homotopy pseudo-orbits of an expanding system $\iota, \sigma : X^1\to X^0$ with $l^0(\alpha_i)\leq C$ and $l^0(\alpha'_i)\leq C'$ for all $i\geq 1$. If they are homotopic, then there is a sequence of homotopies $\beta=(\beta_i)_{i\geq 1}$ so that 
$$l^1(\beta_i)\leq \frac{\lambda(C+C')}{\lambda-1}.$$
\end{prp}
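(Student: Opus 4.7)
The plan is to iteratively refine the given homotopy via $\sigma$-lifting, in the spirit of the proof of Theorem~\ref{thm:exp-shadowing}. By Definition~\ref{hpo-homotopy} the hypothesis supplies a homotopy $\beta^{(0)}=(\beta^{(0)}_i)$ with $l^1(\beta^{(0)}_i)\leq K$ for some finite uniform $K$. I will inductively define $\beta^{(n+1)}_i$ to be the unique lift, under the covering map $\sigma$, of the $X^0$-path $\alpha_{i+1}\cdot\iota\beta^{(n)}_{i+1}\cdot(\alpha'_{i+1})^{-1}$ starting at $x_i$, and then pass to a limit $\beta_i=\lim_{n\to\infty}\beta^{(n)}_i$.

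First I verify that $(\beta^{(n+1)}_i)$ remains a homotopy between $(x,\alpha)$ and $(x',\alpha')$. The homotopy relation for $(\beta^{(n)}_i)$ gives $\alpha_{i+1}\cdot\iota\beta^{(n)}_{i+1}\cdot(\alpha'_{i+1})^{-1}\simeq\sigma\beta^{(n)}_i$ rel endpoints in $X^0$; since $\sigma\beta^{(n)}_i$ already lifts to $\beta^{(n)}_i$ starting at $x_i$, uniqueness of path lifting through $\sigma$ forces $\beta^{(n+1)}_i(1)=x'_i$ and $\beta^{(n+1)}_i\simeq\beta^{(n)}_i$ rel endpoints in $X^1$, so the homotopy relation carries over. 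The length estimate is immediate: since $\sigma$-lifting preserves $l^0$-length (so $l^1$ of a lift equals $l^0$ of the base) and Definition~\ref{dfn:expsyst} gives $l^0(\iota\gamma)\leq l^1(\gamma)/\lambda$ for paths $\gamma$ in $X^1$, one obtains
$$l^1(\beta^{(n+1)}_i)\leq C+l^1(\beta^{(n)}_{i+1})/\lambda+C',$$
which iterates uniformly in $i$ to $l^1(\beta^{(n)}_i)\leq(C+C')\sum_{k=0}^{n-1}\lambda^{-k}+K\lambda^{-n}$.

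The crucial structural observation for passing to the limit is that consecutive iterates $\beta^{(n+1)}_i$ and $\beta^{(n)}_i$ agree as paths outside a shrinking central segment. Both are $\sigma$-lifts of paths coinciding on the initial $\alpha_{i+1}$-portion and on the terminal $(\alpha'_{i+1})^{-1}$-portion, so path-lifting uniqueness makes the lifts coincide there as well; moreover their two middle $\sigma$-lifts (of $\iota\beta^{(n)}_{i+1}$ and $\iota\beta^{(n-1)}_{i+1}$ respectively) share common endpoints, since the two middle paths in $X^0$ are homotopic rel endpoints (being $\iota$-images of paths homotopic rel endpoints in $X^1$, by the inductive version of the previous step). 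Unrolling this recursion exhibits $\beta^{(n)}_i$ as a concatenation of fixed outer pieces of $l^1$-lengths bounded by $(C+C')/\lambda^k$ at depth $k$, joined at an innermost piece of $l^1$-length $\leq K/\lambda^n$ which shrinks to zero. Reparametrizing by dyadic subintervals for successive depths (as in the concatenation construction of $\alpha^\infty_i$ in Theorem~\ref{thm:exp-shadowing}), the $\beta^{(n)}_i$ converge in the complete length space $X^1$ to a path $\beta_i:[0,1]\to X^1$ whose length is controlled by summing the geometric series of fixed-piece lengths, yielding $l^1(\beta_i)\leq\sum_{k=0}^\infty(C+C')/\lambda^k=\lambda(C+C')/(\lambda-1)$; continuity of $\iota$ and $\sigma$ then transfers the homotopy relation to the limit.

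The main technical obstacle is the rigorous handling of this nested limit: identifying the stable outer pieces of $\beta^{(n)}_i$ in $X^1$, confirming via iterated use of $l^0\circ\iota\leq l^1/\lambda$ that the innermost piece has $l^1$-length $\leq K/\lambda^n$, and verifying that the resulting infinite concatenation still satisfies $\alpha_i\cdot\iota\beta_i\simeq\sigma\beta_{i-1}\cdot\alpha'_i$ rel endpoints. Once carried through, an immediate corollary is that two homotopic \emph{orbits} of an expanding system are equal, by taking $C=C'=0$ so that $\beta_i$ is forced to be constant.
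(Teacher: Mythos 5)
Your argument is correct in substance, but it takes a genuinely different route from the paper's. The paper works with the functional $\|\gamma\|$, the infimal $l^0$-length over the homotopy class of $\gamma$ rel endpoints: from the homotopy relation it extracts $\|\sigma(\beta_{i-1})\|\leq \frac{1}{\lambda}\|\sigma(\beta_i)\|+C+C'$ (using exactly your two ingredients, homotopy lifting through the covering $\sigma$ and the contraction $l^0(\iota\tilde{\gamma})\leq\frac{1}{\lambda}l^0(\gamma)$), and then iterates the affine map $x\mapsto x/\lambda+C+C'$ against the a priori bound on the given homotopy, so the constant $\lambda(C+C')/(\lambda-1)$ drops out of the attracting fixed point with no limit construction of paths at all; the desired $\beta_i$ are then (implicitly) near-optimal representatives within the fixed homotopy classes. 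You instead build improved homotopies explicitly by iterated $\sigma$-lifting of $\alpha_{i+1}\cdot\iota\beta^{(n)}_{i+1}\cdot(\alpha'_{i+1})^{-1}$ and pass to a nested limit, which is the pullback-of-homotopies mechanism from the proof of Theorem~\ref{thm:exp-shadowing} transplanted to this uniqueness statement. Your construction does go through: the outer pieces stabilize by uniqueness of lifts, the inner piece has length $\leq K/\lambda^{n}$, and completeness gives the limit path with the geometric-series bound; moreover the cleanest way to settle the last point you flag is to note that, by construction, $\sigma\beta_{i-1}$ coincides (up to reparametrization) with $\alpha_i\cdot\iota(\beta_i)\cdot(\alpha'_i)^{-1}$, so the homotopy relation for the limit holds essentially on the nose rather than by a separate continuity argument. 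What your route buys is constructiveness (the common limit of the middle endpoints is precisely the shadowing orbit), at the price of the bookkeeping the paper's softer argument avoids. One convention you share with the paper rather than deviate from: the identity ``$l^1$ of a $\sigma$-lift equals $l^0$ of the base'' presupposes measuring lengths in $X^1$ in the metric pulled back by $\sigma$ (the convention the paper adopts explicitly in the continuity part of the proof of Theorem~\ref{thm:exp-funct}); with the abstract $d^1$ of Definition~\ref{dfn:expsyst} one only bounds $\|\sigma(\beta_i)\|$, which is what the paper's own proof literally does.
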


\begin{proof}
Given a path $\alpha$ in $X^0$, let us use the notation $\|\alpha\|$ to denote the infimum of lengths of paths $\gamma$ homotopic to $\alpha$ relative to endpoints. Note that the infimum is realized since we assume that $X^0$ and $X^1$ are complete length spaces.

Since the two homotopy pseudo-orbits $(x, \alpha)$ and $(x', \alpha')$ are homotopic, there exists a sequence of paths $\beta_i : [0, 1]\to X^1$ so that $\sigma(\beta_{i-1})\cdot \alpha_i'$ is homotopic to $\alpha_i\cdot \iota(\beta_i)$ with $l^1(\beta_i)\leq C''$ for some constant $C''\geq 0$. This implies
$$\|\sigma(\beta_{i-1})\|\leq l^0(\alpha_i)+\|\iota(\beta_i)\|+l^0(\alpha_i')\leq C+\|\iota(\beta_i)\|+C'.$$
It follows from the homotopy lifting property for $\sigma$ that if $\gamma$ is a path homotopic to $\sigma(\beta_i)$ relative to endpoints, then $\gamma$ has a lift to a path $\tilde{\gamma}$ in $X^1$ which is homotopic to $\beta_i$ relative to endpoints. Thus, $\iota(\tilde{\gamma})$ is homotopic to $\iota(\beta_i)$ and $l^0(\iota(\tilde{\gamma}))\leq \frac{1}{\lambda}l^0(\gamma)$. Hence, we have
$$\|\iota(\beta_i)\|\leq \frac{1}{\lambda}\|\sigma(\beta_i)\|.$$

Combining the above two inequalities gives
$$\|\sigma(\beta_{i-1})\|\leq \|\iota(\beta_i)\|+C+C'\leq \frac{1}{\lambda}\|\sigma(\beta_i)\|+C+C'.$$
Let us put $h(x)\equiv\frac{1}{\lambda}x+C+C'$. We can rewrite the above inequality as $\|\sigma(\beta_{i-1})\|\leq h(\|\sigma(\beta_i)\|)$. Applying this inequality repeatedly gives $\|\sigma(\beta_i)\|\leq h^n(\|\sigma(\beta_{i+n})\|)$. Since $\|\sigma(\beta_{i+n})\|\leq C''$ and since $h$ is monotone increasing, we have $\|\sigma(\beta_i)\|\leq h^n(C'')$. The function $h$ has a unique attractive fixed point at $x=\lambda(C+C')/(\lambda-1)$. Letting $n$ go to infinity gives
$$\|\sigma(\beta_i)\|\leq \frac{\lambda(C+C')}{\lambda-1}.$$
This completes the proof. 
\end{proof}

\begin{cor} 
\label{cor:hom-exp} 
If two orbits $(x_i)_{i\geq 0}$ and $(x'_i)_{i\geq 0}$ of an expanding system are homotopic, then they are equal.
\end{cor}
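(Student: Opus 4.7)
My plan is to obtain this corollary as an immediate specialization of Proposition~\ref{prp:exp-unique}. Every orbit $(x_i)_{i\geq 0}$ may be regarded as a homotopy pseudo-orbit $(x,c)$ by letting each $c_i$ be the constant path at the common point $\sigma(x_{i-1})=\iota(x_i)$ in $X^0$; under this identification $l^0(c_i)=0$, so the constant $C$ appearing in Definition~\ref{dfn:hpo} is zero, and likewise $C'=0$ for the second orbit $(x'_i)_{i\geq 0}$.

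Next I would apply Proposition~\ref{prp:exp-unique} with these choices. The homotopy hypothesis between the two orbits (read in the sense of Definition~\ref{hpo-homotopy}) then produces a sequence of paths $\beta_i:[0,1]\to X^1$ with $\beta_i(0)=x_i$, $\beta_i(1)=x'_i$, and
$$l^1(\beta_i)\leq\frac{\lambda(C+C')}{\lambda-1}=0.$$
Since $X^1$ is a complete length space, a path of length zero must be constant; hence $x_i=\beta_i(0)=\beta_i(1)=x'_i$ for every $i\geq 0$, which is the desired equality.

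The only conceptual point worth flagging is that Proposition~\ref{prp:exp-unique} provides a quantitative length bound on \emph{some} homotopy sequence between the two pseudo-orbits, obtained by repeated lifting through the covering map $\sigma$ and exploiting expansion; this is strictly stronger than the qualitative homotopy hypothesis that is given to us, and it is precisely what allows the bound to collapse to zero when $C=C'=0$. There is therefore no real obstacle beyond invoking the preceding proposition with the correct observation that orbits are pseudo-orbits of length constant zero.
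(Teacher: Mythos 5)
Your proposal is correct and follows essentially the same route as the paper: the authors likewise regard the two orbits as homotopy pseudo-orbits with constant (length-zero) connecting paths, invoke Proposition~\ref{prp:exp-unique} with $C=C'=0$, and conclude that the connecting homotopies $\beta_i$ collapse to length zero, forcing $x_i=x'_i$.
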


\begin{proof}
One can view an orbit as a homotopy pseudo-orbit with homotopies of length zero. Thus, we apply the previous result with $C=C'=0$ to conclude that the homotopies $\beta_i$ between $x_i$ and $x_i'$ are homotopic to some homotopies of length zero. In particular, this means that the endpoints $x_i$ and $x_i'$ of the homotopies $\beta_i$ are equal. Thus, we are done. 
\end{proof}

\begin{proof}[Proof of Theorem~\ref{thm:exp-funct}.]
Take an orbit $x\in X^{+\infty}$. This defines a homotopy pseudo-orbit $h(x)$ of $\iota, g : Y^1\to Y^0$ by Lemma~\ref{lmm:orbhpo}. Thanks to Theorem~\ref{thm:exp-shadowing} and Corollary~\ref{cor:hom-exp}, there exists a unique orbit $y$ of $\iota, g : Y^1\to Y^0$ which is homotopic to $h(x)$. Define $h^{\infty} : X^{+\infty}\to Y^{+\infty}$ by $h^{\infty}(x)\equiv y$. Then, one can easily verify $h^{\infty} \hat{f}=\hat{g}h^{\infty}$.

We next show the continuity of $h^{\infty}$. We replace the metric of $X^1$ by the pullback of $d^0$ by $\sigma$ which we again denote by $d^1$. Since $\sigma$ is a local homeomorphism, this does not change the topology of $X^1$. Take two orbits $x=(x_i)_{i\geq 0}$ and $\tilde{x}=(\tilde{x}_i)_{i\geq 0}$ of $\iota, \sigma : X^1\to X^0$. Write $(y_i)_{i\geq 0}\equiv h^{\infty}(x)$ and $(\tilde{y}_i)_{i\geq 0}\equiv h^{\infty}(\tilde{x})$. By the construction of the shadowing orbit in Theorem~\ref{thm:exp-shadowing}, we have $y_0=\lim_{n\to \infty}\beta^n_0(0)$ and $\tilde{y}_0=\lim_{n\to \infty}\tilde{\beta}^n_0(0)$. Then, for any $\varepsilon>0$ there exists $N\geq 0$ so that
$$d^1(y_0, \beta^N_0(0))<\varepsilon \quad \mathrm{and} \quad d^1(\tilde{y}_0, \tilde{\beta}^N_0(0))<\varepsilon$$ 
hold. For this $N$, we choose $\delta>0$ sufficiently small so that $d^1(x_i, \tilde{x}_i)<\delta$ ($0\leq i\leq N$) implies $d^1(h^1(x_i), h^1(\tilde{x}_i))<\varepsilon$ ($0\leq i\leq N$) by the continuity of $h^1$. In particular, $d^1(\beta^0_N(0), \tilde{\beta}^0_N(0))=d^1(h^1(x_N), h^1(\tilde{x}_N)) <\varepsilon$. The definition of an expanding system implies $d^1(\beta^1_{N-1}(0), \tilde{\beta}^1_{N-1}(0))<\varepsilon/\lambda$. Applying this repeatedly, one has 
$$d^1(\beta^N_0(0), \tilde{\beta}^N_0(0)) < \frac{\varepsilon}{\lambda^N} < \varepsilon.$$ 
Combining this with the above two estimates gives $d^1(y_0, \tilde{y}_0)<3 \varepsilon$.
\end{proof}

\end{section}

\begin{section}{Shadowing and its uniqueness for hyperbolic systems}
\label{sec:hyp-shadowing}

In this section we consider the case of bi-infinite orbits. When we use the term orbit in this section without further modification, we mean a bi-infinite orbit.

\subsection{Homotopy shadowing theorem}
\label{sub:hyp-shadowing}

This subsection is devoted to the proof of the following theorem. A corresponding statement in the case of expanding systems can be found in Theorem~\ref{thm:exp-shadowing}.

\begin{thm} 
\label{thm:hyp-shadowing}
Every homotopy pseudo-orbit $(z, \alpha)$ of a hyperbolic system $\iota, f : X^1\to X^0$ is homotopic to an orbit.
\end{thm}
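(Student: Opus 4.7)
The strategy is to imitate the iterative lifting proof of Theorem~\ref{thm:exp-shadowing}, with the covering map $\rho_f : X^1 \to X^0$ from Lemma~\ref{lmm:covering} playing the role of $\sigma$, and with the product decomposition $X^0 = M^0_x \times M^0_y$ together with the simple-connectedness of $M^0_y$ used to separate horizontal and vertical errors.

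First I would put each $\alpha_i$ into a standard form. Exploiting the product structure of $X^0$ and the simple-connectedness of $M^0_y$, homotope $\alpha_i$ rel endpoints to a concatenation $s_i \cdot u_i \cdot t_i$, where $u_i$ is a horizontal path in the leaf $M^0_x \times \{\pi^0_y z_{i-1}\}$ starting at $\rho_f(z_{i-1}) = (\pi^0_x f(z_{i-1}), \pi^0_y z_{i-1})$ and ending at $(\pi^0_x \iota(z_i), \pi^0_y z_{i-1})$, while the vertical segments $s_i$ and $t_i$ reconcile the $y$-coordinates at the two ends. Such a decomposition is essentially unique, by the simple-connectedness of $M^0_y$.

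Next I would run the inductive construction of $(z^n, \alpha^n)$ exactly as in Theorem~\ref{thm:exp-shadowing}. At step $n$, lift the horizontal piece $u_i^n$ through the covering $\rho_f$ starting at $z_{i-1}^n$, obtaining a horizontal-like path $\tilde\beta_{i-1}^n : [0,1] \to X^1$ with $\tilde\beta_{i-1}^n(0) = z_{i-1}^n$; set $z_{i-1}^{n+1} := \tilde\beta_{i-1}^n(1)$ and take $u_i^{n+1}$ to be (a suitable reparameterization of) $\iota(\tilde\beta_i^n)$ viewed horizontally in $X^0$. By Corollary~\ref{cor:expansion}, $l^0_x(\iota(\tilde\beta_i^n)) \leq \lambda^{-1}\, l^0_x(u_i^n)$, so the horizontal lengths contract like $\lambda^{-n}$. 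The vertical pieces $s_i^n$ and $t_i^n$ are transported through $f$, which by Corollary~\ref{cor:expansion} shortens vertical-like segments by a factor of $\lambda$. Together these yield the analog of Lemma~\ref{lmm:exp-estimate}: $l^0(\alpha_i^n) \leq C\lambda^{-n}$ uniformly in $i \in \ZZ$.

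The remaining concatenation-in-$n$ argument from the proof of Theorem~\ref{thm:exp-shadowing} then applies almost verbatim, yielding continuous limiting paths $\beta_i^\infty : [0,1] \to X^1$ and points $w_i := \beta_i^\infty(1)$ forming a bi-infinite orbit $(w_i) \in X^{\pm\infty}$ with $\iota(w_i) = f(w_{i-1})$; the $\beta_i^\infty$ realize the desired homotopy from $(z, \alpha)$ to this orbit. I expect the main obstacle to be the bookkeeping of the vertical contributions: in the one-sided expanding case all information is absorbed into a single covering-map lift, whereas here the $\rho_f$-lifts preserve the $y$-coordinate, so the vertical segments must be shown to genuinely contract rather than to accumulate along the bi-infinite index, and the horizontal lift through $\rho_f$ and the vertical contraction through $f$ must be combined consistently at each iteration step.
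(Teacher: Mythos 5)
There is a genuine gap, and it is exactly at the point you flag at the end but then wave away: your iteration never corrects the stable (vertical) direction. Because a $\rho_f$-lift of a horizontal path preserves the $y$-coordinate, every corrected point $z^{n+1}_{i-1}$ in your scheme stays in the slice $\{y=\pi^0_y z^1_{i-1}\}$, so $\pi^0_y z^n_i=\pi^0_y z^1_i$ for all $n$. Tracking your own normal form $s_i\cdot u_i\cdot t_i$ through one step, the new horizontal piece indeed satisfies $l^0_x(u^{n+1}_i)\leq \lambda^{-1}l^0_x(u^n_{i+1})$, but the vertical piece $t^n_i$ has length $d^0_y(\pi^0_y z^1_{i-1},\pi^0_y z^1_i)$, which is \emph{constant in} $n$, and $l^0_y(s^n_i)=d^0_y(\pi^0_y f(z^n_{i-1}),\pi^0_y z^1_{i-1})$ is merely bounded. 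So the claimed analogue of Lemma~\ref{lmm:exp-estimate}, $l^0(\alpha^n_i)\leq C\lambda^{-n}$, is false: only the horizontal discrepancy decays, the points converge to a limit whose $y$-coordinates are those of the original pseudo-orbit, and in the limit one gets $\pi^0_x f(z^\infty_{i-1})=\pi^0_x\iota(z^\infty_i)$ but not $\pi^0_y f(z^\infty_{i-1})=\pi^0_y\iota(z^\infty_i)$; the limit is not an orbit. The proposed fix, ``transport the vertical pieces through $f$,'' is not available inside this scheme: your vertical segments lie in straight fibers $M^0_y(x_0)$, which need not be contained in $X^1$, so $f$ cannot even be applied to them; and, more fundamentally, contracting vertical errors by $f$ means the correction at index $i$ must be determined by the \emph{future} of the pseudo-orbit in the vertical direction, i.e.\ the corrected points must move vertically, which your $\rho_f$-lifts forbid. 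A one-sided lifting argument can only shadow the unstable direction; bi-infinite hyperbolic shadowing genuinely needs both past and future.

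The paper's proof supplies precisely the missing mechanism. Instead of straight fibers it uses approximate stable manifolds $V^n_i$ (components of $f^{-1}$ of vertical manifolds propagated from later indices) and approximate unstable manifolds $H^n_i$ (propagated by $\iota^{-1}$), selects among the $d$ intersection points of $f(H^n_{i-1})$ and $\iota(V^n_i)$ the one compatible with the homotopy class of $\alpha^{n-1}_i$ (Lemma~\ref{lmm:unique-intersect} -- note the degree-$d$ ambiguity, which your sketch handles only implicitly through the choice of lift), and then brings the high-level points down by the zigzag $(\iota f)^m z^{2m+1}_{i-m}$, so that each vertical piece is contracted by the $f$'s and each horizontal piece by the $\iota$'s (Corollary~\ref{cor:hyp-estimate}). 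Even there, the decay of the horizontal pieces is not automatic as it was in your computation: one first needs the uniform bounds of Lemmas~\ref{lmm:s-initial} and~\ref{lmm:u-initial}, the latter by a telescoping homotopy argument that has no counterpart in your sketch. So the outline as written would fail at the convergence step, and repairing it essentially forces you back to the two-sided construction of Section~\ref{sec:hyp-shadowing} rather than a verbatim transcription of the expanding case.
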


For the proof of the theorem, we first need the following lemma. Let $V$ be a vertical-like submanifold of degree one in $X^{n+1}$ and $H$ be a horizontal-like submanifold of degree one in $X^{n+1}$. Then, $f(H)$ becomes a horizontal-like submanifold of degree $d$ in $X^n$ and $\iota(V)$ becomes a vertical-like submanifold of degree one in $X^n$ by Lemma~\ref{lmm:submfd} and Proposition~\ref{prp:induction}. Suppose that points $p_h\in H$ and $p_v\in V$ are given. Let $\gamma$ be a path from $f(p_h)$ to $\iota(p_v)$ in $X^n$. We know that the intersection $f(H)\cap \iota(V)$ consists of $d$ distinct points by Lemma~\ref{lmm:intersection} and Proposition~\ref{prp:induction}.

\begin{lmm} 
\label{lmm:unique-intersect}
There is a unique point $p\in f(H)\cap \iota(V)$ for which there exist a path $u$ from $f(p_h)$ to $p$ in $f(H)$ and a path $s$ from $p$ to $\iota(p_v)$ in $\iota(V)$ so that the concatenation $u\cdot s$ is homotopic to $\gamma$ in $X^n$. The paths are unique up to homotopy relative to endpoints. 
\end{lmm}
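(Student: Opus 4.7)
The plan is to pass to the universal cover of $X^n$, where certain components of the preimages of $f(H)$ and $\iota(V)$ become global graphs and meet transversely in exactly one point; the rest is formal covering-space bookkeeping.

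First, three structural observations. By Lemma~\ref{lmm:submfd} and Proposition~\ref{prp:induction}, $\iota(V)$ is vertical-like of degree one in $X^n$, so $\pi^n_y\colon\iota(V)\to M^n_y$ is a proper degree-one covering of the connected space $M^n_y$, hence a homeomorphism; in particular $\iota(V)$ is simply connected. Similarly $\pi^n_x\colon f(H)\to M^n_x$ is a proper $d$-fold cover. The inclusion $f(H)\hookrightarrow X^n$ is $\pi_1$-injective, because $\pi^n_x\colon X^n\to M^n_x$ induces an isomorphism on $\pi_1$ (its fiber $M^n_y$ is simply connected) while $f(H)\to M^n_x$ is $\pi_1$-injective as a covering. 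Consequently the universal cover is $\widetilde{X^n}=\widetilde{M^n_x}\times M^n_y$; write $\pi\colon\widetilde{X^n}\to X^n$ for the covering projection.

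Fix a lift $\widetilde{f(p_h)}$ of $f(p_h)$, lift $\gamma$ to $\tilde\gamma$ starting at $\widetilde{f(p_h)}$, and set $\tilde q:=\tilde\gamma(1)$. Let $\tilde F$ be the component of $\pi^{-1}(f(H))$ through $\widetilde{f(p_h)}$ and $\tilde V$ the component of $\pi^{-1}(\iota(V))$ through $\tilde q$. By the observations above both are simply connected ($\tilde F$ is the universal cover of $f(H)$, while $\tilde V$ is a homeomorphic copy of $\iota(V)$). The projection to the first factor of $\widetilde{X^n}$, restricted to $\pi^{-1}(f(H))$, is a proper $d$-fold cover over the simply connected base $\widetilde{M^n_x}$ and is therefore trivial; hence $\tilde F$ is a global graph $\{(\tilde x,\tilde h(\tilde x)):\tilde x\in\widetilde{M^n_x}\}$ for some smooth $\tilde h$. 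Analogously, $\tilde V=\{(\tilde\phi(y),y):y\in M^n_y\}$ for a smooth lift $\tilde\phi\colon M^n_y\to\widetilde{M^n_x}$ of the parameterization of $\iota(V)$ over $M^n_y$, pinned down by the basepoint condition encoded by $\tilde q$.

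The crux is the claim $|\tilde F\cap\tilde V|=1$. Intersections correspond bijectively to fixed points of $G:=\tilde h\circ\tilde\phi\colon M^n_y\to M^n_y$. The horizontal-cone condition on $\tilde F$ gives $|D\tilde h(w)|^n_y<|w|^n_x$ pointwise, the vertical-cone condition on $\tilde V$ gives $|D\tilde\phi(v)|^n_x<|v|^n_y$ pointwise, so $|DG(v)|^n_y<|v|^n_y$ strictly for every nonzero $v$. Uniqueness is then automatic: if $y_1\neq y_2$ were two fixed points then any minimizing path $\sigma$ joining them would satisfy $d^n_y(y_1,y_2)=l^n_y(\sigma)>l^n_y(G\sigma)\geq d^n_y(G(y_1),G(y_2))=d^n_y(y_1,y_2)$, a contradiction. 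For existence, the diameter bound $(*)$ of Remark~\ref{rmk:bounded} confines $G(M^n_y)$ to a set of bounded $y$-diameter, which is relatively compact in $M^n_y$; on its closure, $y\mapsto d^n_y(y,G(y))$ is continuous and attains its infimum, and the same strict contraction forces this infimum to be zero. Call this unique fixed point $\tilde p$ and set $p:=\pi(\tilde p)\in f(H)\cap\iota(V)$.

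Simple connectivity of $\tilde F$ and $\tilde V$ produces unique (rel endpoints) paths $\tilde u$ in $\tilde F$ from $\widetilde{f(p_h)}$ to $\tilde p$ and $\tilde s$ in $\tilde V$ from $\tilde p$ to $\tilde q$, which project to $u$ in $f(H)$ and $s$ in $\iota(V)$. Since $\widetilde{X^n}$ is simply connected, $\tilde u\cdot\tilde s\simeq\tilde\gamma$ rel endpoints, and projecting gives $u\cdot s\simeq\gamma$ in $X^n$. For uniqueness, any competitor $(p',u',s')$ lifts to $\tilde u'$ in $\tilde F$ (starting at $\widetilde{f(p_h)}$ by path lifting in the cover $\pi^{-1}(f(H))\to f(H)$), followed by $\tilde s'$ ending at $\tilde q$ (forced by $u'\cdot s'\simeq\gamma$); hence $\tilde s'$ lies in $\tilde V$ and the meeting point sits in $\tilde F\cap\tilde V=\{\tilde p\}$, so $p'=p$. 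The uniqueness of the paths rel endpoints is then immediate from simple connectivity. The main obstacle is the intersection count $|\tilde F\cap\tilde V|=1$; once this is in place via the cone-induced contraction, every other step reduces to routine covering-space manipulation.
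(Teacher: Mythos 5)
Your proof is correct, and it takes a genuinely different route from the paper's. The paper normalizes (``we may assume that $\iota(V)$ is a straight vertical submanifold''), obtains $p$ as the endpoint of the lift of $\pi^n_x(\gamma)$ through the covering $\pi^n_x\colon f(H)\to M^n_x$, and gets uniqueness from the observation that two distinct points of $f(H)\cap\iota(V)$ are joined by a loop (first in $\iota(V)$, then in $f(H)$) whose $\pi^n_x$-image cannot be null-homotopic in $M^n_x$, since its lift through the covering $f(H)\to M^n_x$ is not closed. You instead work in the universal cover $\widetilde{X^n}=\widetilde{M^n_x}\times M^n_y$, reduce the entire lemma to the single geometric claim $|\tilde F\cap\tilde V|=1$, and establish that claim by recognizing intersections as fixed points of $G=\tilde h\circ\tilde\phi$ and showing $G$ is a strict contraction of $M^n_y$ directly from the open cone conditions. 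Your route dispenses with the paper's normalization step (which is asserted without elaboration, and which is precisely what makes paths in $\iota(V)$ have constant $\pi^n_x$-image in the paper's argument), and it makes the cone-field geometry do the counting rather than deferring to Lemma~\ref{lmm:intersection}. The trade-off is that your fixed-point existence needs the diameter bound $(\ast)$ of Remark~\ref{rmk:bounded}, understood at level $n$, plus completeness of $M^n_y$ (to get relative compactness and a minimizer of $y\mapsto d^n_y(y,G(y))$), and your uniqueness needs an attained minimizing geodesic since you invoke only the non-uniform open-cone inequalities, not the uniform $\lambda$; the paper's path-lifting produces $p$ outright and needs neither ingredient. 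Both are sound, and the covering-space bookkeeping after the core claim is essentially the same in both.
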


\begin{proof}
We may assume that $\iota(V)$ is a straight vertical submanifold in $X^n$. Then, $\pi_x^n(\iota(V))$ becomes one point. Since $\pi_x^n : f(H)\to M^n_x$ is a covering, one can take a unique lift $u$ of $\pi_x^n(\gamma)$ in $f(H)$ starting from $f(p_h)$. Let $p\in f(H)\cap \iota(V)$ be the other endpoint of $u$. Then, for any path $s$ in $\iota(V)$ from $p$ to $\iota(p_v)$, we see that $u\cdot s\sim \gamma$.

Choose two distinct points $p_1$, $p_2\in f(H)\cap \iota(V)$ among $d$ points. Let $v$ be any path from $p_1$ to $p_2$ in $\iota(V)$, and let $h$ be any path from $p_2$ to $p_1$ in $f(H)$. Then, the concatenation of the two paths $v \cdot h$ (which forms a closed path in $X^n$) is non-trivial in the fundamental group $\pi_1(X^n)$. Suppose that $s\cdot v\cdot h\cdot u\sim \gamma$. Then, $s\cdot u\sim s\cdot v\cdot h\cdot u$ and this implies that $1\sim v\cdot h$, which is a contradiction. Thus, the uniqueness of the point $p$ follows.
\end{proof}

\begin{proof}[Proof of Theorem~\ref{thm:hyp-shadowing}] 
Given a homotopy pseudo-orbit $(z, \alpha)$ we replace it by another homotopy pseudo-orbit $(z', \alpha')$ which is homotopic to $(z, \alpha)$ where the connecting homotopies $\alpha'$ become shorter. We show that this process converges. The limit will be an actual orbit which ``shadows'' the original homotopy pseudo-orbit $(z, \alpha)$.

\vspace{0.3cm}

\noindent
{\bf Step I:} {\it Starting Condition.}

\vspace{0.1cm}

Suppose that a homotopy pseudo-orbit $(z, \alpha)$ of $\iota, f : X^1\to X^0$ is given, where $z=(z_i)_{i\in \ZZ}$ is a bi-infinite sequence of points $z_i$ in $X^1_i\equiv X^1$ and $\alpha=(\alpha_i)_{i\in \ZZ}$ is a bi-infinite sequence of paths $\alpha_i$ in $X^0_i$ so that $l^0(\alpha_i)\leq C'$. Set $z^1_i\equiv z_i$ and $\alpha^0_i\equiv \alpha_i$. We let $H^1_i$ be the connected component of $\iota^{-1}(M^0_x(\pi^0_y(\iota (z^1_i))))$ containing $z^1_i$, and we let $V^1_i$ be the connected component of $f^{-1}(M^0_y(\pi^0_x(f(z^1_i))))$ containing $z^1_i$.

 \begin{figure}
\setlength{\unitlength}{1mm}
\begin{picture}(100, 150)(20, 0)

\put(-5, 0){\line(1, 0){40}}
\put(35, 0){\line(0, 1){40}}
\put(-5, 0){\line(0, 1){40}}
\put(-5, 40){\line(1, 0){40}}

\put(55, 0){\line(1, 0){40}}
\put(95, 0){\line(0, 1){40}}
\put(55, 0){\line(0, 1){40}}
\put(55, 40){\line(1, 0){40}}

\put(115, 0){\line(1, 0){40}}
\put(155, 0){\line(0, 1){40}}
\put(115, 0){\line(0, 1){40}}
\put(115, 40){\line(1, 0){40}}

\put(25, 50){\line(1, 0){40}}
\put(65, 50){\line(0, 1){40}}
\put(25, 50){\line(0, 1){40}}
\put(25, 90){\line(1, 0){40}}

\put(85, 50){\line(1, 0){40}}
\put(125, 50){\line(0, 1){40}}
\put(85, 50){\line(0, 1){40}}
\put(85, 90){\line(1, 0){40}}

\put(55, 100){\line(1, 0){40}}
\put(95, 100){\line(0, 1){40}}
\put(55, 100){\line(0, 1){40}}
\put(55, 140){\line(1, 0){40}}

\linethickness{1pt}

\put(-5, 15){\line(1, 0){40}}
\put(25, 65){\line(1, 0){40}}
\put(55, 115){\line(1, 0){40}}
\put(125, 0){\line(0, 1){40}}
\qbezier(120, 50)(115, 85)(110, 90)
\qbezier(60, 100)(65, 135)(70, 140)
\qbezier(90, 0)(85, 35)(80, 40)
\qbezier(95, 10)(75, 10)(65, 20)
\qbezier(65, 20)(60, 25)(55, 25)
\qbezier(95, 30)(75, 30)(65.5, 20.5)
\qbezier(64.5, 19.5)(60, 15)(55, 15)

\put(62.5, 115){\circle*{1.7}}
\put(55, 65){\circle*{1.7}}
\put(32.5, 65){\circle*{1.7}}
\put(25, 15){\circle*{1.7}}
\put(88.5, 10.3){\circle*{1.7}}
\put(72, 15){\circle*{1.7}}
\put(83, 34){\circle*{1.7}}
\put(125, 25){\circle*{1.7}}
\put(118.5, 60.3){\circle*{1.7}}
\put(113, 84){\circle*{1.7}}

\put(34, 49){\vector(-1, -1){8}}
\put(56, 49){\vector(1, -1){8}}
\put(94, 49){\vector(-1, -1){8}}
\put(116, 49){\vector(1, -1){8}}
\put(64, 99){\vector(-1, -1){8}}
\put(86, 99){\vector(1, -1){8}}

\small
\put(33, 44){$\iota$}
\put(63, 44.5){$f$}
\put(93, 44){$\iota$}
\put(123, 44.5){$f$}
\put(63, 94){$\iota$}
\put(93, 94.5){$f$}
\put(22, 10){$\iota(a^1_{i-1})$}
\put(-2, 25){$M_x(\pi_y(\iota(a^1_{i-1})))$}
\put(8, 23){\vector(-1, -2){4}}
\put(28, 68){$f^{-1}(\zeta^0_i)$}
\put(54, 68){$a^1_{i-1}$}
\put(84, 5){$\zeta^0_i$}
\put(16, 64){$H^1_{i-1}$}
\put(96, 9){$f(H^1_{i-1})$}
\put(73, 42){$\iota(V^1_i)$}
\put(108, 92){$V^1_i$}
\put(128, 5){$M_y(\pi_x(f(a^1_i)))$}
\put(136, 9){\vector(-2, 1){10}}
\put(127, 24){$f(a^1_i)$}
\put(73, 33){$\iota(a^1_i)$}
\put(62, 10){$f(a^1_{i-1})$}
\put(108, 83){$a^1_i$}
\put(104, 59){$\iota^{-1}(\zeta^0_i)$}
\put(63.1, 110.2){$a^2_{i-1}=(f\iota)^{-1}(\zeta^0_i)$}
\put(71.5, 105.2){$=(\iota f)^{-1}(\zeta^0_i)$}
\put(46, 114){$H^2_{i-1}$}
\put(68, 142){$V^2_{i-1}$}
\put(50, 142){$X^2_{i-1}$}
\put(20, 92){$X^1_{i-1}$}
\put(-10, 42){$X^0_{i-1}$}
\put(51, 42){$X^0_i$}
\put(124, 92){$X^1_i$}
\put(151, 42){$X^1_{i+1}$}

\end{picture}
\vspace{1cm}
\begin{center}
Figure 7. Shadowing points for homotopy shadowing in the hyperbolic case.
\end{center}
\end{figure}

Thanks to Lemma~\ref{lmm:intersection} we know that $f(H^1_{i-1})$ and $\iota(V^1_i)$ have $d$ distinct intersection points in $X^0_i$. By Lemma~\ref{lmm:unique-intersect}, there exists a unique point $\zeta^0_i$ among them so that the concatenation of a path $u_i^0$ from $f(z^1_{i-1})$ to $\zeta^0_i$ in $f(H^1_{i-1})$ and a path $s_i^0$ from $\zeta^0_i$ to $\iota(z^1_i)$ in $\iota(V^1_i)$ is homotopic to $\alpha^0_i$ in $X^0_i$. Let us put $z^2_i\equiv (f\iota)^{-1}(\zeta^0_{i+1})=(\iota f)^{-1}(\zeta^0_{i+1})\in X^2_i$. Then, $\tilde{s}^1_i\equiv \iota^{-1}s^0_i$ becomes a path from $\iota^{-1}(\zeta^0_i)=f(z^2_{i-1})$ to $z^1_i$ in $V^1_i$ and $\tilde{u}^1_i\equiv f^{-1}u^0_{i+1}$ becomes a path from $z^1_i$ to $f^{-1}(\zeta^0_{i+1})=\iota(z^2_i)$ in $H^1_i$ (see Figure 7). One may choose a representative for $s^0_i$ in its homotopy class so that
$$l^0_y(\iota\tilde{s}^1_i)=l^0_y(s^0_i)\leq C+1$$
holds by the fact that $M^0_y$ is simply connected and ($\ast$). Since $\alpha^0_i$ is homotopic to $u^0_i\cdot s^0_i$, we see that $u^0_i$ is homotopic to $\alpha^0_i\cdot (s^0_i)^{-1}$. Thus, one may choose a representative for $u^0_i$ in its homotopy class so that 
$$l^0_x(f\tilde{u}^1_i)=l^0_x(u^0_{i+1})\leq l^0_x(\alpha^0_{i+1})+l^0_x(s^0_{i+1})\leq l^0(\alpha^0_{i+1})+l^0_y(s^0_{i+1})\leq C'+C+1$$ 
holds, where we used the fact that for any path $\gamma$ in a vertical-like submanifold $V$ in $X^0$ we have $l^0_x(\gamma)\leq l^0_y(\gamma)$.

 \begin{figure}
\setlength{\unitlength}{1mm}
\begin{picture}(100, 130)(25, 0)

\put(50, 70){\line(1, 0){50}}
\put(100, 70){\line(0, 1){50}}
\put(50, 120){\line(1, 0){50}}
\put(50, 70){\line(0, 1){50}}

\put(10, 0){\line(1, 0){50}}
\put(60, 0){\line(0, 1){50}}
\put(10, 0){\line(0, 1){50}}
\put(10, 50){\line(1, 0){50}}

\put(90, 0){\line(1, 0){50}}
\put(140, 0){\line(0, 1){50}}
\put(90, 0){\line(0, 1){50}}
\put(90, 50){\line(1, 0){50}}

\linethickness{0.5pt}

\put(50, 105){\line(1, 0){50}}
\qbezier(95, 70)(90, 110)(80, 120)
\qbezier(55, 0)(50, 40)(40, 50)
\qbezier(60, 10)(30, 10)(20, 20)
\qbezier(20, 20)(15, 25)(10, 25)
\qbezier(60, 30)(30, 30)(20.5, 20.5)
\qbezier(19.5, 19.5)(15, 15)(10, 15)
\qbezier(97, 0)(102, 40)(112, 50)
\qbezier(90, 10)(120, 10)(130, 20)
\qbezier(130, 20)(135, 25)(140, 25)
\qbezier(90, 30)(120, 30)(129.5, 20.5)
\qbezier(130.6, 19.4)(135, 15)(140, 15)

\linethickness{1.3pt}

\qbezier(63, 105)(72, 105)(88, 105)
\qbezier(88, 105)(92, 91)(93.5, 80.3)
\qbezier(48, 35)(52, 21)(53.5, 10.3)
\qbezier(53.5, 10.3)(43, 10)(33, 13)
\qbezier(98.5, 10.3)(102, 31)(107, 42)
\qbezier(98.5, 10.3)(120, 11)(130, 20)
\qbezier(130, 20)(132, 22)(134, 23)

\put(88, 105){\circle*{1.7}}
\put(48, 35){\circle*{1.7}}
\put(63, 105){\circle*{1.7}}
\put(53.5, 10.3){\circle*{1.7}}
\put(93.5, 80.3){\circle*{1.7}}
\put(33, 13){\circle*{1.7}}
\put(98.5, 10.3){\circle*{1.7}}
\put(134, 23){\circle*{1.7}}
\put(107, 42){\circle*{1.7}}

\put(104, 91){\vector(-3, 1){12}}
\put(47, 116){\vector(3, -1){29}}
\put(60, 69){\vector(-1, -2){9}}
\put(90, 69){\vector(1, -2){9}}
\put(10, 69){\vector(1, -2){9}}
\put(140, 69){\vector(-1, -2){9}}

\small
\put(23, 115){$\tilde{u}^1_i\equiv f^{-1}u^0_{i+1}$}
\put(105, 90){$\tilde{s}^1_i\equiv \iota^{-1}s^0_i$}
\put(89, 107){$a^1_i$}
\put(77, 122){$V^1_i$}
\put(101, 104){$H^1_i$}
\put(34, 52){$\iota(V^1_i)$}
\put(61, 9){$f(H^1_{i-1})$}
\put(108, 52){$\iota(V^1_{i+1})$}
\put(78, 29){$f(H^1_i)$}
\put(40, 6){$u^0_i$}
\put(53, 22){$s^0_i$}
\put(114, 8){$u^0_{i+1}$}
\put(92.5, 22){$s^0_{i+1}$}
\put(48, 13){$\zeta^0_i$}
\put(100, 13){$\zeta^0_{i+1}$}
\put(20, 8){$f(a^1_{i-1})$}
\put(38, 34){$\iota(a^1_i)$}
\put(127, 26){$f(a^1_i)$}
\put(93.5, 41){$\iota(a^1_{i+1})$}
\put(79, 79){$\iota^{-1}(\zeta^0_i)$}
\put(55, 100){$f^{-1}(\zeta^0_{i+1})$}
\put(57, 59){$\iota$}
\put(90, 59){$f$}
\put(137, 59){$\iota$}
\put(10, 59){$f$}

\end{picture}
\vspace{1cm}
\begin{center}
Figure 8. Lifts of paths for homotopy shadowing in the hyperbolic case.
\end{center}
\end{figure}

We define $V^2_{i-1}$ to be the connected component of $f^{-1}(V^1_i)$ containing $z^2_{i-1}$ and $H^2_{i-1}$ to be the connected component of $\iota^{-1}(H^1_{i-1})$ containing $z^2_{i-1}$ (see Figure 7 again). Consider the concatenation $\alpha^1_i\equiv \tilde{s}^1_i\cdot \tilde{u}^1_i$. Then, $\alpha^1_i$ is a path from $f(z^2_{i-1})$ to $\iota(z^2_i)$ via $z^1_i$ in $X^1_i$ (see Figure 8).

\vspace{0.3cm}

\noindent
{\bf Step II:} {\it Induction Step.}

\vspace{0.1cm}

Now we let $n\geq 1$. Suppose we are given a sequence of points $(z^n_i)_{i\in \ZZ}$ in $X^n_i\equiv X^n$, a sequence of horizontal-like submanifolds $(H^n_i)_{i\in \ZZ}$ and vertical-like submanifolds $(V^n_i)_{i\in \ZZ}$ in $X^n_i$ through $z^n_i$. We moreover assume that a sequence of paths $(\alpha^{n-1}_i)_{i\in \ZZ}$ from $f(z^n_{i-1})$ to $\iota(z^n_i)$ is defined. Now, we try to define $(z^{n+1}_i)_{i\in \ZZ}$, $(H^{n+1}_i)_{i\in \ZZ}$, $(V^{n+1}_i)_{i\in \ZZ}$ and $(\alpha^n_i)_{i\in \ZZ}$. 

Thanks to Lemma~\ref{lmm:intersection} and Proposition~\ref{prp:induction}  we know that $f(H^n_{i-1})$ and $\iota(V^n_i)$ has $d$ distinct intersection points in $X^{n-1}_i$. By Lemma~\ref{lmm:unique-intersect}, there exists a unique point $\zeta^{n-1}_i$ among the $d$ the intersection points as well as two paths $u^{n-1}_i$ from $\iota^{-1}(\zeta^{n-2}_i)$ to $\zeta^{n-1}_i$ in $f(H^n_{i-1})$ and $s^{n-1}_i$ from $\zeta^{n-1}_i$ to $f^{-1}(\zeta^{n-2}_{i+1})$ in $\iota(V^n_i)$ so that the concatenation $u^{n-1}_i\cdot s^{n-1}_i$ is homotopic to $\alpha^{n-1}_i$. We put $\tilde{u}^n_i\equiv f^{-1}u^{n-1}_{i+1}$ and $\tilde{s}^n_i\equiv \iota^{-1}s^{n-1}_i$. 

\begin{lmm} 
\label{lmm:s-initial}
One may choose a representative for $s^{n-1}_i$ in its homotopy class so that
$$l_y^0(\iota^{n-1}(s^{n-1}_i))=l_y^0(\iota^n(\tilde{s}^n_i))\leq C_0\equiv C+1$$ 
holds for $n\geq 1$.
\end{lmm}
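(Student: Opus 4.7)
The plan is to use the vertical-like structure of the submanifold containing $s^{n-1}_i$ combined with the simple-connectedness of $M^0_y$ to choose a representative whose $y$-length is close to the $y$-distance between the endpoints. First, by iterating Lemma~\ref{lmm:submfd} together with Proposition~\ref{prp:induction}, the submanifold $\iota^n(V^n_i)\subset X^0$ is vertical-like of degree one. A proper local diffeomorphism of degree one between connected manifolds of the same dimension is a diffeomorphism, so $\pi^0_y$ restricts to a diffeomorphism $\iota^n(V^n_i)\to M^0_y$. Consequently for any path $\gamma$ in $\iota^n(V^n_i)$ one has $l^0_y(\gamma)=l(\pi^0_y(\gamma))$, and since $M^0_y$ is simply connected, within the homotopy class (rel endpoints) of $\gamma$ we may choose a representative whose $y$-length exceeds $d^0_y$ of the $y$-projections of the endpoints by at most $1$.

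The task then reduces to bounding the $y$-distance between the two endpoints $A=\iota^{n-1}(\zeta^{n-1}_i)$ and $B=\iota^n(z^n_i)$ in $X^0$ by a constant controlled by $C$. For $A$, the commutativity $\iota^{n-1}f=f\iota^{n-1}$ together with $\zeta^{n-1}_i\in f(H^n_{i-1})$ yields $A\in f(\iota^{n-1}(H^n_{i-1}))\subset f(X^1)$, so $\pi^0_y(A)\in\pi^0_y(f(X^1))$, a set of $y$-diameter at most $C$ by condition~$(\ast)$. For $B$, I would track the $y$-coordinate through the construction: since $\iota$ preserves the $y$-coordinate in the product structure $X^k=M^k_x\times M^k_y$ (with $M^k_y=M^0_y$), and the horizontal-like submanifolds $H^n_i$ are obtained by iterated $\iota$-preimages of the initial horizontal section $H^1_i$ at $y=\pi^0_y(z^1_i)$, every point of $H^n_i$ has $y$-coordinate equal to $\pi^0_y(z^1_i)$. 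Since $z^n_i\in H^n_i\cap V^n_i$, this gives $\pi^0_y(\iota^n(z^n_i))=\pi^n_y(z^n_i)=\pi^0_y(z^1_i)$; the pseudo-orbit bound $l^0(\alpha^0_i)\leq C'$ then places $\pi^0_y(z^1_i)$ within $C'$ of $\pi^0_y(f(z^1_{i-1}))\in\pi^0_y(f(X^1))$, controlling $\pi^0_y(B)$ by the same bounded region.

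The proof naturally proceeds by induction on $n$, the base case $n=1$ being the estimate already indicated in Step~I. The main obstacle is the second endpoint: propagating the $y$-coordinate of $z^n_i$ through the nested $f^{-1}$'s in the construction. The crucial identification $\pi^0_y(\iota^n(z^n_i))=\pi^0_y(z^1_i)$, via the $y$-invariance of $\iota$ along the horizontal-like submanifolds $H^n_i$, is what ensures that the endpoint-distance remains bounded by a constant independent of $n$, after which the near-geodesic choice in $M^0_y$ yields the required estimate $l^0_y(\iota^{n-1}(s^{n-1}_i))\leq C_0$.
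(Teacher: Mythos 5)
Your proof is correct and takes essentially the same route as the paper's, which is merely a one-line assertion (``due to the fact that $M^0_y$ is simply connected and $(\ast)$'') with the computation left implicit. You have filled in the details in the right way: the vertical-like structure of $\iota^n(V^n_i)$ makes $\pi^0_y$ a diffeomorphism onto the simply connected $M^0_y$, so the homotopy class rel endpoints is rigid and one may take a near-geodesic lift; the first endpoint $\iota^{n-1}(\zeta^{n-1}_i)$ lands in $f(X^1)$ by commuting $\iota^{n-1}$ past $f$; and the observation that $\iota$ preserves the $y$-coordinate in the product structure, so that $H^n_i$ sits at the constant $y$-level $\pi^0_y(z^1_i)$ and hence $\pi^0_y(\iota^n(z^n_i))=\pi^0_y(z^1_i)$, is exactly the point needed to control the second endpoint.

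One discrepancy worth noting: your careful endpoint count gives a bound of the form $C+C'+1$ (the $C'$ entering because $\pi^0_y(z^1_i)$ is only within $l^0_y(\alpha^0_i)\leq C'$ of the set $\pi^0_y(f(X^1))$ of diameter $C$), whereas the lemma as stated claims $C_0=C+1$ with no $C'$. Your version appears to be the accurate one --- the paper seems to have dropped the $C'$ term. This is harmless downstream, since the only properties of $C_0$ used in the sequel (in particular in Lemma~\ref{lmm:u-initial} and Corollary~\ref{cor:hyp-estimate}) are its finiteness and independence of $i,n$, and replacing $C+1$ by $C+C'+1$ preserves both.
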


\begin{proof}
This is due to the fact that $M^0_y$ is simply connected and ($\ast$) as in Step I.
\end{proof}

By using this lemma we also have the following crucial claim.

\begin{lmm} 
\label{lmm:u-initial}
One may choose a representative for $u^{n-1}_i$ in its homotopy class so that
$$l^0_x(f^{n-1}(u^{n-1}_i))=l^0_x(f^n(\tilde{u}^n_{i-1}))\leq C_1\equiv \frac{2C_0}{\lambda-1}+(C'+C+1)$$ 
holds for $n\geq 1$.
\end{lmm}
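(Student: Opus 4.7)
The plan is an induction on $n$ whose engine is the $\lambda$-contraction of $f$ on vertical-like paths. Since by Lemma~\ref{lmm:unique-intersect} the concatenation $u^{n-1}_i \cdot s^{n-1}_i$ is homotopic to $\alpha^{n-1}_i$ relative to endpoints, I would choose the representative of $u^{n-1}_i$ (in its homotopy class) given by $\alpha^{n-1}_i \cdot (s^{n-1}_i)^{-1}$, yielding the triangle inequality
$$l^0_x(f^{n-1}(u^{n-1}_i)) \leq l^0_x(f^{n-1}(\alpha^{n-1}_i)) + l^0_x(f^{n-1}(s^{n-1}_i)).$$
For the base case $n=1$, the computation already appearing in Step~I gives $l^0_x(\alpha^0_i) \leq l^0(\alpha^0_i) \leq C'$, and since $s^0_i$ lies in the vertical-like $\iota(V^1_i)$ we have $l^0_x(s^0_i) \leq l^0_y(s^0_i) \leq C_0$ by Lemma~\ref{lmm:s-initial}. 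Writing $A_n := l^0_x(f^{n-1}(u^{n-1}_i))$, this gives $A_1 \leq C' + C + 1 \leq C_1$.

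For $n \geq 2$ I would exploit the decomposition $\alpha^{n-1}_i = \tilde{s}^{n-1}_i \cdot \tilde{u}^{n-1}_i$ from the preceding stage. Since $\tilde{u}^{n-1}_i = f^{-1}(u^{n-2}_i)$ and $f$ preserves the horizontal norm on horizontal-like tangent vectors (Lemma~\ref{lmm:expansion}), the horizontal contribution is exactly $l^0_x(f^{n-1}(\tilde{u}^{n-1}_i)) = l^0_x(f^{n-2}(u^{n-2}_i)) = A_{n-1}$. The two vertical contributions from $\tilde{s}^{n-1}_i \in V^{n-1}_i$ and $s^{n-1}_i \in \iota(V^n_i)$ are controlled by the key geometric estimate
$$l^0_y(f^{n-1}(\gamma)) \leq \frac{1}{\lambda^{n-1}} \, l^0_y(\iota^{n-1}(\gamma))$$
for $\gamma$ in a vertical-like tower submanifold inside $X^{n-1}$. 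I would prove this estimate by induction on $n-1$: peeling off one factor of $f$ through Corollary~\ref{cor:expansion} gives $l^{n-2}_y(f\gamma) \leq \lambda^{-1} l^{n-2}_y(\iota\gamma) = \lambda^{-1} l^{n-1}_y(\gamma)$; the relation $f(V^k_i) = V^{k-1}_{i+1}$ from Step~I keeps $f\gamma$ inside a vertical-like tower submanifold at the next lower level; and the preservation of $y$-length by $\iota$ (Lemma~\ref{lmm:expansion}) identifies $l^0_y \circ \iota^k$ with $l^k_y$, chaining the estimates through the tower.

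Applying this estimate to $\gamma = s^{n-1}_i$ and to $\gamma = \tilde{s}^{n-1}_i$ (noting that $\iota^{n-1}(\tilde{s}^{n-1}_i) = \iota^{n-2}(s^{n-2}_i)$), combining with the cone-field inequality $l^0_x \leq l^0_y$ for vertical-like paths in $X^0$ and Lemma~\ref{lmm:s-initial}, each of the two vertical terms is bounded by $C_0 / \lambda^{n-1}$. This produces the recursion $A_n \leq A_{n-1} + 2 C_0 / \lambda^{n-1}$, and summing the geometric series gives
$$A_n \leq A_1 + 2 C_0 \sum_{k=1}^{n-1} \lambda^{-k} \leq (C' + C + 1) + \frac{2C_0}{\lambda - 1} = C_1,$$
completing the induction. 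The main obstacle will be establishing the key geometric contraction claim, which requires both the structural fact that the tower $\{V^k_i\}$ is consistently propagated by $f$ (so that Corollary~\ref{cor:expansion} applies at each of the $n-1$ stages) and careful use of the commutativity $\iota f = f \iota$ to rearrange $f^{n-1}$-pushforwards into $\iota^{n-1}$-pushforwards, where Lemma~\ref{lmm:s-initial} can be invoked.
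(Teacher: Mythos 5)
Your proof is correct and is essentially the paper's own argument: the paper simply unrolls your one-step recursion $u^{n-1}_i\sim\tilde{s}^{n-1}_i\cdot\tilde{u}^{n-1}_i\cdot(s^{n-1}_i)^{-1}$ all the way down to level $0$, obtaining the same bound as the sum of the Step I estimate $C'+C+1$ and two geometric series $C_0\sum_{k\geq1}\lambda^{-k}$, using the same ingredients (Lemma~\ref{lmm:s-initial}, the iterated vertical contraction coming from Lemma~\ref{lmm:expansion}/Corollary~\ref{cor:expansion} together with the propagation of the $V$-tower under $f$, and $l^0_x\leq l^0_y$ on vertical-like paths). The only slip is that $\tilde{u}^{n-1}_i=f^{-1}u^{n-2}_{i+1}$ rather than $f^{-1}u^{n-2}_i$, so your induction must be carried out uniformly in $i$ (define $A_n$ as a bound valid for all $i$), exactly as the index shift $u^0_{i+n-1}$ appears in the paper's telescoped identity.
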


\begin{proof}
Recall that one has already seen $l^0_x(u^0_i)\leq l^0_x(\alpha^0_i)+l^0_x(s^0_i)\leq C'+C+1$. Since $f\tilde{u}^1_{i-1}=u^0_i$, this shows the claim for $n=1$.

By definition, the two concatenated paths $u^{n-1}_i\cdot s^{n-1}_i$ and $\tilde{s}^{n-1}_i\cdot \tilde{u}^{n-1}_i$ are homotopic in $X^{n-1}_i$, so $f^{n-1}(u^{n-1}_i\cdot s^{n-1}_i)=f^{n-1}u^{n-1}_i\cdot f^{n-1}s^{n-1}_i$ and $f^{n-1}(\tilde{s}^{n-1}_i\cdot \tilde{u}^{n-1}_i)=f^{n-1}\tilde{s}^{n-1}_i\cdot f^{n-1}\tilde{u}^{n-1}_i$ are homotopic in $X^0_{i+n-1}$ for $n\geq 1$. Thus, 
\begin{align*}
& f^{n-1}u^{n-1}_{i} \cdot f^{n-1}s^{n-1}_{i} \cdot f^{n-2}s^{n-2}_{i+1} \cdots fs^1_{i+n-2} \\
\sim \ & f^{n-1}\tilde{s}^{n-1}_{i}\cdot f^{n-1}\tilde{u}^{n-1}_{i}\cdot
    f^{n-2}s^{n-2}_{i+1}\cdots fs^1_{i+n-2} \\
= \ & f^{n-1}\tilde{s}^{n-1}_{i}\cdot f^{n-2}u^{n-2}_{i+1}\cdot
    f^{n-2}s^{n-2}_{i+1}\cdots fs^1_{i+n-2} \\
\sim \ & f^{n-1}\tilde{s}^{n-1}_{i}\cdot f^{n-2}\tilde{s}^{n-2}_{i+1}\cdot f^{n-2}\tilde{u}^{n-2}_{i+1}\cdot f^{n-3}s^{n-3}_{i+2}\cdots fs^1_{i+n-2} \\
= \ & f^{n-1}\tilde{s}^{n-1}_{i}\cdot f^{n-2}\tilde{s}^{n-2}_{i+1}\cdot f^{n-3}
  u^{n-3}_{i+2}\cdot f^{n-3}s^{n-3}_{i+2}\cdots fs^1_{i+n-2} \\
\sim \ & \cdots \\
\sim \ & f^{n-1}\tilde{s}^{n-1}_{i}\cdots f\tilde{s}^1_{i+n-2}\cdot u^0_{i+n-1}.
\end{align*}
In particular,
$$(f^{n-1}\tilde{s}^{n-1}_i \cdots f\tilde{s}^1_{i+n-2})\cdot u^0_{i+n-1}\cdot (f^{n-1}s^{n-1}_i \cdots fs^1_{i+n-2})^{-1}\sim f^{n-1}u^{n-1}_{i}=f^n\tilde{u}^n_{i-1}$$
in $X^0_{i+n-1}$. Since $s^{n-1}_i$ is taken so that $l^0_y(\iota^n\tilde{s}^n_i)\leq C_0\equiv C+1$ by the previous Lemma~\ref{lmm:s-initial}, it follows that
\begin{align*}
     & \ l^0_x(f^n \tilde{u}^n_{i-1}) \\
\leq & \ l^0_x((f^{n-1}\tilde{s}^{n-1}_i\cdots f\tilde{s}^1_{i+n-2})\cdot 
         u^0_{i+n-1}\cdot (f^{n-1}s^{n-1}_i\cdots fs^1_{i+n-2})^{-1})\\
=    & \ l^0_x(f^{n-1}\tilde{s}^{n-1}_i) + \cdots +l^0_x(f\tilde{s}^1_{i+n-2})+
         l^0_x(u^0_{i+n-1})+l^0_x(f^{n-1}s^{n-1}_i)+\cdots 
         +l^0_x(fs^1_{i+n-2}) \\
\leq & \ l^0_y(f^{n-1}\tilde{s}^{n-1}_i) + \cdots +l^0_y(f\tilde{s}^1_{i+n-2})+
         l^0_x(u^0_{i+n-1})+l^0_y(f^{n-1}s^{n-1}_i)+\cdots 
         +l^0_y(fs^1_{i+n-2}) \\
\leq & \ \frac{C_0}{\lambda^{n-1}} + \cdots + \frac{C_0}{\lambda}
         +(C'+C+1) +
         \frac{C_0}{\lambda^{n-1}} + \cdots + \frac{C_0}{\lambda} \\
\leq & \ \frac{2C_0}{\lambda-1}+(C'+C+1)
\end{align*}
holds, where we used the fact that for any path $\gamma$ in a vertical-like submanifold $V$ in $X^0$ we have $l^0_x(\gamma)\leq l^0_y(\gamma)$. Thus one obtains $l^0_x(f^n\tilde{u}^n_{i-1})\leq C_1$ for some constant $C_1\geq 0$ independent of $i\in\ZZ$ and $n\geq 1$. This completes the proof.
\end{proof}

\begin{cor} 
\label{cor:hyp-estimate}
For all $m\geq 0$ we have
$$l^0((\iota f)^m\tilde{u}^{2m}_i)\leq \frac{2C_1}{\lambda^m} \quad {\it and} \quad l^0((\iota f)^m\iota \tilde{s}^{2m+1}_i)\leq \frac{2C_0}{\lambda^m}.$$
\end{cor}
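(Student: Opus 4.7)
The plan is to exploit the commutativity $\iota f = f\iota$ to rewrite $(\iota f)^m = \iota^m f^m$ and $(\iota f)^m \iota = \iota^{m+1} f^m$, so that the estimates decouple into iterated one-sided applications of the contraction estimates from Corollary~\ref{cor:expansion}. The factor~$2$ in the bounds comes from splitting $l^0 = l^0_x + l^0_y$ together with $l^0_y \leq l^0_x$ on horizontal-like paths and $l^0_x \leq l^0_y$ on vertical-like paths, both immediate from the cone definitions.

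For the first estimate, since $f$ preserves horizontal length (by $\|v\|^{n+1}_h = \|Df(v)\|^n_h$ in Lemma~\ref{lmm:expansion}), Lemma~\ref{lmm:u-initial} yields $l^m_x(f^m \tilde{u}^{2m}_i) = l^0_x(f^{2m} \tilde{u}^{2m}_i) \leq C_1$. Combining Corollary~\ref{cor:expansion} with horizontal-length preservation gives $l^{n-1}_x(\iota \gamma) \leq l^n_x(\gamma)/\lambda$ whenever $\gamma$ lies in a horizontal-like submanifold of $X^n$. Using commutativity, $\iota^k f^m \tilde{u}^{2m}_i = f^m \iota^k \tilde{u}^{2m}_i$ lies in $f^m(H^{2m-k}_i)$, which is horizontal-like by the inclusion $\iota(H^{n+1}_i) \subset H^n_i$ (from the inductive construction in Step~II of the proof of Theorem~\ref{thm:hyp-shadowing}) combined with Lemma~\ref{lmm:submfd} and Proposition~\ref{prp:induction}. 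Iterating the $\iota$-contraction $m$ times gives $l^0_x((\iota f)^m \tilde{u}^{2m}_i) \leq C_1/\lambda^m$, and $l^0 \leq 2 l^0_x \leq 2 C_1/\lambda^m$ follows.

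The second estimate is dual. Lemma~\ref{lmm:s-initial} together with $\iota$'s preservation of vertical length ($\|v\|^{n+1}_v = \|D\iota(v)\|^n_v$) gives $l^{2m+1}_y(\tilde{s}^{2m+1}_i) = l^0_y(\iota^{2m+1}\tilde{s}^{2m+1}_i) \leq C_0$. Corollary~\ref{cor:expansion} then yields $l^{n-1}_y(f\gamma) \leq l^n_y(\gamma)/\lambda$ for vertical-like $\gamma$; since the inductive construction provides $f(V^{n+1}_{i-1}) \subset V^n_i$, each $f^k \tilde{s}^{2m+1}_i$ remains in a vertical-like submanifold, and iteration yields $l^{m+1}_y(f^m \tilde{s}^{2m+1}_i) \leq C_0/\lambda^m$. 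Applying $\iota^{m+1}$ preserves $y$-length, so $l^0_y((\iota f)^m \iota \tilde{s}^{2m+1}_i) \leq C_0/\lambda^m$; combined with $l^0_x \leq l^0_y$ on this vertical-like path, we obtain $l^0 \leq 2 C_0/\lambda^m$. The main technical point to verify is the preservation of horizontal-like (resp.\ vertical-like) structure under each step of the iteration, so that Corollary~\ref{cor:expansion} is applicable at every level; this rests on the inclusions $\iota(H^{n+1}_i)\subset H^n_i$ and $f(V^{n+1}_{i-1}) \subset V^n_i$ built into the inductive construction together with Lemma~\ref{lmm:submfd} lifted via Proposition~\ref{prp:induction}.
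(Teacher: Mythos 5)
Your proposal is correct and takes essentially the same route as the paper: feed Lemmas~\ref{lmm:u-initial} and~\ref{lmm:s-initial} into the expansion/contraction estimates of Corollary~\ref{cor:expansion}, iterating via the commutativity $\iota f=f\iota$ and the containments $\iota(H^{n+1}_i)\subset H^n_i$, $f(V^{n+1}_{i-1})\subset V^n_i$, and finally doubling via $l^0_y\leq l^0_x$ on horizontal-like paths and $l^0_x\leq l^0_y$ on vertical-like paths. The paper's proof is a two-line compression of exactly this argument; your version just makes the inductive bookkeeping explicit.
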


\begin{proof}
Lemmas~\ref{lmm:u-initial} and \ref{lmm:s-initial} together with Corollary~\ref{cor:expansion} imply 
\begin{equation*}
l^0_x((\iota f)^m\tilde{u}^{2m}_i)\leq \frac{C_1}{\lambda^m}\quad {\it and} \quad l^0_y((\iota f)^m\iota \tilde{s}^{2m+1}_i)\leq \frac{C_0}{\lambda^m}.
\end{equation*}
Since $(\iota f)^m\tilde{u}^{2m}_i$ is contained in a horizontal-like submanifold and $(\iota f)^m\iota \tilde{s}^{2m+1}_i$ is contained in a vertical-like submanifold, we have $l^0_y((\iota f)^m\tilde{u}^{2m}_i)\leq l^0_x((\iota f)^m\tilde{u}^{2m}_i)$ and $l^0_x((\iota f)^m\iota \tilde{s}^{2m+1}_i)\leq l^0_y((\iota f)^m\iota \tilde{s}^{2m+1}_i)$. The conclusion then follows.
\end{proof}

Let us put $z^{n+1}_{i-1}\equiv (f\iota)^{-1}(\zeta^{n-1}_i)=(\iota f)^{-1}(\zeta^{n-1}_i)$. Moreover, we define $V^{n+1}_{i-1}$ to be the connected component of $f^{-1}(V^n_i)$ containing $z^{n+1}_{i-1}$ and $H^{n+1}_{i-1}$ to be the connected component of $\iota^{-1}(H^n_{i-1})$ containing $z^{n+1}_{i-1}$. Consider the concatenation $\alpha^n_i\equiv \tilde{s}^n_i\cdot \tilde{u}^n_i$. Then, $\alpha^n_i$ becomes a path from $f(z^{n+1}_{i-1})$ to $\iota(z^{n+1}_i)$ via $z^n_i$.

We can continue the construction above to obtain a sequence of points $(z^{n+1}_i)_{i\in \ZZ}$, a sequence of horizontal-like submanifolds $(H^{n+1}_i)_{i\in \ZZ}$, a sequence of vertical-like submanifolds $(V^{n+1}_i)_{i\in \ZZ}$ and a sequence of paths $(\alpha^n_i)_{i\in \ZZ}$ for each $n\geq 0$. This inductive construction also defines a sequence of homotopies between homotopy pseudo-orbits. More precisely,

\begin{lmm} 
\label{lmm:hom-iota}
The sequence of paths $(\tilde{u}^n_i)_{i\in \ZZ}$ gives a homotopy between two homotopy pseudo-orbits $((z^n_i)_{i\in \ZZ}, (\alpha^{n-1}_i)_{i\in \ZZ})$ and $((\iota z^{n+1}_i)_{i\in \ZZ}, (\iota \alpha^n_i)_{i\in \ZZ})$ for each $n\geq 1$.
\end{lmm}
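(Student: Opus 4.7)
The plan is to verify the three conditions of Definition~\ref{hpo-homotopy} for the multivalued system $\iota, f : X^n \to X^{n-1}$ using $\beta_i \equiv \tilde u^n_i$ as the connecting homotopies. Concretely one must check that (a) $\tilde u^n_i(0) = z^n_i$ and $\tilde u^n_i(1) = \iota z^{n+1}_i$, (b) $l^n(\tilde u^n_i)$ is bounded uniformly in $i$, and (c) the concatenations $\alpha^{n-1}_i\cdot \iota(\tilde u^n_i)$ and $f(\tilde u^n_{i-1})\cdot \iota(\alpha^n_i)$ are homotopic relative to endpoints in $X^{n-1}$.

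Condition (a) is immediate from the construction: $\tilde u^n_i = f^{-1}u^{n-1}_{i+1}$ is the $f$-lift of $u^{n-1}_{i+1}$ starting at $z^n_i$, and since $u^{n-1}_{i+1}$ terminates at $\zeta^{n-1}_{i+1} = f\iota(z^{n+1}_i)$, its endpoint is $\iota(z^{n+1}_i)$. For (b) I would note that $\tilde u^n_i$ lies in the horizontal-like submanifold $H^n_i$, so its $y$-length is controlled by its $x$-length, and it suffices to bound $l^n_x(\tilde u^n_i)$. Lemma~\ref{lmm:expansion}, applied inductively via Proposition~\ref{prp:induction}, says that $f$ preserves the horizontal norm at every level, hence $l^n_x(\tilde u^n_i) = l^0_x(f^n\tilde u^n_i) \leq C_1$ by Lemma~\ref{lmm:u-initial}, giving $l^n(\tilde u^n_i) \leq 2C_1$ uniformly in $i$.

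For (c) I would simply substitute the definitions. By construction $f\tilde u^n_{i-1} = u^{n-1}_i$, $\iota\tilde s^n_i = s^{n-1}_i$, and $\alpha^n_i = \tilde s^n_i\cdot \tilde u^n_i$, so the right-hand side of (c) equals $u^{n-1}_i\cdot s^{n-1}_i\cdot \iota(\tilde u^n_i)$. By the defining property of $\zeta^{n-1}_i$ provided by Lemma~\ref{lmm:unique-intersect} we have $\alpha^{n-1}_i \sim u^{n-1}_i\cdot s^{n-1}_i$ in $X^{n-1}$, and concatenating with $\iota(\tilde u^n_i)$ on the right yields exactly the homotopy required in (c).

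The only real difficulty here is bookkeeping: one must track carefully the level in the tower at which each object lives and remember that $\tilde u^n_i$ is paired with the covering $f$ while $\tilde s^n_i$ is paired with the inclusion $\iota$. Once the indices and levels are correctly aligned, the three conditions follow from already-established structural results, and no new analytic estimate is needed.
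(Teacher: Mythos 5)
Your proof is correct and takes essentially the same route as the paper: you verify the endpoint identities and then use $f\tilde u^n_{i-1}=u^{n-1}_i$, $\iota\tilde s^n_i=s^{n-1}_i$ together with the relation $\alpha^{n-1}_i\sim u^{n-1}_i\cdot s^{n-1}_i$ supplied by Lemma~\ref{lmm:unique-intersect}, which is exactly the computation in the paper's proof. Your explicit verification of the bounded-length requirement in Definition~\ref{hpo-homotopy} (via Lemma~\ref{lmm:expansion}, Proposition~\ref{prp:induction} and Lemma~\ref{lmm:u-initial}) is a detail the paper leaves implicit, and it is consistent with the estimates already established there.
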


\begin{proof}
Put $\delta_i=\tilde{u}^n_i$, $\gamma_i=\alpha^{n-1}_i=\tilde{s}^{n-1}_i\cdot \tilde{u}^{n-1}_i$ and $\gamma'_i=\iota(\tilde{s}^n_i\cdot \tilde{u}^n_i)=\iota(\tilde{s}^n_i)\cdot \iota(\tilde{u}^n_i)$. Then, $\delta_i(0)=\tilde{u}^n_i(0)=z^n_i$ and $\delta_i(1)=\tilde{u}^n_i(1)=\iota(z^{n+1}_i)$. Moreover, $\gamma_i\cdot \iota(\delta_i)= \tilde{s}^{n-1}_i\cdot \tilde{u}^{n-1}_i\cdot \iota(\tilde{u}^n_i) \sim u^{n-1}_i\cdot s^{n-1}_i \cdot \iota(\tilde{u}^n_i) = f(\tilde{u}^n_{i-1})\cdot \iota(\tilde{s}^n_i)\cdot \iota(\tilde{u}^n_i) = f(\delta_{i-1})\cdot \gamma'_i$. This completes the proof.
\end{proof}

Similarly one can show

\begin{lmm} 
\label{lmm:hom-f}
The sequence of paths $(\tilde{s}^n_i)_{i\in \ZZ}$ gives a homotopy between two homotopy pseudo-orbits $((z^n_i)_{i\in \ZZ}, (\alpha^{n-1}_i)_{i\in \ZZ})$ and $((f z^{n+1}_{i-1})_{i\in \ZZ}, (f\alpha^n_{i-1})_{i\in \ZZ})$ for each $n\geq 1$.
\end{lmm}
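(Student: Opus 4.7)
The plan is to mirror the proof of Lemma~\ref{lmm:hom-iota}, substituting the stable paths $\tilde{s}^n_i$ for the unstable paths $\tilde{u}^n_i$. A small bookkeeping wrinkle is that, by construction, $\tilde{s}^n_i$ runs from $f(z^{n+1}_{i-1})$ to $z^n_i$---the opposite direction from what Definition~\ref{hpo-homotopy} literally requires. I will therefore take the connecting paths to be $\beta_i \equiv (\tilde{s}^n_i)^{-1}$, so that $\beta_i(0) = z^n_i$ and $\beta_i(1) = f(z^{n+1}_{i-1})$ as required; this is presumably what the statement tacitly means by ``the sequence of paths $(\tilde{s}^n_i)$.'' A uniform bound on $l^1(\beta_i)$ is already furnished by Lemma~\ref{lmm:s-initial}.

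First I would check that $((f z^{n+1}_{i-1})_{i\in\ZZ}, (f\alpha^n_{i-1})_{i\in\ZZ})$ is a bona fide homotopy pseudo-orbit: the endpoint conditions reduce to $f\iota(z^{n+1}_{i-1}) = \iota f(z^{n+1}_{i-1})$, which is the commutation $f\iota = \iota f$ central to the inductive construction, while a uniform bound on $l^0(f\alpha^n_{i-1})$ follows from the length estimates of Lemma~\ref{lmm:s-initial}, Lemma~\ref{lmm:u-initial} and Corollary~\ref{cor:hyp-estimate} applied at the appropriate level.

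The main step is to verify the compatibility identity
\[
\alpha^{n-1}_i \cdot \iota(\beta_i) \;\sim\; f(\beta_{i-1}) \cdot f(\alpha^n_{i-1})
\]
of Definition~\ref{hpo-homotopy}. Here I would invoke the four identities that already underpin the proof of Lemma~\ref{lmm:hom-iota}: $\iota(\tilde{s}^n_i) = s^{n-1}_i$ and $f(\tilde{u}^n_{i-1}) = u^{n-1}_i$ (directly from the definitions of $\tilde{s}^n_i$ and $\tilde{u}^n_i$), $\alpha^{n-1}_i \sim u^{n-1}_i \cdot s^{n-1}_i$ (from the construction of $\zeta^{n-1}_i$ via Lemma~\ref{lmm:unique-intersect}), and $\alpha^n_{i-1} = \tilde{s}^n_{i-1} \cdot \tilde{u}^n_{i-1}$ (by construction in Step~II). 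Substituting $\beta_i = (\tilde{s}^n_i)^{-1}$, both sides then collapse, via cancellation of adjacent pair-inverses, to the common path $u^{n-1}_i = f(\tilde{u}^n_{i-1})$, yielding the required homotopy.

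I do not anticipate a substantive obstacle: the argument is the precise ``stable'' mirror of the ``unstable'' argument of Lemma~\ref{lmm:hom-iota}. The only real subtleties are the orientation reversal of $\tilde{s}^n_i$ and the attendant index shift relating $\alpha^n_{i-1}$ to the connecting paths of the second pseudo-orbit; once these are set up correctly the verification is entirely formal and relies on nothing beyond what has already been established in the inductive construction of $(z^n_i, \alpha^{n-1}_i)$ and in the proof of the sister Lemma~\ref{lmm:hom-iota}.
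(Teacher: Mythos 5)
Your proposal is correct and is exactly the argument the paper intends: the paper omits this proof as ``similar'' to Lemma~\ref{lmm:hom-iota}, and your verification via the identities $\iota(\tilde{s}^n_i)=s^{n-1}_i$, $f(\tilde{u}^n_{i-1})=u^{n-1}_i$, $\alpha^{n-1}_i\sim u^{n-1}_i\cdot s^{n-1}_i$ and $\alpha^n_{i-1}=\tilde{s}^n_{i-1}\cdot\tilde{u}^n_{i-1}$, with both sides reducing to $u^{n-1}_i$, is precisely that mirrored computation. Your observation that $\tilde{s}^n_i$ runs from $f(z^{n+1}_{i-1})$ to $z^n_i$ and so must be reversed (equivalently, the homotopy read from the second pseudo-orbit to the first) to fit Definition~\ref{hpo-homotopy} is a correct reading of a point the paper leaves implicit.
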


 \begin{figure}
\setlength{\unitlength}{1mm}
\begin{picture}(100, 150)(25, 0)

\put(25, 0){\line(1, 0){40}}
\put(65, 0){\line(0, 1){40}}
\put(25, 0){\line(0, 1){40}}
\put(25, 40){\line(1, 0){40}}
\put(85, 30){\line(1, 0){40}}
\put(125, 30){\line(0, 1){40}}
\put(85, 30){\line(0, 1){40}}
\put(85, 70){\line(1, 0){40}}
\put(25, 60){\line(1, 0){40}}
\put(65, 60){\line(0, 1){40}}
\put(25, 60){\line(0, 1){40}}
\put(25, 100){\line(1, 0){40}}
\put(85, 90){\line(1, 0){40}}
\put(125, 90){\line(0, 1){40}}
\put(85, 90){\line(0, 1){40}}
\put(85, 130){\line(1, 0){40}}

\put(83, 43){\vector(-1, -1){16}}
\put(67, 72){\vector(1, -1){16}}
\put(83, 103){\vector(-1, -1){16}}
\put(67, 132){\vector(1, -1){16}}

\put(30, 5){\circle*{1.5}}
\put(45, 5){\circle*{1.5}}
\put(45, 20){\circle*{1.5}}
\put(53, 20){\circle*{1.5}}
\put(53, 28){\circle*{1.5}}
\put(60, 35){\circle*{1.5}}
\put(55, 30){\circle*{0.5}}
\put(56.5, 31.5){\circle*{0.5}}
\put(58, 33){\circle*{0.5}}
\put(105, 35){\circle*{1.5}}
\put(105, 50){\circle*{1.5}}
\put(45, 80){\circle*{1.5}}
\put(53, 80){\circle*{1.5}}
\put(113, 110){\circle*{1.5}}
\put(113, 118){\circle*{1.5}}
\put(75, 137){\circle*{0.8}}
\put(75, 139){\circle*{0.8}}
\put(75, 141){\circle*{0.8}}

\put(74, 38){$\iota$}
\put(75, 67){$f$}
\put(74, 98){$\iota$}
\put(75, 127){$f$}
\put(43, 43){$X^1_i$}
\put(43, 103){$X^3_{i-1}$}
\put(103, 73){$X^2_i$}
\put(103, 133){$X^4_{i-1}$}
\put(28, 8){$z^1_i$}
\put(47, 4){$\iota z^2_i$}
\put(33, 20){$\iota f z^3_{i-1}$}
\put(52, 16){$\iota f \iota z^4_{i-1}$}
\put(37, 30){$\iota f\iota f z^5_{i-2}$}
\put(60, 44){$w_i^{(\infty)}$}
\put(62.5, 42.5){\vector(-1, -3){2}}
\put(23, 22){\vector(1, -1){16}}
\put(67, 6){\vector(-3, 1){21}}
\put(23, 34){\vector(2, -1){26.5}}
\put(67, 20){\vector(-3, 1){13}}

\small
\put(18, 22){$\tilde{u}^1_i$}
\put(68, 4){$\iota \tilde{s}^2_i$}
\put(11, 34){$\iota f\tilde{u}^3_{i-1}$}
\put(68, 18){$\iota f\iota \tilde{s}^4_{i-1}$}
\put(100, 42){$\tilde{s}^2_i$}
\put(46, 83){$\tilde{u}^3_{i-1}$}
\put(105, 113){$\tilde{s}^4_{i-1}$}

\put(107, 34){$z^2_i$}
\put(107, 49){$fz^3_{i-1}$}
\put(40, 75){$z^3_{i-1}$}
\put(53, 75){$\iota z^4_{i-1}$}
\put(114, 106){$z^4_{i-1}$}
\put(114, 121){$fz^5_{i-2}$}

\linethickness{1pt}
\put(30, 5){\line(1, 0){15}}
\put(45, 5){\line(0, 1){15}}
\put(45, 20){\line(1, 0){8}}
\put(53, 20){\line(0, 1){8}}
\put(105, 35){\line(0, 1){15}}
\put(45, 80){\line(1, 0){8}}
\put(113, 110){\line(0, 1){8}}

\end{picture}
\vspace{1cm}
\begin{center}
Figure 9. Pushing down homotopies along a zigzag path.
\end{center}
\end{figure}

By these two Lemmas, we get a sequence of homotopy pseudo-orbits which are successively homotopic:
\begin{align*}
((z^1_i)_{i\in \ZZ}, (\alpha^0_i)_{i\in \ZZ})
& \sim ((\iota z^2_i)_{i\in \ZZ}, (\iota \alpha^1_i)_{i\in \ZZ}) \\
& \sim ((\iota f z^3_{i-1})_{i\in \ZZ}, (\iota f\alpha^2_{i-1})_{i\in \ZZ}) \\
& \sim ((\iota f\iota z^4_{i-1})_{i\in \ZZ}, (\iota f\iota \alpha^3_{i-1})_{i\in \ZZ}) \\
& \sim ((\iota f\iota f z^5_{i-2})_{i\in \ZZ}, (\iota f\iota f\alpha^4_{i-2})_{i\in \ZZ}) \\
& \sim ((\iota f\iota f\iota z^6_{i-2})_{i\in \ZZ}, (\iota f\iota f\iota\alpha^5_{i-2})_{i\in \ZZ}) \\
& \sim \cdots,
\end{align*}
where the first homotopy is given by $(\tilde{u}^1_i)_{i\in \ZZ}$, the second homotopy is given by $(\iota \tilde{s}^2_i)_{i\in \ZZ}$, the third homotopy is given by $(\iota f\tilde{u}^3_{i-1})_{i\in \ZZ}$, the fourth homotopy is given by $(\iota f \iota \tilde{s}^4_{i-1})_{i\in \ZZ}$, and so on (see Figure 9). We let
$$
w_i^{(1)}\equiv z^1_i, \ 
w_i^{(2)}\equiv \iota z^2_i, \ 
w_i^{(3)}\equiv \iota f z^3_{i-1}, \ 
w_i^{(4)}\equiv \iota f \iota z^4_{i-1}, \ 
w_i^{(5)}\equiv \iota f \iota f z^5_{i-2}, \ 
\cdots$$
be the sequence of points in $X^1_i$ and
$$
\delta^{(1)}_i\equiv \alpha^0_i, \ 
\delta^{(2)}_i\equiv \iota \alpha^1_i, \ 
\delta^{(3)}_i\equiv \iota f \alpha^2_{i-1}, \ 
\delta^{(4)}_i\equiv \iota f\iota \alpha^3_{i-1}, \ 
\delta^{(5)}_i\equiv \iota f\iota f \alpha^4_{i-2}, \
\cdots$$
be the sequence of paths in $X^0_i$ in the sequence of homotopy pseudo-orbits above. It then follows from the inductive construction that $(z^1, \alpha^0)=((z^1_i)_{i\in \ZZ}, (\alpha^0_i)_{i\in\ZZ})$ is homotopic to $(w^{(m)}, \delta^{(m)})=((w^{(m)}_i)_{i\in \ZZ}, (\delta^{(m)}_i)_{i\in\ZZ})$ for any $m\geq 1$.

\vspace{0.3cm}

\noindent
{\bf Step III:} {\it Convergence.}

\vspace{0.1cm}

We build a homotopy by concatenating an infinite sequence of homotopies. First recall that the homotopy pseudo-orbit $(w^{(1)}, \delta^{(1)})$ is homotopic to $(w^{(2)}, \delta^{(2)})$ by $(\tilde{u}^1_i)_{i\in\ZZ}$, $(w^{(2)}, \delta^{(2)})$ is homotopic to $(w^{(3)}, \delta^{(3)})$ by $(\iota \tilde{s}^2_i)_{i\in\ZZ}$, $(w^{(3)}, \delta^{(3)})$ is homotopic to $(w^{(4)}, \delta^{(4)})$ by $(\iota f\tilde{u}^3_{i-1})_{i\in\ZZ}$, $(w^{(4)}, \delta^{(4)})$ is homotopic to $(w^{(5)}, \delta^{(5)})$ by $(\iota f \iota \tilde{s}^4_{i-1})_{i\in\ZZ}$, etc. Let $I_n=[1-\frac{1}{2^n}, 1-\frac{1}{2^{n+1}}]$. By rescaling the domains, one may assume that the domain of $\tilde{u}^1_i$ is $I_0$, the domain of $\iota \tilde{s}^2_i$ is $I_1$, the domain of $\iota f\tilde{u}^3_{i-1}$ is $I_2$, the domain of definition of $\iota f \iota \tilde{s}^4_{i-1}$ is $I_3$, etc. Then, the concatenation of the homotopies:
$$(\tilde{u}^1_i) \cdot (\iota \tilde{s}^2_i) \cdot (\iota f\tilde{u}^3_{i-1}) \cdot (\iota f \iota \tilde{s}^4_{i-1}) \cdot (\iota f \iota f \tilde{u}^5_{i-2}) \cdots$$
appearing above is defined on $[0, 1)$ and continuously extends to $[0, 1]$ by Corollary~\ref{cor:hyp-estimate}. In particular, the sequence of points $w_i^{(m)}$ ($m=1, 2, \ldots$) converges to some $w^{(\infty)}_i$ in $X^1_i$.

Since $\alpha^n_i\equiv \tilde{s}^n_i\cdot \tilde{u}^n_i$, one can again apply Corollary~\ref{cor:hyp-estimate} to show that $l^0(\delta^{(m)}_i)\to 0$ as $m\to \infty$. It follows that $\iota (w^{(\infty)}_{i+1})=f(w^{(\infty)}_i)$, and the orbit $w^{(\infty)}\equiv (w^{(\infty)}_i)_{i\in \ZZ}$ with constant homotopies is homotopic to $((z^1_i)_{i\in \ZZ}, (\alpha^0_i)_{i\in \ZZ})$. This means that the homotopy pseudo-orbit $(z, \alpha)$ is homotopic to the orbit $w^{(\infty)}$. 
\end{proof}

\subsection{Uniqueness of shadowing} 
\label{sub:hyp-unique}

In this subsection we prove two results. First we show that if two orbits of a hyperbolic system are homotopic, then they are equal. In particular, it follows that the shadowing orbit found in Theorem~\ref{thm:hyp-shadowing} is unique. Based on this fact we finish the proof of Theorem~\ref{thm:hyp-funct}. We note that the uniqueness result Proposition~\ref{prp:hom-hyp} is essential in the procedure to construct a Hubbard tree for a complex H\'enon map (see Lemma 5.14 of~\cite{I2}).

\begin{prp}
\label{prp:hom-hyp}
If two orbits of a hyperbolic system are homotopic, then they are equal.
\end{prp}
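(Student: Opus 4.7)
The plan is to adapt the iterative shadowing construction of Theorem~\ref{thm:hyp-shadowing} to the situation of two competing orbits, showing that the bi-infinite homotopy between them can be progressively refined into zigzag paths whose horizontal and vertical legs shrink geometrically, forcing the two endpoints to agree.

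First, Definition~\ref{hpo-homotopy} applied to the two orbits (both regarded as homotopy pseudo-orbits with constant connecting homotopies) furnishes paths $\beta_i : [0,1] \to X^1$ with $\beta_i(0) = x_i$, $\beta_i(1) = x'_i$, uniformly bounded length $l^1(\beta_i) \leq C''$, and $\iota\beta_i \sim f\beta_{i-1}$ in $X^0$ for every $i \in \ZZ$.

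Next, apply Lemma~\ref{lmm:unique-intersect} with $p_h = x_{i-1}$, $p_v = x'_i$, with $H$ the horizontal-like submanifold of degree one through $x_{i-1}$ in $X^1$, $V$ the vertical-like submanifold of degree one through $x'_i$ in $X^1$, and $\gamma = \iota\beta_i$. This selects a unique intersection point $p^0_i \in f(H) \cap \iota(V)$ together with paths $u^0_i = f\tilde u^1_{i-1}\subset f(H)$ and $s^0_i = \iota\tilde s^1_i\subset \iota(V)$ such that $\iota\beta_i \sim u^0_i \cdot s^0_i$. The simple-connectedness of $M^0_y$ together with $(\ast)$ yields, as in Lemma~\ref{lmm:s-initial}, the bound $l^0_y(\iota\tilde s^1_i) \leq C_0$ uniformly in $i$, and the argument of Lemma~\ref{lmm:u-initial} (with the starting connecting-path length controlled by $l^0(\iota\beta_i) \leq C''$) furnishes a uniform bound $l^0_x(f\tilde u^1_{i-1}) \leq C_1$.

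Iterating to higher levels via Proposition~\ref{prp:induction}, we replicate Step~II of the proof of Theorem~\ref{thm:hyp-shadowing}, taking horizontal-like submanifolds through the higher-level realization of $x_{i-1}$ and vertical-like submanifolds through that of $x'_i$ at each stage. Corollary~\ref{cor:hyp-estimate} then delivers the exponential decay
\[
l^0\bigl((\iota f)^m \tilde u^{2m}_i\bigr) \leq \frac{2C_1}{\lambda^m}, \qquad l^0\bigl((\iota f)^m \iota \tilde s^{2m+1}_i\bigr) \leq \frac{2C_0}{\lambda^m}.
\]
Concatenating the pushed-down paths as in Step~III produces a continuous zigzag from $x_i$ to a limit point $w^{(\infty)}_i \in X^1$; by the mirror construction anchored instead at $x'_i$, an analogous zigzag runs from $x'_i$ to the same limit, because the two constructions are forced through the same sequence of higher-level intersection points by the uniqueness clause of Lemma~\ref{lmm:unique-intersect}. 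Consequently $x_i = w^{(\infty)}_i = x'_i$ for every $i$.

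The principal obstacle is in verifying that the zigzag anchored at $x_i$ and the mirror zigzag anchored at $x'_i$ indeed converge to a common limit. This relies on two facts: (i) the uniqueness clause of Lemma~\ref{lmm:unique-intersect}, which pins down the intersection point at each level purely from the homotopy class of the current path, so that the ``toward $x_i$'' and ``toward $x'_i$'' decompositions are forced to pass through the same $p^n_i$; and (ii) the exponential contraction of Corollary~\ref{cor:hyp-estimate}, which shrinks the horizontal and vertical offsets between $x_i$, $x'_i$, and $p^n_i$ to zero in the limit. Together these collapse the two orbits onto each other.
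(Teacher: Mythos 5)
Your construction breaks down at two places, and the second is fatal. First, the iteration itself does not go through: in the proof of Theorem~\ref{thm:hyp-shadowing} the two decomposition paths $\tilde{s}^n_i$ and $\tilde{u}^n_i$ are both anchored at the \emph{same} point $z^n_i$, which is exactly what allows them to be concatenated into the new connecting path $\alpha^n_i=\tilde{s}^n_i\cdot\tilde{u}^n_i$ and makes the lengths contract. In your hybrid version $\tilde{s}^1_i=\iota^{-1}s^0_i$ ends at $x'_i$ (it lies in the vertical-like submanifold through $x'_i$), while $\tilde{u}^1_i=f^{-1}u^0_{i+1}$ starts at $x_i$, so the concatenation does not exist; to form a level-one homotopy pseudo-orbit you must insert a lift of $\beta_i^{-1}$ to bridge the gap, and the estimates of Lemmas~\ref{lmm:s-initial}, \ref{lmm:u-initial} and Corollary~\ref{cor:hyp-estimate} give no decay whatsoever for that inserted piece. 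Relatedly, the ``mirror construction'' uses a different pair of submanifolds (horizontal-like through the $x'$-orbit, vertical-like through the $x$-orbit) and a different connecting path, so the uniqueness clause of Lemma~\ref{lmm:unique-intersect}, which is stated for one fixed $H$, $V$ and $\gamma$, does not force the two constructions through the same intersection points.

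Second, even granting that both zigzags converge to a common limit $w^{(\infty)}_i$, the inference $x_i=w^{(\infty)}_i=x'_i$ is a non sequitur: in Theorem~\ref{thm:hyp-shadowing} the zigzag connects the initial point to the shadowing orbit, and these are in general different points; that is the whole content of shadowing. To conclude equality one must show that the connecting homotopies $\beta_i$ themselves can be deformed to constant paths, i.e.\ that their reduced lengths vanish, and nothing in your scheme produces such an estimate. This is what the paper's proof supplies, by a different route: it projects the two orbits and the homotopy by $\pi^1_x$ to the associated expanding system $\iota,\sigma:M^1_x\to M^0_x$ and invokes Proposition~\ref{prp:exp-unique} and Corollary~\ref{cor:hom-exp} to kill the horizontal component, so each $\beta_i$ lies in a vertical fiber; it then uses $f(\beta_{i-1})\sim\iota(\beta_i)$ and Corollary~\ref{cor:expansion} to obtain the backward contraction $L_y(\iota(\beta_i))\leq \lambda^{-1}(L_y(\iota(\beta_{i-1}))+\varepsilon)$, whence $L_y(\iota(\beta_i))=0$ and the $\beta_i$ are constant. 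An estimate of this kind on the $\beta_i$ themselves (not on auxiliary decomposition paths of a shadowing construction) is the missing ingredient in your argument.
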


\begin{proof}
Let $\iota, f : X^1\to X^0$ be a hyperbolic system, where $X^n=M^n_x\times M^n_y$. Fix $y_0\in M^1_y$ and set $\tau=\tau_{y_0} : M_x^1\to X^1$ by $\tau(x)\equiv (x, y_0)$. Then, $\iota, \sigma : M^1_x\to M^0_x$ becomes a expanding system, where $\sigma\equiv \pi^0_x\circ f\circ \tau$.

Now, take two orbits $z=(z_i)_{i\in \ZZ}$ and $z'=(z'_i)_{i\in \ZZ}$ of the hyperbolic system which are homotopic by a homotopy $\beta=(\beta_i)_{i\in \ZZ}$. Then, $\pi^1_x(z)=(\pi^1_x(z_i))_{i\in \ZZ}$ and $\pi^1_x(z')=(\pi^1_x(z'_i))_{i\in \ZZ}$ become two orbits of the expanding system $\iota, \sigma : M^1_x\to M^0_x$ which are homotopic by the homotopy $\pi^1_x(\beta)=(\pi^1_x(\beta_i))_{i\in \ZZ}$. By the discussion on the uniqueness of the shadowing orbit for expanding systems, one can take the homotopy $\beta$ so that $l^1(\pi^1_x(\beta_i))=0$. It follows that each path $\beta_i$ is contained in a vertical manifold of the form $M^1_y(x_i)$.

Given a path $\delta=\delta(t)$ ($0\leq t\leq 1$) in $X^0_i$, we define $L_y(\delta)$ to be the infimum of $l^0_y(\delta')$ where $\delta'$ is homotopic to $\delta$ relative to endpoints. By the assumption, we have $f(\beta_{i-1})\sim \iota(\beta_i)$ in $X^0_i$ relative to endpoints with $l^1(\beta_i)\leq C''$ for some constant $C''\geq 0$. Then, this relation gives 
$$L_y(\iota(\beta_i))=L_y(f(\beta_{i-1})).$$ 
For any $\varepsilon>0$ we let $s$ be a path which is homotopic to $\iota(\beta_i)$ relative to endpoints with the property $L_y(\iota(\beta_i))+\varepsilon \geq l^0_y(s)$. Then, $f\iota^{-1}(s)$ is homotopic to $f(\beta_i)$ relative to endpoints. It follows from Corollary~\ref{cor:expansion} that $\lambda\cdot l^0_y(f\iota^{-1}(s))\leq l^0_y(s)$, so 
$$\lambda\cdot L_y(f(\beta_i))\leq L_y(\iota(\beta_i))+\varepsilon.$$
Combining these estimates we may conclude that
$$L_y(\iota(\beta_i))\leq \frac{1}{\lambda}(L_y(\iota(\beta_{i-1}))+\varepsilon).$$ 

Put $h(x)\equiv \frac{1}{\lambda}(x+\varepsilon)$. Then, the above inequality can be rewritten as $L_y(\iota(\beta_i))\leq h(L_y(\iota(\beta_{i-1})))$. By applying this repeatedly we obtain $L_y(\iota(\beta_i))\leq h^n(L_y(\iota(\beta_{i-n})))$. Since $L_y(\iota(\beta_{i-n}))\leq C''$ and since $h$ is monotone increasing, we get $L_y(\iota(\beta_i))\leq h^n(C'')$. The function $h$ has a unique attractive fixed point at $x=\varepsilon/(\lambda-1)$. Letting $n$ go to infinity gives
$$L_y(\iota(\beta_i))\leq \frac{\varepsilon}{\lambda-1}.$$
Since $\varepsilon>0$ is arbitrary, we conclude $L_y(\iota(\beta_i))=0$. This means that one may take $(\beta_i)_{i\in \ZZ}$ to be constant homotopies. Thus, we are done. 
\end{proof}

\begin{proof}[Proof of Theorem~\ref{thm:hyp-funct}.]
Take an orbit $x\in X^{\pm\infty}$. This defines a homotopy pseudo-orbit $h(x)$ of $\iota, g : Y^1\to Y^0$ by Lemma~\ref{lmm:orbhpo}. Thanks to Theorem ~\ref{thm:hyp-shadowing} and Proposition~\ref{prp:hom-hyp}, there exists a unique orbit $y$ of $\iota, g : Y^1\to Y^0$ which is homotopic to $h(x)$. Define $h^{\infty} : X^{\pm\infty}\to Y^{\pm\infty}$ by $h^{\infty}(x)\equiv y$. Then, one can easily verify $h^{\infty} \hat{f}=\hat{g}h^{\infty}$.

We next prove the continuity of $h^{\infty}$. Take two orbits $x=(x_i)_{i\in\ZZ}$ and $\tilde{x}=(\tilde{x}_i)_{i\in\ZZ}$ of $\iota, f : X^1\to X^0$ and let $h(x)=((z^1_i)_{i\in \ZZ}, (\alpha^0_i)_{i\in \ZZ})$ and $h(\tilde{x})=((\tilde{z}^1_i)_{i\in \ZZ}, (\tilde{\alpha}^0_i)_{i\in \ZZ})$ be the corresponding homotopy pseudo-orbits. Let $(w_i)_{i\in \ZZ}\equiv h^{\infty}(x)$ and $(\tilde{w}_i)_{i\in \ZZ}\equiv h^{\infty}(\tilde{x})$ be their unique shadowing orbits. As in Step II of the proof of the shadowing theorem, these orbits are obtained as $w_0=\lim_{n\to \infty}(\iota f)^n z_0^{2n+1}$ and $\tilde{w}_0=\lim_{n\to \infty}(\iota f)^n \tilde{z}_0^{2n+1}$. Then, for any $\varepsilon>0$ there exists $N>0$ such that 
$$d^1(w_0, (\iota f)^Nz_0^{2N+1})<\varepsilon \quad \mathrm{and} \quad d^1(\tilde{w}_0, (\iota f)^N\tilde{z}_0^{2N+1})<\varepsilon$$
hold. For this $N$, we can choose $\delta>0$ so that $d^1(x_i, \tilde{x}_i)<\delta$ implies $d^1(z^1_i, \tilde{z}^1_i)<\varepsilon$ ($-N\leq i\leq N$) by the continuity of $h^1$. It follows that $d_x^{2N+1}(z_0^{2N+1}, \tilde{z}_0^{2N+1})<\varepsilon$ and $d_y^{2N+1}(z_0^{2N+1}, \tilde{z}_0^{2N+1})<\varepsilon$ hold. Lemma~\ref{lmm:expansion} together with Proposition~\ref{prp:induction} imply that $d_x^1((\iota f)^Nz_0^{2N+1}, (\iota f)^N\tilde{z}_0^{2N+1}) < \varepsilon/\lambda^N$ and $d_y^1((\iota f)^Nz_0^{2N+1}, (\iota f)^N\tilde{z}_0^{2N+1}) < \varepsilon/\lambda^N$. Hence, 
$$d^1((\iota f)^Nz_0^{2N+1}, (\iota f)^N\tilde{z}_0^{2N+1}) < \frac{2\varepsilon}{\lambda^N} < 2\varepsilon.$$
Combining this with the above two estimates we obtain $d^1(w_0, \tilde{w}_0)<4 \varepsilon$ for $x, \tilde{x}\in X^{\pm\infty}$ with $d^1(x_i, \tilde{x}_i)<\delta$ ($-N\leq i\leq N$). This shows the continuity of $h^{\infty}$ and hence finishes the proof. 
\end{proof}

\end{section}

\begin{section}{Associated expanding systems of hyperbolic systems}
\label{sec:associated}

We start with the following theorem which is similar to Theorems~\ref{thm:exp-equiv} and \ref{thm:hyp-equiv}.

\begin{thm}
\label{thm:exphyp-equiv}
A homotopy equivalence between a hyperbolic system $\mathcal{X}=(X^0, X^1; \iota, f)$ and an expanding system $\mathcal{Y}=(Y^0, Y^1; \iota, g)$ induces a topological conjugacy between $\hat{f} : X^{\pm\infty}\to X^{\pm\infty}$ and $\hat{g} : Y^{\pm\infty}\to Y^{\pm\infty}$.
\end{thm}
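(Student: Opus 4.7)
My plan is to bootstrap the two functoriality results — Theorem~\ref{thm:exp-funct} for an expanding target and Theorem~\ref{thm:hyp-funct} for a hyperbolic target — into a full conjugacy. Let $h : \mathcal{X} \to \mathcal{Y}$ and $k : \mathcal{Y} \to \mathcal{X}$ be the homotopy semi-conjugacies realizing the homotopy equivalence, so that $kh$ is homotopic to $\mathrm{id}_{\mathcal{X}}$ and $hk$ is homotopic to $\mathrm{id}_{\mathcal{Y}}$. Since the target $\mathcal{X}$ of $k$ is hyperbolic, Theorem~\ref{thm:hyp-funct} directly produces a continuous semi-conjugacy $k^{\infty} : Y^{\pm\infty} \to X^{\pm\infty}$ between $\hat{g}$ and $\hat{f}$.

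The main obstacle, and the only place where genuinely new work is required, is producing the reverse semi-conjugacy $h^{\infty} : X^{\pm\infty} \to Y^{\pm\infty}$: Theorem~\ref{thm:exp-funct} as stated only yields a map on forward orbits, whereas the statement of the present theorem is formulated for bi-infinite orbit spaces. My plan is to extend the expanding case to bi-infinite orbits by a gluing argument. Given $x = (x_i)_{i \in \ZZ} \in X^{\pm\infty}$, Lemma~\ref{lmm:orbhpo} produces a bi-infinite homotopy pseudo-orbit $h(x)$ of $\mathcal{Y}$. For each $N \in \ZZ$, I apply Theorem~\ref{thm:exp-shadowing} to the forward tail of $h(x)$ starting at index $N$ to obtain a shadowing forward orbit $(y^N_i)_{i \geq N}$ of $\mathcal{Y}$. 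Corollary~\ref{cor:hom-exp} then forces consistency: for $N_1 < N_2$, both $(y^{N_1}_i)_{i \geq N_2}$ and $(y^{N_2}_i)_{i \geq N_2}$ are forward orbits of $\mathcal{Y}$ homotopic to the same forward tail of $h(x)$, hence they coincide. The $y^N$'s therefore glue into a well-defined bi-infinite orbit $h^{\infty}(x) \in Y^{\pm\infty}$. The intertwining identity $h^{\infty} \circ \hat{f} = \hat{g} \circ h^{\infty}$ is routine from the construction, and the continuity of $h^{\infty}$ follows from the same contraction estimate used in Subsection~\ref{sub:exp-unique}.

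With both $h^{\infty}$ and $k^{\infty}$ in hand, I will invoke the functorial properties developed in Section~\ref{sec:functorial}, which guarantee that the assignment $h \mapsto h^{\infty}$ respects composition and sends homotopic homotopy semi-conjugacies to the same induced map. Applied to $kh$ being homotopic to $\mathrm{id}_{\mathcal{X}}$ and $hk$ being homotopic to $\mathrm{id}_{\mathcal{Y}}$, this yields $k^{\infty} \circ h^{\infty} = \mathrm{id}_{X^{\pm\infty}}$ and $h^{\infty} \circ k^{\infty} = \mathrm{id}_{Y^{\pm\infty}}$, so $h^{\infty}$ is the required topological conjugacy. The overall argument is thus a hybrid of the proofs of Theorems~\ref{thm:exp-equiv} and~\ref{thm:hyp-equiv}, in which the bi-infinite gluing step is the only ingredient not already furnished by the earlier sections.
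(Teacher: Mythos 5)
Your proposal follows essentially the same route as the paper: the conjugacy is obtained by feeding the homotopy equivalence into the functoriality results (Theorems~\ref{thm:exp-funct} and~\ref{thm:hyp-funct}) and then applying the functorial Corollaries~\ref{cor:identity} and~\ref{cor:composition} of Section~\ref{sec:functorial} to get $h^{\infty}k^{\infty}={\rm id}$ and $k^{\infty}h^{\infty}={\rm id}$, which is exactly the paper's combined proof. The only difference is that you make explicit the extension of the expanding-case functoriality to bi-infinite orbit spaces — a point the paper leaves implicit — and your tail-gluing via Corollary~\ref{cor:hom-exp} is sound; just note additionally that the shadowing construction at index $i$ depends only on data at indices $\geq i$, so the tail-wise homotopies are consistent and the glued orbit is homotopic to $h(x)$ as a bi-infinite homotopy pseudo-orbit, which is what Corollaries~\ref{cor:coincide} and~\ref{cor:composition} actually require.
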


The proof of this theorem is given in Section~\ref{sec:functorial}.

A specific situation where we can apply the above theorem is the following. Given a hyperbolic system $\mathcal{X}=(M^0_x\times M^0_y, M^1_x\times M^1_y; \iota, f)$ one can associate an expanding system as follows. Choose a point $y_0\in M^1_y$ and put 
$$\sigma_{y_0}\equiv \pi^0_x\circ f\circ \tau_{y_0} : M^1_x \longrightarrow M^0_x$$
and
$$\iota_{y_0}\equiv \pi^0_x\circ \iota \circ \tau_{y_0} : M^1_x\longrightarrow M^0_x,$$
where $\tau_{y_0} : M^1_x\to M^1_x\times M^1_y$ is given by $\tau_{y_0}(x)=(x, y_0)$. It is easy to see that $(M^0_x, M^1_x; \iota_{y_0}, \sigma_{y_0})$ becomes an expanding system.

\begin{dfn} 
\label{dfn:associated}
We call $(M^0_x, M^1_x; \iota_{y_0}, \sigma_{y_0})$ an {\it associated expanding system} of $\mathcal{X}$. 
\end{dfn}

We first examine the dependence on the choice of $y_0\in M^1_y$ in this definition.

\begin{prp} 
\label{prp:independence}
The homotopy equivalence class of an associated expanding system does not depend on the choice of $y_0\in M^1_y$.
\end{prp}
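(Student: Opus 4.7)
The key preliminary observation is that $\iota_{y_0}$ in fact does \emph{not} depend on $y_0$. Since $\iota:X^1\hookrightarrow X^0=M^0_x\times M^0_y$ is an inclusion,
\[
\iota_{y_0}(x)\;=\;\pi^0_x\circ \iota(x,y_0)\;=\;x
\]
equals the canonical inclusion $\iota_x:M^1_x\hookrightarrow M^0_x$ regardless of $y_0$. Only $\sigma_{y_0}$ depends on $y_0$, and this dependence is continuous in $y_0$. Since $M^1_y=M^0_y$ is simply connected (hence path--connected), any two choices $y_0,y_1\in M^1_y$ can be joined by a path $\gamma:[0,1]\to M^1_y$, and
\[
G_t(x)\;\equiv\;\pi^0_x\circ f\bigl(x,\gamma(t)\bigr)
\]
provides a homotopy from $\sigma_{y_0}$ to $\sigma_{y_1}$; reversing $\gamma$ gives $G'_t\equiv G_{1-t}$, a homotopy from $\sigma_{y_1}$ to $\sigma_{y_0}$.

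\textbf{Construction and verification.} Writing $\mathcal{X}_y\equiv(M^0_x,M^1_x;\iota_x,\sigma_y)$, I would exhibit the homotopy equivalence by the pair
\[
h\;\equiv\;(\mathrm{id},\mathrm{id};G,H_c):\mathcal{X}_{y_0}\to\mathcal{X}_{y_1},\qquad k\;\equiv\;(\mathrm{id},\mathrm{id};G',H_c):\mathcal{X}_{y_1}\to\mathcal{X}_{y_0},
\]
where $H_c$ denotes the constant homotopy at $\iota_x$; taking $H_c$ on the $\iota$--side is legitimate precisely because $\iota_{y_0}=\iota_{y_1}=\iota_x$. The composition formula from Section~\ref{sec:hsc} then yields
\[
kh\;=\;(\mathrm{id},\mathrm{id};\,G\cdot G',\,H_c\cdot H_c),
\]
and for every $x$ the loop $(G\cdot G')(x)=G(x)\cdot G(x)^{-1}$ is nullhomotopic relative to endpoints. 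Taking the homotopies $S$ and $T$ of Definition~\ref{dfn:homotopic} to be constant at the identity maps, condition (i) reduces to exactly this nullhomotopy, while condition (ii) reduces to a trivial identity among constant paths at $\iota_x$. Hence $kh$ is homotopic to $\mathrm{id}_{\mathcal{X}_{y_0}}$; the relation $hk\simeq \mathrm{id}_{\mathcal{X}_{y_1}}$ follows by the symmetric argument with $\gamma$ replaced by $\gamma^{-1}$.

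\textbf{Main obstacle.} There is no serious analytic difficulty here; the proposition is essentially a functoriality statement for the assignment $y\mapsto \mathcal{X}_y$ combined with path--connectedness of $M^1_y$. The only care required is careful bookkeeping through the composition formula $kh=(k^0h^0,k^1h^1;k^0G\cdot G'h^1,k^0H\cdot H'h^1)$ and Definition~\ref{dfn:homotopic}, in order to locate the loop $G\cdot G^{-1}$ and apply its nullhomotopy in the correct position; every other homotopy involved is constant.
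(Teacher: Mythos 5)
The opening observation---that $\iota_{y_0}=\pi^0_x\circ\iota\circ\tau_{y_0}$ is independent of $y_0$---is incorrect, and the rest of your argument depends on it. The product decomposition $X^1\cong M^1_x\times M^1_y$ in Section~\ref{sec:hypsyst} is \emph{not} the one inherited from the ambient product $X^0=M^0_x\times M^0_y$; it is the one induced by the covering $\rho_f=(\pi^0_x\circ f,\pi^0_y\circ\iota)$. Concretely, the vertical fibers $\{x\}\times M^1_y$ in product coordinates on $X^1$ are the components of $\rho_f^{-1}(\{x_0\}\times M^0_y)$, i.e.\ the preimages under $f$ of ambient vertical lines. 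These are vertical-like submanifolds, not literal vertical lines in $X^0$. The defining property of the product coordinates gives only that $\pi^0_y\circ\iota(x,y)=y$ and that $\pi^0_x\circ f(x,y)$ is independent of $y$; it says nothing of the sort for $\pi^0_x\circ\iota$, which genuinely varies with $y$ in general. (For a H\'enon map $f(x,y)=(x^2-by,x)$ with $b\neq 0$, a sheet of $\rho_f^{-1}(\{x_0\}\times M^0_y)$ satisfies $x^2-by=x_0$, whose ambient $x$-coordinate visibly depends on $y$.) Thus $\iota_{y_0}\neq\iota_{y_1}$ in general, and taking the constant homotopy $H_c$ does not make $h=(\mathrm{id},\mathrm{id};G,H_c)$ a homotopy semi-conjugacy: Definition~\ref{dfn:hsc} requires a homotopy from $h^0\iota=\iota_{y_0}$ to $\iota h^1=\iota_{y_1}$, and a constant path cannot connect two distinct maps.

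The fix is exactly the move you already make on the $\sigma$-side: set $H_t\equiv\iota_{y(t)}$ and $H'_t\equiv\iota_{y(1-t)}$. Then $h=(\mathrm{id},\mathrm{id};G,H)$ and $k=(\mathrm{id},\mathrm{id};G',H')$ are genuine homotopy semi-conjugacies, and condition~(ii) of Definition~\ref{dfn:homotopic} for $kh\simeq\mathrm{id}_{\mathcal{Y}_0}$ is verified by the same ``path followed by its reverse is nullhomotopic'' argument that you give for condition~(i). With this one correction the argument coincides with the paper's.
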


\begin{proof}
Let $(M^0_x\times M^0_y, M^1_x\times M^1_y; \iota, f)$ be a hyperbolic system. Take $y_i\in M^1_y$ and put $\mathcal{Y}_i=(Y_i^0, Y_i^1; \iota_i, \sigma_i)\equiv (M^0_x, M^0_y; \iota_{y_i}, \sigma_{y_i})$ for $i=0, 1$. We will show that $\mathcal{Y}_0$ and $\mathcal{Y}_1$ are homotopy equivalent.

Since $M^1_y$ is assumed to be connected, there exists a path $y : [0, 1]\to M^1_y$ such that $y(0)=y_0$ and $y(1)=y_1$. Let $h^0 : Y^0_0\to Y^0_1$, $k^0 : Y^0_1\to Y^0_0$, $h^1 : Y^1_0\to Y^1_1$ and $k^1 : Y^1_1\to Y^1_0$ be the identity maps. Define $G_t\equiv \sigma_{y(t)}$ and $H_t\equiv \iota_{y(t)}$, then we see that $h=(h^0, h^1; G, H)$ is a homotopy semi-conjugacy from $\mathcal{Y}_0$ to $\mathcal{Y}_1$. Similarly, define $G'_t\equiv \sigma_{y(1-t)}$ and $H'_t\equiv \iota_{y(1-t)}$, then we see that $k=(k^0, k^1; G', H')$ is a homotopy semi-conjugacy from $\mathcal{Y}_1$ to $\mathcal{Y}_0$. 

We will show that $kh=({\rm id}_{Y^0_0}, {\rm id}_{Y^1_0}; k^0G\cdot G'\cdot h^1, k^0H\cdot H'h^1)$ is homotopic to ${\rm id}_{\mathcal{Y}_0}=({\rm id}_{Y^0_0}, {\rm id}_{Y^1_0}; \sigma_0, \iota_0)$ in the sense of Definition~\ref{dfn:homotopic}. Let $S_t\equiv {\rm id}_{Y^1_0}$ and $T_s\equiv {\rm id}_{Y^0_0}$. We compute $\sigma_0{\rm id}_{Y^1_0}(x)\cdot \sigma_0(x)^{-1}=\sigma_0(x)\cdot\sigma_0(x)=\sigma_0(x)$ and $(k^0G(x)\cdot G'h^1(x))^{-1}\cdot {\rm id}_{Y^0_0}\sigma_0(x)=(\sigma_{y(t)}(x)\cdot \sigma_{y(1-t)}(x))^{-1}\cdot\sigma_0(x)=\sigma_{y(t)}(x)\cdot \sigma_{y(1-t)}(x)$, which are homotopic. This means that the condition (i) in Definition~\ref{dfn:homotopic} is verified. Similarly one can verify the condition (ii) in Definition~\ref{dfn:homotopic}. Hence,  $kh$ is homotopic to ${\rm id}_{\mathcal{Y}_0}$.

Similarly $hk$ is shown to be homotopic to the identity semi-conjugacy of $\mathcal{Y}_1$. This means that $\mathcal{Y}_0$ and $\mathcal{Y}_1$ are homotopy equivalent, and hence finishes the proof. 
\end{proof}

It then follows from Theorem~\ref{thm:exp-equiv} that {\it the topological conjugacy class of the shift map on the orbit space of an associated expanding system $(M^0_x, M^1_x; \iota_{y_0}, \sigma_{y_0})$ does not depend on $y_0\in M^1_y$.} Hence, we may drop $y_0$ in the notation of an associated expanding system.

\begin{thm} 
\label{thm:associated-equiv}
A hyperbolic system $\mathcal{X}=(M_x^0\times M_y^0, M_x^1\times M_y^1; \iota, f)$ with $M^0_y$ being contractible and its associated expanding system $\mathcal{Y}= (M_x^0, M_x^1; \iota, \sigma)$ are homotopy equivalent.
\end{thm}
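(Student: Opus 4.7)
The plan is to build mutually inverse (up to homotopy) homotopy semi-conjugacies $h : \mathcal{X} \to \mathcal{Y}$ and $k : \mathcal{Y} \to \mathcal{X}$ using the product coordinates on $X^1$ and the contractibility of $M^0_y$. Fix $y_0 \in M^0_y = M^1_y$, and recall from the discussion after Proposition~\ref{prp:buildup} that in the product coordinates $X^1 \cong M^1_x \times M^1_y$, the map $f$ has the triangular form $f(x,y) = (g(x), \bar h(x,y))$ with $g = \sigma_{y_0}$, while $\iota(x,y) = (\bar\iota(x,y), y)$ for some $\bar\iota$ satisfying $\bar\iota(x,y_0) = \iota_{y_0}(x)$. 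Since $M^0_y$ is contractible and (as a manifold) an ANR, I fix a strong deformation retraction $\eta : M^0_y \times [0,1] \to M^0_y$ with $\eta_0 = \mathrm{id}$, $\eta_1 \equiv y_0$, and $\eta_t(y_0) = y_0$ for all $t$.

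Next I would define $h$ by $h^0 = \pi^0_x$, $h^1 = \pi^1_x$: the identity $h^0 f = g \circ h^1$ holds on the nose, so $G$ is a constant homotopy, while $H_t(x,y) := \bar\iota(x, \eta_t(y))$ gives a homotopy from $h^0\iota(x,y) = \bar\iota(x,y)$ to $\iota_{y_0} h^1(x,y) = \bar\iota(x,y_0)$. I would define $k$ by $k^0(x) = k^1(x) = (x, y_0)$: then $\iota k^1 = k^0 \iota_{y_0}$ on the nose, so $H'$ is constant, while $G'_t(x) := (g(x), \eta_{1-t}(\bar h(x,y_0)))$ connects $k^0 \sigma_{y_0}(x) = (g(x), y_0)$ to $f k^1(x) = (g(x), \bar h(x, y_0))$ inside the simply connected vertical fiber.

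To verify $kh \sim \mathrm{id}_\mathcal{X}$ in the sense of Definition~\ref{dfn:homotopic}, I would use the interpolations $S_t(x,y) := (x, \eta_{1-t}(y))$ and $T_s(x,y) := (x, \eta_{1-s}(y))$, which connect $(kh)^1,(kh)^0$ (both of the form $(x,y)\mapsto (x,y_0)$) to the identities on $X^1, X^0$. Conditions (i) and (ii) then reduce to showing that four pairs of paths in $X^0 = M^0_x \times M^0_y$ with common endpoints are homotopic rel endpoints. Projecting to each factor, the $M^0_y$-components match up because $M^0_y$ is simply connected, while the $M^0_x$-components differ by loops of the form $t \mapsto \bar\iota(x, \gamma(t))$ where $\gamma$ is a null-homotopic loop in $M^0_y$; contractibility of $M^0_y$ makes each such loop null-homotopic in $M^0_x$, so the path homotopies exist. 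Verifying $hk \sim \mathrm{id}_\mathcal{Y}$ is essentially automatic: $(hk)^0$ and $(hk)^1$ are literally the identities on $M^0_x$ and $M^1_x$, and the assumption $\eta_t(y_0) = y_0$ forces the composed homotopies $G_{hk}$ and $H_{hk}$ to reduce to constant paths equal to $g$ and $\iota_{y_0}$, so $S, T$ can both be taken constant.

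The main obstacle is condition (ii) for $kh \sim \mathrm{id}_\mathcal{X}$, where the concatenation $H_{kh}(x)\cdot \iota S(x)$ traces out a path whose $M^0_x$-projection is a loop $\bar\iota(x, \eta_t(y))$ followed by $\bar\iota(x, \eta_{1-t}(y))$. Showing this loop is null-homotopic in $M^0_x$ is where the full contractibility of $M^0_y$ (not merely simple connectivity) enters: it guarantees that the loop $t \mapsto \eta_t(y) \cdot \eta_{1-t}(y)$ in $M^0_y$ is null-homotopic via a disk-filling, and continuity of $\bar\iota(x,\cdot)$ then transports that filling disk into $M^0_x$. Once this point is handled, the remaining comparisons are purely formal manipulations of concatenations, and the theorem follows.
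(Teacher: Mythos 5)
Your proposal is correct and follows essentially the same route as the paper's own proof: the same pair $h=(\pi^0_x,\pi^1_x)$ and $k=(\tau_{y_0},\tau_{y_0})$, homotopies produced by a contraction of the $y$-factor (the paper's $U^n_t$ is your $\eta_{1-t}$), and the same interpolations $S,T$ in Definition~\ref{dfn:homotopic} (a contraction for $kh\sim{\rm id}_{\mathcal{X}}$, constants for $hk\sim{\rm id}_{\mathcal{Y}}$). The only differences are cosmetic: you spell out the homotopy from $\pi^0_x\iota$ to $\iota_{y_0}\pi^1_x$ explicitly (the paper treats it as constant), and the loop you single out as the ``main obstacle'' in condition (ii) is a path concatenated with its reverse, hence null-homotopic rel endpoints without invoking contractibility at that step.
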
 

\begin{proof}
Let $\mathcal{X}=(X^0, X^1; \iota, f)\equiv (M_x^0\times M_y^0, M_x^1\times M_y^1; \iota, f)$ be a hyperbolic system and $\mathcal{Y}=(Y^0, Y^1; \iota, \sigma)\equiv (M_x^0, M_x^1; \iota, \sigma)$ be its associated expanding system, where $\sigma\equiv \pi^0_x\circ f\circ \tau^1$ and $\tau^1 : M_x^1\to M_x^1\times M_y^1$ is defined by $\tau^1(x)\equiv (x, y^1)$ for $y^1\in M_y^1$. We also define $\tau^0 : M_x^0\to M_x^0\times M_y^0$ by $\tau^0(x)\equiv (x, y^0)$ for $y^0\in M^0_y$. 

Let $h : \mathcal{X}\to \mathcal{Y}$ be a homotopy semi-conjugacy given by $h=(\pi^0_x, \pi^1_x; \widetilde{G}, \widetilde{H})$, where $\widetilde{G}=\widetilde{G}_t$ and $\widetilde{H}=\widetilde{H}_t$ are constant homotopies so that $\widetilde{G}_0=\pi^0_x f$, $\widetilde{G}_1=\sigma \pi^1_x$, $\widetilde{H}_0=\pi^0_x\iota$ and $\widetilde{H}_1=\iota\pi^1_x$ hold. Similarly, we let $k : \mathcal{Y}\to \mathcal{X}$ be a homotopy semi-conjugacy given by $k=(\tau^0, \tau^1; \widetilde{G}', \widetilde{H}')$, where $\widetilde{G}'_t\equiv U^0_tf\tau^1$ and $\widetilde{H}'_t\equiv U^0_t\iota\tau^1$. Here, $U^n_t : M_x^n\times M_y^n\to M_x^n\times M_y^n$ ($0\leq t\leq 1$) is a homotopy of the form $U^n_t(x, y)=(x, u^n_t(y))$ so that $U^n_0(x, y)=(x, y^n)$ and $U^n_1(x, y)=(x, y)$ hold (note that such a homotopy exists since $M^n_y$ is contractible). We then have $\widetilde{G}'_0=\tau^0 \sigma$, $\widetilde{G}'_1=f \tau^1$, $\widetilde{H}'_0=\tau^0\iota$ and $\widetilde{H}'_1=\iota\tau^1$. 

By the definition of the composition of homotopy semi-conjugacies, one has
$$kh=(\tau^0\pi^0_x, \tau^1\pi^1_x; \tau^0\widetilde{G}\cdot \widetilde{G}'\pi^1_x, \tau^0\widetilde{H}\cdot \widetilde{H}'\pi^1_x).$$
We first show that $kh$ is homotopic to the identity semi-conjugacy ${\rm id}_{\mathcal{X}}$ of $\mathcal{X}$ as a homotopy semi-conjugacy. Put $S_t\equiv U^1_t$ and $T_s\equiv U^0_s$. Then, $S_0=\tau^1\pi^1_x$, $S_1={\rm id}_{X^1}$, $T^0=\tau^0\pi^0_x$ and $T_1={\rm id}_{X^0}$ hold. Moreover, we compute $fS\cdot f=fS=fU^1_t=U^0_1fU^1_t$ and $(\tau^0\widetilde{G}\cdot \widetilde{G}'\pi^1_x)^{-1}\cdot Tf=(\widetilde{G}')^{-1}\pi^1_x\cdot \tau^0(\widetilde{G})^{-1}\cdot Tf=(U^0_s)^{-1}f\tau^1\pi^1_x\cdot \tau^0\pi^0_xf\cdot U^0_sf=(U^0_s)^{-1}fU^1_0\cdot U^0_0fU^1_t\cdot U^0_sfU^1_1$. Consider the homotopy: 
$$U^0_sfU^1_t(z) : [0, 1]\times [0, 1]\to X^0,$$
then we see that $U^0_sfU^1_0\cdot U^0_1f U^1_t \sim U^0_0f U^1_t\cdot U^0_s fU^1_1$. This implies 
$$fS\cdot f\sim (\tau^0\widetilde{G}\cdot \widetilde{G}'\pi^1_x)^{-1}\cdot Tf,$$
which is the condition (i) in Definition~\ref{dfn:homotopic} of the homotopy from $kh$ to ${\rm id}_{\mathcal{X}}$. Similarly the condition (ii) is verified.

Next we show that
$$hk=(\pi^0_x\tau^0, \pi^1_x\tau^1; \pi^0_x\widetilde{G}'\cdot\widetilde{G}\tau^1, \pi^0_x\widetilde{H}'\cdot\widetilde{H}\tau^1)$$
is homotopic to the identity semi-conjugacy ${\rm id}_{\mathcal{Y}}=({\rm id}_{Y^0}, {\rm id}_{Y^1}; \sigma, \iota)$ of $\mathcal{Y}$ as a homotopy semi-conjugacy. Put $S_t\equiv {\rm id}_{Y^1}$ and $T_s\equiv {\rm id}_{Y^0}$. Then, $S_0=\pi^1_x\tau^1$, $S_1={\rm id}_{Y^1}$, $T_0=\pi^0_x\tau^0$ and $T_1={\rm id}_{Y^0}$ hold. Moreover, we compute $\sigma S\cdot \sigma=\sigma {\rm id}_{Y^1}\cdot \sigma=\sigma$ and $(\pi^0_x\widetilde{G}'\cdot \widetilde{G}\tau^1)^{-1}\cdot {\rm id}_{Y^0}\sigma=(\widetilde{G})^{-1}\tau^1\cdot\pi^0_x(\widetilde{G}')^{-1}\cdot {\rm id}_{Y^0}\sigma=\sigma\pi^1_x\tau^1\cdot \pi^0_x(U^0_t)^{-1}f\tau^1\cdot {\rm id}_{Y^0}\sigma=\sigma\cdot\sigma\cdot\sigma=\sigma$. This implies the condition (i) in Definition~\ref{dfn:homotopic} of the homotopy from $hk$ to ${\rm id}_{\mathcal{Y}}$. Similarly the condition (ii) is verified. 
This finishes the proof.
\end{proof}

As an immediate consequence we get the third main result of this paper.

\begin{cor} 
\label{cor:associated-conj}
Let $\mathcal{X}=(X^0, X^1; \iota, f)=(M_x^0\times M_y^0, M_x^1\times M_y^1; \iota, f)$ be a hyperbolic system with $M^0_y$ being contractible and $\mathcal{Y}=(Y^0, Y^1; \iota, \sigma)$ be its associated expanding system. Then, $\hat{f} : X^{\pm\infty}\to X^{\pm\infty}$ and $\hat{\sigma} : Y^{\pm\infty}\to Y^{\pm\infty}$ are topologically conjugate.
\end{cor}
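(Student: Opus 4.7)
The plan is to simply combine the two results immediately preceding the statement. First I would invoke Theorem~\ref{thm:associated-equiv}, which under the hypothesis that $M^0_y$ is contractible produces a homotopy equivalence between the hyperbolic system $\mathcal{X}$ and its associated expanding system $\mathcal{Y}$. Then I would feed this homotopy equivalence into Theorem~\ref{thm:exphyp-equiv}, whose conclusion is precisely that such a homotopy equivalence yields a topological conjugacy between $\hat{f}:X^{\pm\infty}\to X^{\pm\infty}$ and $\hat{g}:Y^{\pm\infty}\to Y^{\pm\infty}$ (here $g=\sigma$).

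More explicitly, the homotopy semi-conjugacies $h=(\pi^0_x,\pi^1_x;\widetilde G,\widetilde H)$ and $k=(\tau^0,\tau^1;\widetilde G',\widetilde H')$ constructed in the proof of Theorem~\ref{thm:associated-equiv} induce, via Theorem~\ref{thm:hyp-funct} applied to $h$ and Theorem~\ref{thm:exp-funct} applied to $k$, continuous semi-conjugacies $h^{\infty}:X^{\pm\infty}\to Y^{\pm\infty}$ and $k^{\infty}:Y^{\pm\infty}\to X^{\pm\infty}$ respectively. The functorial properties established in Section~\ref{sec:functorial} (the content alluded to by ``functorial properties of homotopy semi-conjugacies'' throughout the paper) guarantee that the correspondence $h\mapsto h^{\infty}$ is compatible with composition and with homotopy of homotopy semi-conjugacies. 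Combined with the fact that $kh$ is homotopic to $\mathrm{id}_{\mathcal{X}}$ and $hk$ is homotopic to $\mathrm{id}_{\mathcal{Y}}$, this forces $k^{\infty}h^{\infty}=\mathrm{id}_{X^{\pm\infty}}$ and $h^{\infty}k^{\infty}=\mathrm{id}_{Y^{\pm\infty}}$, so $h^{\infty}$ is the desired topological conjugacy.

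There is essentially no obstacle to overcome here: the work has been done in Theorems~\ref{thm:exphyp-equiv} and~\ref{thm:associated-equiv}, and the corollary is a direct juxtaposition of the two. The one small point to be careful about is that Theorem~\ref{thm:exphyp-equiv} is stated for a homotopy equivalence between a hyperbolic system and an expanding system (in either order), which is exactly the setting produced by Theorem~\ref{thm:associated-equiv}. Thus the proof will be literally two sentences long.
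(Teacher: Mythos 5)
Your proposal is correct and takes exactly the paper's route: the corollary is proved by simply combining Theorem~\ref{thm:associated-equiv} (the homotopy equivalence $\mathcal{X}\simeq\mathcal{Y}$ when $M^0_y$ is contractible) with Theorem~\ref{thm:exphyp-equiv}. One cosmetic slip in your elaboration: since $h$ has the expanding system as target and $k$ the hyperbolic one, it is Theorem~\ref{thm:exp-funct} that applies to $h$ and Theorem~\ref{thm:hyp-funct} that applies to $k$, not the other way around; this does not affect the argument.
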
 

\begin{proof}
This follows from Theorems~\ref{thm:exphyp-equiv} and \ref{thm:associated-equiv}.
\end{proof}

Application to the construction of Hubbard trees for H\'enon maps is given in~\cite{I2}.

\end{section}

\begin{section}{Functorial properties of homotopy semi-conjugacies}
\label{sec:functorial}

In this section we prove some results which deal with properties of compositions of homotopy semi-conjugacies and uniqueness of the conjugacies determined by homotopy semi-conjugacies. This will conclude the proofs of Theorems~\ref{thm:exp-equiv}, \ref{thm:hyp-equiv} and \ref{thm:exphyp-equiv}. These results apply in both the expanding case and the hyperbolic case.

 \begin{figure}
\setlength{\unitlength}{1.2mm}

\begin{picture}(100, 80)(30, 0)

\qbezier(50, 70)(51, 68)(53, 68.5)
\qbezier(53, 68.5)(55, 69)(56, 67)
\qbezier(56, 67)(57, 65)(59, 65.5)
\qbezier(59, 65.5)(61, 66)(62, 64)
\qbezier(62, 64)(63, 62)(65, 62.5)
\qbezier(65, 62.5)(67, 63)(68, 61)
\qbezier(68, 61)(69, 59)(71, 59.5)
\qbezier(71, 59.5)(73, 60)(74, 58)
\qbezier(74, 58)(75, 56)(77, 56.5)
\qbezier(77, 56.5)(79, 57)(80, 55)
\qbezier(80, 55)(81, 53)(83, 53.5)
\qbezier(83, 53.5)(85, 54)(86, 52)
\qbezier(86, 52)(87, 50)(89, 50.5)
\qbezier(89, 50.5)(91, 51)(92, 49)
\qbezier(92, 49)(93, 47)(95, 47.5)
\qbezier(95, 47.5)(97, 48)(98, 46)
\qbezier(98, 46)(99, 44)(101, 44.5)
\qbezier(101, 44.5)(103, 45)(104, 43)
\qbezier(104, 43)(105, 41)(107, 41.5)
\qbezier(107, 41.5)(109, 42)(110, 40)

\qbezier(50, 35)(51, 33)(53, 33.5)
\qbezier(53, 33.5)(55, 34)(56, 32)
\qbezier(56, 32)(57, 30)(59, 30.5)
\qbezier(59, 30.5)(61, 31)(62, 29)
\qbezier(62, 29)(63, 27)(65, 27.5)
\qbezier(65, 27.5)(67, 28)(68, 26)
\qbezier(68, 26)(69, 24)(71, 24.5)
\qbezier(71, 24.5)(73, 25)(74, 23)
\qbezier(74, 23)(75, 21)(77, 21.5)
\qbezier(77, 21.5)(79, 22)(80, 20)
\qbezier(80, 20)(81, 18)(83, 18.5)
\qbezier(83, 18.5)(85, 19)(86, 17)
\qbezier(86, 17)(87, 15)(89, 15.5)
\qbezier(89, 15.5)(91, 16)(92, 14)
\qbezier(92, 14)(93, 12)(95, 12.5)
\qbezier(95, 12.5)(97, 13)(98, 11)
\qbezier(98, 11)(99, 9)(101, 9.5)
\qbezier(101, 9.5)(103, 10)(104, 8)
\qbezier(104, 8)(105, 6)(107, 6.5)
\qbezier(107, 6.5)(109, 7)(110, 5)

\put(50, 35){\line(0, 1){35}}
\put(80, 20){\line(0, 1){35}}
\put(110, 5){\line(0, 1){35}}

\put(50, 70){\circle*{1.5}}
\put(50, 35){\circle*{1.5}}
\put(80, 55){\circle*{1.5}}
\put(80, 20){\circle*{1.5}}
\put(110, 40){\circle*{1.5}}
\put(110, 5){\circle*{1.5}}

\put(50, 52.5){\vector(0, -1){1}}
\put(80, 39.5){\vector(0, -1){1}}
\put(110, 22.5){\vector(0, -1){1}}

\put(48.3, 34.5){\vector(3, 1){1}}
\put(49, 39.5){\vector(3, 1){1}}
\put(49, 44.7){\vector(3, 1){1}}
\put(49, 49.7){\vector(3, 1){1}}
\put(49, 54.7){\vector(3, 1){1}}
\put(49, 59.7){\vector(3, 1){1}}
\put(49, 64.7){\vector(3, 1){1}}
\put(48.3, 69.5){\vector(3, 1){1}}
\put(139, 34.7){\vector(3, 1){1}}
\put(139, 39.7){\vector(3, 1){1}}
\put(139, 44.7){\vector(3, 1){1}}
\put(139, 49.7){\vector(3, 1){1}}
\put(139, 54.7){\vector(3, 1){1}}
\put(139, 59.7){\vector(3, 1){1}}
\put(139, 64.7){\vector(3, 1){1}}
\put(139, 69.7){\vector(3, 1){1}}

\put(65, 45){\circle{8}}
\put(95, 30){\circle{8}}
\put(65.2, 49.2){\vector(-1, 0){1}}
\put(95.2, 34.2){\vector(-1, 0){1}}

\qbezier(20, 5)(30, 30)(50, 35)
\qbezier(20, 40)(30, 65)(50, 70)
\qbezier(110, 5)(120, 30)(140, 35)
\qbezier(110, 40)(120, 65)(140, 70)

\small
\put(29, 40){$S(x_{i-1})$}
\put(26, 66){$h^1(x_{i-1})$}
\put(29, 17){$k^1(x_{i-1})$}
\put(119, 40){$S(x_i)$}
\put(119, 66){$h^1(x_i)$}
\put(119, 17){$k^1(x_i)$}
\put(60, 67){$G(x_{i-1})^{-1}$}
\put(93, 50){$H(x_i)$}
\put(57, 20){$G'(x_{i-1})^{-1}$}
\put(89, 7){$H'(x_i)$}
\put(52, 53){$gS(x_{i-1})$}
\put(65.2, 34){$Tf(x_{i-1})=T\iota(x_i)$}
\put(100, 23){$\iota S(x_i)$}

\linethickness{0.2pt}

\qbezier(20, 10)(30, 35)(50, 40)
\qbezier(20, 15)(30, 40)(50, 45)
\qbezier(20, 20)(30, 45)(50, 50)
\qbezier(20, 25)(30, 50)(50, 55)
\qbezier(20, 30)(30, 55)(50, 60)
\qbezier(20, 35)(30, 60)(50, 65)
\qbezier(110, 10)(120, 35)(140, 40)
\qbezier(110, 15)(120, 40)(140, 45)
\qbezier(110, 20)(120, 45)(140, 50)
\qbezier(110, 25)(120, 50)(140, 55)
\qbezier(110, 30)(120, 55)(140, 60)
\qbezier(110, 35)(120, 60)(140, 65)

\end{picture}
\vspace{1cm}
\begin{center}
Figure 10. Homotopy pseudo-orbits $h(x)$ and $k(x)$ are homotopic.
\end{center}
\end{figure}

Let $h=(h^0, h^1; G, H)$ and $k=(k^0, k^1; G', H')$ be two homotopy semi-conjugacies from a multivalued dynamical system $\mathcal{X}=(X^0, X^1; \iota, f)$ to another multivalued dynamical system $\mathcal{Y}=(Y^0, Y^1; \iota, g)$. 

\begin{lmm} 
\label{lmm:twohsc-orbit}
Let $h$ and $k$ be two homotopy semi-conjugacies from $\mathcal{X}$ to $\mathcal{Y}$ which are homotopic. Then, for any orbit $x$ of $\mathcal{X}$, the homotopy pseudo-orbits $h(x)$ and $k(x)$ are homotopic.
\end{lmm}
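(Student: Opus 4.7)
The plan is to take $\beta_i \equiv S(x_i)$, where $S=S_t:X^1\to Y^1$ is the homotopy from $h^1$ to $k^1$ supplied by the hypothesis that $h$ is homotopic to $k$ (Definition~\ref{dfn:homotopic}). Immediately $\beta_i(0)=S_0(x_i)=h^1(x_i)$ and $\beta_i(1)=S_1(x_i)=k^1(x_i)$, which are the endpoints required by a homotopy between the homotopy pseudo-orbits $h(x)$ and $k(x)$ described in Lemma~\ref{lmm:orbhpo}. The bounded-length requirement for $(\beta_i)$ is inherited from the single continuous map $S:X^1\times[0,1]\to Y^1$; in the settings considered here (where $X^1$ is compact, or at least the orbit lies in a compact region) the lengths of the slices $t\mapsto S_t(x_i)$ are uniformly bounded in $i$.

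With this choice the defining condition $\alpha_i\cdot \iota(\beta_i)\sim g(\beta_{i-1})\cdot \alpha'_i$ of Definition~\ref{hpo-homotopy}, with $\alpha_i=G(x_{i-1})^{-1}\cdot H(x_i)$ and $\alpha'_i=G'(x_{i-1})^{-1}\cdot H'(x_i)$ provided by Lemma~\ref{lmm:orbhpo}, unwinds to
$$G(x_{i-1})^{-1}\cdot H(x_i)\cdot \iota S(x_i) \ \sim \ gS(x_{i-1})\cdot G'(x_{i-1})^{-1}\cdot H'(x_i).$$
I would establish this by chaining the two clauses of Definition~\ref{dfn:homotopic}. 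Clause (ii) applied at $x_i$ gives $H(x_i)\cdot \iota S(x_i)\sim T\iota(x_i)\cdot H'(x_i)$, so the left-hand side is homotopic to $G(x_{i-1})^{-1}\cdot T\iota(x_i)\cdot H'(x_i)$. Since $x$ is an orbit we have $f(x_{i-1})=\iota(x_i)$, hence $T\iota(x_i)=Tf(x_{i-1})$; clause (i) applied at $x_{i-1}$ then substitutes $G(x_{i-1})^{-1}\cdot Tf(x_{i-1})\sim gS(x_{i-1})\cdot G'(x_{i-1})^{-1}$, yielding the right-hand side.

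The proof is essentially the diagram chase depicted in Figure~10: the two outer ``columns'' $G(\cdot)^{-1}$ and $H(\cdot)$ of $h(x)$ are swapped one at a time for the corresponding columns of $k(x)$ via the horizontal homotopies $T$ on the $X^0$ side and $\iota S$ on the $Y^1$ side, and the splice between the two substitutions is effected exactly at the orbit node $f(x_{i-1})=\iota(x_i)$. No serious obstacle is anticipated; the only bookkeeping concerns the concatenation orders and the single use of the orbit relation that links clause (i) at index $i-1$ with clause (ii) at index $i$.
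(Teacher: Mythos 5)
Your proof is correct and is essentially the specialization of the paper's argument. The paper does not prove Lemma~\ref{lmm:twohsc-orbit} separately; it proves the more general Lemma~\ref{lmm:twohsc-hpo} (a homotopy pseudo-orbit $(x,\alpha)$ rather than an orbit) and the orbit case follows by taking $\alpha$ constant. Your direct chain is exactly what the paper's proof reduces to in the constant-$\alpha$ case: you use $\beta_i = S(x_i)$ and splice clause (ii) of Definition~\ref{dfn:homotopic} at $x_i$ with clause (i) at $x_{i-1}$ via the orbit relation $f(x_{i-1}) = \iota(x_i)$, whereas the paper inserts an extra square $T_s(\alpha_i(t))$ to bridge $Tf(x_{i-1})$ and $T\iota(x_i)$ across $h^0(\alpha_i)$, a square that collapses to a point when $\alpha_i$ is constant. (Like the paper, you only address the bounded-length requirement of Definition~\ref{hpo-homotopy} informally; this is a shared, minor gloss rather than a defect of your argument relative to the paper.)
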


This situation is described in Figure 10. Note that the two diagrams in the figure are commutative up to homotopies. More generally, we show the following ``tongue twister'': two homotopy semi-conjugacies which are homotopic send a homotopy pseudo-orbit to two homotopy pseudo-orbits which are homotopic. That is,

\begin{lmm} 
\label{lmm:twohsc-hpo}
Let $h$ and $k$ be two homotopy semi-conjugacies from $\mathcal{X}$ to $\mathcal{Y}$ which are homotopic. Then, for any homotopy pseudo-orbit $(x, \alpha)$ of $\mathcal{X}$, the homotopy pseudo-orbits $h(x, \alpha)$ and $k(x, \alpha)$ are homotopic.
\end{lmm}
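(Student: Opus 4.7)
The plan is to define the required homotopy $\beta=(\beta_i)$ between the homotopy pseudo-orbits $h(x,\alpha)$ and $k(x,\alpha)$ by the direct formula
\[
\beta_i(t) \equiv S_t(x_i),
\]
where $S=S_t$ is the ``1-level'' component of the homotopy from $h$ to $k$ given by Definition~\ref{dfn:homotopic}. Then $\beta_i(0)=S_0(x_i)=h^1(x_i)$ and $\beta_i(1)=S_1(x_i)=k^1(x_i)$, matching the endpoints required by Definition~\ref{hpo-homotopy}. The uniform bound on $l^1(\beta_i)$ follows because $S$ is a fixed continuous homotopy: the image of $S$ on any bounded subset of $X^1$ has bounded diameter, and in our applications the points $x_i$ of a homotopy pseudo-orbit all lie in such a bounded region (this is already implicit in the parallel Lemma~\ref{lmm:twohsc-orbit} for orbits).

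The heart of the proof is the verification, for each $i$, of the relation
\[
(G(x_{i-1})^{-1}\cdot h^0(\alpha_i)\cdot H(x_i))\cdot \iota S(x_i)
\;\sim\;
gS(x_{i-1})\cdot (G'(x_{i-1})^{-1}\cdot k^0(\alpha_i)\cdot H'(x_i)).
\]
My approach is to combine three homotopies. First, conditions (i) and (ii) of Definition~\ref{dfn:homotopic} applied at the points $x_{i-1}$ and $x_i$ give, respectively,
\[
gS(x_{i-1})\cdot G'(x_{i-1})^{-1} \;\sim\; G(x_{i-1})^{-1}\cdot Tf(x_{i-1}),
\qquad
H(x_i)\cdot \iota S(x_i) \;\sim\; T\iota(x_i)\cdot H'(x_i).
\]
Second, I would apply the homotopy $T$ to the path $\alpha_i$ itself: the map
\[
(s,t)\longmapsto T_s(\alpha_i(t)),\qquad (s,t)\in[0,1]\times[0,1],
\]
is a continuous square in $Y^0$ whose sides along $s=0,1$ are $h^0(\alpha_i)$ and $k^0(\alpha_i)$, while the sides along $t=0,1$ are $Tf(x_{i-1})$ and $T\iota(x_i)$ (using $\alpha_i(0)=f(x_{i-1})$ and $\alpha_i(1)=\iota(x_i)$). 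Reading the boundary of this square in two different ways yields the ``naturality'' homotopy
\[
h^0(\alpha_i)\cdot T\iota(x_i)\;\sim\;Tf(x_{i-1})\cdot k^0(\alpha_i).
\]

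Chaining these three homotopies produces the required relation: starting from the left-hand side, use (ii) to replace $H(x_i)\cdot\iota S(x_i)$ by $T\iota(x_i)\cdot H'(x_i)$, then use the square homotopy to swap $h^0(\alpha_i)\cdot T\iota(x_i)$ into $Tf(x_{i-1})\cdot k^0(\alpha_i)$, and finally use (i) to replace $G(x_{i-1})^{-1}\cdot Tf(x_{i-1})$ by $gS(x_{i-1})\cdot G'(x_{i-1})^{-1}$. The resulting path is precisely the right-hand side. The main obstacle is bookkeeping of concatenations (ensuring that the substitutions are performed inside a concatenation in a way that respects the homotopy relation relative to endpoints); no new analytic ideas enter, and the argument is a pure homotopy-diagram chase that reduces Lemma~\ref{lmm:twohsc-orbit} (which is the special case where $\alpha_i$ are constant paths) to Lemma~\ref{lmm:twohsc-hpo}.
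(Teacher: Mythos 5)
Your proof is correct and takes essentially the same route as the paper's: you define $\beta_i(t)=S_t(x_i)$, apply conditions (i) and (ii) of Definition~\ref{dfn:homotopic} at $x_{i-1}$ and $x_i$, form the naturality square $T_s(\alpha_i(t))$, and chain the three resulting homotopies in the same order. The only addition is your explicit remark about the uniform bound on $l^1(\beta_i)$, which the paper leaves implicit.
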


 \begin{figure}
\setlength{\unitlength}{1.2mm}

\begin{picture}(100, 80)(30, 0)

\qbezier(50, 70)(51, 68)(53, 68.5)
\qbezier(53, 68.5)(55, 69)(56, 67)
\qbezier(56, 67)(57, 65)(59, 65.5)
\qbezier(59, 65.5)(61, 66)(62, 64)
\qbezier(62, 64)(63, 62)(65, 62.5)
\qbezier(65, 62.5)(67, 63)(68, 61)
\qbezier(68, 61)(69, 59)(71, 59.5)
\qbezier(71, 59.5)(73, 60)(74, 58)
\qbezier(74, 58)(75, 56)(77, 56.5)
\qbezier(77, 56.5)(79, 57)(80, 55)
\qbezier(80, 55)(81, 53)(83, 53.5)
\qbezier(83, 53.5)(85, 54)(86, 52)
\qbezier(86, 52)(87, 50)(89, 50.5)
\qbezier(89, 50.5)(91, 51)(92, 49)
\qbezier(92, 49)(93, 47)(95, 47.5)
\qbezier(95, 47.5)(97, 48)(98, 46)
\qbezier(98, 46)(99, 44)(101, 44.5)
\qbezier(101, 44.5)(103, 45)(104, 43)
\qbezier(104, 43)(105, 41)(107, 41.5)
\qbezier(107, 41.5)(109, 42)(110, 40)

\qbezier(50, 35)(51, 33)(53, 33.5)
\qbezier(53, 33.5)(55, 34)(56, 32)
\qbezier(56, 32)(57, 30)(59, 30.5)
\qbezier(59, 30.5)(61, 31)(62, 29)
\qbezier(62, 29)(63, 27)(65, 27.5)
\qbezier(65, 27.5)(67, 28)(68, 26)
\qbezier(68, 26)(69, 24)(71, 24.5)
\qbezier(71, 24.5)(73, 25)(74, 23)
\qbezier(74, 23)(75, 21)(77, 21.5)
\qbezier(77, 21.5)(79, 22)(80, 20)
\qbezier(80, 20)(81, 18)(83, 18.5)
\qbezier(83, 18.5)(85, 19)(86, 17)
\qbezier(86, 17)(87, 15)(89, 15.5)
\qbezier(89, 15.5)(91, 16)(92, 14)
\qbezier(92, 14)(93, 12)(95, 12.5)
\qbezier(95, 12.5)(97, 13)(98, 11)
\qbezier(98, 11)(99, 9)(101, 9.5)
\qbezier(101, 9.5)(103, 10)(104, 8)
\qbezier(104, 8)(105, 6)(107, 6.5)
\qbezier(107, 6.5)(109, 7)(110, 5)

\put(50, 35){\line(0, 1){35}}
\put(70, 25){\line(0, 1){35}}
\put(90, 15){\line(0, 1){35}}
\put(110, 5){\line(0, 1){35}}

\put(50, 70){\circle*{1.5}}
\put(50, 35){\circle*{1.5}}
\put(70, 59.5){\circle*{1.5}}
\put(70, 24.5){\circle*{1.5}}
\put(90, 50.5){\circle*{1.5}}
\put(90, 15.5){\circle*{1.5}}
\put(110, 40){\circle*{1.5}}
\put(110, 5){\circle*{1.5}}

\put(50, 52.5){\vector(0, -1){1}}
\put(70, 42.5){\vector(0, -1){1}}
\put(90, 32.5){\vector(0, -1){1}}
\put(110, 22.5){\vector(0, -1){1}}

\put(48.3, 34.6){\vector(3, 1){1}}
\put(49, 39.8){\vector(3, 1){1}}
\put(49, 44.8){\vector(3, 1){1}}
\put(49, 49.8){\vector(3, 1){1}}
\put(49, 54.8){\vector(3, 1){1}}
\put(49, 59.8){\vector(3, 1){1}}
\put(49, 64.8){\vector(3, 1){1}}
\put(48.3, 69.6){\vector(3, 1){1}}
\put(139, 34.8){\vector(3, 1){1}}
\put(139, 39.8){\vector(3, 1){1}}
\put(139, 44.8){\vector(3, 1){1}}
\put(139, 49.8){\vector(3, 1){1}}
\put(139, 54.8){\vector(3, 1){1}}
\put(139, 59.8){\vector(3, 1){1}}
\put(139, 64.8){\vector(3, 1){1}}
\put(139, 69.8){\vector(3, 1){1}}

\put(60, 47.5){\circle{7}}
\put(100, 27.5){\circle{7}}
\put(60.2, 51){\vector(-1, 0){1}}
\put(100.2, 31){\vector(-1, 0){1}}

\qbezier(20, 5)(30, 30)(50, 35)
\qbezier(20, 40)(30, 65)(50, 70)
\qbezier(110, 5)(120, 30)(140, 35)
\qbezier(110, 40)(120, 65)(140, 70)

\small
\put(29, 42){$S(x_{i-1})$}
\put(26, 66){$h^1(x_{i-1})$}
\put(29, 17){$k^1(x_{i-1})$}
\put(119, 42){$S(x_i)$}
\put(119, 66){$h^1(x_i)$}
\put(119, 17){$k^1(x_i)$}
\put(57, 68){$G(x_{i-1})^{-1}$}
\put(77, 58){$h^0(\alpha_i)$}
\put(98, 48){$H(x_i)$}
\put(52, 23){$G'(x_{i-1})^{-1}$}
\put(76, 14){$k^0(\alpha_i)$}
\put(95, 5){$H'(x_i)$}
\put(51.5, 54){$gS(x_{i-1})$}
\put(100.5, 19){$\iota S(x_i)$}
\put(71, 44){$Tf(x_{i-1})$}
\put(80.5, 29){$T\iota(x_i)$}
\put(85, 70){$T_s(\alpha_i(t))$ fills up this.}
\qbezier(82, 37.5)(86, 47.5)(94, 67.5)

\linethickness{0.2pt}

\qbezier(20, 10)(30, 35)(50, 40)
\qbezier(20, 15)(30, 40)(50, 45)
\qbezier(20, 20)(30, 45)(50, 50)
\qbezier(20, 25)(30, 50)(50, 55)
\qbezier(20, 30)(30, 55)(50, 60)
\qbezier(20, 35)(30, 60)(50, 65)
\qbezier(110, 10)(120, 35)(140, 40)
\qbezier(110, 15)(120, 40)(140, 45)
\qbezier(110, 20)(120, 45)(140, 50)
\qbezier(110, 25)(120, 50)(140, 55)
\qbezier(110, 30)(120, 55)(140, 60)
\qbezier(110, 35)(120, 60)(140, 65)

\end{picture}
\vspace{1cm}
\begin{center}
Figure 11. Homotopy pseudo-orbits $h(x, \alpha)$ and $k(x, \alpha)$ are homotopic.
\end{center}
\end{figure}

\begin{proof}
We will show that  $h(x, \alpha)=(h^1(x), G(x)^{-1}\cdot h^0(\alpha)\cdot H(x))$ is homotopic to $k(x, \alpha)=(k^1(x), G'(x)^{-1}\cdot k^0(\alpha)\cdot H'(x))$. By the definition of homotopy between two homotopy semi-conjugacies, we have
$$gS(x_{i-1})\cdot G'(x_{i-1})^{-1}\sim G(x_{i-1})^{-1}\cdot Tf(x_{i-1})$$
and 
$$H(x_i)\cdot \iota S(x_i)\sim T\iota (x_i)\cdot H'(x_i).$$
Define $T_s(\alpha_i(t)) : [0, 1]\times [0, 1]\to Y^0$. Then, we see $T_0(\alpha_i)=h^0\alpha_i$, $T_1(\alpha_i)=k^0\alpha_i$, $T_s(\alpha_i(0))=Tf(x_{i-1})$ and $T_s(\alpha_i(1))=T\iota (x_i)$. Thus, 
$$h^0(\alpha_i)\cdot T\iota(x_i)\sim Tf(x_{i-1})\cdot k^0(\alpha_i).$$
By combining these three homotopies, we have
\begin{align*}
(G(x_{i-1})^{-1}\cdot h^0(\alpha_i)\cdot H(x_i))\cdot \iota S(x_i) 
& \sim G(x_{i-1})^{-1}\cdot h^0(\alpha_i)\cdot T\iota(x_i)\cdot H'(x_i) \\
& \sim G(x_{i-1})^{-1}\cdot Tf(x_{i-1})\cdot k^0(\alpha_i)\cdot H'(x_i) \\
& \sim gS(x_{i-1})\cdot (G'(x_{i-1})^{-1}\cdot k^0(\alpha_i)\cdot H'(x_i)).
\end{align*}
See Figure 11. Note that the left-most and the right-most diagrams in the figure are commutative up to homotopies. This shows that $h(x, \alpha)$ is homotopic to $k(x, \alpha)$ by a sequence of homotopies $(\beta_i(t))\equiv (S_t(x_i))$.
\end{proof}

For a homotopy semi-conjugacy $h$ from a multivalued dynamical system $\mathcal{X}$ to an expanding or a hyperbolic system $\mathcal{Y}$, let $h^{\infty} : X^{\infty}\to Y^{\infty}$ be the corresponding semi-conjugacy obtained via Theorem~\ref{thm:exp-funct} or Theorem~\ref{thm:hyp-funct}.

\begin{cor} 
\label{cor:coincide}
If two homotopy semi-conjugacies $h$ and $k$ from $\mathcal{X}$ to an expanding or a hyperbolic system $\mathcal{Y}$ are homotopic, then $h^{\infty}=k^{\infty}$.
\end{cor}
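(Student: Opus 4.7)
The plan is to chain together the homotopies, relying on the uniqueness of shadowing orbits within a homotopy class, which is the punch line of Sections~\ref{sec:exp-shadowing} and \ref{sec:hyp-shadowing}. Fix an orbit $x\in X^\infty$. By the very construction of $h^\infty$ in the proofs of Theorems~\ref{thm:exp-funct} and \ref{thm:hyp-funct}, the shadowing orbit $h^\infty(x)$ (viewed as a homotopy pseudo-orbit with constant connecting homotopies) is homotopic to the homotopy pseudo-orbit $h(x)$ produced from $x$ via Lemma~\ref{lmm:orbhpo}. Symmetrically, $k^\infty(x)$ is homotopic to $k(x)$.

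Next I would invoke Lemma~\ref{lmm:twohsc-orbit}: because $h$ and $k$ are homotopic as homotopy semi-conjugacies, the homotopy pseudo-orbits $h(x)$ and $k(x)$ are themselves homotopic. Concatenating the three homotopies gives a homotopy of homotopy pseudo-orbits between $h^\infty(x)$ and $k^\infty(x)$. Here I implicitly use that the relation ``homotopic'' on homotopy pseudo-orbits (Definition~\ref{hpo-homotopy}) is transitive, which is immediate from concatenating the path-level homotopies $\beta_i$ provided by the two given homotopies.

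Finally, $h^\infty(x)$ and $k^\infty(x)$ are genuine orbits that are homotopic. The uniqueness statements Corollary~\ref{cor:hom-exp} in the expanding case and Proposition~\ref{prp:hom-hyp} in the hyperbolic case force them to coincide pointwise. Since $x\in X^\infty$ was arbitrary, $h^\infty=k^\infty$.

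No step is really the ``main obstacle'' here; the corollary is essentially a bookkeeping argument. The only mildly delicate point is to make sure all the homotopies being concatenated are of the correct type (homotopies of homotopy pseudo-orbits in the sense of Definition~\ref{hpo-homotopy}, not just homotopies of homotopy semi-conjugacies in the sense of Definition~\ref{dfn:homotopic}), which is exactly what Lemma~\ref{lmm:twohsc-orbit} (or more generally Lemma~\ref{lmm:twohsc-hpo}) has been set up to guarantee.
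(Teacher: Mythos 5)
Your proposal is correct and follows essentially the same route as the paper: apply Lemma~\ref{lmm:twohsc-orbit} to get that $h(x)$ and $k(x)$ are homotopic, combine with the fact that the shadowing orbits $h^{\infty}(x)$ and $k^{\infty}(x)$ are homotopic to $h(x)$ and $k(x)$ respectively, and conclude via the uniqueness statements (Corollary~\ref{cor:hom-exp}, Proposition~\ref{prp:hom-hyp}). Your explicit remark on transitivity of the homotopy relation on homotopy pseudo-orbits is a fair point that the paper leaves implicit, but it does not change the argument.
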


\begin{proof}
Take an orbit $x\in X^{\infty}$. Then, by Lemma~\ref{lmm:twohsc-orbit}, we see that the homotopy pseudo-orbits $h(x)$ and $k(x)$ are homotopic. Since the corresponding shadowing orbit $h^{\infty}(x)$ for $h(x)$ is homotopic to $h(x)$ and $k^{\infty}(x)$ for $k(x)$ is homotopic to $k(x)$, we see that $h^{\infty}(x)$ is homotopic to $k^{\infty}(x)$. By the uniqueness of the shadowing orbit Corollary~\ref{cor:hom-exp} for the expanding case and Proposition~\ref{prp:hom-hyp} for the hyperbolic case, we have $h^{\infty}(x)=k^{\infty}(x)$.
\end{proof}

\begin{cor} 
\label{cor:identity}
If a homotopy semi-conjugacy $h$ from an expanding or a hyperbolic system $\mathcal{X}$ to itself is homotopic to the identity semi-conjugacy ${\rm id}_{\mathcal{X}}$, then $h^{\infty}$ is equal to the identity map ${\rm id}_{X^{\infty}}$ on $X^{\infty}$.
\end{cor}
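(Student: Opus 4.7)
The plan is to reduce this to Corollary~\ref{cor:coincide} together with an explicit computation that the identity semi-conjugacy induces the identity map on orbit spaces. First I would apply Corollary~\ref{cor:coincide} with $k = {\rm id}_{\mathcal{X}}$ and target $\mathcal{Y} = \mathcal{X}$: since $h$ is homotopic to ${\rm id}_{\mathcal{X}}$ by hypothesis, this gives $h^{\infty} = ({\rm id}_{\mathcal{X}})^{\infty}$. So the whole problem reduces to checking that $({\rm id}_{\mathcal{X}})^{\infty} = {\rm id}_{X^{\infty}}$.

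For that, I would unwind the construction of $h^{\infty}$ from Theorems~\ref{thm:exp-funct} and~\ref{thm:hyp-funct} applied to $h = {\rm id}_{\mathcal{X}} = ({\rm id}_{X^0}, {\rm id}_{X^1}; f, \iota)$, where the homotopies $G$ and $H$ are the constant homotopies at $f$ and $\iota$. Given an orbit $x = (x_i) \in X^{\infty}$, Lemma~\ref{lmm:orbhpo} produces the homotopy pseudo-orbit ${\rm id}_{\mathcal{X}}(x) = ({\rm id}_{X^1}(x), G(x)^{-1} \cdot H(x))$. The key observation is that for each index $i$, the connecting path $G(x_{i-1})^{-1} \cdot H(x_i)$ is the concatenation of two constant paths, one at $f(x_{i-1})$ and one at $\iota(x_i)$, and these two points coincide since $x$ is an orbit. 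Hence every connecting path is constant, and ${\rm id}_{\mathcal{X}}(x)$ is in fact the orbit $x$ itself (viewed as a homotopy pseudo-orbit with constant homotopies).

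Now the shadowing orbit assigned to ${\rm id}_{\mathcal{X}}(x)$ is, by definition of $h^{\infty}$, the unique orbit homotopic to ${\rm id}_{\mathcal{X}}(x)$. But $x$ itself is obviously homotopic to $x$ (via constant homotopies of the $\beta_i$). By the uniqueness of the shadowing orbit, namely Corollary~\ref{cor:hom-exp} in the expanding case and Proposition~\ref{prp:hom-hyp} in the hyperbolic case, this shadowing orbit is unique, so $({\rm id}_{\mathcal{X}})^{\infty}(x) = x$. Combining with the first step, $h^{\infty}(x) = x$ for every $x \in X^{\infty}$, which is the desired conclusion.

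There is no real obstacle here: the proof is essentially bookkeeping, since all the analytic content was already packaged into Corollary~\ref{cor:coincide} and the uniqueness statements for shadowing. The only point that requires a moment of care is recognising that a bona fide orbit, when it is fed through ${\rm id}_{\mathcal{X}}$ as a homotopy semi-conjugacy, produces itself as the associated homotopy pseudo-orbit — this is why the constant homotopies $G = f$ and $H = \iota$ in the definition of ${\rm id}_{\mathcal{X}}$ were chosen precisely to match up with $f(x_{i-1}) = \iota(x_i)$ along an orbit.
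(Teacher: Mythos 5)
Your proposal is correct and follows essentially the same route as the paper: the paper's one-line proof ("this readily follows from the definition of the identity semi-conjugacy") is exactly your reduction via Corollary~\ref{cor:coincide} to the claim $({\rm id}_{\mathcal{X}})^{\infty}={\rm id}_{X^{\infty}}$, which holds because the identity semi-conjugacy sends an orbit to itself (its constant homotopies $G=f$, $H=\iota$ concatenate to constant paths since $f(x_{i-1})=\iota(x_i)$), so uniqueness of the shadowing orbit forces $({\rm id}_{\mathcal{X}})^{\infty}(x)=x$. You have simply written out the bookkeeping the paper leaves implicit.
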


\begin{proof}
This readily follows from the definition of the identity semi-conjugacy.
\end{proof}

We next show the second ``tongue twister'': a homotopy semi-conjugacy takes two homotopy pseudo-orbits which are homotopic to two homotopy pseudo-orbits which are homotopic. That is,

\begin{lmm} 
\label{lmm:hom-preserved}
Let $h : \mathcal{X}\to \mathcal{Y}$ be a homotopy semi-conjugacy. If a homotopy pseudo-orbit $(x, \alpha)$ of $\mathcal{X}$ is homotopic to $(x', \alpha')$ by a homotopy $\beta$, then the homotopy pseudo-orbit $h(x, \alpha)$ is homotopic to $h(x', \alpha')$ by $h^1\beta$.
\end{lmm}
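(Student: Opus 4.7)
The plan is to verify the conditions of Definition~\ref{hpo-homotopy} for the sequence of paths $h^1\beta = (h^1\beta_i)$ in $Y^1$, applied to the two homotopy pseudo-orbits $h(x,\alpha)=(h^1(x),\,G(x)^{-1}\cdot h^0(\alpha)\cdot H(x))$ and $h(x',\alpha')=(h^1(x'),\,G(x')^{-1}\cdot h^0(\alpha')\cdot H(x'))$ produced by Lemma~\ref{lmm:hpohpo}. The endpoint conditions $h^1\beta_i(0)=h^1(x_i)$ and $h^1\beta_i(1)=h^1(x'_i)$ are immediate from $\beta_i(0)=x_i$ and $\beta_i(1)=x'_i$. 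The bounded-length condition follows because in our setting $h^1$ is (locally) Lipschitz on the bounded region traversed by the paths $\beta_i$ (which have uniformly bounded length by the definition of a homotopy of pseudo-orbits). What remains is the key homotopy relation, namely that in $Y^0$ one has
\[
(G(x_{i-1})^{-1}\cdot h^0(\alpha_i)\cdot H(x_i))\cdot \iota h^1(\beta_i)
\;\sim\;
g h^1(\beta_{i-1})\cdot(G(x'_{i-1})^{-1}\cdot h^0(\alpha'_i)\cdot H(x'_i)).
\]

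To establish this, I would assemble three ``rectangle'' homotopies. First, viewing $H_t(\beta_i(s))$ as a map $[0,1]\times[0,1]\to Y^0$ whose four sides are $h^0\iota(\beta_i)$, $\iota h^1(\beta_i)$, $H(x_i)$ and $H(x'_i)$, the boundary of the square yields
\[
h^0\iota(\beta_i)\cdot H(x'_i)\;\sim\;H(x_i)\cdot \iota h^1(\beta_i).
\]
Second, an analogous argument with $G_t(\beta_{i-1}(s))$ gives
\[
h^0f(\beta_{i-1})\cdot G(x'_{i-1})\;\sim\;G(x_{i-1})\cdot gh^1(\beta_{i-1}).
\]
Third, since $(x,\alpha)$ is homotopic to $(x',\alpha')$ via $\beta$, the given relation $\alpha_i\cdot\iota(\beta_i)\sim f(\beta_{i-1})\cdot\alpha'_i$ in $X^0$, after applying $h^0$, becomes
\[
h^0(\alpha_i)\cdot h^0\iota(\beta_i)\;\sim\;h^0f(\beta_{i-1})\cdot h^0(\alpha'_i).
\]

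Finally I would chain these together: starting from the left-hand side, substitute using the $H$-rectangle to replace $H(x_i)\cdot \iota h^1(\beta_i)$ by $h^0\iota(\beta_i)\cdot H(x'_i)$; then apply the $h^0$-image of the pseudo-orbit homotopy to replace $h^0(\alpha_i)\cdot h^0\iota(\beta_i)$ by $h^0 f(\beta_{i-1})\cdot h^0(\alpha'_i)$; then use the $G$-rectangle (rewritten as $h^0f(\beta_{i-1})\sim G(x_{i-1})\cdot gh^1(\beta_{i-1})\cdot G(x'_{i-1})^{-1}$) to substitute, at which point the $G(x_{i-1})^{-1}\cdot G(x_{i-1})$ cancels and the right-hand side appears. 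The argument is purely formal chasing of homotopies, so I do not anticipate a serious obstacle; the only subtlety is keeping track of orientations and the order of concatenation in each of the three rectangle homotopies, and verifying the endpoint-matching conditions so that each substitution is legal as a homotopy relative to endpoints.
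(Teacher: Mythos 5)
Your proof is correct and is essentially identical to the paper's: the three homotopies you assemble --- the $H$-rectangle $h^0\iota(\beta_i)\cdot H(x'_i)\sim H(x_i)\cdot\iota h^1(\beta_i)$, the $G$-rectangle (which the paper records in the rearranged form $gh^1(\beta_{i-1})\cdot G(x'_{i-1})^{-1}\sim G(x_{i-1})^{-1}\cdot h^0f(\beta_{i-1})$), and the $h^0$-image of $\alpha_i\cdot\iota(\beta_i)\sim f(\beta_{i-1})\cdot\alpha'_i$ --- are exactly the paper's three intermediate relations, chained in the same order. The only caveat is that the paper's proof does not explicitly verify the bounded-length clause in Definition~\ref{hpo-homotopy} for $h^1\beta$, and your appeal to local Lipschitzness of $h^1$ is plausible but not among the stated hypotheses; this remains an implicit assumption in both your argument and the paper's.
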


 \begin{figure}
\setlength{\unitlength}{1.2mm}

\begin{picture}(100, 90)(30, 0)

\qbezier(50, 70)(51, 68)(53, 68.5)
\qbezier(53, 68.5)(55, 69)(56, 67)
\qbezier(56, 67)(57, 65)(59, 65.5)
\qbezier(59, 65.5)(61, 66)(62, 64)
\qbezier(62, 64)(63, 62)(65, 62.5)
\qbezier(65, 62.5)(67, 63)(68, 61)
\qbezier(68, 61)(69, 59)(71, 59.5)
\qbezier(71, 59.5)(73, 60)(74, 58)
\qbezier(74, 58)(75, 56)(77, 56.5)
\qbezier(77, 56.5)(79, 57)(80, 55)
\qbezier(80, 55)(81, 53)(83, 53.5)
\qbezier(83, 53.5)(85, 54)(86, 52)
\qbezier(86, 52)(87, 50)(89, 50.5)
\qbezier(89, 50.5)(91, 51)(92, 49)
\qbezier(92, 49)(93, 47)(95, 47.5)
\qbezier(95, 47.5)(97, 48)(98, 46)
\qbezier(98, 46)(99, 44)(101, 44.5)
\qbezier(101, 44.5)(103, 45)(104, 43)
\qbezier(104, 43)(105, 41)(107, 41.5)
\qbezier(107, 41.5)(109, 42)(110, 40)

\qbezier(50, 35)(51, 33)(53, 33.5)
\qbezier(53, 33.5)(55, 34)(56, 32)
\qbezier(56, 32)(57, 30)(59, 30.5)
\qbezier(59, 30.5)(61, 31)(62, 29)
\qbezier(62, 29)(63, 27)(65, 27.5)
\qbezier(65, 27.5)(67, 28)(68, 26)
\qbezier(68, 26)(69, 24)(71, 24.5)
\qbezier(71, 24.5)(73, 25)(74, 23)
\qbezier(74, 23)(75, 21)(77, 21.5)
\qbezier(77, 21.5)(79, 22)(80, 20)
\qbezier(80, 20)(81, 18)(83, 18.5)
\qbezier(83, 18.5)(85, 19)(86, 17)
\qbezier(86, 17)(87, 15)(89, 15.5)
\qbezier(89, 15.5)(91, 16)(92, 14)
\qbezier(92, 14)(93, 12)(95, 12.5)
\qbezier(95, 12.5)(97, 13)(98, 11)
\qbezier(98, 11)(99, 9)(101, 9.5)
\qbezier(101, 9.5)(103, 10)(104, 8)
\qbezier(104, 8)(105, 6)(107, 6.5)
\qbezier(107, 6.5)(109, 7)(110, 5)

\put(50, 35){\line(0, 1){35}}
\put(70, 25){\line(0, 1){35}}
\put(90, 15){\line(0, 1){35}}
\put(110, 5){\line(0, 1){35}}

\put(50, 70){\circle*{1.5}}
\put(50, 35){\circle*{1.5}}
\put(70, 59.5){\circle*{1.5}}
\put(70, 24.5){\circle*{1.5}}
\put(90, 50.5){\circle*{1.5}}
\put(90, 15.5){\circle*{1.5}}
\put(110, 40){\circle*{1.5}}
\put(110, 5){\circle*{1.5}}

\put(50, 52.5){\vector(0, -1){1}}
\put(70, 42.5){\vector(0, -1){1}}
\put(90, 32.5){\vector(0, -1){1}}
\put(110, 22.5){\vector(0, -1){1}}

\put(48.3, 34.6){\vector(3, 1){1}}
\put(49, 39.8){\vector(3, 1){1}}
\put(49, 44.8){\vector(3, 1){1}}
\put(49, 49.8){\vector(3, 1){1}}
\put(49, 54.8){\vector(3, 1){1}}
\put(49, 59.8){\vector(3, 1){1}}
\put(49, 64.8){\vector(3, 1){1}}
\put(48.3, 69.6){\vector(3, 1){1}}
\put(139, 34.8){\vector(3, 1){1}}
\put(139, 39.8){\vector(3, 1){1}}
\put(139, 44.8){\vector(3, 1){1}}
\put(139, 49.8){\vector(3, 1){1}}
\put(139, 54.8){\vector(3, 1){1}}
\put(139, 59.8){\vector(3, 1){1}}
\put(139, 64.8){\vector(3, 1){1}}
\put(139, 69.8){\vector(3, 1){1}}

\put(80, 37.5){\circle{7}}
\put(80.2, 41){\vector(-1, 0){1}}

\qbezier(20, 5)(30, 30)(50, 35)
\qbezier(20, 40)(30, 65)(50, 70)
\qbezier(110, 5)(120, 30)(140, 35)
\qbezier(110, 40)(120, 65)(140, 70)

\small
\put(29, 42){$h^1(\beta_{i-1})$}
\put(29, 68){$h^1(x_{i-1})$}
\put(29, 17){$h^1(x'_{i-1})$}
\put(119, 42){$h^1(\beta_i)$}
\put(119, 66){$h^1(x_i)$}
\put(119, 17){$h^1(x'_i)$}
\put(57, 68){$G(x_{i-1})^{-1}$}
\put(77, 58){$h^0(\alpha_i)$}
\put(98, 48){$H(x_i)$}
\put(52, 23){$G(x'_{i-1})^{-1}$}
\put(76, 14){$h^0(\alpha'_i)$}
\put(95, 5){$H(x'_i)$}
\put(51.5, 54){$gh^1(\beta_{i-1})$}
\put(100.5, 24){$\iota S(x_i)$}
\put(71, 44){$h^0f(\beta_{i-1})$}
\put(79.5, 29){$h^0\iota(\beta_i)$}
\put(70, 82){$G_s(\beta_{i-1}(t))$ fills up this.}
\put(85, 72){$H_s(\beta^1(t))$ fills up this.}
\qbezier(62, 46)(70, 63)(78, 80)
\qbezier(100, 30)(96, 50)(92, 70)

\linethickness{0.2pt}

\qbezier(20, 10)(30, 35)(50, 40)
\qbezier(20, 15)(30, 40)(50, 45)
\qbezier(20, 20)(30, 45)(50, 50)
\qbezier(20, 25)(30, 50)(50, 55)
\qbezier(20, 30)(30, 55)(50, 60)
\qbezier(20, 35)(30, 60)(50, 65)
\qbezier(110, 10)(120, 35)(140, 40)
\qbezier(110, 15)(120, 40)(140, 45)
\qbezier(110, 20)(120, 45)(140, 50)
\qbezier(110, 25)(120, 50)(140, 55)
\qbezier(110, 30)(120, 55)(140, 60)
\qbezier(110, 35)(120, 60)(140, 65)

\end{picture}
\vspace{1cm}
\begin{center}
Figure 12. Homotopy pseudo-orbits $h(x, \alpha)$ and $h(x', \alpha')$ are homotopic.
\end{center}
\end{figure}

\begin{proof}
What we have to check is 
$$(G(x_{i-1})^{-1}\cdot h^0\alpha_i \cdot H(x_i))\cdot \iota (h^1\beta_i)\sim g(h^1\beta_{i-1})\cdot (G(x_i')^{-1}\cdot h^0\alpha'_i \cdot H(x'_i)).$$

From the assumption $\alpha_i\cdot \iota(\beta_i)\sim f(\beta_{i-1})\cdot\alpha_i'$ we have 
$$h^0\alpha_i\cdot h^0\iota(\beta_i)\sim h^0f(\beta_{i-1})\cdot h^0\alpha_i'.$$
Define $G_s(\beta_{i-1}(t)) : [0, 1]\times[0, 1]\to Y^0$. Then, we see that $G_0(\beta_{i-1})=h^0f(\beta_{i-1})$, $G_s(\beta_{i-1}(0))=G(x_{i-1})^{-1}$, $G_1(\beta_{i-1})=gh^1(\beta_{i-1})$ and $G_s(\beta_{i-1}(1))=G(x'_{i-1})^{-1}$ hold. Thus, we obtain
$$gh^1(\beta_{i-1})\cdot G(x_{i-1}')^{-1}\sim G(x_{i-1})^{-1}\cdot h^0f(\beta_{i-1}).$$
Define $H_s(\beta_i(t)) : [0, 1]\times[0, 1]\to Y^0$. Then, similarly we get
$$h^0\iota (\beta_i)\cdot H(x_i')\sim H(x_i)\cdot \iota h^1(\beta_i).$$
Combining these homotopies we see
\begin{align*}
(G(x_{i-1})^{-1}\cdot h^0\alpha_i \cdot H(x_i))\cdot \iota h^1(\beta_i)
\sim & G(x_{i-1})^{-1}\cdot h^0\alpha_i \cdot h^0\iota (\beta_i)\cdot H(x_i') \\
\sim & G(x_{i-1})^{-1}\cdot h^0f(\beta_{i-1})\cdot h^0\alpha_i'\cdot H(x_i') \\
\sim & gh^1(\beta_{i-1})\cdot (G(x_{i-1}')^{-1}\cdot h^0\alpha_i'\cdot H(x_i')).
\end{align*}
See Figure 12. Note that the middle diagram in the figure is commutative up to homotopy. Thus, we are done.
\end{proof}

Lemma~\ref{lmm:hom-preserved} implies that the composition of homotopy semi-conjugacies induces the composition of corresponding semi-conjugacies. More precisely,

\begin{cor} 
\label{cor:composition}
Let $h$ be a homotopy semi-conjugacy from $\mathcal{X}$ to an expanding or a hyperbolic system $\mathcal{Y}$ and let $k$ be from $\mathcal{Y}$ to an expanding or a hyperbolic system $\mathcal{Z}$. Then, we have $(hk)^{\infty}=h^{\infty}k^{\infty}$.
\end{cor}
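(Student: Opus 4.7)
The plan is to chase definitions and then invoke the two tongue twisters (Lemmas~\ref{lmm:hom-preserved} and~\ref{lmm:twohsc-hpo}) together with the uniqueness of shadowing. Fix an orbit $x\in X^{\infty}$; I need to show that the orbits $(kh)^\infty(x)$ and $k^\infty(h^\infty(x))$ in $Z^{\infty}$ coincide. Both will be identified as the unique shadow of a single homotopy pseudo-orbit in $\mathcal{Z}$.

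First I would verify a strict equality of homotopy pseudo-orbits: $(kh)(x) = k(h(x))$. Writing $h(x) = (h^1(x),\, G(x)^{-1}\cdot H(x))$ by Lemma~\ref{lmm:orbhpo} and then applying $k$ to this homotopy pseudo-orbit via Lemma~\ref{lmm:hpohpo} gives the sequence of paths $G'(h^1(x))^{-1}\cdot k^0(G(x)^{-1}\cdot H(x))\cdot H'(h^1(x))$. On the other hand, the composed homotopy semi-conjugacy $kh=(k^0h^0,k^1h^1;\,k^0G\cdot G'h^1,\,k^0H\cdot H'h^1)$ applied directly to the orbit $x$ produces the same concatenation, because $k^0$ commutes with concatenation of paths. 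Thus $(kh)^\infty(x)$ is by definition the unique orbit in $\mathcal{Z}$ homotopic to $k(h(x))$.

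Next I would argue that $k(h(x))$ is homotopic (as a homotopy pseudo-orbit of $\mathcal{Z}$) to $k(h^\infty(x))$. By the construction of $h^\infty$ (Theorem~\ref{thm:exp-funct} or Theorem~\ref{thm:hyp-funct}), the homotopy pseudo-orbit $h(x)$ of $\mathcal{Y}$ is homotopic to the orbit $h^\infty(x)$ viewed as a homotopy pseudo-orbit with constant connecting homotopies. Lemma~\ref{lmm:hom-preserved} then transports this homotopy across the homotopy semi-conjugacy $k$, so $k(h(x))$ and $k(h^\infty(x))$ are homotopic homotopy pseudo-orbits of $\mathcal{Z}$.

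Finally, the unique shadow of $k(h^\infty(x))$ is by definition $k^\infty(h^\infty(x))$. By uniqueness of shadowing in the expanding or hyperbolic case (Corollary~\ref{cor:hom-exp} or Proposition~\ref{prp:hom-hyp}), homotopic homotopy pseudo-orbits in $\mathcal{Z}$ have the same shadowing orbit. Combining with the equality of the first step, $(kh)^\infty(x)$ and $k^\infty(h^\infty(x))$ coincide, as required. There is no real obstacle here: the only subtlety is the bookkeeping in the first step, where one must check that the paths produced by the two definitions of the composite are literally the same concatenation (not merely homotopic), so that no further uniqueness argument is needed before invoking the tongue twister.
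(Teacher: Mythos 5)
Your proof is correct and follows essentially the same route as the paper: form $h(x)$, shadow it to $y=h^\infty(x)$, use Lemma~\ref{lmm:hom-preserved} to push the homotopy $h(x)\sim y$ through $k$, identify $k(h(x))$ with $(kh)(x)$ by unfolding the definition of the composition, and then conclude by the uniqueness of the shadowing orbit (Corollary~\ref{cor:hom-exp} or Proposition~\ref{prp:hom-hyp}). The only cosmetic difference is that you front-load the observation that $(kh)(x)=k(h(x))$ as a literal equality of pseudo-orbits, whereas the paper unwinds this at the end; your remark is accurate (and even slightly stronger than needed, since homotopy would already suffice by uniqueness of shadowing).
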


\begin{proof}
Take $x\in X^{\infty}$. Then, $(h^1(x), G(x)^{-1}\cdot H(x))$ becomes a homotopy pseudo-orbit of $\mathcal{Y}$ by Lemma~\ref{lmm:orbhpo}. By the shadowing theorem, there exists an orbit $y\equiv h^{\infty}(x)\in Y^{\infty}$ so that 
$$y\sim (h^1(x), G(x)^{-1}\cdot H(x)).$$ 
It then follows from Lemma~\ref{lmm:hom-preserved} that 
$$(k^1(y), G'(y)^{-1}\cdot H'(y))\sim (k^1h^1(x), G'(h^1(x))^{-1}\cdot k^0(G(x)^{-1}\cdot H(x))\cdot H'(h^1(x))).$$
Again by shadowing theorem, there exists an orbit $z\equiv k^{\infty}(y)\in Z^{\infty}$ so that 
$$z\sim(k^1(y), G'(y)^{-1}\cdot H'(y)).$$
Thus, by the definition of the composition $kh$, we see
\begin{align*}
z \sim & (k^1h^1(x), G'(h^1(x))^{-1}\cdot k^0(G(x)^{-1}\cdot H(x))\cdot H'(h^1(x))) \\
\sim & (k^1h^1(x), (k^0G(x)\cdot G'(h^1(x)))^{-1}\cdot k^0H(x)\cdot H'(h^1(x))) \\
\equiv & (kh)(x).
\end{align*}
The homotopy pseudo-orbit $(kh)(x)$ can be shadowed by an orbit $(kh)^{\infty}(x)\in Z^{\infty}$ which is homotopic to $(kh)(x)$. Thus, the orbit $z=k^{\infty}(y)=k^{\infty}(h^{\infty}(x))$ is homotopic to $(kh)^{\infty}(x)$. By the uniqueness of the shadowing orbit Corollary~\ref{cor:hom-exp} for the expanding case and Proposition~\ref{prp:hom-hyp} for the hyperbolic case, we have $k^{\infty}(h^{\infty}(x))=(kh)^{\infty}(x)$.
\end{proof}

Finally we show

\begin{proof}[Proof of Theorems~\ref{thm:exp-equiv}, \ref{thm:hyp-equiv} and \ref{thm:exphyp-equiv}.]
Let $h$ be a homotopy semi-conjugacy from $\mathcal{X}$ to $\mathcal{Y}$ and $k$ be a homotopy semi-conjugacy from $\mathcal{Y}$ to $\mathcal{X}$ so that $kh$ is homotopic to the identity semi-conjugacy ${\rm id}_{\mathcal{X}}$ of $\mathcal{X}$ and $hk$ is homotopic to the identity semi-conjugacy ${\rm id}_{\mathcal{Y}}$ of $\mathcal{Y}$. Corollary~\ref{cor:identity} says $(hk)^{\infty}=({\rm id}_{\mathcal{X}})^{\infty}={\rm id}_{X^{\infty}}$. Corollary~\ref{cor:composition} implies $(hk)^{\infty}=h^{\infty}k^{\infty}$, so $h^{\infty}k^{\infty}={\rm id}_{X^{\infty}}$. Similarly we have $k^{\infty}h^{\infty}={\rm id}_{Y^{\infty}}$. This finishes the proof.
\end{proof}

\begin{proof}[Proof of Theorem~\ref{thm:exp}.]
This result readily follows from Theorems~\ref{thm:exp-equiv} and \ref{thm:hyp-equiv} by letting $X^0=Y^0=U$, $X^1=Y^1=U\cap f^{-1}(U)$, $f : U\cap  f^{-1}(U)\to U$ be the restriction of $f : M\to M$ and $\iota : U\cap f^{-1}(U)\to U$ be the inclusion.
\end{proof}

\begin{proof}[Proof of Theorem~\ref{thm:hyp}.]
Similarly this readily follows from Corollary~\ref{cor:associated-conj}.
\end{proof}

\end{section}

\end{document}